\tikzset{partition/.style={fill,circle,inner sep=1pt}}
\tikzset{partition/.style={fill,circle,inner sep=1pt},
         part/.style={baseline=0,scale=0.5,bend left=45},
         partlabel/.style={below}}
\tikzstyle{pnt}=[draw,ellipse,fill,inner sep=1pt]
\tikzstyle{opnt}=[draw,ellipse,inner sep=1pt]
\tikzstyle{opnt}=[ ]
\tikzstyle{pntt}=[draw,ellipse,fill,inner sep=0.5pt]
\tikzstyle{point}=[draw,ellipse,fill,inner sep=2pt]
\newcommand{\Hom}{\operatorname{Hom}}
\newcommand{\Trace}{\operatorname{Trace}}
\newcommand{\Mat}{\operatorname{Mat}}
\newcommand{\jmp}{\operatorname{jmp}}
\newcommand{\orb}{\operatorname{Orb}}
\newcommand{\cent}{\operatorname{Centralizer}}
\newtheorem{theorem}{Theorem}[section]
  \newtheorem{lemma}[theorem]{Lemma}
  \newtheorem{palgorithm}[theorem]{Separation of Variables (SOV) Approach}
  \newtheorem{proposition}[theorem]{Proposition}
  \newtheorem{corollary}[theorem]{Corollary}
  	\newtheorem{definition}[theorem]{Definition} 
  	\theoremstyle{definition}
  	\newtheorem{example}[theorem]{Example} 
  \theoremstyle{remark}  	
  	\newtheorem{remark}[theorem]{Remark}
	\newtheorem{note}[theorem]{Note}
\newcommand{\Z}{\mathbb{Z}}
\subjclass[2000]{05C25, 05E40, 05C38, 13F20}
\begin{document}
\title[Separation of Variables]{Separation of Variables and the Computation of Fourier Transforms on Finite Groups, II}

%    Information for first author
\author{David Maslen}
%    Address of record for the research reported here
\address{HBK Capital Management, New York, NY 10036}
%    Current address
%\curraddr{Department of Mathematics and Statistics,
%Case Western Reserve University, Cleveland, Ohio 43403}
\email{david@maslen.net}
%    \thanks will become a 1st page footnote.
%\thanks{The first author was supported in part by NSF Grant \#000000.}

%    Information for second author
\author{Daniel N. Rockmore}
\address{Departments of Mathematics and Computer Science, Dartmouth College, Hanover, NH 03755}
\email{rockmore@math.dartmouth.edu}
\thanks{The second author was partially supported by AFOSR Award FA9550-11-1-0166 and the Neukom Institute for Computational Science at Dartmouth College}

\author{Sarah Wolff}
\address{Department of Mathematics, Denison University, Granville, OH 43023}
\email{wolffs@denison.edu}
\thanks{The third author was partially supported by an NSF Graduate Fellowship.}

%    General info
\subjclass[2000]{To be filled in.}

\date{\today}

%\dedicatory{This paper is dedicated to our advisors.}

\keywords{Fast Fourier Transform, Bratteli diagram, path algebra, quiver}

\begin{abstract}
We present a general diagrammatic approach to the construction of efficient algorithms for computing the Fourier transform of a function on a finite group. By extending work which connects Bratteli diagrams to the construction of Fast Fourier Transform algorithms %\cite{sovi}, 
we make explicit use of the path algebra connection to the construction of Gel'fand-Tsetlin bases and work in the setting of quivers. We relate this framework to the construction of a {\em configuration space} derived from a Bratteli diagram. In this setting the complexity of an algorithm for computing a Fourier transform reduces to the calculation of the dimension of the associated configuration space. 
%We give explicit counting results to find the dimension of these configuration spaces, and thus the complexity of the associated Fourier transform. 
Our methods give  improved upper bounds for computing the Fourier transform for the general linear groups over finite fields, the classical Weyl groups, and homogeneous spaces of finite groups, while also  recovering the best known algorithms for the symmetric group and compact Lie groups.
\end{abstract}

\maketitle

\section{Introduction}\label{in}
The {\em Fast Fourier Transform} (FFT) remains among the most important family of algorithms in information processing \cite{RockmoreIEEE}. It efficiently computes the {\em discrete Fourier transform} (DFT) which is equivalent to the  matrix-vector multiplication
\begin{equation}
\Big( e^{2\pi i j k/n} \Big)_{j,k} \vec{f}
\end{equation}
for $i = \sqrt{-1}$, $j,k = 0,\dots n-1$, and $\vec{f}$ a complex-valued vector of length $n$ \cite{RockmoreIEEE}. This calculation can be framed in a number of ways. We take a representation theoretic point of view and cast the DFT as a change of basis in $\mathbb{C}[C_N]$, the complex group algebra of the cyclic group of order $N$, from a natural basis of group element indicator functions to a basis of irreducible matrix elements. This perspective (which is at times driven by applications \cite{rocksurvey}) suggests a generalization of the DFT to finite nonabelian groups $G$ as the computation of a change of basis in $\mathbb{C}[G]$ from a basis of indicator functions to a basis of irreducible matrix elements, which raises the kinds of  attendant questions of computational complexity (see e.g., \cite{MaslenNotices}) addressed herein. 

Let $T_G(R)$ denote the computational complexity of the Fourier transform on a group $G$ at a set of inequivalent irreducible representations $R$. Then $C(G)$ denotes the \textit{complexity of the group} $G$, defined as
$$C(G):=\min_R \{T_G(R)\}.$$ 

For $N$ a ``highly composite" number, Cooley and Tukey in 1965 famously presented an algorithm to show $C(\Z/N\Z)\leq O(N\log_2 N)$  \cite{cooleytukey}. Yavne \cite{yavne} later  showed that for $N=2^m$, $C(\Z/N\Z) \leq \frac{8}{3}N\log_2 N-\frac{16}{9}N-\frac{2}{9}(-1)^{\log_2(N)}+2$. More recently, Johnson and Frigo \cite{frigo} and Lundy and Van Buskirk \cite{lundy} have further reduced the total number of complex multiplications required, but without affecting the overall group complexity $C(\mathbb{Z}/N\mathbb{Z})$. 
More generally, for $A$ an abelian group of size $N$, various efficiencies can be combined to prove the complexity of the DFT on $A$ is bounded above by $O(N\log_2N)$ \cite{diaconis-fft}. 
The deep and ongoing study of this problem has been motivated by a wide range of applications in digital signal processing and beyond (see e.g. \cite{auslander, barros, astrophysics,  blindimage, rao,toli,vanloan}). 

The Cooley-Tukey algorithm is undoubtedly the most famous of the FFTs. It is a divide-and-conquer algorithm whose basic idea was first recorded by Gauss in unpublished work (see e.g. \cite{burr} for a brief history of the algorithm). The key step is to rewrite the DFT on a cyclic group $C_N$ as a linear combination of DFTs on $C_n < C_N$ (for $n\mid N$). Iterating this step for a chain of subgroups of $C_N$ yields  algorithms more efficient than a direct matrix-vector multiplication. 

This divide-and-conquer algorithm produces efficiencies by reducing the ``big" problem to smaller subproblems that have common structure and in fact are themselves, smaller versions of the original, that can be efficiently combined to produce the required result. In this paper we continue a line of work  that generalizes this approach to nonabelian groups \cite{MR-duco, MR-adapted, sovi,  rocksurvey}. In this case the common subproblems are repeated occurrences of particular pieces of matrix multiplications (e.g., repeated block and thus element-by-element multiplications) whose existence is ensured by working with very specific kinds of bases for the irreducible matrix representations (and associated matrix elements) enabled by choices of group factorizations. Thus, there is in a sense, ``divide-and-conquer" going on in both the group and its dual. 

The bases are encoded via paths in a Bratteli diagram attached to the group of interest, which in turn means that irreducible matrix elements correspond to pairs of paths in the diagram, which for a given group element may only be nonzero when of a particular form. I.e., the ``repeated units" of our divide-and-conquer amount to certain subgraphs of a Bratteli diagram and efficiencies are gained by recognizing their multiple appearances in the corresponding calculation. This is the  guts of the ``separation of variables" (SOV) approach  first introduced in \cite{sovi} and then extended in \cite{maslen} via a quiver-based formalism. 

In this paper we finally take on the problem of laying a proper axiomatic and logical foundation for this approach and in so doing also produce improved algorithms for 
%Our primary results are improvements in group complexity for 
the important families of classical Weyl groups $B_n$ and $D_n$ and the general linear groups over finite fields $GL_n(\mathbb{F}_q)$:

\begin{theorem}\label{Bnthm}
$C(B_n)\leq  n(2n-1)\vert B_n\vert.$
\end{theorem}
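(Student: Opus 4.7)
The plan is to invoke the general Separation of Variables complexity bound developed in the preceding sections of this paper, applied to the canonical tower of hyperoctahedral groups
\[ B_0 < B_1 < B_2 < \cdots < B_n, \]
with $[B_k:B_{k-1}] = 2k$. Since the restriction of an irreducible $B_k$-representation to $B_{k-1}$ is multiplicity-free (the classical bipartition branching rule), this chain supplies a canonical Gel'fand--Tsetlin basis. The associated Bratteli diagram $\mathcal{B}$ has vertices at level $k$ labeled by bipartitions $(\lambda^{(0)},\lambda^{(1)})$ of $k$, with edges given by deleting a single box from either $\lambda^{(0)}$ or $\lambda^{(1)}$.

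First I would pin down the combinatorial data feeding the SOV framework: choose an explicit transversal of $B_{k-1}$ in $B_k$ of size $2k$ (for instance the $k$ adjacent transpositions combined with sign flips at position $k$), and identify the corresponding configuration space derived from $\mathcal{B}$. The main theorem of the paper reduces $C(B_n)$ to the dimension of this configuration space, so the problem becomes purely a dimension count on a quiver built from $\mathcal{B}$ and the chosen transversals.

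Next I would bound the dimension level by level. By the SOV setup, the level-$k$ contribution is a sum over pairs of paths in $\mathcal{B}$ through the relevant subgraph, weighted by the sizes of irreducible blocks above. The combinatorics of bipartition branching, together with the fact that at level $k$ the new coset choices contribute both a positional factor ($k$) and a sign factor ($2$), naturally produces a per-level cost of $4k-3$: summing,
\[ \sum_{k=1}^n (4k-3) \;=\; 2n^2 - n \;=\; n(2n-1), \]
and the overall $|B_n|$ factor arises from the standard $|G|$-prefactor in the SOV complexity bound, yielding $C(B_n) \leq n(2n-1)\,|B_n|$.

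The main obstacle is the combinatorial bookkeeping for the bipartition structure. Unlike $S_n$, where the classical SOV bound $C(S_n) \leq \binom{n}{2}|S_n|$ follows by counting single-partition path pairs, here each $B_n$-irreducible is indexed by a pair of partitions, enlarging the Bratteli diagram and introducing crossover edges between the two components. Showing that the configuration space nevertheless admits the tight per-level bound $4k-3$ — rather than a looser $\Theta(k^2)$ estimate — requires carefully exploiting the repeated subgraph structure in $\mathcal{B}$ that the SOV formalism was designed to detect, and is where essentially all the work of the proof lies.
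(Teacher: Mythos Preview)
Your overall architecture is right --- the chain $B_0<\cdots<B_n$, the bipartition Bratteli diagram, a per-step cost of $4k-3$ that sums to $n(2n-1)$ --- and you correctly flag that the real work is the per-level bound. But there is a genuine gap in how you propose to get it.

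The missing ingredient is the \emph{factorization} of the coset representatives, not merely a choice of transversal. The SOV machinery does not take a raw transversal of size $2k$ and produce a configuration space; it takes a factorization $yF_y=x_1\cdots x_m$ in which each factor $x_j$ lies in some $\mathbb{C}[\mathcal{B}_{j^+}]\cap\cent(\mathbb{C}[\mathcal{B}_{j^-}])$, and the quiver whose morphism count controls the complexity is built from the data $(j^+,j^-)$. Your proposed transversal (``adjacent transpositions plus sign flips'') has no such factorization attached. The paper instead writes each of the $2n$ minimal coset representatives for $B_n/B_{n-1}$ as a word $a_n\cdots a_2a_1a_2'\cdots a_n'$ of length $2n-1$ in the simple reflections, with $a_i,a_i'\in\{e,s_i\}\subset B_i\cap\cent(B_{i-2})$; it is this $(i^+,i^-)=(i,i-2)$ structure that produces the specific quivers $\mathcal{H}_i^n$, $\mathcal{J}_i^n$, $\mathcal{K}^n$ whose morphism counts are then bounded.

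Second, your heuristic for $4k-3$ (``positional factor $k$ times sign factor $2$'') is not the mechanism. The bound $(4n-3)|B_n|$ arises as $1+2(n-1)+2(n-1)$ from three separate estimates: $\#\Hom(\mathcal{K}^n;\mathcal{B})=|B_n|$; $\#\Hom(\mathcal{J}_i^n;\mathcal{B})\le 2|B_n|$, which needs the combinatorial lemma $M_{\mathcal{B}}(B_i,B_{i-2})\le 2$ (only two paths of length $2$ between bipartitions in $\mathcal{B}$); and $\#\Hom(\mathcal{H}_i^n;\mathcal{B})\le \frac{4(i-1)}{n}|B_n|$, which needs an inequality on jumps of bipartitions, namely $(\jmp(\beta^1)+\jmp(\beta^2))(\jmp(\beta^1)+\jmp(\beta^2)+1)\le 6i$. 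These branching-rule lemmas are where the bipartition combinatorics actually enters, and they are not visible from your sketch.
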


\begin{theorem}\label{Dnthm}
$C(D_n)\leq \frac{n(13n-11)}{2}\vert D_n\vert.$
\end{theorem}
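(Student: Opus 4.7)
The plan is to apply the SOV framework developed earlier in the paper directly to $D_n$. The first step is to fix a suitable subgroup chain $D_n = G_n \supset G_{n-1} \supset \cdots \supset G_0$ for which the restriction at each level is multiplicity-free; a natural choice uses a $D$-type chain $D_n \supset D_{n-1} \supset \cdots$, possibly alternated with $B$-type intermediates so that each branching step is multiplicity-free. Once such a chain is fixed, the main theorem developed earlier reduces the task of bounding $C(D_n)$ to bounding the dimension of the configuration space attached to the resulting Bratteli diagram (scaled by $|D_n|$).

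The second step is to describe the Bratteli diagram explicitly. Recall that the irreducibles of $D_k$ are indexed by unordered pairs of partitions $\{\lambda,\mu\}$ with $|\lambda|+|\mu|=k$, where pairs with $\lambda=\mu$ split into two $D_k$-irreducibles (one for each eigenvector of the outer swap). I would write out the branching rules $D_k \downarrow G_{k-1}$ in terms of box-removal moves on these bipartition labels; below the top few levels this diagram is essentially the one already used in the proof of Theorem \ref{Bnthm}, so much of the combinatorial bookkeeping can be imported directly from the $B_n$ analysis.

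The third step is the configuration-space dimension count. I would split it into two contributions: (i) a baseline inherited from the lower levels of the Bratteli diagram, whose shape mirrors the $n(2n-1)$ bound from Theorem \ref{Bnthm}; and (ii) a correction coming from the top levels, where $D_n$-specific branching and the split vertices intervene. Adding these pieces, and simplifying, should produce the stated coefficient $\frac{n(13n-11)}{2}$. Along the way one must express each quantity in the form $c|D_n|$, which is straightforward since $|D_n|=\tfrac{1}{2}|B_n|$.

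The main obstacle will be the exceptional vertices $\{\lambda,\lambda\}$ of the $D_n$ Bratteli diagram. These carry a pair of irreducibles swapped by the outer automorphism of $B_n$, and they interact with the branching rules in a way that has no analogue in the $B_n$ case: a box removed from $\lambda$ versus a box removed from $\mu$ no longer gives distinguishable targets, and the two split summands must be followed separately down the diagram. Accounting for the extra paths through these vertices without either over- or under-counting is what pins down the precise $\frac{13n-11}{2}$ constant, and I expect this to be the delicate part of the argument. Everything else is routine application of the configuration-space dimension machinery already set up in the paper.
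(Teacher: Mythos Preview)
Your proposal is a plan rather than a proof, and several of its structural assumptions are off in ways that would prevent the argument from going through.

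First, there is no need to alternate with $B$-type intermediates: the pure chain $D_n>D_{n-1}>\cdots>D_0$ already has multiplicity-free branching, and this is the chain the paper uses. Second, your ``baseline plus correction at the top few levels'' framing misidentifies where the $D_n$-specific combinatorics live. The split irreducibles $(\lambda,\lambda)^{\pm}$ occur at \emph{every} even level of the Bratteli diagram, not just near the top, so the difference from $B_n$ is not localized and cannot be handled as a top-level correction to the $B_n$ count.

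What is actually needed, and what your sketch omits entirely, is the concrete input to the SOV machinery: an explicit factorization of minimal coset representatives for $D_n/D_{n-1}$ as words in the simple reflections $s_1,\dots,s_n$ (with $A_i=\{e,s_i\}$), exactly as in the $B_n$ proof. This produces the same three families of quivers $\mathcal{K}^n,\mathcal{J}_i^n,\mathcal{H}_i^n$, and the work is then to bound $\#\Hom$ for each against the $D_n$ Bratteli diagram. The constants change because the combinatorics do: one shows $M_{\mathcal{B}}(D_i,D_{i-2})\leq 3$ (versus $2$ for $B_n$), giving $\#\Hom(\mathcal{J}_i^n\uparrow\mathcal{Q};\mathcal{B})\leq 3|D_n|$, and a jump-function analysis on pairs of partitions (with separate treatment of odd and even levels to account for the split vertices) gives $\#\Hom(\mathcal{H}_i^n\uparrow\mathcal{Q};\mathcal{B})\leq\frac{20(i-1)}{n}|D_n|$. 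Summing yields $(13n-12)|D_n|$ per step of the recursion $t_{D_n}\leq t_{D_{n-1}}+(13n-12)$, and then $\sum_{k=1}^n(13k-12)=\frac{n(13n-11)}{2}$. None of these ingredients---the coset factorization, the multiplicity bound, or the level-parity case split in the jump estimate---appears in your outline.
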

\begin{theorem}\label{Gln} 
$C(Gl_n(\mathbb{F}_q))\leq \left(\frac{4^nq^{n+1}-q}{4q-1}\right)\vert Gl_n(q)\vert.$
\end{theorem}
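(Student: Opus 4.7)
The plan is to instantiate the Bratteli-diagram based separation-of-variables machinery developed earlier in the paper on the standard tower of general linear groups. The first step is to fix the chain
\[
\{1\} = GL_0(\mathbb{F}_q) \subset GL_1(\mathbb{F}_q) \subset GL_2(\mathbb{F}_q) \subset \cdots \subset GL_n(\mathbb{F}_q),
\]
with each $GL_k(\mathbb{F}_q)$ embedded in $GL_{k+1}(\mathbb{F}_q)$ via the block inclusion $g \mapsto \bigl(\begin{smallmatrix} g & 0 \\ 0 & 1 \end{smallmatrix}\bigr)$. This chain determines a Bratteli diagram $\mathcal{B}$ whose level-$k$ vertices are the equivalence classes of irreducible complex representations of $GL_k(\mathbb{F}_q)$ and whose edge multiplicities record the branching numbers. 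Viewed as a quiver, the path algebra of $\mathcal{B}$ realizes a Gelfand-Tsetlin-adapted basis for $\mathbb{C}[GL_n(\mathbb{F}_q)]$, to which the SOV framework applies directly.

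With this setup in place, I would invoke the main SOV complexity bound of the paper, which estimates $C(G)$ via a sum over levels of dimensions of configuration spaces attached to $\mathcal{B}$. To recover the exact constant in the theorem I would target a per-level bound $d_k \le q(4q)^{k-1}$ on the configuration-space dimension at level $k$; granted this estimate, summing a geometric series yields
\[
\sum_{k=1}^{n} q(4q)^{k-1} \;=\; q \cdot \frac{(4q)^n - 1}{4q - 1} \;=\; \frac{4^n q^{n+1} - q}{4q - 1},
\]
after which the SOV bound multiplies by $|GL_n(\mathbb{F}_q)|$ to produce precisely the claimed inequality. The algorithmic content is the usual divide-and-conquer: each level-$k$ reduction assembles the Fourier transform on $GL_k(\mathbb{F}_q)$ from transforms on $GL_{k-1}(\mathbb{F}_q)$, combined by block matrix operations whose cost is governed by the local configuration-space dimension.

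The hard part is establishing the per-level estimate $d_k \le q(4q)^{k-1}$. Unlike the case of the symmetric group, the branching $GL_k(\mathbb{F}_q) \downarrow GL_{k-1}(\mathbb{F}_q)$ fails to be multiplicity-free, so one cannot simply count Gelfand-Tsetlin patterns. My approach would be to exploit Harish-Chandra theory, which parameterizes $\widehat{GL_k(\mathbb{F}_q)}$ by cuspidal data attached to Levi subgroups and computes branching via parabolic induction and restriction. The factor $4q$ at each level should arise as a product of a constant bounding the number of admissible Harish-Chandra transitions between adjacent levels and a factor of $q$ reflecting the size of the residue field over which the relevant cuspidal multiplicities live. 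An inductive argument on $k$, propagating bounds on path-pairs through the tower one level at a time and exploiting the multiplicativity of parabolic induction, should deliver the estimate and complete the proof.
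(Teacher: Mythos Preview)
Your high-level outline is on the right track—the recursion through the subgroup chain and the geometric summation $\sum_{k=1}^n 4^{k-1}q^k = \dfrac{4^nq^{n+1}-q}{4q-1}$ are exactly what the paper obtains from iterating Lemma~\ref{factorsum}. But there is a genuine structural gap.

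The SOV machinery does not produce a single ``configuration-space dimension $d_k$'' at level $k$ that one then bounds. The complexity of the inductive step $t_{GL_k}(R)\le t_{GL_{k-1}}(R_{GL_{k-1}})+m_{GL_k}(R,Y,GL_{k-1})$ depends on an \emph{explicit factorization of coset representatives} $y\in Y$ for $GL_k(q)/GL_{k-1}(q)$ into elements $x_j$, each lying in some $GL_{j}(q)\cap\cent(GL_{j-2}(q))$. Only after such a factorization is fixed does the SOV approach produce a sequence of subquivers $Q_j^\sigma$ of $\mathcal{B}$, and the cost is $\sum_j |W_{j-1}|\cdot\#\Hom((Q_1^\sigma\triangle\cdots\triangle Q_{j-1}^\sigma)\cup Q_j^\sigma;\mathcal{B})$—a sum of products, not a single dimension. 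The paper devotes an entire appendix to constructing such a factorization for $GL_n(q)$: coset representatives are written as $\pi s_i$ with $\pi$ a permutation matrix and $s_i$ a product of roughly $n$ matrices $u_j,u_j',t_j,v_j\in GL_j(q)\cap\cent(GL_{j-2}(q))$, obtained via an action of $GL_n(q)$ on the set $Z_n=\{(x_1y_1,\dots,x_ny_n):x\cdot y=1\}$. Your proposal has no analogue of this step, and without it the SOV bound (Theorem~\ref{efficiencyfirststate}) cannot even be invoked.

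Second, your plan to derive the $4q$ factor from Harish-Chandra theory and parabolic induction is speculative and not how the paper proceeds. Once the coset factorization is in hand, the quiver counts $\#\Hom(\mathcal{H}_j^n;\mathcal{B})$ and $\#\Hom(\mathcal{J}_j^n;\mathcal{B})$ are bounded using two concrete inputs from \cite{sovi}: the multiplicity bound $M(GL_j(q),GL_{j-1}(q))\le 2^{j-1}$ and the conjugacy-class count $|\widehat{GL_j(q)}|\le q^j$. These combine with the sizes $|\tilde U_j|,|\tilde V_j|\le q^2$ of the factor sets to give $|W_{j-1}|\#\Hom(\cdots)\le 2^{2j-4}q^n|GL_n(q)|$, whose sum over $j$ yields $4^{n-1}q^n|GL_n(q)|$ at level $n$. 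The $4$ is $2^2$ from squaring the multiplicity bound, not a count of Harish-Chandra transitions.
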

Improvements for the complexity of Fourier transforms on related homogeneous spaces are also presented. For example, let $B_n/B_{n-k}$ denote the homogenous space  of the Weyl group $B_n$. 
\begin{theorem}\label{hombnthm}
$C(B_n/B_{n-k})\leq k(4n-2k-1)\frac{\vert B_n\vert}{\vert B_{n-k}\vert}.$ 
\end{theorem}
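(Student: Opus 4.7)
The plan is to specialize the general SOV machinery developed earlier in the paper to the homogeneous space $B_n/B_{n-k}$, exploiting that a Fourier transform on $G/H$ may be viewed as the restriction of a Fourier transform on $G$ to the subspace of $H$-right-invariant functions. This subspace is spanned precisely by those Gel'fand--Tsetlin basis vectors whose paths in the Bratteli diagram for the chain $B_{n-k} \subset B_{n-k+1} \subset \cdots \subset B_n$ descend (below level $n-k$) to paths ending at the trivial representation of $B_{n-k}$. Hence only the portion of the Bratteli diagram strictly above level $n-k$ is relevant to the computation.

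First I would fix the adapted chain $B_{n-k} \subset B_{n-k+1} \subset \cdots \subset B_n$ and invoke the general SOV bound for homogeneous spaces, which expresses $C(G/H)$ as the sum over the intermediate subgroup inclusions of the dimensions of the appropriate configuration spaces, scaled by $|G/H|$. In the present setup this converts the problem into bounding, for each $j$ with $n-k+1 \leq j \leq n$, the per-level cost attached to the inclusion $B_{j-1} \subset B_j$. The cost for this same inclusion was already analyzed in the proof of Theorem~\ref{Bnthm}, where the full $B_n$ bound $n(2n-1) = \sum_{j=1}^{n}(4j-3)$ arises as the telescoping sum of per-level contributions of size $4j-3$; I would simply reuse that per-level estimate unchanged, since the local branching structure of $B_j \supset B_{j-1}$ does not depend on whether the chain starts at $B_1$ or at $B_{n-k}$.

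Summing the per-level contributions over the relevant levels then gives
\[
C(B_n/B_{n-k}) \leq \left(\sum_{j=n-k+1}^{n} (4j-3)\right)\frac{|B_n|}{|B_{n-k}|} = k(4n-2k-1)\frac{|B_n|}{|B_{n-k}|},
\]
where the last equality is the elementary identity $\sum_{j=n-k+1}^{n}(4j-3) = 4 \cdot \tfrac{k(2n-k+1)}{2} - 3k = k(4n-2k-1)$.

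The main obstacle is verifying that the homogeneous-space configuration space at each level really is the restriction of the $B_n$ configuration space to paths passing through the trivial $B_{n-k}$-representation, so that (i) the per-level dimension counts are bounded by the same $4j-3$ obtained in the $B_n$ case, and (ii) the outer factor correctly reduces from $|B_n|$ to $|B_n|/|B_{n-k}|$. This is precisely the content of the general configuration-space formalism for $G/H$ developed in the body of the paper, so the verification reduces to checking that the branching multiplicities appearing for $B_j \supset B_{j-1}$ behave the same way in the quotient setting — a direct consequence of the fact that irreducibles of $B_n$ appearing in $\mathbb{C}[B_n/B_{n-k}]$ are exactly those whose Gel'fand--Tsetlin paths end at the trivial representation of $B_{n-k}$.
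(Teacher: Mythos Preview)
Your proposal is correct and matches the paper's approach. The paper's treatment of Theorem~\ref{hombnthm} (in the section on homogeneous spaces) is exactly the observation you make: the $B_{n-k}$-invariance of $F_y$ forces the bottom path in the quiver associated to $F_y$ to pass through the trivial vertex $1_{n-k}$, so the same SOV machinery used for Theorem~\ref{Bnthm} applies with the configuration-space counts restricted accordingly; the per-level reduced cost remains $4j-3$, and summing over $j=n-k+1,\dots,n$ gives $\sum_{j=n-k+1}^{n}(4j-3)=k(4n-2k-1)$. One small wording issue: the adapted representations are still with respect to the full chain $B_n>B_{n-1}>\cdots>B_0$, not just $B_n>\cdots>B_{n-k}$ --- the Gel'fand--Tsetlin basis needs the whole chain --- but the recursion in Lemma~\ref{factorsum} terminates at $B_{n-k}$ because $t_{B_{n-k}/B_{n-k}}=0$, which is what you intend.
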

Moreover, our results extend to chains of semisimple algebras rather than just chains of group algebras. This will be explored in subsequent work \cite{algpaper}. 

In Section \ref{prelim} we outline the preliminaries needed for our results, including a discussion of the mainideas behind the SOV approach, necessarily recapitulated (in an abbreviated format) in order to make this paper as self-contained as possible (although we acknowledge -- given the title -- the dependence on part I \cite{sovi}). In Section \ref{sepvarstate} we present the improved SOV approach in detail, rewriting an iterated product in the path algebra as a sequence of bilinear maps on the newly defined ``configuration spaces" (vector spaces of quiver morphisms). In Section \ref{applications} we give factorizations and counts to prove the specific group complexity results (Theorems~\ref{Bnthm},\ref{Dnthm},\ref{Gln}) and also recover previously known methods for $S_n$ \cite{maslen} and compact Lie groups \cite{compactgroups}. The results in Section \ref{applications} depend on various important, but very technical details of the explicit computation of the configuration space dimensions. In order to bring the reader to the complexity results as quickly and directly as seems possible, we postpone the presentation of these details to Sections \ref{proofsofstuff} and \ref{generalcounts}. This includes  generalizations of some results of Stanley on differential posets \cite{diffposets, diffposets2} used to give explicit methods for finding these dimensions. This may be of independent interest. Some of the more laborious (but necessary) formalisms are collected in three short appendices. 

\section{Background}\label{prelim}

\subsection{Fourier transforms and the group algebra}\label{galgebra}
The usual discrete Fourier transform of  a finite data sequence may be viewed as a special case of Fourier transforms on finite groups, defined using group representations. Results here assume complex representations, unless spelled out otherwise, although most results go through more generally. For necessary background on the representation theory of finite groups we refer the reader to \cite{serre}.

\begin{definition}\label{groups} Let $G$ be a finite group and $f$ a complex-valued function on $G$. 
\begin{itemize}
\item[(i)] Let $\rho$ be a matrix representation of $G$. Then the \textbf{Fourier transform of} $\mathbf{f}$ \textbf{at} $\mathbf{\rho}$, denoted $\hat{f}(\rho)$, is the matrix sum
$$\hat{f}(\rho)=\sum_{s\in G} f(s)\rho(s).$$
\item[(ii)] Let $R$ be a set of matrix representations of $G$. Then the \textbf{Fourier transform of} $\mathbf{f}$ \textbf{on} $\mathbf{R}$ is the direct sum of Fourier transforms of $f$ at the representations in $R$:
$$\mathcal{F}_R(f)=\bigoplus_{\rho\in R} \hat{f}(\rho)\in\bigoplus_{\rho\in R} \Mat_{\dim\rho}(\mathbb{C}).$$

\end{itemize}
\end{definition}

When we compute the Fourier transform for  a complete set of inequivalent irreducible representations $R$ of $G$  we refer to the calculation as the \textbf{computation of a Fourier transform on $G$} (with respect to $R$).
%\end{example}

\begin{definition}\label{complexdef} Let $G$ be  a finite group, $R$ a set of matrix representations of $G$.
\begin{itemize}
\item[(ii)] Let $+_G(R)$ (respectively, $\times_G(R)$) denote the minimum number of complex arithmetic additions (resp., multiplications)  needed to compute the Fourier transform of $f$ on $R$ via a straight-line program\footnote{A \textbf{straight-line program} is a list of instructions for performing the operations $\times, \div, +, -$ on inputs and precomputed values \cite{algebraiccomplexity}.} for an arbitrary complex-valued function $f$ defined on $G$. The \textbf{arithmetic complexity} of a Fourier transform on $R$, denoted $T_G(R)$, is given by $\max{(+_G(R), \times_G(R))}$.
\item[(ii)] The \textbf{complexity of the group} $G$, denoted $C(G)$ is defined by
$$C(G):=\min_R \{T_G(R)\},$$
where $R$ varies over all complete sets of inequivalent irreducible matrix representations of $G$.
\item[(iii)] The \textbf{reduced complexity}, denoted $t_G(R)$, is defined by $$t_G(R)=\frac{1}{\vert G\vert}T_G(R).$$
\end{itemize}
\end{definition}
Let $\rho_1,\dots,\rho_m$ be a complete set of inequivalent irreducible matrix representations of a group $G$ of dimensions $d_1,\dots,d_m,$ respectively. A direct computation of a Fourier transform would require at most $|G|\sum d_i^2=|G|^2$  arithmetic operations. Rewriting, for a direct computation we have 
$$C(G)\leq T_G(R)\leq \vert G\vert^2.$$
\textbf{Fast Fourier transforms} (FFTs) are algorithms for computing Fourier transforms that improve on this naive upper bound.  A priori, the number of operations needed to compute the Fourier transform may depend on the specific representations used. 

\begin{example}The classical DFT and FFT. 
For $G=C_N$, the cyclic group of order $N$, the irreducible representations are $1$-dimensional and defined by $\zeta_j\rightarrow\zeta_j^k,$ for $\zeta_j=e^{2\pi ij/N}$ and $k=0,\dots, N-1$ and $i=\sqrt{-1}$. The corresponding Fourier transform on $C_N$ is the usual discrete Fourier transform. Cooley and Tukey's algorithm showed that for a ``highly composite" integer $N$ (an integer $N$ that factors completely as a product of small prime numbers), $C(G)\leq O(N\log_2 N)$  \cite{cooleytukey}. 
\end{example}

The group algebra $\mathbb{C}[G]$ is the space of all formal complex linear combinations of group elements under the product
$$\left(\sum_{s\in G} f(s)s\right)\left(\sum_{t\in G} h(t)t\right)=\sum_{s,t\in G} f(s)h(t)st.$$

Elements of $\mathbb{C}[G]$ are in one-to-one correspondence with complex-valued functions on G, and the group algebra product corresponds to convolution of functions.

A complete set $R$ of inequivalent irreducible matrix representations of a group $G$ determines a basis for $\mathbb{C}[G]$ (via the irreducible matrix elements) and in this case the Fourier transform is an algebra isomorphism from $\mathbb{C}[G]$ to a direct sum of matrix algebras. We recover $f$ through the Fourier inversion formula.

\begin{theorem}[Fourier inversion (see e.g., \cite{diaconis})] Let $G$ be a finite group, $f$ a complex-valued function on $G$, and $R$ a complete set of inequivalent irreducible matrix representations of $G$. Then 
$$f(s)=\frac{1}{|G|}\sum_{\rho\in R}\dim_\rho \Trace(\hat{f}(\rho)\rho(s^{-1})).$$ 
\end{theorem}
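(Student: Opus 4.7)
The plan is to expand the right-hand side and reduce it to an application of the character of the regular representation. Write
\[
g(s) := \frac{1}{|G|}\sum_{\rho\in R}\dim_\rho \Trace(\hat{f}(\rho)\rho(s^{-1}))
\]
and aim to show $g(s)=f(s)$ for every $s\in G$.

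First I would substitute the definition $\hat{f}(\rho)=\sum_{t\in G} f(t)\rho(t)$ into the trace. Using linearity of the trace and the homomorphism property $\rho(t)\rho(s^{-1}) = \rho(ts^{-1})$, one obtains
\[
\Trace(\hat{f}(\rho)\rho(s^{-1})) = \sum_{t\in G} f(t)\,\chi_\rho(ts^{-1}),
\]
where $\chi_\rho$ denotes the character of $\rho$. Interchanging the finite sums over $t$ and $\rho$ gives
\[
g(s)=\frac{1}{|G|}\sum_{t\in G} f(t)\left(\sum_{\rho\in R}\dim_\rho\,\chi_\rho(ts^{-1})\right).
\]

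The key step, and the only nontrivial ingredient, is to evaluate the inner sum. I would invoke the decomposition of the regular representation $\mathbb{C}[G]\cong\bigoplus_{\rho\in R}\rho^{\oplus\dim_\rho}$, whose character is therefore $\chi_{\mathrm{reg}}=\sum_{\rho\in R}\dim_\rho\,\chi_\rho$. Computing $\chi_{\mathrm{reg}}$ directly from the action of $G$ on itself by left multiplication shows $\chi_{\mathrm{reg}}(g)=|G|$ if $g=e$ and $0$ otherwise. Hence
\[
\sum_{\rho\in R}\dim_\rho\,\chi_\rho(ts^{-1}) = |G|\,\delta_{t,s}.
\]
Substituting this identity collapses the outer sum to the single term $t=s$, yielding $g(s)=f(s)$.

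There is essentially no obstacle here beyond citing (or briefly justifying) the regular-representation character identity, which is a standard consequence of Maschke's theorem and the fact that each irreducible $\rho$ occurs in $\mathbb{C}[G]$ with multiplicity $\dim_\rho$. Note also that unitarity of the representations is not required, since the argument uses only the homomorphism property and the character of the regular representation; this matches the hypothesis of the theorem, which allows an arbitrary complete set of inequivalent irreducible matrix representations.
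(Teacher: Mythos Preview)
Your argument is correct and is the standard proof of Fourier inversion via the character of the regular representation. The paper itself does not prove this theorem: it is stated with a citation to \cite{diaconis} and used as background, so there is no in-paper proof to compare against; your write-up is exactly the kind of justification one would supply if asked to fill in the reference.
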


Thus, the Fourier transform of a function $f$ on $G$ with respect to a complete set of inequivalent irreducible representations $R$ of $G$ is an algebra isomorphism 
$$ \mathbb{C}[G]\xrightarrow{\;\;\;\mathcal{F}_R\;\;\;}\bigoplus_{\rho\in R} M_{\dim(\rho)}(
\mathbb{C}),$$ and so as in \cite{clausen, maslen}:
\begin{lemma}\label{equivcomp}  The computation of the Fourier transform of a function $f$ on $G$ with respect to a complete set of irreducible  representations $R$ is equivalent to computation (rewriting) of
$$\sum_{s\in G} f(s)s$$ in the group algebra, relative to a fixed basis for $R$.
\end{lemma}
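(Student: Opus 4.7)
The plan is to exploit the fact that the Fourier transform $\mathcal{F}_R$ is a $\mathbb{C}$-linear isomorphism and to recognize ``computing the Fourier transform'' as the change-of-basis problem in $\mathbb{C}[G]$ between the group-element basis and the preimage (under $\mathcal{F}_R$) of the matrix-unit basis on $\bigoplus_{\rho\in R} \Mat_{\dim\rho}(\mathbb{C})$.

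First I would observe that by Definition~\ref{groups}, $\mathcal{F}_R$ is $\mathbb{C}$-linear, and by the Fourier inversion theorem together with the equality $|G| = \sum_{\rho\in R}(\dim\rho)^2$, it is a linear isomorphism (and, as noted in the preceding paragraph, an algebra isomorphism). The domain has the natural basis $\{s : s \in G\}$, in which an arbitrary element of $\mathbb{C}[G]$ has the form $\sum_{s\in G} f(s)\, s$ with coordinates $f(s)$. The target has the natural basis $\{E^{\rho}_{ij}\}$ of matrix units in the various $\rho$-blocks, and in this basis $\mathcal{F}_R(f) = \sum_{\rho,i,j} \bigl(\hat{f}(\rho)\bigr)_{ij}\, E^{\rho}_{ij}$, with coordinates exactly the matrix entries of each $\hat{f}(\rho)$.

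Next, fixing a matrix form for each $\rho \in R$ determines the basis $\{E^{\rho}_{ij}\}$ and hence, via $\mathcal{F}_R^{-1}$, a basis $\{e^{\rho}_{ij}\}$ of $\mathbb{C}[G]$; Fourier inversion yields the explicit formula $e^{\rho}_{ij} = \frac{\dim\rho}{|G|}\sum_{s\in G}\rho(s^{-1})_{ji}\, s$, with $\mathcal{F}_R(e^{\rho}_{ij}) = E^{\rho}_{ij}$ by construction. Since $\mathcal{F}_R$ is an isomorphism, the coordinates of $\sum_{s\in G} f(s)\, s$ when re-expressed in the basis $\{e^{\rho}_{ij}\}$ are precisely the matrix entries $\bigl(\hat{f}(\rho)\bigr)_{ij}$. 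Producing the tuple of matrix entries defining $\mathcal{F}_R(f)$ from the input $(f(s))_{s \in G}$ is therefore, as an arithmetic problem with identical inputs and identical outputs, the problem of rewriting $\sum_{s\in G} f(s)\, s$ in the fixed basis $\{e^{\rho}_{ij}\}$ of $\mathbb{C}[G]$.

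I do not anticipate any serious obstacle; the one point requiring care is to articulate that ``relative to a fixed basis for $R$'' amounts to a choice of matrix form for each $\rho \in R$, which in turn pins down both the target basis $\{E^{\rho}_{ij}\}$ and, via $\mathcal{F}_R^{-1}$, the corresponding basis $\{e^{\rho}_{ij}\}$ of $\mathbb{C}[G]$ in which the rewriting takes place. Beyond this identification the lemma is essentially an unpacking of Definition~\ref{groups} together with bijectivity of $\mathcal{F}_R$ supplied by Fourier inversion.
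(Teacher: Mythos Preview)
Your argument is correct and matches the paper's approach: the paper simply notes that $\mathcal{F}_R$ is an algebra isomorphism from $\mathbb{C}[G]$ to $\bigoplus_{\rho\in R} M_{\dim\rho}(\mathbb{C})$ and then states the lemma with a citation (``as in \cite{clausen, maslen}''), offering no further proof. You have supplied the details the paper omits---identifying the matrix-unit basis on the target, pulling it back via $\mathcal{F}_R^{-1}$, and recognizing the Fourier coefficients as the coordinates in that pulled-back basis---which is exactly the content implicit in the paper's one-line justification.
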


%An equivalent notion of adapted bases is a \textit{system of Gel'fand-Tsetlin bases}, determined by the \textit{Bratteli diagram} of a subgroup chain.

\subsection{Adapted bases, Bratteli diagrams, and quivers}\label{gel}
The fundamental idea of the SOV approach is a recasting of the Cooley-Tukey algorithm in terms of {\em graded quivers}, which is an elaboration of path algebras derived from Bratteli diagrams, which are motivated by the use of \textit{adapted} or \textit{Gel'fand-Tsetlin} bases for irreducible representations. 

\begin{definition} Given a group $G$ with subgroup $H\leq G$, a complete set $R$ of inequivalent irreducible matrix representations of $G$ is $\mathbf{H}$\textbf{-adapted} if there exists a complete set $R_H$ of inequivalent irreducible matrix representations of $H$ such that for all $\rho\in R$, $\rho\downarrow_H=\bigoplus \gamma_s$, for (not neccessarily distinct) representations $\gamma_s$ in $R_H$. The set $R$ is \textbf{adapted to the chain} $G=G_n> G_{n-1}>\cdots> G_0$ if for each $1\leq i\leq n$ there is a complete set $R_i$ of inequivalent representations of $G_i$ such that $R_i$ is $G_{i-1}$-adapted and $R_n=R$. A set of bases for the representation spaces that give rise to adapted representations is an \textbf{adapted basis}.
\end{definition}

For the FFT results of this paper we assume the ability to construct adapted sets of representations. This requirement is not a limitation, as any set of representations is equivalent to an adapted set of representations. One such construction is outlined in \cite{sovi}.

\begin{definition} A \textbf{quiver} $Q$ is a directed multigraph with vertex set $V(Q)$ and edge set $E(Q)$. For an arrow (directed edge) $e\in E(Q)$ from vertex $\beta$ to vertex $\alpha$, we call $\alpha$ the \textbf{target} of $e$ and $\beta$ the \textbf{source} of $e$. 
\end{definition}
%
%\begin{example}
%The directed multigraph of Figure \ref{exampleofquiver} is an example of a quiver.
%\begin{figure}[H]\begin{center}
%\begin{tikzpicture}[shorten >=1pt,node distance=2cm,on grid,auto,/tikz/initial text=] 
%   \node[pnt] at (0, 0)(00){};
%   \node[pnt] at (-2, 1)(11){};
%   \node[pnt] at (-2,-1)(12){};
%   \node[pnt] at (-4, 1)(21){};
%   \node[pnt] at (-4, -1)(22){};
%   
%    \path[->,every node/.style={font=\scriptsize}]
%    (00) edge  node {} (11)
%    (00) edge [bend left=55] node {} (11)
%    (00) edge [bend right=55] node {} (21)
%    (11) edge  node {} (21)
%    (12) edge  node {} (21)
%    (12) edge  node {} (22)
%    (12) edge [bend left=55] node {} (22);
%    
%\end{tikzpicture}
%\caption{A quiver}
%\label{exampleofquiver}
%\end{center}\end{figure}
%\end{example}

Let $Q$ be a quiver. For each $e\in E(Q)$, let $t(e)$ denote the target of $e$ and $s(e)$ the source of $e$.
\begin{definition}
A quiver $Q$ is \textbf{graded} if there is a function $gr:V(Q)\rightarrow \mathbb{N}$ such that for each $e\in E(Q)$, $gr(t(e))>gr(s(e))$.  
\end{definition}

\begin{example}
Figure \ref{grquiv} is an example of a graded quiver. Each vertex $v$ is labeled by its grading, $gr(v)$.

\begin{figure}[H]\begin{center}
\begin{tikzpicture}[shorten >=1pt,node distance=2cm,on grid,auto,/tikz/initial text=] 
   \node[pnt] at (0, 0)(00){};
   \node[pnt] at (-2, 1)(11){};
   \node[pnt] at (-2,-1)(12){};
   \node[pnt] at (-4, 1)(21){};
   \node[pnt] at (-4, -1)(22){};
   \draw (00) node[above] {\footnotesize $0$};
   \draw (11) node[above] {\footnotesize $1$};
   \draw (12) node[above] {\footnotesize $1$};
   \draw (21) node[above] {\footnotesize $2$};
   \draw (22) node[above] {\footnotesize $2$};
    \path[->,every node/.style={font=\scriptsize}]
    (00) edge  node {} (11)
    (00) edge [bend left=55] node {} (11)
    (00) edge [bend right=55] node {} (21)
    (11) edge  node {} (21)
    (12) edge  node {} (21)
    (12) edge  node {} (22)
    (12) edge [bend left=55] node {} (22);
    
\end{tikzpicture}
\caption{A graded quiver.}
\label{grquiv}
\end{center}\end{figure}
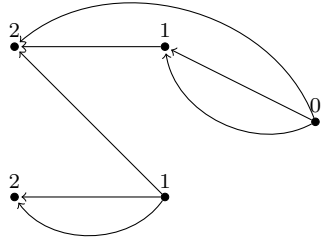
\end{example}

\begin{definition} A \textbf{Bratteli diagram} is a finite graded quiver such that: 
\begin{itemize}
\item[(i)] there is a unique vertex with grading $0$, called the \textbf{root},
\item[(ii)] if $v\in V(Q)$ is not the root then $v$ is the target of at least one arrow,
\item[(iii)] if $v\in V(Q)$ does not have grading of maximum value then $v$ is the source of at least one arrow,
\item[(iv)] for each $e\in E(Q)$, $gr(t(e))=1+gr(s(e))$. 
\end{itemize}
\end{definition}

\begin{example}
Note that the quiver of Figure \ref{grquiv} is not a Bratteli diagram. However, a slight modification produces the Bratteli diagram of Figure \ref{bratfirstex}. 

\begin{figure}[H]\begin{center}
\begin{tikzpicture}[shorten >=1pt,node distance=2cm,on grid,auto,/tikz/initial text=] 
   \node[pnt] at (0, 0)(00){};
   \node[pnt] at (-2, 1)(11){};
   \node[pnt] at (-2,-1)(12){};
   \node[pnt] at (-4, 1)(21){};
   \node[pnt] at (-4, -1)(22){};
   \draw (00) node[above] {\footnotesize $0$};
   \draw (11) node[above] {\footnotesize $1$};
   \draw (12) node[above] {\footnotesize $1$};
   \draw (21) node[above] {\footnotesize $2$};
   \draw (22) node[above] {\footnotesize $2$};
    \path[->,every node/.style={font=\scriptsize}]
    (00) edge  node {} (11)
    (00) edge node {} (12)
    (00) edge [bend left=55] node {} (11)
    (00) edge [bend right=55] node {} (11)
    (11) edge  node {} (21)
    (12) edge  node {} (21)
    (12) edge  node {} (22)
    (12) edge [bend left=55] node {} (22);
    
\end{tikzpicture}
\caption{A Bratteli diagram.}
\label{bratfirstex}
\end{center}\end{figure}
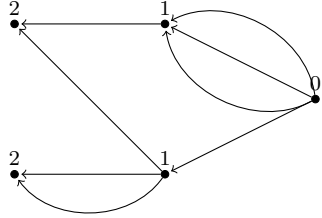
\end{example}
Consider a group algebra chain $\mathbb{C}[G_n] > \mathbb{C}[G_{n-1}] > \dots > \mathbb{C}[G_1]> \mathbb{C}[G_0] = \mathbb{C}$. To associate a Bratelli diagram to this chain we follow the language of \cite{seminormal}. Let $\rho$ be an irreducible representation of $G_{i}$, i.e., an irreducible $\mathbb{C}[G_i]$-module.  Upon restriction to $G_{i-1}$,  $\rho\downarrow_{G_{i-1}}$ decomposes as a direct sum of irreducible $\mathbb{C}[G_{i-1}]$-modules. For $\gamma$ an irreducible representation of $G_{i-1}$, let $M(\rho,\gamma)$ denote the multiplicity of $\gamma$ in $\rho\downarrow_{G_{i-1}}$. 
\begin{definition}\label{brattdef} For a chain of group algebras $\mathbb{C}[G_n] > \mathbb{C}[G_{n-1}] > \dots > \mathbb{C}[G_0],$ the \textbf{associated Bratteli diagram} is described by
\begin{itemize}
\item[(i)] The vertices of grading $i$ are labeled by the (equivalence classes of) irreducible representations of $G_i$;

\item[(ii)] A vertex labeled by an irreducible representation $\gamma$ of $G_{i-1}$ is connected to a vertex labeled by an irreducible representation $\rho$ of $G_{i}$ by $M(\rho,\gamma)$ arrows. 
\end{itemize} 

\end{definition}

\begin{example}  Figure \ref{BDexamples} shows two examples of Bratteli diagrams, with the gradings listed at the top. 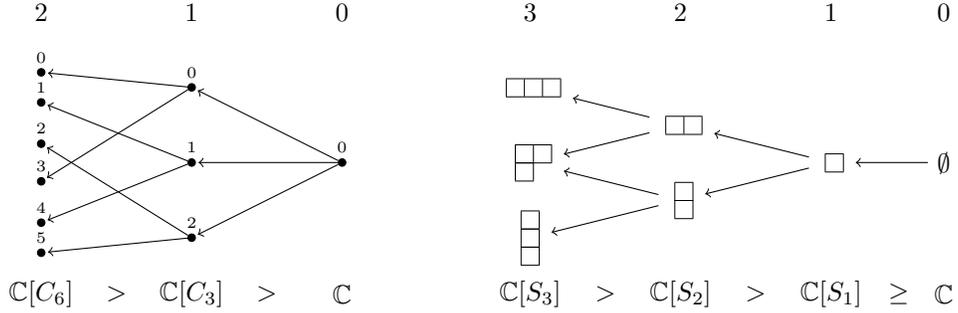
\begin{figure}[H]\begin{center}
\begin{tikzpicture}[shorten >=1pt,node distance=2cm,on grid,auto,/tikz/initial text=] 
   \node at (0,2) (q) {0};
   \node at (-2,2) (q) {1};
   \node at (-4,2) (q) {2};

   \node[pnt] at (0,0) (q_0) {};
   \node at (0,.2) (0) {\tiny{0}};
      \node at (0,-1.75) {$\mathbb{C}$};
   \node[pnt] at (-2,1) (q_1) {};
   \node at (-2,1.2) (1) {\tiny{0}};
   \node[pnt] at (-2,0) (q_2) {};
   \node at (-2,0.2) (2) {\tiny{1}};
   \node[pnt] at (-2,-1) (q_3) {};
   \node at (-2,-.8) (3) {\tiny{2}};
      \node at (-1,-1.75) {$>$};
      \node at (-2,-1.75) {$\mathbb{C}[C_3]$};
   \node[pnt]  at (-4,1.2)  (z_1) {};
   \node  at (-4,1.4)  (1) {\tiny{0}};
   \node[pnt]   at (-4,.8) (z_2)  {};
   \node   at (-4,1) (2)  {\tiny{1}};
   \node[pnt]   at (-4,.25) (z_3)   {};
   \node   at (-4,.45) (3)   {\tiny{2}};
   \node[pnt]   at (-4,-.25) (z_4)  {};
   \node   at (-4,-.05) (4)  {\tiny{3}};
   \node[pnt]   at (-4,-.8) (z_5)  {};
   \node   at (-4,-.6) (5)  {\tiny{4}};
   \node[pnt]   at (-4,-1.2) (z_6)  {};
   \node   at (-4,-1.0) (6)  {\tiny{5}};
%    \node at (-1.75,-1.75) {$<$};
\node at (-3,-1.75) {$>$};
\node at (-4,-1.75) {$\mathbb{C}[C_6]$};

\begin{scope}[shift={(-.5,0)}]
   \node at (8.5,2) (q) {0};
   \node at (7,2) (q) {1};
   \node at (5,2) (q) {2};
   \node at (3,2) (q) {3};

  \node at (8.5,0) (00) {$\emptyset$};
  \node at (7.9,-1.75) {$\geq$};
   \node at (8.5,-1.75) {$\mathbb{C}$};

  \node at (7,0) (11) {\tiny{ $\yng(1)$}};
   \node at (5,.5) (21) {\tiny{ $\yng(2)$}};
   \node at (5,-.5) (22) {\tiny{ $\yng(1,1)$}};
   \node at (3,1) (31) {\tiny{ $\yng(3)$}};
   \node at (3,0) (32) {\tiny{ $\yng(2,1)$}};
   \node at (3,-1) (33) {\tiny{$ \yng(1,1,1)$}};
   \node at (7,-1.75) {$\mathbb{C}[S_1]$};
   \node at (6,-1.75) {$>$};
   \node at (5,-1.75) {$\mathbb{C}[S_2]$};
   \node at (4,-1.75) {$>$};
   \node at (3,-1.75) {$\mathbb{C}[S_3]$};
\end{scope}
   \path[->]
    (00) edge node {} (11)
    (11) edge node {} (21)
    (11) edge node {} (22)
    (21) edge node {} (31)
    (21) edge node {} (32)
    (22) edge node {} (32)
    (22) edge node {} (33);

   \path[->,every node/.style={font=\scriptsize}]
   (q_0) edge (q_1)
   (q_0) edge (q_2)
   (q_0) edge (q_3) 
   (q_1) edge (z_1)
   (q_1) edge (z_4)
   (q_2) edge (z_2)
   (q_2) edge (z_5)
   (q_3) edge (z_3)
   (q_3) edge (z_6);

\end{tikzpicture}
\caption{Bratteli diagrams for $C_6$ (left) and $S_3$ (right).}\label{BDexamples}
\end{center}\end{figure}
On the left of Figure 3 we see the Bratteli diagram for a chain of group algebras for $C_6$ while on the right we see the Bratteli diagram for a chain of group algebras for the symmetric group $S_3$, viewing $S_i$ as the subgroup of $S_n$ that fixes the elements $\{i+1,\dots,n\}$.
Note that we distinguish $\mathbb{C}[S_1]$ from $\mathbb{C} (= C[S_0])$ only so that vertices at level $i$ correspond to representations of $\mathbb{C}[S_i]$. 

For the group algebra $\mathbb{C}[C_N]$, irreducible representations are naturally indexed  by the integers $0,\dots,N-1$, while for $\mathbb{C}[S_n]$, the irreducible representations are indexed by partitions of $n$ (as determined by Young in \cite{yg1}; see \cite{jameskerber} for an introduction to the representation theory of $S_n$). 

Both Bratteli diagrams of Figure \ref{BDexamples} are examples of \textbf{multiplicity-free} diagrams in that there is at most one edge from any vertex of grading $i$ to any vertex of grading $i+1$.

\end{example}

Given a Bratteli diagram $\mathcal{B}$, there is a canonical chain of algebras associated to $\mathcal{B}$ called the {\em chain of path algebras}.
\begin{definition} Let $\mathcal{B}$ be a Bratteli diagram. The \textbf{path algebra (at level i)}, denoted $\mathbb{C}[\mathcal{B}_i]$, is the $\mathbb{C}$-vector space with basis given by ordered pairs of paths  of length $i$ in $\mathcal{B}$ which start at the root and end at the same vertex at level $i$. 
\end{definition}

\begin{example}\label{pathalgsym}
In the Bratteli diagram $\mathcal{B}$ of Figure \ref{BDexamples2} associated to the chain $\mathbb{C}[S_3] >\mathbb{C}[ S_2] > \mathbb{C}[S_1]\geq \mathbb{C},$ let $P_1,P_2,P_3,P_4$ be the paths from the root to level $3$ in $\mathcal{B}$, labeled from top to bottom. Then the path algebra $\mathbb{C}[\mathcal{B}_3]$ has basis $\{(P_1,P_1), (P_2,P_2), (P_2,P_3), (P_3,P_2), (P_3,P_3), (P_4,P_4)\}.$

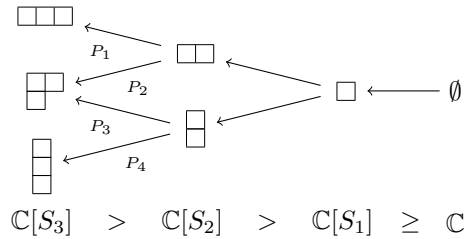
\begin{figure}[H]\begin{center}
\begin{tikzpicture}[shorten >=1pt,node distance=2cm,on grid,auto,/tikz/initial text=] 
   
  \node at (8.5,0) (00) {$\emptyset$};
  \node at (7.9,-1.75) {$\geq$};
   \node at (8.5,-1.75) {$\mathbb{C}$};

  \node at (7,0) (11) {\tiny{ $\yng(1)$}};
   \node at (5,.5) (21) {\tiny{ $\yng(2)$}};
   \node at (5,-.5) (22) {\tiny{ $\yng(1,1)$}};
   \node at (3,1) (31) {\tiny{ $\yng(3)$}};
   \node at (3,0) (32) {\tiny{ $\yng(2,1)$}};
   \node at (3,-1) (33) {\tiny{$ \yng(1,1,1)$}};
   \node at (7,-1.75) {$\mathbb{C}[S_1]$};
   \node at (6,-1.75) {$>$};
   \node at (5,-1.75) {$\mathbb{C}[S_2]$};
   \node at (4,-1.75) {$>$};
   \node at (3,-1.75) {$\mathbb{C}[S_3]$};

%   \node at (1,2) (41) {\tiny{ $\yng(4)$}};
%   \node at (1,1) (42) {\tiny{ $\yng(3,1)$}};
%   \node at (1,0) (43) {\tiny{ $\yng(2,2)$}};
%   \node at (1,-1) (44) {\tiny{ $\yng(2,1,1)$}};
%   \node at (1,-2) (45) {\tiny{$\yng(1,1,1,1)$}};
   \path[->]
    (00) edge node {} (11)
    (11) edge node {} (21)
    (11) edge node {} (22)
    (21) edge node {\tiny$P_1$} (31)
    (21) edge node {\tiny$P_2$} (32)
    (22) edge node {\tiny$P_3$} (32)
    (22) edge node {\tiny$P_4$} (33);

\end{tikzpicture}
\caption{Paths $P_1,P_2,P_3,P_4$ labeled according to their last steps. }\label{BDexamples2}
\end{center}\end{figure}
\end{example}
Note that for a vertex $v$, labeled by a representation $\rho$, the dimension of $\rho$ is given by the number of paths from the root to $v$. Moreover, each path corresponds to a subgroup-equivariant embedding of $\mathbb{C}$ into the representation space of $\rho$ (for more details, see Appendix \ref{appB}).

Further, $\mathbb{C}[\mathcal{B}_i]$ embeds into $\mathbb{C}[\mathcal{B}_{i+1}]$ as a subalgebra by mapping any pair of paths $(P,Q)\in\mathbb{C}[\mathcal{B}_i]$ to the sum $$\sum_e (e\circ P, e\circ Q),$$
over all arrows $e$ such that the source of $e$ is the target of $P$ (equivalently, of $Q$), and $\circ$ denotes concatenation of paths. Thus, elements in these subalgebras are effectively determined by the initial ``legs" (or ``bubbles") of their paths. This is also equivalent to a choice of basis  in the corresponding Wedderburn decomposition of the group algebra as a direct sum of matrix algebras, recognizing that for a given element, a number (equal to the total number of distinct paths that have the common middle ``source" of tail of $P$) of irreducible matrix elements will take on the same value. Identification of this kind of common ``unit" (formalized by the injection of one quiver into another) is the fundamental observation and technique of the quiver-based SOV approach. 

Multiplication in the path algebra $\mathbb{C}[\mathcal{B}_i]$ linearly extends $(P,Q)*(P',Q')=\delta_{QP'}(P,Q')$
$$\sum_{(P,Q)} a_{PQ}(P,Q)*\sum_{(P',Q')} b_{P'Q'}(P',Q')=\sum\left(\sum_{Q} a_{PQ}b_{QQ'}\right) (P,Q')$$
and is illustrated in Figure \ref{multinpath}. The first arrow represents gluing two pairs of paths along identical middle paths $Q=P'$ and the second arrow represents summation over all possible gluings.
\begin{figure}[H]\begin{center}
\begin{tikzpicture}[shorten >=1pt,node distance=2cm,on grid,auto,/tikz/initial text=] 
   \node[pnt] at (-3,0) (q_0) {};
   \node[pnt] (q_2) [left of=q_0] {};
   \node[pnt] (q_1) [below of=q_0] {};
   \node[pnt] (q_3) [below of=q_2] {};
   \node at (-2,-1)(y1){$\longrightarrow$};
%   \node at (-.5,-1)(y2){};
  % \node at (-1.5,-.5)(y3){\scriptsize{glue identical}};   
  % \node at (-1.5,-.8)(y3){\scriptsize{middle paths}};   
   \node[pnt] at (-1,-1) (y4) {};
   \node[pnt] (y5) [right of=y4] {};
   \node at (2,-1)(y6){$\longrightarrow$};
  % \node at (4.5,-1)(y7){};
 %  \node at (3.5,-.5)(y3){\scriptsize{sum over all}};   
 %  \node at (3.5,-.8)(y3){\scriptsize{poss. gluings}};
   \node[pnt] at (3,-1) (y8) {};
   \node[pnt] (y9) [right of=y8] {};
   \path[->,every node/.style={font=\scriptsize}]
    (q_0) edge [bend left=55] node {$Q$} (q_2)
    (q_0) edge [bend right=55] node {$P$} (q_2)
    (q_1) edge [bend left=55] node {$Q'$} (q_3)
%	(y1) edge  node {} (y2)
    (q_1) edge [bend right=55] node {$P'$} (q_3)
    (y5) edge [bend left=55] node {$Q'$} (y4)
    (y5) edge  node {$Q=P'$} (y4)
    (y5) edge [bend right=55] node {$P$} (y4)
 %   (y6) edge  node {} (y7)
    (y9) edge [bend left=55] node {$Q'$} (y8)
    (y9) edge  [bend right=55] node {$P$} (y8);    
\end{tikzpicture}
\caption{Multiplication in the path algebra.}
\label{multinpath}
\end{center}
\end{figure}
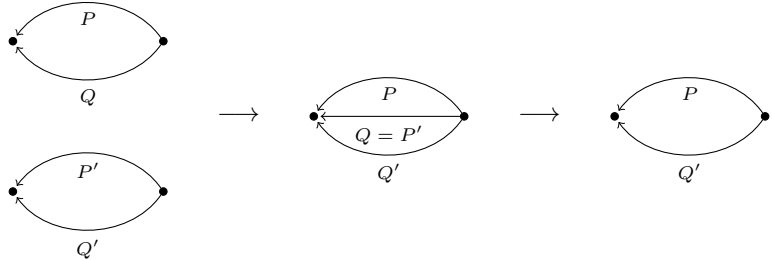

For a Bratteli diagram $\mathcal{B}$ with highest grading $n$ associated to a chain of group algebras, consider the associated chain of path algebras:
$\mathbb{C}[\mathcal{B}_n]>\mathbb{C}[\mathcal{B}_{n-1}]>\cdots> \mathbb{C}[\mathcal{B}_1]>\mathbb{C}[\mathcal{B}_0]=\mathbb{C}.$
It is not too difficult to see that there exists an isomorphism between these algebra chains.
\begin{lemma}\label{bratchain}Let $\mathbb{C}[G]=\mathbb{C}[G_n] > \mathbb{C}[G_{n-1}] > \dots > \mathbb{C}[G_1]> \mathbb{C}[G_0] = \mathbb{C}$ be a chain of group algebras with Bratteli diagram $\mathcal{B}$. Then the chain of path algebras associated to $\mathcal{B}$ is isomorphic to the group algebra chain.
\end{lemma}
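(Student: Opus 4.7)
The plan is to build a level-by-level algebra isomorphism using the adapted bases together with the Artin–Wedderburn decomposition, and then verify that these isomorphisms intertwine the inclusions on both sides. Fix a complete set of irreducible matrix representations of $G_n$ that is adapted to the chain $G_n > G_{n-1} > \cdots > G_0$, and let $R_i$ denote the induced adapted set for $G_i$. Artin--Wedderburn gives
\[
\mathbb{C}[G_i] \;\xrightarrow{\;\sim\;}\; \bigoplus_{\rho \in R_i} \mathrm{Mat}_{\dim\rho}(\mathbb{C}),
\]
and in the adapted basis each summand has a canonical matrix-unit basis $\{E^{(\rho)}_{j,k}\}$ indexed by pairs of basis vectors of the representation space of $\rho$. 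Because the basis is adapted, these basis vectors can be labeled inductively by chains of subgroup-irreducibles $\rho = \rho^{(i)} \supset \rho^{(i-1)} \supset \cdots \supset \rho^{(0)} = \mathbb{C}$, together with a choice of multiplicity index at each restriction step. By Definition \ref{brattdef} such chains with multiplicity labels are precisely the paths in $\mathcal{B}$ from the root to the vertex labeled $\rho$. In particular, the number of such paths equals $\dim\rho$ (an easy induction using $\dim\rho = \sum_\gamma M(\rho,\gamma)\dim\gamma$).

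First I would use this identification to define, for each $i$, a linear map
\[
\Phi_i : \mathbb{C}[\mathcal{B}_i] \longrightarrow \mathbb{C}[G_i], \qquad (P,Q) \;\longmapsto\; E^{(\rho)}_{P,Q},
\]
where $\rho$ is the representation at the common endpoint of $P$ and $Q$. The multiplication rule $(P,Q)\ast(P',Q') = \delta_{Q,P'}(P,Q')$ in the path algebra matches the matrix-unit multiplication $E^{(\rho)}_{P,Q}E^{(\rho)}_{P',Q'} = \delta_{Q,P'}E^{(\rho)}_{P,Q'}$ exactly (and products across different endpoints vanish on both sides), so $\Phi_i$ is an algebra homomorphism. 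Since source and target have the same dimension $\sum_{\rho\in R_i}(\dim\rho)^2 = |G_i|$, and $\Phi_i$ carries a basis to a basis, it is an isomorphism.

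Next I would check that the two families of inclusions agree. The path-algebra inclusion sends $(P,Q)$ with common endpoint $\gamma$ at level $i-1$ to $\sum_e (e\circ P, e\circ Q)$, where $e$ runs over all arrows out of that vertex; on the Bratteli side this sum is indexed by all $\rho$ at level $i$ together with one index per copy of $\gamma$ inside $\rho\!\downarrow_{G_{i-1}}$. On the group-algebra side, adaptedness means that the embedding $\mathbb{C}[G_{i-1}] \hookrightarrow \mathbb{C}[G_i]$ sends the matrix unit $E^{(\gamma)}_{P,Q}$ to the sum of all matrix units $E^{(\rho)}_{e\circ P,\,e\circ Q}$ over the same index set: each occurrence of $\gamma$ in the restriction $\rho\!\downarrow_{G_{i-1}}$ contributes one diagonal block in the adapted basis for $\rho$, and $E^{(\gamma)}_{P,Q}$ becomes the corresponding matrix unit within that block. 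Thus $\Phi_i$ restricted to the image of $\mathbb{C}[\mathcal{B}_{i-1}]$ coincides with the composition of $\Phi_{i-1}$ with the group-algebra inclusion, giving a commuting ladder of isomorphisms and completing the proof.

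The main obstacle is precisely this last compatibility check: one has to match the combinatorial bookkeeping — one arrow in $\mathcal{B}$ per copy of $\gamma$ inside $\rho\!\downarrow_{G_{i-1}}$ — with the way a matrix unit of a subalgebra decomposes inside an ambient matrix algebra in an adapted basis. This is where the full strength of adaptedness, and the interpretation of paths as subgroup-equivariant embeddings (cf.\ Appendix \ref{appB}), is used; once that dictionary is in place the verification becomes bookkeeping.
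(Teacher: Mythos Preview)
Your argument is correct and is essentially the approach the paper takes: the paper does not prove Lemma~\ref{bratchain} in the main text but defers to Appendix~\ref{appB} (and an external reference), where the same identification is built via the equivalent language of \emph{model representations}---equivariant injections $L_e$ along edges whose images give the adapted decomposition---so that paths index basis vectors (Lemma~\ref{modelpaths}, Theorem~\ref{geleqmod}) and hence pairs of paths index matrix units. Your Artin--Wedderburn formulation is the same content stated more directly, and your explicit verification of the inclusion compatibility is exactly the bookkeeping the paper alludes to in Remark~\ref{gel=adapt}.
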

For further explanation see Appendix \ref{appB} and Section 2.3 of \cite{towers}.
\begin{remark}  Quivers were first introduced by Gabriel  in the study of modular representation theory \cite{gabriel}. Bratteli diagrams were first introduced to classify inductive limits of $C^*$-algebras \cite{bratteli}. After Elliot's use of Bratteli diagrams in the classification of AF-algebras \cite{elliot}, these ideas motivated a program to classify $C^*$-algebras in terms of their \textit{K-theory} \cite{classification}. In terms of the representation theory of semisimple algebras, Bratteli diagrams have been used to explicitly construct complete sets of irreducible representations that are analogs of \textit{Young's seminormal form} in the symmetric group, and to describe restriction relations of representations \cite{orell, grood, halver, leduc-ram}. 
\end{remark}

\subsection{Gel'fand-Tsetlin bases}
The analogous concept in the path algebra of  
\newline
adapted bases associated to a group algebra chain is a \textit{system of Gel'fand Tsetlin bases}.
\begin{definition}\label{defgel} Let $\mathcal{B}$ be the Bratteli diagram associated to a chain of group algebras. A \textbf{system of Gel'fand-Tsetlin bases for} $\mathbf{\mathcal{B}}$ consists of a collection of bases for the representation spaces $\{V_\alpha\vert\;\alpha\in V(\mathcal{B})\}$ of the representations corresponding to $\alpha$ indexed by paths from the root to $\alpha$, along with maps from the paths to the basis vectors; i.e., a set of basis vectors along with knowledge of the path corresponding to each vector.
\end{definition}
\begin{example} Let $\mathcal{B}$  be the Bratteli diagram of Figure \ref{BDexamples} associated to the chain $\mathbb{C}[S_3] >\mathbb{C}[ S_2] > \mathbb{C}[S_1]\geq\mathbb{C}$. Then for the paths $P_1,P_2,P_3,P_4$ defined in Example \ref{pathalgsym}, a basis $\{w_{P_2},w_{P_3}\}$ for the two-dimensional representation space $V_{\tiny{\yng(2,1)}}$ is part of a system of Gel'fand-Tsetlin bases for $\mathcal{B}$. Note that the entries of the matrix of this representation are indexed by pairs $\{(w_{P_i},w_{P_j})\mid i,j=1,2\}$ and so correspond to basis elements of the path algebra $\mathbb{C}[\mathcal{B}_3]$. 
\end{example}

Systems of Gel'fand-Tsetlin bases were originally developed by Gel'fand and Tsetlin to calculate the matrix coefficients of compact groups \cite{tsetlin}. Clausen was the first to apply them to the efficient computation of Fourier transforms on finite groups \cite{clausen}. 

In Remark \ref{gel=adapt} of Appendix \ref{appB}, we show systems of Gel'fand-Tsetlin bases for the chain of path algebras corresponding to a group algebra chain are equivalent to adapted bases for the chain of subgroups. The notion of an adapted basis coincides with that of a set, for each $1\leq i\leq n$, of $G_i$-equivariant maps between the representation spaces of representations in $R_i$ and those in $R_{i+1}$. For further details, see Appendix \ref{appB}.

Gel'fand-Tsetlin bases provide a means to better understand the isomorphism of Lemma \ref{bratchain} between a chain of group algebras and the corresponding chain of path algebras. Since Gel'fand-Tsetlin bases are indexed by paths in $\mathcal{B}$ and a basis for the path algebra $\mathbb{C}[\mathcal{B}_n]$ consists of pairs of paths, we identify the group algebra $\mathbb{C}[G]$ with its realization in coordinates relative to the Gel'fand-Tsetlin basis, indexed by pairs of paths of length $n$ in $\mathcal{B}$ that share the same endpoint. For $s\in G$ let $\tilde{s}:=\sum_{(P,Q)\in\mathbb{C}[\mathcal{B}_n]} [s]_{P,Q}(P,Q).$ These are the coordinates of $s$ in the path algebra basis. 
Then Lemma \ref{equivcomp} becomes
\begin{lemma}\label{equivcomp2}  The computation of the Fourier transform of a function $f$ on a group $G$ with respect to a complete set of inequivalent irreducible representations $R$ is the same as computation of $$\sum_{s\in G}f(s)\tilde{s},$$  expressing it in terms of a Gel'fand Tsetlin basis for the path algebra $\mathbb{C}[\mathcal{B}_n]$ associated to $\mathbb{C}[G]$.
\end{lemma}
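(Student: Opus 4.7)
The plan is to derive Lemma \ref{equivcomp2} as an immediate consequence of Lemma \ref{equivcomp} together with Lemma \ref{bratchain}, once the path algebra basis has been correctly identified with the set of Gel'fand-Tsetlin matrix elements.

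First, I would recall the content of Lemma \ref{equivcomp}: for any complete set $R$ of inequivalent irreducible matrix representations of $G$, computing $\mathcal{F}_R(f)$ amounts to rewriting the element $\sum_{s\in G} f(s)\,s$ of $\mathbb{C}[G]$ in the basis of irreducible matrix elements determined by $R$. In the adapted setting of the present subsection, the representations are built from a system of Gel'fand-Tsetlin bases, so the relevant matrix-element basis is indexed exactly by pairs $(w_P,w_Q)$ of Gel'fand-Tsetlin vectors sharing a common endpoint vertex.

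Next, I would invoke Lemma \ref{bratchain}, which produces an algebra isomorphism $\Phi\colon \mathbb{C}[G]\to \mathbb{C}[\mathcal{B}_n]$. The key point is to describe $\Phi$ explicitly on group elements: by the identification preceding the statement, $\Phi(s)=\tilde{s}=\sum_{(P,Q)}[s]_{P,Q}(P,Q)$, where $[s]_{P,Q}$ is precisely the $(w_P,w_Q)$-entry of the block of $s$ acting on the irreducible representation labeling the common endpoint of $P$ and $Q$. In other words, under $\Phi$ the path algebra basis $\{(P,Q)\}$ corresponds to the Gel'fand-Tsetlin matrix-element basis appearing in Lemma \ref{equivcomp}.

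Finally, using $\mathbb{C}$-linearity of $\Phi$, I would write
\[
\Phi\!\left(\sum_{s\in G}f(s)\,s\right)=\sum_{s\in G}f(s)\,\tilde{s}=\sum_{(P,Q)}\left(\sum_{s\in G}f(s)[s]_{P,Q}\right)(P,Q),
\]
so expressing $\sum_s f(s)\tilde{s}$ in the path algebra basis produces, as its coordinates, exactly the matrix entries $\hat{f}(\rho)_{PQ}$ of the Fourier transform in the Gel'fand-Tsetlin basis. Combined with Lemma \ref{equivcomp}, the two computations are literally the same, yielding the claim.

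The only substantive step — really the heart of the argument — is the explicit description of $\Phi$ on $s\in G$ in terms of matrix entries of Gel'fand-Tsetlin vectors; this is the identification worked out in Appendix \ref{appB} (and summarized in the paragraph surrounding Remark \ref{gel=adapt}). Everything else is formal. Consequently, I expect the proof to be very short, amounting to assembling Lemmas \ref{equivcomp} and \ref{bratchain} and citing the Appendix for the coordinate formula $\tilde{s}=\sum_{(P,Q)}[s]_{P,Q}(P,Q)$.
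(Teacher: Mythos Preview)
Your proposal is correct and matches the paper's approach exactly: the paper does not give a formal proof of this lemma but simply presents it as what Lemma~\ref{equivcomp} ``becomes'' once one uses the isomorphism of Lemma~\ref{bratchain} and the identification $\tilde{s}=\sum_{(P,Q)}[s]_{P,Q}(P,Q)$ spelled out in the paragraph immediately preceding the statement. Your write-up makes explicit precisely the steps the paper leaves implicit.
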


\begin{example}\label{ygorth}
Young's orthogonal form gives an example of a complete set of irreducible matrix representations for $S_n$ adapted to the chain $S_n>S_{n-1}>\cdots>S_1$. Since restriction of representations from $S_n$ to $S_{n-1}$ is multiplicity-free, the basis vectors of a system of Gel'fand-Tsetlin bases for the irreducible representations relative to this chain are determined up to scalar multiplies, and in the case of $n=3$, the paths are the paths $P_1, P_2, P_3, P_4$ of Example \ref{pathalgsym}. In \cite{maslen}, Maslen gives an efficient algorithm for computation of the Fourier transform of a function on $S_n$ by considering the computation of $\displaystyle\sum_{s\in S_n}f(s)s$ in the group algebra for $S_n$ relative to this Gel'fand-Tsetlin basis.   
\end{example}

%\subsection{The Separation of Variables Idea}\label{factorizations}

%One motivation for Lemma \ref{equivcomp} is that translation of matrix sums into path algebra elements takes advantage of the direct sum structure of these matrices in that the matrix elements corresponding to paths ending at different vertices in the Bratteli diagram (and thus zero) are ignored in the path algebra. For matrices with a more structured form, e.g., those lying in subalgebras or centralizer algebras, the nonzero entries are indexed by specific pairs of paths in the Bratteli diagram. Further, since the definition of multiplication in the path algebra corresponds to gluing and summing operations on quivers, to compute a product of elements in the path algebra, we glue together subquivers corresponding to each element in the product. Complexity estimates can be computed in terms of counts of the number of such configurations in the Bratteli diagram. This was done in \cite{maslen} for the symmetric group case. Our translation to the path algebra and a reframing in terms of quivers and configuration spaces enables an extension to arbitrary groups. We extend these ideas further to general semisimple algebras in \cite{algpaper}. Key to this is the introduction in Section \ref{sepvarstate} of a diagrammatic technique of gluing and summing quivers to keep track of the nonzero entries.

\section{The Separation of Variables Approach}\label{sepvarstate}

In this section we describe the main components of the SOV approach. The heart of the idea involves expressing a path algebra element as a factorization over subsets of the Bratteli diagram in such a way as to disentangle the dependencies in the sum. To do so we first factor the Fourier transform through the subalgebras $\mathbb{C}[G_i]$. If we do this for a simple two-step chain, 
$\mathbb{C}[G]>\mathbb{C}[H]>\mathbb{C}$, we get a corresponding factorization (under the identification with the path algebra given by Lemma \ref{equivcomp2})
\begin{equation}
\mathcal{F}:=\sum_{s\in G}f(s)\tilde{s}= \sum_{y\in Y}\sum_{h\in H}f(yh)\tilde{y}\tilde{h}=\sum_{y\in Y}\tilde{y}\sum_{h\in H}f(yh)\tilde{h}=\sum_{y\in Y}\tilde{y}F_y,
\end{equation}
for $Y$ a set of coset representatives for $G/H$ such that for each $y\in Y,$
$$F_y=\sum_{h\in H}f_y(h)\tilde{h}\in\mathbb{C}[\mathcal{B}_{H}]$$ with $f_y(h):=f(yh)$.
This factorization allows us to obtain a simple, but key complexity estimate: given a set of coset representatives $Y$ for $G/H$ with $F_y$ (for each $y\in Y$) an arbitrary element of $\mathbb{C}[\mathcal{B}_H]$, define  
$$m_G(R,Y,H)=\frac{1}{\vert G\vert}\times
    \left\{
     \begin{array}{llll}
        \text{minimum number of operations required to compute }\\
    \sum_{y\in Y} \tilde{y}F_y\text{ in a system of Gel'fand-Tsetlin bases for } \mathcal{B}.\\
     \end{array} 
   \right. $$

\begin{lemma}\label{factorsum} 

Let $H$ be a subgroup of $G$, $R$ a complete $H$-adapted set of inequivalent irreducible matrix representations of $G$, and $Y\subseteq G$ a set of coset representatives for $G/H$. Let $\mathcal{B}$ be the Bratteli diagram of the group algebra chain $\mathbb{C}[G]>\mathbb{C}[H]>\mathbb{C}$, with corresponding path algebra chain $\mathbb{C}[\mathcal{B}_G]>\mathbb{C}[\mathcal{B}_H]>\mathbb{C}$. Then
$$t_G(R)\leq t_H(R_H)+m_G(R,Y,H).$$
\end{lemma}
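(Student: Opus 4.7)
The plan is to convert the identity displayed just before the statement of the lemma into a two-stage algorithm whose operation count realizes both summands on the right, and then to normalize by $|G|$. Concretely, the factorization
$$\mathcal{F} = \sum_{s\in G} f(s)\tilde s = \sum_{y\in Y} \tilde y \sum_{h\in H} f_y(h)\tilde h = \sum_{y\in Y} \tilde y F_y$$
is already established in equation~(4), as a consequence of the coset decomposition $G = \coprod_{y\in Y} yH$ and the path-algebra embedding $\mathbb{C}[\mathcal{B}_H]\hookrightarrow \mathbb{C}[\mathcal{B}_G]$. So what remains is to bound the cost of the two natural phases suggested by this factorization.

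In the first phase, for each of the $|Y|=|G|/|H|$ coset representatives $y\in Y$, I would compute the element $F_y = \sum_{h\in H} f_y(h)\tilde h \in \mathbb{C}[\mathcal{B}_H]$ expressed in a Gel'fand-Tsetlin basis. Because $R$ is $H$-adapted, $R_H$ is automatically a complete set of inequivalent irreducible matrix representations of $H$, and by Lemma~\ref{equivcomp2} producing $F_y$ is literally the task of computing the Fourier transform of $f_y$ on $H$ with respect to $R_H$. This uses at most $T_H(R_H)$ operations per coset, for a grand total over all $y\in Y$ of at most $|Y|\cdot T_H(R_H) = |G|\, t_H(R_H)$ operations.

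In the second phase I take the precomputed elements $\{F_y\}_{y\in Y}$, regard them as arbitrary elements of $\mathbb{C}[\mathcal{B}_H]$, and form the sum $\sum_{y\in Y} \tilde y F_y$ in a Gel'fand-Tsetlin basis for $\mathbb{C}[\mathcal{B}_G]$. By the very definition of $m_G(R,Y,H)$, this phase uses at most $|G|\cdot m_G(R,Y,H)$ operations. Concatenating the two phases yields a straight-line program that computes $\mathcal{F}$ from $f$ in at most $|G|\bigl(t_H(R_H)+m_G(R,Y,H)\bigr)$ operations, and dividing by $|G|$ gives the stated inequality.

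There is no serious obstacle here; the only point that requires care is the identification of each phase with a reduced complexity on the appropriate path algebra. Phase one relies on Lemma~\ref{bratchain} (to see $\mathbb{C}[\mathcal{B}_H]\cong \mathbb{C}[H]$) together with Lemma~\ref{equivcomp2} (to reinterpret the Fourier transform as a path-algebra computation), while phase two is true essentially by definition of $m_G(R,Y,H)$. Once these identifications are in place, the bound follows from a straightforward sum of arithmetic costs.
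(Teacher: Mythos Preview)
Your proposal is correct and follows essentially the same two-phase argument as the paper: compute each $F_y$ via a Fourier transform on $H$ (costing $\frac{|G|}{|H|}T_H(R_H)=|G|\,t_H(R_H)$ in total), then combine them into $\sum_{y}\tilde y F_y$ at cost $|G|\,m_G(R,Y,H)$ by definition, and normalize. The paper adds one explicit remark you leave implicit, namely that re-expressing each $F_y$ from the Gel'fand--Tsetlin basis of $\mathbb{C}[\mathcal{B}_H]$ into that of $\mathbb{C}[\mathcal{B}_G]$ costs no arithmetic operations, but this is already covered by your reference to the embedding $\mathbb{C}[\mathcal{B}_H]\hookrightarrow\mathbb{C}[\mathcal{B}_G]$.
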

\begin{proof}
 For $G$ a group with subgroup $H$, let $\mathcal{B}$ be the Bratteli diagram of the group algebra chain $\mathbb{C}[G]>\mathbb{C}[H]>\mathbb{C}$. Denote the path algebra chain by $\mathbb{C}[\mathcal{B}_G]>\mathbb{C}[\mathcal{B}_H]>\mathbb{C}$. Then by Lemma \ref{equivcomp}, computation of the Fourier transform of a function $f$ on $G$ at $R$ is equivalent to computation of $\mathcal{F}:=\sum_{s\in G} f(s)\tilde{s}$ in $\mathbb{C}[\mathcal{B}_G]$  expressing group algebra elements in coordinates relative to a Gel'fand-Tsetlin basis for $\mathcal{B}$. Let $H$ be a subgroup of $G$ and $Y\subseteq G$ a set of coset representatives for $G/H$. Then
\begin{equation}\label{yf_y}
\mathcal{F}:=\sum_{s\in G}f(s)\tilde{s}= \sum_{y\in Y}\sum_{h\in H}f(yh)\tilde{y}\tilde{h}=\sum_{y\in Y}\tilde{y}\sum_{h\in H}f(yh)\tilde{h}=\sum_{y\in Y}\tilde{y}F_y,
\end{equation}
where for each $y\in Y,$
$$F_y=\sum_{h\in H}f_y(h)\tilde{h}\in\mathbb{C}[\mathcal{B}_{H}]$$ with $f_y(h):=f(yh)$. Then to compute $\mathcal{F}$, first compute $F_y\in\mathbb{C}[\mathcal{B}_{H}]$ for all $y\in Y$ relative to a system of Gel'fand-Tsetlin bases for the chain $\mathbb{C}[\mathcal{B}_{H}]>\mathbb{C}$ corresponding to $R_H$, by means of a Fourier transform on $H$. This requires at most $\frac{ |G|}{|H|} T_H(R_H)$ scalar operations. Next, express the elements $F_y$ in coordinates relative to a system of Gel'fand-Tsetlin bases for the path algebra chain $\mathbb{C}[\mathcal{B}_{G}]>\mathbb{C}[\mathcal{B}_{H}]> \mathbb{C}$ corresponding to $R$. This requires no additional arithmetic operations. Finally, compute $\mathcal{F}$ using Equation (\ref{yf_y}), which (by definition) requires at most $\vert G\vert m_G(R,Y,H)$ operations. Thus,
$$T_G(R)\leq \frac{|G|}{|H|} T_H(R_H)+\vert G\vert m_G(R,Y,H),$$ and dividing by $\vert G\vert$ proves the lemma.

\end{proof}

Lemma \ref{factorsum} is a restatement of Lemma 2.10 of \cite{maslen} and Proposition 1 of \cite{diarock}.  It shows that  to compute the Fourier transform of a complex function defined on $G$ at a set of $H$-adapted representations, we compute
$$\mathcal{F}_Y:=\sum_{y\in Y}\tilde{y}F_y,$$ for $Y$ a set of coset representatives for $G/H$, or, equivalently, for ease of notation
$$\mathcal{F}_Y:=\sum_{y\in \tilde{Y}}yF_y,$$ for $\tilde{Y}=\{\tilde{y}\mid y\in Y\}$. (In the case of $C_N/C_n$ ($n | N$) this is basically the Cooley-Tukey algorithm.) In doing so, the complexity estimate ``reduces" to a close study of the computation of $F_Y$. This idea can  be iterated through a chain of subgroups assuming a set of representations $R$ adapted to a chain $G=G_n> G_{n-1}>\cdots> G_0=e$, let  $Y_i$ be a set of coset representatives for $G_i/G_{i-1}$. Iteration  of Lemma \ref{factorsum} gives \begin{equation}\label{iterate}t_G(R)\leq t_{G_0}(R_{G_0})+\sum_{i=1}^n M_{G_i}(R_{G_i},Y_i, G_{i-1}).\end{equation}

%In Section \ref{sepvarstate} we detail an approach for computing $\mathcal{F}_{Y_i}$. This is the key step in the operation count. 

%%
%%%---------------------------------------------------------------------------------------------------------------------------------
%%

%By Lemma \ref{factorsum} and Equation (\ref{iterate}), computation of a Fourier transform is reduced to computation of $\mathcal{F}_Y=\sum_{y\in Y} \tilde{y}F_y$ in the path algebra. 

The heart of the SOV approach is the efficient computation of  $\mathcal{F}_Y$. It comprises  three main steps:
\begin{enumerate}
\item[1] For each $y\in \tilde{Y}$, factor $yF_y=x_1\cdots x_m$ in such a way as to enable rearrangements allowing for  $\mathcal{F}_Y$ to be a recursively structured summation.
\item[2] Each factor $x_i$ will correspond to an element of the path algebra of a particular form, and thus a particular subgraph of the Bratteli diagram. These subgraphs can be given a vector space structure through an identification with a space of quiver morphisms.
\item[3] By virtue of the vector space identification, the element multiplication $x_ix_{i+1}$ becomes  a bilinear map whose complexity can be calculated directly in terms of the dimension of the derived space of graph morphisms.
\end{enumerate}

To give the general idea, the ``gluing" and summing operations that are multiplication in the path algebra (cf. Figure \ref{multinpath}) mean that only certain kinds of ``middle paths" contribute when two path algebra elements are multiplied. I.e., only certain kinds of quivers can be combined to create the target quiver. A complexity estimate thus becomes counting the number of subgraphs (subquivers) wherein this compatibility is respected. This is just a 
%recall that the definition of multiplication in the path algebra (Section \ref{gel})
%corresponds to the natural gluing and summing operations of Figure \ref{multinpath}, where the first arrow represents gluing the pairs of paths along identical middle paths $Q=P'$ and the second arrow represents summation over all possible gluings.
%
%Thus, a product of elements in the path algebra can be reinterpreted as the  
%This in turn can formation of a quiver $\mathcal{Q}$ obtained by gluing together quivers that correspond to each element in the product. Summing over all possible gluings amounts to 
counting of the number of occurrences of subquiver $\mathcal{Q}$ in the corresponding Bratteli diagram $\mathcal{B}$. Ultimately, this is the number of \emph{morphisms} from $\mathcal{Q}$ into $\mathcal{B}$ (see Definition \ref{defmorph}). We give a general example below. 
%For a more explicit example, see \cite{rockmassurvey}.
\begin{example}\label{firstpaths is it?}
Suppose $y\in \mathbb{C}[G]$ factors as $y=x_1x_2$ with $x_i\in \mathbb{C}[G_{i+2}]\cap\cent\mathbb{C}[G_{i}]$. Express $x_i$ in Gelf'and-Tsetlin coordinates as $\tilde{x}_i=\sum_{(P,Q)}[x_i]_{PQ}(P,Q)$. An application of Schur's Lemma and standard facts about Gel'fand-Tsetlin bases show that $[x_i]_{P,Q}$ is $0$ unless $P$ and $Q$ are paths in $\mathcal{B}$ that agree from level $i+1$ to level $n$, and from level $0$ to level $i-1$, as in the quivers $Q_i$ of the lefthand side of Figure \ref{correspondingones} (see also \cite{rockmassurvey}). The product $\tilde{x}_1\tilde{x}_2$ is indexed by any triple of paths resulting from gluing $Q_2$ to $Q_1$ obtained by identifying the ``bottom" path of $Q_1$ with the ``top" path of $Q_2$, but these triples must simultaneously maintain the structures of $Q_1$ and $Q_2$ (the quiver on the righthand side of Figure \ref{correspondingones}). The complexity count is thus the careful counting of these compatible structures, which can be recast as the computation of the dimension of a space of quiver morphisms. 
\begin{figure}[H]\begin{center}
\begin{tikzpicture}[shorten >=1pt,node distance=2cm,on grid,auto,/tikz/initial text=] 
%\begin{scope}[shift={(6,-1.5)}] 
%   \node at (-4.25,.5) (q) {$Q_F$};
%   \node[pnt] at (-2,1.5) (01) {};
%   \node[pnt] at (-6.5,1.5) (n1) {};
%   \node[pnt] at (-5.5,1.5) (21) {};
% \end{scope}   
  \node at (-4.25,-1) (q) {$Q_1$};
   \node[pnt] at (-2,0) (02) {};
   \node[pnt] at (-6.5,0) (n2) {};
   \node[pnt] at (-3,0) (11) {};
   \node at (-2.75,-.2) (l) {\footnotesize$1$};
   \node[pnt] at (-4.5,0) (31) {};
   \node at (-4.8,-.2) (l) {\footnotesize$3$};
   \node at (-3.75,0) (c) {\tiny{$\tilde{x}_1$}};
  \begin{scope}[shift={(0,-2)}] 
    \node at (-4.25,-1) (q) {$Q_2$};
   \node[pnt] at (-2,0) (021) {};
   \node[pnt] at (-6.5,0) (n21) {};
   \node[pnt] at (-3.5,0) (111) {};
   \node at (-3.25,-.2) (l) {\footnotesize$2$};
   \node[pnt] at (-5,0) (311) {};
   \node at (-5.3,-.2) (l) {\footnotesize$4$};
   \node at (-4.25,0) (c) {\tiny{$\tilde{x}_2$}};
   \end{scope}
   \draw (02) node[below] {\footnotesize $0$};
   \draw (n2) node[below] {\footnotesize $n$};

   \draw (021) node[below] {\footnotesize $0$};
   \draw (n21) node[below] {\footnotesize $n$};
   \path[->,every node/.style={font=\scriptsize}]
    (021) edge node {} (111)
    (311) edge node {} (n21)
    (111) edge [bend left=55] node {} (311)
    (111) edge [bend right=55] node {} (311)
    (02) edge node {} (11)
    (31) edge node {} (n2)
    (11) edge [bend left=55] node {} (31)
    (11) edge [bend right=55] node {} (31);  
\begin{scope}[shift={(7,-1)}] 
   \node[pnt] at (-2.5,0) (002) {};
   \node[pnt] at (-6,0) (442) {};
   \node[pnt] at (-7,0) (nn2) {};
   \node[pnt] at (-3.5,0) (111) {};
   \node at (-3.25,-.2) (l) {\footnotesize$1$};
   \node[pnt] at (-5,0) (331) {};
    \node[pnt] at (-4.25,-.37) (221) {};

   \node at (-5.3,.2) (l) {\footnotesize$3$};
   \node at (-4.25,0) (c) {\tiny{$\tilde{x}_1$}};
   \node at (-5.25,-.3) (c) {\tiny{$\tilde{x}_2$}};
   \end{scope}
%   \draw (01) node[below] {\footnotesize $0$};
   \draw (442) node[below] {\footnotesize $4$};
   \draw (221) node[below] {\footnotesize $2$};
   \draw (002) node[below] {\footnotesize $0$};
   \draw (nn2) node[below] {\footnotesize $n$};
   \path[->,every node/.style={font=\scriptsize}]
%    (21) edge node {} (n1)
%    (01) edge [bend left=35] node {} (21)
%    (01) edge [bend right=35] node {} (21)        
    (002) edge node {} (111)
    (331) edge node {} (nn2)
    (111) edge [bend left=55] node {} (331)
    (221) edge [bend left=55] node {} (442)
    (111) edge [bend right=55] node {} (331);  

\end{tikzpicture}
\caption{Examples of a quiver factorization. Note that $Q_1$ and $Q_2$ are both subquivers of the quiver on the righthand side.}
\label{correspondingones}
\end{center}\end{figure}
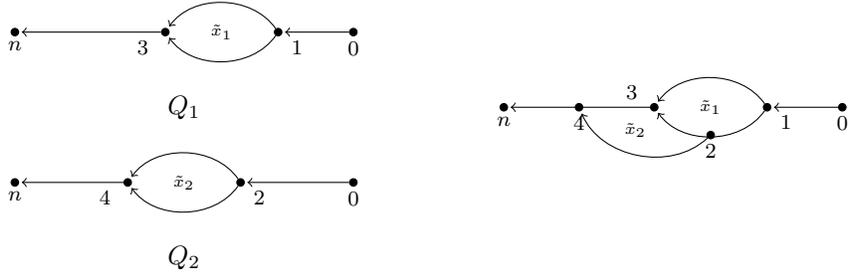
%A count of the number of operations needed to effect the sequence of additions and multiplications thus becomes 
%a counting of the number of occurrences  in the Bratteli diagram $\mathcal{B}$ of the the subquivers in the gluings. 

For products with more factors we iterate this gluing process.  Example \ref{quiv} below gives further details.  The SOV approach consists of factoring $\tilde{y}F_y$, forming the graph (akin to the righthand side of Fig.~\ref{correspondingones}) and determining the subgraphs (like the lefthand side of Fig.~\ref{correspondingones})) corresponding to each individual product.  %We denote by $*$ the map that translates each multiplication into a product indexed by a subgraph. 

 \end{example}

%To understand how to determine the appropriate subgraphs, we define spaces $X_i$ corresponding the factors $x_i$. Let $\mathcal{B}$ be a Bratteli diagram with highest grading at least $n$ and for $x_1,\dots,x_m\in\mathbb{C}[\mathcal{B}_n]$, consider the product 
%$x_1\cdots x_m.$
\begin{definition}\label{widef} Let $\mathcal{B}$ be a Bratteli diagram with highest grading at least $n$ corresponding to a group chain for $G$. For a path algebra product $x_1\cdots x_m$,
let $i^+$ denote the smallest integer such that $x_i\in \mathbb{C}[\mathcal{B}_{i^+}]$ and let $i^-$ denote the largest integer less than or equal to $i^+$ such that $x_i\in\cent(\mathbb{C}[\mathcal{B}_{i^-}])$. Then for $1\leq i\leq m$ define $$X_i:=\mathbb{C}[\mathcal{B}_{i^+}]\cap\cent(\mathbb{C}[\mathcal{B}_{i^-}]).$$ \end{definition}

To each space $X_i$, associate the quiver $Q_i$ of Figure \ref{Qi}. (Note that $Q_i$ is also the quiver associated to every element of $X_i$.) We show in Section \ref{proofsofstuff} that $X_i$ has dimension equal to the number of occurrences of $Q_i$ in the Bratteli diagram $\mathcal{B}$. Denote this number by $\#\Hom(Q_i;\mathcal{B})$.  An ``occurrence" of $Q_i$ is the same as an injective map from $Q_i$ into $\mathcal{B}$. Thus, $\#\Hom(Q_i;\mathcal{B})$ is also the dimension of this space of morphisms of $Q_i$ into $\mathcal{B}$.

\begin{figure}[H]\begin{center}
\begin{tikzpicture}[shorten >=1pt,node distance=2cm,on grid,auto,/tikz/initial text=] 
  \node at (-4.25,-1) (q) {$Q_i$};
   \node[pnt] at (-2,0) (02) {};
   \node[pnt] at (-6.5,0) (n2) {};
   \node[pnt] at (-3.5,0) (11) {};
   \node at (-3.25,-.2) (l) {\footnotesize$i^-$};
   \node[pnt] at (-5,0) (31) {};
   \node at (-5.3,-.2) (l) {\footnotesize$i^+$};
   \draw (02) node[below] {\footnotesize $0$};
   \draw (n2) node[below] {\footnotesize $n$};
   \path[->,every node/.style={font=\scriptsize}]
    (02) edge node {} (11)
    (31) edge node {} (n2)
    (11) edge [bend left=55] node {} (31)
    (11) edge [bend right=55] node {} (31);  

\end{tikzpicture}
\caption{The quiver associated to $X_i$ and $x_i$.}
\label{Qi}
\end{center}\end{figure}
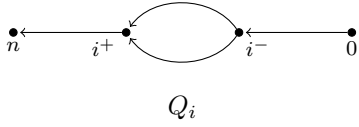

In this setting (bilinear) group algebra multiplication is transformed into a bilinear map on products of associated spaces of quiver morphisms.  Call this map $*$. 
%For $x_i\in X_i$ and the product $x_1\cdots x_m$ is a multilinear map $$X_1\times\cdots\times X_m\rightarrow\mathbb{C}[\mathcal{B}_n].$$
As the notation and details are more technical than illuminating, we defer the explicit definition of $*$ and discussion of its properties to Section \ref{proofsofstuff}. However, even with deferring this we can present the algorithm. Keep in mind the identification of the  group algebra and the path algebra. 

\begin{palgorithm}\label{alg2}
\begin{itemize}
\item[]
\item[I.] Choose $m\in\mathbb{N}$ and a subset $X\subseteq (\mathbb{C}[\mathcal{B}_n])^m=\mathbb{C}[\mathcal{B}_n]\times\cdots\times\mathbb{C}[\mathcal{B}_n]$ such that $\vert X\vert=\vert Y\vert$ and for each $y\in Y$ there exists $(x_1,\dots, x_m)\in X$ with $\tilde{y}F_y=x_1\cdots x_m.$ Thus, $X$ can be thought of as a choice of factorization into   $m$ elements (some of which may be the identity) of each term $\tilde{y}F_y$. 
\item[II.] For $1\leq i\leq m$ let $X_i$ be as in Definition \ref{widef}. For $\sigma\in S_m$, let $w_i= x_{\sigma(i)}$. The   bilinear map $*$ 
is such that 
 $x_1\cdots x_m=(((w_1*w_2)*w_3)\cdots*w_m),$

\item[III.] For $  0\leq i<m$, let $W_i=\{(w_{i+1},\dots, w_m)\mid (x_1,\dots,x_m)\in X\}$. Let $
W_m=\emptyset. $ Note that $W_i\subseteq X_{\sigma(i+1)}\times\cdots\times X_{\sigma(m)}$.
\item[IV.] Define a sequence of functions $L_i$ recursively by:
$$\begin{array}{ll}
L_1(w_2,\dots,w_m)=&\displaystyle\sum_{(w_1,w_2,\dots,w_m)\in W_0} w_1, \\
L_2(w_{3},\dots,w_m)=&\displaystyle\sum_{(w_2,w_{3},\dots, w_m)\in W_{1}} L_{1}(w_2,\dots,w_m)*w_2.\\
 L_i(w_{i+1},\dots,w_m)=&\displaystyle\sum_{(w_i,w_{i+1},\dots, w_m)\in W_{i-1}} (L_{i-1}(w_i,\dots,w_m)*w_i).
\end{array}$$
\end{itemize}
\end{palgorithm}
\begin{theorem}\label{algthm} For $L_i$ as defined above,
$$L_m:=L_m(\emptyset)=\sum_{(w_1,\dots, w_m)\in W_0} (((w_1*w_2)*w_3)\cdots*w_m)=\sum_{y\in Y}\tilde{y}F_y$$
\end{theorem}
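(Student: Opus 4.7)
The plan is to establish the theorem by induction on $i$, proving the intermediate claim that
$$L_i(w_{i+1}, \dots, w_m) \;=\; \sum_{\substack{(w_1, \dots, w_i):\\ (w_1, \dots, w_m) \in W_0}} \bigl(\cdots((w_1 * w_2) * w_3) \cdots * w_i\bigr),$$
for every $1 \leq i \leq m$, where the outer sum ranges over all prefixes compatible with the fixed tail $(w_{i+1}, \dots, w_m)$. Specializing this to $i = m$ produces the first claimed equality, and then the second follows from the setup of steps I and II.

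The base case $i = 1$ is immediate from the defining formula $L_1(w_2, \dots, w_m) = \sum_{(w_1, w_2, \dots, w_m) \in W_0} w_1$. For the inductive step, I would substitute the induction hypothesis for $L_{i-1}(w_i, w_{i+1}, \dots, w_m)$ into the recursion
$$L_i(w_{i+1}, \dots, w_m) = \sum_{(w_i, w_{i+1}, \dots, w_m) \in W_{i-1}} L_{i-1}(w_i, \dots, w_m) * w_i,$$
then use bilinearity of $*$ in its first argument to pull the inner sum through the operation $(\cdot) * w_i$. The final step is to collapse the two nested sums into a single one over $(w_1, \dots, w_i)$ with $(w_1, \dots, w_i, w_{i+1}, \dots, w_m) \in W_0$, which is justified because $W_{i-1}$ is by definition exactly the image of $W_0$ under projection to the last $m - i + 1$ coordinates.

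Having the claim for $i = m$, I would conclude the second equality as follows. By step II of the algorithm, the permutation $\sigma$ and the properties of $*$ ensure that $(((w_1 * w_2) * w_3) \cdots * w_m) = x_1 x_2 \cdots x_m$ for the tuple $(x_1, \dots, x_m) \in X$ corresponding to $(w_1, \dots, w_m) \in W_0$ via $w_i = x_{\sigma(i)}$. By step I, $x_1 \cdots x_m = \tilde{y} F_y$ for the unique $y \in Y$ associated with $(x_1, \dots, x_m)$, and the maps $y \mapsto (x_1, \dots, x_m) \mapsto (w_1, \dots, w_m)$ are bijections $Y \leftrightarrow X \leftrightarrow W_0$ since $|X| = |Y|$. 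Summing over $W_0$ therefore produces $\sum_{y \in Y} \tilde{y} F_y$, as required.

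The main obstacle is not mathematical depth but bookkeeping: one must be scrupulous about the indexing of the arguments of $L_i$, the correspondence between $Y$, $X$, and $W_0$ induced by $\sigma$, and the appeal to bilinearity of $*$ (whose detailed definition is deferred to Section \ref{proofsofstuff}). Everything else is essentially formal, since the algorithm has been designed precisely so that this telescoping identity holds verbatim.
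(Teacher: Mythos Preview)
Your proposal is correct and follows essentially the same approach as the paper, which simply states ``Follows from II.\ and induction.'' You have spelled out in detail the induction that the paper leaves implicit, together with the appeal to steps I and II for the second equality; nothing in your argument departs from the paper's intended route.
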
 
\begin{proof}
Follows from II. and induction.
\end{proof}
%The SOV approach gives a method for computing $\sum_{y\in Y}\tilde{y}F_y$ in a manner that allows the complexity to be determined by counting the number of occurrences of subgraphs in the Bratteli diagram. We demonstrate with an example before stating the general complexity result (Theorem \ref{efficiencyfirststate}).
\begin{example}\label{quiv} Suppose $\tilde{y}=x_1x_2x_3$, with $$\begin{array}{ll} 1^+=7,& 1^-=4,\\ 2^+=3,& 2^-=1,\\ 3^+=5,& 3^-=2.\end{array}$$ Figure \ref{exQ1} shows the quivers $Q_i$ and the quiver $\mathcal{Q}$ formed by gluing $Q_1$ to $Q_2$ to $Q_3$.
\begin{figure}[H]\begin{center}
\begin{tikzpicture}[shorten >=1pt,node distance=2cm,on grid,auto,/tikz/initial text=] 
   \node at (-5.5,0) (q) {$Q_{1}$};
   \node[pnt] at (-2,0) (02) {};
   \node[pnt] at (-5,0) (n2) {};
   \node[pnt] at (-2.5,0) (41) {};
   \node[pnt] at (-4.5,0) (71) {};
   \node at (-5.5,-1.5) (q) {$Q_{2}$};
   \node[pnt] at (-2,-1.5) (03) {};
   \node[pnt] at (-5,-1.5) (n3) {};
   \node[pnt] at (-2.5,-1.5) (12) {};
   \node[pnt] at (-4.5,-1.5) (32) {};   
\begin{scope}[shift={(0,-1.5)}] 
   \node at (-5.5,-1.5) (q) {$Q_{3}$};
   \node[pnt] at (-2,-1.5) (04) {};
   \node[pnt] at (-5,-1.5) (n4) {};
   \node[pnt] at (-2.5,-1.5) (23) {};
   \node[pnt] at (-4.5,-1.5) (53) {};   
\end{scope}
\begin{scope}[shift={(3,-2)}] 
   \node at (-.25,1.5) (q) {\large $\mathcal{Q}$};
   \node at (-.75,.5) (q) {\tiny $Q_{1}$};
   \node at (1,.25) (q) {\tiny $Q_{2}$};
   \node at (.25,0) (q) {\tiny $Q_{3}$};
   \node[pnt] at (1,0) (211) {};    
   \node[pnt] at (-.5,.1) (511) {};
   \node[pnt] at (2,.5) (011) {};
   \node[pnt] at (1.5,0.5) (111) {};
   \node[pnt] at (.5,.5) (311) {};
   \node[pnt] at (0,0.5) (411) {};
   \node[pnt] at (-1.5,.5) (711) {};
   \node[pnt] at (-2.5,.5) (n11) {};
\end{scope}

   \draw (41) node[below] {\footnotesize $4$};
   \draw (71) node[below] {\footnotesize $7$};
   \draw (12) node[below] {\footnotesize $1$};
   \draw (02) node[below] {\footnotesize $0$};
   \draw (n2) node[below] {\footnotesize $n$};
   \draw (32) node[below] {\footnotesize $3$};
   \draw (23) node[below] {\footnotesize $2$};
   \draw (03) node[below] {\footnotesize $0$};
   \draw (n3) node[below] {\footnotesize $n$};
   \draw (53) node[below] {\footnotesize $5$};
   \draw (04) node[below] {\footnotesize $0$};
   \draw (n4) node[below] {\footnotesize $n$};
   \draw (011) node[below] {\tiny $0$};
   \draw (111) node[below] {\tiny $1$};
   \draw (211) node[below] {\tiny $2$};
   \draw (311) node[below] {\tiny $3$};
   \draw (411) node[below] {\tiny $4$};
   \draw (511) node[below] {\tiny $5$};
   \draw (711) node[below] {\tiny $7$};
   \draw (n11) node[below] {\tiny $n$};
   \path[every node/.style={font=\scriptsize}]
    (02) edge node {} (41)
    (71) edge node {} (n2)
    (41) edge [bend left=55] node {} (71)
    (41) edge [bend right=55] node {} (71)    
    (03) edge node {} (12)
    (32) edge node {} (n3)
    (12) edge [bend left=55] node {} (32)
    (12) edge [bend right=55] node {} (32)    
    (04) edge node {} (23)
    (53) edge node {} (n4)
    (23) edge [bend left=55] node {} (53)
    (23) edge [bend right=55] node {} (53)
    (011) edge node {} (411)
    (711) edge node {} (n11)
    (111) edge [bend left=35] node {} (211)
    (211) edge [bend left=35] node {} (311)
    (411) edge [bend left=65] node {} (711)
    (411) edge [bend right=65] node {} (711)
    (211) edge [bend left=65] node {} (511);
\end{tikzpicture}
\caption{A triple product of quivers.}
\label{exQ1}
\end{center}\end{figure}
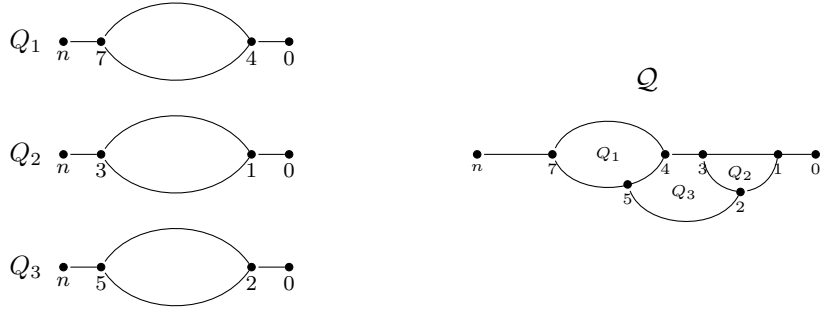
For $\sigma=(123)$, $w_1*w_2*w_3=x_2*x_3*x_1$. The complexity of $x_2*x_3$ is $\#\Hom(Q_2\cup Q_3;\mathcal{B})$, where $Q_2\cup Q_3$ is as in Figure \ref{exQ}, the subquiver of $\mathcal{Q}$ corresponding to $Q_2$ and $Q_3$ (note that in Figure \ref{exQ} we show only   the subquiver formed by the segments of $Q_2\cup Q_3$  where not all three -- top, bottom and the summed over middle -- of the paths agree).
The complexity of $(x_2*x_3)*x_1$ is $\#\Hom((Q_2\triangle Q_3)\cup Q_1;\mathcal{B})$, where $Q_2\triangle Q_3$ is the quiver of Figure \ref{exQ} associated to the space containing $x_2*x_3$. Note that as per the notation $Q_2\triangle Q_3$ is in fact the \textit{symmetric difference} of $Q_2$ and $Q_3$, i.e., the edges of $Q_2\cup Q_3$ not in $Q_2\cap Q_3$ (see Definition \ref{symdif}).

\begin{figure}[H]\begin{center}
\begin{tikzpicture}[shorten >=1pt,node distance=2cm,on grid,auto,/tikz/initial text=] 
  
\begin{scope}[shift={(3,-2)}] 
   \node at (0.5,1.5) (q) {\large $Q_2\cup Q_3$};
   \node[pnt] at (1,0) (211) {};    
   \node[pnt] at (-.5,.1) (511) {};
   \node[pnt] at (1.5,0.5) (111) {};
   \node[pnt] at (.5,.5) (311) {};
   \node[pnt] at (0,0.5) (411) {};
 
\end{scope}
\begin{scope}[shift={(10,-2)}] 
 \node at (0,1.5) (q) {\large $(Q_2\triangle Q_3)\cup Q_1$};
   \node[pnt] at (1,0) (212) {};    
   \node[pnt] at (-.5,.1) (512) {};
   \node[pnt] at (1.5,0.5) (112) {};
   \node[pnt] at (.5,.5) (312) {};
   \node[pnt] at (0,0.5) (412) {};
   \node[pnt] at (-1.5,.5) (712) {};
\end{scope}
   \draw (111) node[below] {\tiny $1$};
   \draw (211) node[below] {\tiny $2$};
   \draw (311) node[below] {\tiny $3$};
   \draw (411) node[below] {\tiny $4$};
   \draw (511) node[below] {\tiny $5$};
   
   \draw (112) node[below] {\tiny $1$};
   \draw (212) node[below] {\tiny $2$};
   \draw (312) node[below] {\tiny $3$};
   \draw (412) node[below] {\tiny $4$};
   \draw (512) node[below] {\tiny $5$};
   \draw (712) node[below] {\tiny $7$};
   \path[every node/.style={font=\scriptsize}]
    (111) edge node {} (411)
%    (711) edge node {} (n11)
    (111) edge [bend left=35] node {} (211)
    (211) edge [bend left=35] node {} (311)
    (411) edge [bend left=45] node {} (511)
%    (411) edge [bend right=65] node {} (711)
    (211) edge [bend left=65] node {} (511)
  
    (112) edge node {} (412)
%    (712) edge node {} (n12)
    (112) edge [bend left=35] node {} (212)
    (412) edge [bend left=65] node {} (712)
    (412) edge [bend right=65] node {} (712)
    (212) edge [bend left=65] node {} (512);
\end{tikzpicture}
\caption{Illustrations of the union (left) and symmetric difference and union (right) of quivers of Figure 8.}
\label{exQ}
\end{center}\end{figure}
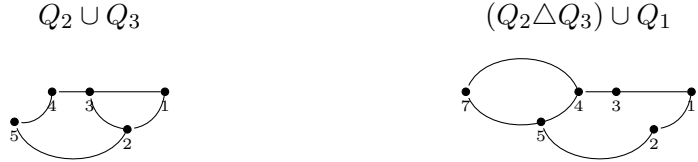
\end{example}

\begin{lemma}\label{counting}
For  $Q_i$ (respectively, $Q_j$) the quiver associated to $X_i$ (respectively, $X_j$), computation of $x_i*x_j$ requires at most $\#\Hom(Q_i\cup Q_j;\mathcal{B})$ scalar multiplications and fewer additions. \end{lemma}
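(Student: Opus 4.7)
The plan is to extract the sparsity pattern of elements in $X_i$ and $X_j$ expressed in Gel'fand–Tsetlin coordinates, then directly count the scalar operations appearing when their path-algebra product is computed in that basis.

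First, I would apply the reasoning sketched in Example~\ref{firstpaths is it?}: because $x_i \in \mathbb{C}[\mathcal{B}_{i^+}] \cap \cent(\mathbb{C}[\mathcal{B}_{i^-}])$, Schur's Lemma together with the standard properties of Gel'fand–Tsetlin bases imply that $[x_i]_{P,Q} = 0$ unless $P$ and $Q$ agree on the bottom segment from level $0$ to level $i^-$ and on the top segment from level $i^+$ to level $n$. This is exactly the shape of the quiver $Q_i$ in Figure~\ref{Qi}, so the nonzero coordinates of $x_i$ are naturally in bijection with the morphisms of $Q_i$ into $\mathcal{B}$; in particular $\dim X_i = \#\Hom(Q_i;\mathcal{B})$. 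The same holds for $x_j$ with $Q_j$.

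Next, I would expand the product using the path-algebra rule $(P,Q)\ast(P',Q') = \delta_{Q,P'}(P,Q')$ to write
$$x_i \ast x_j \;=\; \sum_{P,R,Q'} [x_i]_{P,R}\,[x_j]_{R,Q'}\,(P,Q'),$$
where the surviving terms are exactly those triples $(P,R,Q')$ for which the pair $(P,R)$ fits the shape of $Q_i$ \emph{and} the pair $(R,Q')$ fits the shape of $Q_j$, with a common middle path $R$. By the precise definitions of quiver morphism and of the glued union $Q_i \cup Q_j$ (to be recorded in Section~\ref{proofsofstuff} and Definition~\ref{defmorph}), such a triple is precisely a morphism of $Q_i \cup Q_j$ into $\mathcal{B}$. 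Hence the direct computation performs exactly one scalar multiplication per morphism, giving at most $\#\Hom(Q_i\cup Q_j;\mathcal{B})$ multiplications.

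For the claim about additions, I would note that distinct triples $(P,R,Q')$ sharing the same outer pair $(P,Q')$ contribute to the same output coefficient and are accumulated by addition, while each group of $k$ such triples produces $k$ multiplications and only $k-1$ additions. Since in the generic case there exist pairs $(P,Q')$ receiving more than one contribution (the middle path $R$ genuinely varies), the total number of additions is strictly smaller than the number of multiplications, giving the second half of the lemma.

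The main obstacle I anticipate is not the counting itself but rather ensuring the combinatorial identification is clean: one must verify that every compatible triple $(P,R,Q')$ corresponds to a unique morphism of the glued quiver $Q_i \cup Q_j$, and conversely — which requires the formal definitions of morphism and union of quivers deferred to Section~\ref{proofsofstuff}. Once those are in place, the lemma follows immediately from the sparsity observation above and the definition of path-algebra multiplication.
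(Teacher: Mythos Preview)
Your proposal is correct and follows essentially the same argument as the paper: the paper routes the count through the formal definition of the restricted product on configuration spaces (Definition~\ref{resprod}) and then proves exactly your multiplication and addition count as Lemma~\ref{rprodcount}, with Theorem~\ref{multthm} providing the identification of path-algebra multiplication with that restricted product. Your direct expansion in Gel'fand--Tsetlin coordinates and your bijection between surviving triples $(P,R,Q')$ and morphisms of $Q_i\cup Q_j$ into $\mathcal{B}$ is precisely the content of those results unwound, and your $k$ multiplications versus $k-1$ additions observation recovers the paper's exact addition count $\#\Hom(Q_i\cup Q_j;\mathcal{B})-\#\Hom(Q_i\triangle Q_j;\mathcal{B})$.
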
 
We postpone the proof of this key counting lemma to Section \ref{proofsofstuff}.
With Lemma \ref{counting} we now have our main general result: 

\begin{theorem}\label{efficiencyfirststate}
For $x_i$ and $\sigma$ as above, let $Q_i^\sigma$ denote the quiver associated to $w_i=x_{\sigma(i)}$. Then we may compute $\sum_{y\in Y} \tilde{y}F_y$ in at most $$\sum_{i=1}^{m-1}\vert W_{i-1}\vert\#\Hom((Q_1^\sigma\triangle \cdots \triangle Q_i^\sigma)\cup Q_{i+1}^\sigma;\mathcal{B})$$ multiplications and fewer additions.
\end{theorem}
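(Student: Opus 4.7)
The plan is to apply the iterative structure of the Separation of Variables Approach (Algorithm \ref{alg2}) to rewrite $\sum_{y \in Y} \tilde{y} F_y$ as $L_m$, and then to carefully count the arithmetic operations incurred by each stage of the recursive definition of the $L_i$. By Theorem \ref{algthm} we have $L_m = \sum_{y \in Y} \tilde{y} F_y$, computed via the recursion $L_i(w_{i+1},\ldots,w_m) = \sum_{(w_i,\ldots,w_m)\in W_{i-1}} L_{i-1}(w_i,\ldots,w_m)*w_i$. Since each stage of the recursion consists of additions and applications of the bilinear $*$ map, and since additions are cheaper than multiplications in each $*$-product, it suffices to bound the number and cost of the individual $*$-products.

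The first ingredient is a counting argument: the recursion that produces $L_i$ from $L_{i-1}$ performs exactly one $*$-product for each tuple $(w_i, w_{i+1},\ldots, w_m) \in W_{i-1}$, for a total of at most $|W_{i-1}|$ applications of $*$ at this stage. The second ingredient is tracking which quiver-supported subspace of the path algebra contains the accumulated element $L_i$. I would prove by induction on $i$ that $L_i(w_{i+1},\ldots,w_m)$ lies in the subspace of $\mathbb{C}[\mathcal{B}_n]$ whose associated quiver is the symmetric difference $Q_1^\sigma\triangle \cdots \triangle Q_i^\sigma$. The base case is immediate since $L_1$ is a linear combination of elements of $X_{\sigma(1)}$, whose quiver is $Q_1^\sigma$. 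The inductive step rests on the key fact that the bilinear map $*$ sends a pair of elements supported on quivers $Q$ and $Q'$ to an element supported on $Q\triangle Q'$: this reflects the path-algebra multiplication of Figure \ref{multinpath}, where identical middle legs $Q = P'$ are summed over and therefore cease to appear as distinguishing features of the product.

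With the inductive claim in hand, I would invoke Lemma \ref{counting} (applied to the accumulated element $L_{i-1}$ in place of a bare $x_i$, which is the natural extension to products of elements in quiver-defined subspaces) to bound the cost of each individual product $L_{i-1} * w_i$ by $\#\Hom\bigl((Q_1^\sigma \triangle \cdots \triangle Q_{i-1}^\sigma) \cup Q_i^\sigma;\mathcal{B}\bigr)$ scalar multiplications and fewer additions. Multiplying by the $|W_{i-1}|$ products performed at stage $i$ and summing over the $m-1$ nontrivial stages yields a bound of the desired form; a reindexing then produces exactly the sum appearing in the statement.

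The main obstacle is the inductive step itself, namely verifying that iterated products of quiver-supported elements remain supported on the symmetric difference of the constituent quivers. The intuition is visible in Figure \ref{multinpath} (common middle paths are summed away) and worked out concretely in Example \ref{quiv}, but a rigorous argument requires the explicit definition of the $*$ map together with the identification $X_i$ with a space of quiver morphisms that is deferred to Section \ref{proofsofstuff}. Concretely, one must check that when two quiver-supported elements are multiplied an edge of the product's quiver persists if and only if it belongs to exactly one of the two input quivers, which is exactly the set-theoretic symmetric difference and is the mechanism that turns the iterated path-algebra product into a sequence of bilinear maps whose complexity can be read off as a dimension of a space of quiver morphisms.
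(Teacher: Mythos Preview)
Your proposal is correct and follows essentially the same route as the paper's own proof: both argue stage-by-stage through the SOV recursion, observing that stages~0 and~1 require no multiplications, that stage~$i$ performs $|W_{i-1}|$ applications of~$*$, and that each such application costs at most $\#\Hom$ of the appropriate union quiver by Lemma~\ref{counting} (what the paper calls ``condition~(2) and the definition of~$*$''). Your explicit inductive tracking of the quiver supporting $L_i$ is exactly what the paper leaves implicit in its appeal to the deferred Section~\ref{proofsofstuff}, so there is no substantive difference in strategy.
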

\begin{proof}
To compute $\sum \tilde{y}F_y$, apply the SOV approach as follows:
\begin{itemize}
\item[]Stage $0$: Find $W_0$ by reordering $X$.
\item[]Stage $1$: Compute $L_1$ for all $(w_2,\dots,w_m)$ in $W_1$.
\item[]Stage $i$: Compute $L_i$ given $W_{i-1}$ and $L_{i-1}$. 
\end{itemize}
Stages $0$ and $1$ require no multiplications. For $2\leq i\leq m$,  condition (2) and the definition of $*$ implies that stage $i$ requires $\vert W_{i-1}\vert\#\Hom((Q_1^\sigma\triangle\cdots \triangle Q_i^\sigma)\cup Q_{i+1}^\sigma;\mathcal{B})$ multiplications.
\end{proof}
For an explicit additions count, see Theorem \ref{efficiency} in Section \ref{proofsofstuff}.
%\begin{remark} This is a slightly simplified approach in that we treat each element $x_i$ in the $i$th coordinate the same by using the maximum $c^+(x_i)$ and minimum $c^-(x_i)$. We may use a more detailed approach to get a better count by considering each different value of $c^+(x_i)$ and $c^-(x_i)$ separately. This approach is outlined in Appendix \ref{appendix}. **dont think this is needed?**
%\end{remark}

%**see how $c^+(x)$ defined in tex draft page $23$ and change to that**
\section{The Complexity of Fourier Transforms on Finite Groups}\label{applications}
The SOV approach computes path algebra sums by first factoring each element and then translating multiplication into maps indexed by subgraphs. The complexity is determined by the size of the factorization sets and the number of occurrences of these subgraphs in the Bratteli diagram. Thus, our main results require methods to determine these counts. In this section we demonstrate the subgraphs determined by the SOV apporach and defer the proofs of the complexity counts to Section \ref{generalcounts} and the appendices. In this way we hope to give the visual sense (and attendant justification of the proofs) of the algorithm without an overload of technical notation.
\subsection{The Weyl Groups $B_n$ and $D_n$}
For our first application of the SOV approach we consider the Fourier transform of functions on the Weyl groups of type $B_n$ and $D_n$. We improve upon the results of \cite{sovi}.
\begin{theorem}[cf. Theorem \ref{Bnthm}]\label{Bnthm2}
Let $R$ be a complete set of irreducible matrix representations of (Weyl group) $B_n$ adapted to the subgroup chain
$B_n> B_{n-1}>\cdots> B_0=\{e\}.$ Then
$$C(B_n)\leq T_{B_n}(R)\leq n(2n-1)\vert B_n\vert.$$
\end{theorem}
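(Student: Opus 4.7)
The plan is to iterate Lemma \ref{factorsum} along the chain $B_n > B_{n-1} > \cdots > B_0 = \{e\}$, yielding via (\ref{iterate}) the bound
\[
t_{B_n}(R) \;\leq\; \sum_{i=1}^{n} m_{B_i}(R_{B_i}, Y_i, B_{i-1}).
\]
Since $|B_i/B_{i-1}| = 2i$, I take $Y_i$ to be the natural set of $2i$ minimum-length coset representatives (for each $\pm j$ with $1 \le j \le i$, the shortest signed-permutation sending $i \mapsto \pm j$). Each such $y$ is then a short word in the Coxeter generators $s_1, \ldots, s_{i-1}$ together with at most one sign-change generator $t_i$. Because $\sum_{i=1}^{n}(4i-3) = n(2n-1)$, the goal reduces to showing the per-level bound $m_{B_i}(R_{B_i}, Y_i, B_{i-1}) \leq 4i-3$ for each $i$.

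For a fixed $i$, I factor each $\tilde{y}F_y$ (for $y \in Y_i$) as a product in the path algebra $\mathbb{C}[\mathcal{B}_{B_i}]$ whose initial factors are the individual generators making up $\tilde{y}$ (each confined to a small centralizer subalgebra, so its window $(i^-, i^+)$ in Definition \ref{widef} is narrow) and whose final factor is the arbitrary element $F_y \in \mathbb{C}[\mathcal{B}_{B_{i-1}}]$ (of window $(0, i-1)$). By Algorithm \ref{alg2} and Theorem \ref{efficiencyfirststate}, applied with a reordering $\sigma$ that places the large $F_y$-factor first and sweeps the generator-factors onto it one at a time, the operation count at level $i$ becomes a sum
\[
\sum_{j} |W_{j-1}|\cdot\#\operatorname{Hom}\bigl((Q_1^{\sigma} \triangle \cdots \triangle Q_j^{\sigma}) \cup Q_{j+1}^{\sigma};\mathcal{B}\bigr),
\]
where $\mathcal{B}$ is the Bratteli diagram of the chain of $B_k$-algebras.

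The main obstacle is the explicit evaluation of these subquiver morphism counts. The diagram $\mathcal{B}$ has a two-colored branching rule: its vertices at level $k$ are the bipartitions of $k$, with an edge recording the addition of a single box to either of the two component partitions. This doubled Young-lattice structure means the relevant $\#\operatorname{Hom}$ counts can be evaluated via the generalizations of Stanley's differential poset identities that the authors develop in Section \ref{generalcounts}. The successive symmetric differences collapse onto the $F_y$-quiver one narrow generator-quiver at a time, so each gluing produces a factor of order $i$ (coming from the number of ways to extend a path one step through $\mathcal{B}$); after normalizing by $|B_i|$, the contributions aggregate to the claimed $4i-3$ at level $i$.

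Summing the per-level bounds along the chain gives $\sum_{i=1}^{n}(4i-3) = n(2n-1)$, hence $T_{B_n}(R) \leq n(2n-1)|B_n|$ and thus $C(B_n) \leq n(2n-1)|B_n|$. Equivalently, this is the $k=n$ specialization of the homogeneous-space bound of Theorem \ref{hombnthm} applied to $B_n = B_n/B_0$, and I expect the argument to be essentially identical to the single-step $k=1$ case of that theorem iterated $n$ times, with the telescoping $\sum (4i-3) = n(2n-1)$ playing the role of the polynomial $k(4n-2k-1)$.
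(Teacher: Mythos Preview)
Your overall architecture matches the paper's: iterate Lemma~\ref{factorsum} down the chain, target $m_{B_i}(R_{B_i},Y_i,B_{i-1})\le 4i-3$ at each level, then telescope $\sum_{i=1}^n(4i-3)=n(2n-1)$. You also correctly identify the bipartition Bratteli diagram and the placement of $F_y$ first in the SOV reordering. But the technical core is missing, and one of your heuristic claims is off.

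The coset representatives are not ``short'': the minimal representatives for $B_n/B_{n-1}$ have length up to $2n-1$, the longest being $s_n\cdots s_2 s_1 s_2\cdots s_n$. The paper's key move is to embed them in the \emph{palindromic} product set $\{a_n\cdots a_2 a_1 a_2'\cdots a_n' : a_j,a_j'\in\{e,s_j\}\}$; it is this specific $(2n-1)$-factor shape, together with the observation (Corollary~\ref{e}) that the identity factor costs nothing so effectively $|W_{k-1}|=1$ throughout, that makes the SOV count tractable. Your assertion that ``each gluing produces a factor of order $i$'' is precisely where the real work lies, and it is not correct as stated: the reordering produces three distinct quiver shapes $\mathcal{K}^n,\mathcal{J}_j^n,\mathcal{H}_j^n$ (Figure~\ref{BnH}) with quite different behavior. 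The single $\mathcal{K}^n$ step contributes exactly $|B_n|$; each $\mathcal{J}_j^n$ step contributes at most $2|B_n|$---a constant, not something of order $i$---resting on the bipartition fact $M_{\mathcal{B}}(B_j,B_{j-2})\le 2$ (Lemma~\ref{BnJlemma}), which you do not mention; each $\mathcal{H}_j^n$ step contributes at most $\tfrac{4(j-1)}{n}|B_n|$ via a separate partition-jump estimate (Lemmas~\ref{BnH1}--\ref{BnH2}). Summing gives $|B_n|\bigl(1+2(n-1)+\tfrac{4}{n}\sum_{j=2}^{n}(j-1)\bigr)=(4n-3)|B_n|$. These bipartition-specific bounds are the crux and do not fall out of the generic differential-poset identities of Section~\ref{generalcounts} alone.
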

\begin{proof}
Let $s_1,\dots,s_n$ denote the simple reflections for $B_n$, labeled as per the usual Dynkin diagram schema (see e.g., \cite{humphreys}) in Figure \ref{Bn}.
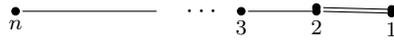
\begin{figure}[H]\begin{center}
\begin{tikzpicture}[shorten >=1pt,node distance=2cm,on grid,auto,/tikz/initial text=] 
   \node[pnt] at (6,.475) (1) {};
   \node[pnt] at (6,.525) (12) {};
   \node[pnt] at (5,.5) (2) {};
   \node[pnt] at (5,.55) (22) {};
   \node[pnt] at (4,.5) (3) {};
   \node at (3.5,.5) (q) {$\cdots$};   
   \node at (3,.5) (n-1) {};   
   \node[pnt] at (1,.5) (n) {};
   \draw (1) node[below] {\footnotesize $1$};
   \draw (2) node[below] {\footnotesize $2$};
   \draw (3) node[below] {\footnotesize $3$};
   \draw (n) node[below] {\footnotesize $n$};
   \path[every node/.style={font=\scriptsize}]
    (1) edge  node {} (2)
    (12) edge  node {} (22)
    (2) edge node {} (3)
    (n-1) edge node {} (n);               

\end{tikzpicture}
\caption{Dynkin diagram for $B_n$.}
\label{Bn}
\end{center}\end{figure}
Recall from \cite{sovi} that elements in a set of minimal coset representatives for $B_n/B_{n-1}$ have the following factorizations:
$$e,\;s_n,\;s_{n-1}s_n,\;\dots\; ,\;s_1\cdots s_n,\;s_2s_1\cdots s_n,\;\dots\;,\;s_n\cdots s_1\cdots s_n.$$
Then for $A_i=\{e,s_i\}=A_i'$, a complete set of coset representatives is contained in $Y=\{a_n\cdots a_2a_1a_2'\cdots a_n'\vert \; a_i,a_i'\in A_i\}$.   

Let $\mathcal{B}$ denote the Bratteli diagram associated to the chain 
$\mathbb{C}[B_{n}]> \mathbb{C}[B_{n-1}]>\cdots> \mathbb{C}$ and let $\{\mathbb{C}[\mathcal{B}_i]\}$ be the chain of path algebras associated to the chain $\{\mathbb{C}[B_{i}]\}$. Let $\tilde{Y}=\{\tilde{y}\mid y\in Y\}$, and similarly define $\tilde{A}_i, \tilde{A}_i'$ (where we continue to use $\tilde{}$ to denote the rewriting in path algebra coordinates). Note that $\tilde{A}_i, \tilde{A}_i'\subseteq \mathbb{C}[\mathcal{B}_i]\cap\cent( \mathbb{C}[\mathcal{B}_{i-2}])$. By Lemma \ref{factorsum}, computation of the Fourier transform of a complex function $f$ on $B_n$ is equivalent to computation of

$$\sum_{y\in\tilde{Y}} yF_y=\sum_{\substack{a_i\in \tilde{A}_i\\a_i'\in \tilde{A}_i'}} a_n\cdots a_2a_1a_2'\cdots a_n'F_{a_n\cdots a_2a_1a_2'\cdots a_n'}$$
for $F_y=F_{a_n\cdots a_2a_1a_2'\cdots a_n'}\in \mathbb{C}[\mathcal{B}_{n-1}]$. We now use the SOV Approach:
\begin{itemize}
\item[I.] Let $X=\{(a_n,\dots,a_2,a_1,a_2',\dots,a_n', F_{a_n\cdots a_2a_1a_2'\cdots a_n'})\vert\; a_i\in\tilde{A}_i,a_i'\in\tilde{A}_i'\}.$
\item[II.] Note that $$\begin{array}{ll} \displaystyle i^+=\max_{a_{n-i+1}\in\tilde{A}_{n-i+1}}\{c(a_{n-i+1})^+\}=n-i+1,&1\leq i\leq n,\\
\displaystyle i^+=\max_{a_{i-n+1}'\in\tilde{A}_{i-n+1}'}\{c(a_{i-n+1}')^+\}=i-n+1, &n<i<2n,\\
i^-=i^+-2,& 1\leq i< 2n,\\
{2n}^+=\max \{c(F_{a_n\cdots a_2a_1a_2'\cdots a_n'})^+\}=n-1,&\\
{2n}^-=0.&\end{array}$$ 

Fig. \ref{BnG} shows the various component subquivers corresponding to the coset representatives. They combine together as per Fig. 12 to give the factorization of $yF_y$. Thus, the algorithm proceeds by gluing together quivers $Q_{i}$ of Figure \ref{BnQ} (corresponding to elements of $\tilde{A}_j, \tilde{A}'_j, \text{ or }  F_y$, as per necessary) to build the quiver $\mathcal{Q}$ of Figure \ref{BnG}. The left column of Figure \ref{BnQ} shows the quivers $Q_{i}$ for $1\leq i\leq n$  and the right column shows the quivers $Q_{i}$ for $n+1\leq i\leq 2n$. 
\begin{figure}[H]\begin{center}
\begin{tikzpicture}[shorten >=1pt,node distance=2cm,on grid,auto,/tikz/initial text=,scale=0.7] 
   \node at (-9,11.5) (q) {$\tilde{A}_{n}$};
   \node[pnt] at (-2.5,11.5) (10) {};
   \node[pnt] at (-8,11.5) (1n) {};
   \node[pnt] at (-6,11.5) (1n-2) {};
   \node at (-9,10) (q) {$\tilde{A}_{n-1}$};
   \node[pnt] at (-2.5,10) (20) {};
   \node[pnt] at (-8,10) (2n) {};
   \node[pnt] at (-5,10) (2n-3) {};
   \node[pnt] at (-7,10) (2n-1) {};
   \node at (-5.5,8.5) (q) {$\vdots$};
\begin{scope}[shift={(0,-.5)}]
   \node at (-9,7.5) (q) {$\tilde{A}_3$};
   \node[pnt] at (-2.5,7.5) (30) {};
   \node[pnt] at (-8,7.5) (3n) {};
   \node[pnt] at (-3.5,7.5) (311) {};
   \node[pnt] at (-5.5,7.5) (333) {};
   \node at (-9,6) (q) {$\tilde{A}_2$};
   \node[pnt] at (-2.5,6) (40) {};
   \node[pnt] at (-8,6) (4n) {};
   \node[pnt] at (-4.5,6) (42) {};   
   \node at (-9,4.5) (q) {$\tilde{A}_1$};
   \node[pnt] at (-2.5,4.5) (50) {};
   \node[pnt] at (-8,4.5) (5n) {};
   \node[pnt] at (-3.5,4.5) (51) {};
\end{scope}
\begin{scope}[shift={(8,10)}] 
   \node at (-9,1.5) (q) {$\tilde{A}_2'$};
   \node[pnt] at (-2.5,1.5) (01) {};
   \node[pnt] at (-8,1.5) (n1) {};
   \node[pnt] at (-4.5,1.5) (21) {};
   \node at (-9,0) (q) {$\tilde{A}_3'$};
   \node[pnt] at (-2.5,0) (02) {};
   \node[pnt] at (-8,0) (n2) {};
   \node[pnt] at (-3.5,0) (11) {};
   \node[pnt] at (-5.5,0) (31) {};
%   \node at (-9,-1.5) (q) {$\tilde{A}_{4}'$};
%   \node[pnt] at (-2,-1.5) (03) {};
%   \node[pnt] at (-8,-1.5) (n3) {};
%   \node[pnt] at (-2.5,-1.5) (22) {};
%   \node[pnt] at (-4.5,-1.5) (41) {};
   \node at (-5.5,-1.5) (q) {$\vdots$};   
\end{scope}   
\begin{scope}[shift={(0,11)}] 
   \node at (-1,-4) (q) {$\tilde{A}_{n-1}'$};
   \node[pnt] at (5.5,-4) (0new) {};
   \node[pnt] at (0,-4) (nnew) {};
   \node[pnt] at (3,-4) (n-3new) {};
   \node[pnt] at (1,-4) (n-1new) {};
   \node at (-1,-5.5) (q) {$\tilde{A}_{n}'$};
   \node[pnt] at (5.5,-5.5) (010) {};
   \node[pnt] at (0,-5.5) (n10) {};
   \node[pnt] at (2,-5.5) (n-21) {};
   \node at (-1,-7) (q) {$F_{y}$};
   \node[pnt] at (5.5,-7) (011) {};
   \node[pnt] at (0,-7) (n11) {};
   \node[pnt] at (1,-7) (n-11) {};
\end{scope}
   \draw (10) node[below] {\footnotesize $0$};
   \draw (1n) node[below] {\footnotesize $n$};
   \draw (1n-2) node[below] {\footnotesize $n-2$};
   \draw (20) node[below] {\footnotesize $0$};
   \draw (2n) node[below] {\footnotesize $n$};
   \draw (2n-1) node[below] {\footnotesize $n-1$};
   \draw (2n-3) node[below] {\footnotesize $n-3$};
   \draw (30) node[below] {\footnotesize $0$};
   \draw (3n) node[below] {\footnotesize $n$};
   \draw (333) node[below] {\footnotesize $3$};
   \draw (311) node[below] {\footnotesize $1$};
   \draw (40) node[below] {\footnotesize $0$};
   \draw (4n) node[below] {\footnotesize $n$};
   \draw (42) node[below] {\footnotesize $2$};
   \draw (50) node[below] {\footnotesize $0$};
   \draw (51) node[below] {\footnotesize $1$};
   \draw (5n) node[below] {\footnotesize $n$};
   \draw (01) node[below] {\footnotesize $0$};
   \draw (n1) node[below] {\footnotesize $n$};
   \draw (21) node[below] {\footnotesize $2$};
   \draw (02) node[below] {\footnotesize $0$};
   \draw (n2) node[below] {\footnotesize $n$};
   \draw (11) node[below] {\footnotesize $1$};
   \draw (31) node[below] {\footnotesize $3$};
%   \draw (03) node[below] {\footnotesize $0$};
%   \draw (n3) node[below] {\footnotesize $n$};
%   \draw (22) node[below] {\footnotesize $2$};
%   \draw (41) node[below] {\footnotesize $4$};
   \draw (010) node[below] {\footnotesize $0$};
   \draw (n10) node[below] {\footnotesize $n$};
   \draw (n-21) node[below] {\footnotesize $n-2$};
   \draw (0new) node[below] {\footnotesize $0$};
   \draw (nnew) node[below] {\footnotesize $n$};
   \draw (n-1new) node[below] {\footnotesize $n-1$};
   \draw (n-3new) node[below] {\footnotesize $n-3$};
   \draw (011) node[below] {\footnotesize $0$};
   \draw (n11) node[below] {\footnotesize $n$};
   \draw (n-11) node[below] {\footnotesize $n-1$};
   \path[every node/.style={font=\scriptsize}]
    (10) edge node {} (1n-2)
    (1n-2) edge [bend left=55] node {} (1n)
    (1n-2) edge [bend right=55] node {} (1n)        
    (20) edge node {} (2n-3)
    (2n-3) edge [bend left=55] node {} (2n-1)
    (2n-3) edge [bend right=55] node {} (2n-1)        
    (2n-1) edge node {} (2n)
    (30) edge node {} (311)
    (311) edge [bend left=55] node {} (333)
    (311) edge [bend right=55] node {} (333)        
    (333) edge node {} (3n)
    (42) edge node {} (4n)
    (40) edge [bend left=55] node {} (42)
    (40) edge [bend right=55] node {} (42)
    (50) edge node {} (51)
    (51) edge node {} (5n)
    (5n) edge node {} (51)
    (21) edge node {} (n1)
    (01) edge [bend left=55] node {} (21)
    (01) edge [bend right=55] node {} (21)        
    (02) edge node {} (11)
    (31) edge node {} (n2)
    (11) edge [bend left=55] node {} (31)
    (11) edge [bend right=55] node {} (31)    
    (0new) edge node {} (n-3new)
    (n-1new) edge node {} (nnew)
    (n-3new) edge [bend left=55] node {} (n-1new)
    (n-3new) edge [bend right=55] node {} (n-1new)    
    (010) edge node {} (n-21)
    (n-21) edge [bend left=55] node {} (n10)
    (n-21) edge [bend right=55] node {} (n10)        
    (n-11) edge node {} (n11)
    (011) edge [bend left=25] node {} (n-11)
    (011) edge [bend right=25] node {} (n-11);    
    
\end{tikzpicture}
\caption{Component subquivers of the factorization of $yF_y$.}
\label{BnQ}
\end{center}\end{figure}
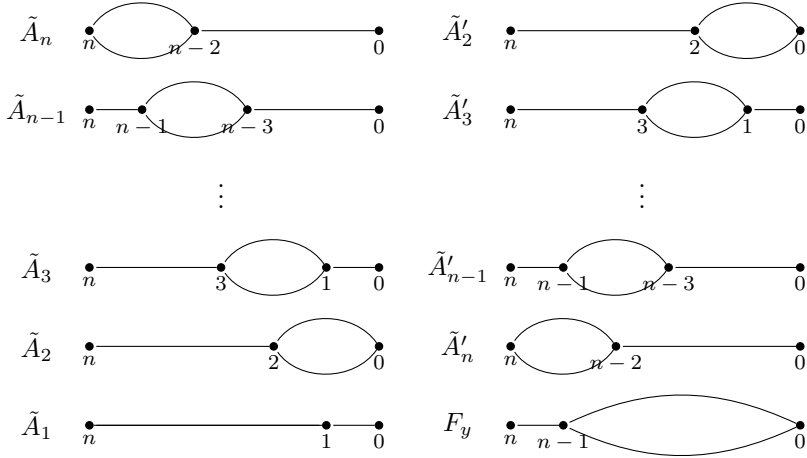
\begin{figure}\begin{center}
\begin{tikzpicture}[shorten >=1pt,node distance=2cm,on grid,auto,/tikz/initial text=] 
   
   \node at (1.25,1.5) (q) {\large $\mathcal{Q}$};
   \node at (4,.25) (q) {\tiny $\tilde{A}_3'$};
   \node at (3,.25) (q) {\tiny $\tilde{A}_4'$};
   \node at (2,.25) (q) {\tiny $\tilde{A}_5'$};
   \node at (5,.25) (q) {\tiny $\tilde{A}_2'$};
   \node at (4,.75) (q) {\tiny $\tilde{A}_3$};
   \node at (3,.75) (q) {\tiny $\tilde{A}_4$};
   \node at (2,.75) (q) {\tiny $\tilde{A}_5$};
   \node at (5,.75) (q) {\tiny $\tilde{A}_2$};   
   \node at (5.5,.5) (q) {\tiny $\tilde{A}_1$};   
   \node at (-1.5,.75) (q) {\tiny $\tilde{A}_{n-1}$};
   \node at (-.5,.75) (q) {\tiny $\tilde{A}_{n-2}$};
   \node at (-1.5,.25) (q) {\tiny $\tilde{A}_{n-1}'$};
   \node at (-.5,.25) (q) {\tiny $\tilde{A}_{n-2}'$};
   \node at (-2.5,.75) (q) {\tiny $\tilde{A}_{n}$};
   \node at (-2.5,.25) (q) {\tiny $\tilde{A}_n'$};
   \node at (1.25,-.75) (q) {\tiny $F_{y}$};
   \node[pnt] at (6,.5) (02) {};
   \node[pnt] at (5,0) (11) {};
   \node[pnt] at (5,.5) (12) {};
   \node[pnt] at (4,0) (21) {};
   \node[pnt] at (3,0) (31) {};
   \node[pnt] at (4,.5) (22) {};    
   \node[pnt] at (3,.5) (32) {};
   \node[pnt] at (2,.5) (42) {};
   \node[pnt] at (2,0) (41) {};
   \node[pnt] at (1,.5) (52) {};   
   \node[pnt] at (5,1) (13) {};
   \node[pnt] at (4,1) (23) {};
   \node[pnt] at (3,1) (33) {};
   \node[pnt] at (2,1) (43) {};
   \node at (.75,.25) (q) {$\dots$};
   \node at (.75,.75) (q) {$\dots$};
   \node[pnt] at (.5,0) (n-41) {};
   \node[pnt] at (-.5,.5) (n-32) {};
   \node[pnt] at (-.5,0) (n-31) {};
   \node[pnt] at (-1.5,0) (n-21) {};
   \node[pnt] at (-2.5,.5) (n-12) {};
   \node[pnt] at (-2.5,0) (n-11) {};
   \node[pnt] at (-3.5,.5) (n2) {};
   \node[pnt] at (-1.5,.5) (n-22) {};
   \node[pnt] at (.5,1) (n-43) {};
   \node[pnt] at (-.5,1) (n-33) {};
   \node[pnt] at (-1.5,1) (n-23) {};
   \node[pnt] at (-2.5,1) (n-13) {};
   \draw (11) node[below] {\tiny $1$};
   \draw (21) node[below] {\tiny $2$};
%   \draw (22) node[above] {\tiny $2$};
%   \draw (32) node[above] {\tiny $3$};
   \draw (31) node[below] {\tiny $3$};
%   \draw (42) node[above] {\tiny $4$};
%   \draw (52) node[above] {\tiny $5$};
   \draw (41) node[below] {\tiny $4$};
   \draw (02) node[below] {\tiny $0$};
   \draw (13) node[above] {\tiny $1$};
   \draw (23) node[above] {\tiny $2$};
   \draw (33) node[above] {\tiny $3$};
   \draw (43) node[above] {\tiny $4$};
%   \draw (n-32) node[above] {\tiny $n-3$};
   \draw (n-41) node[below] {\tiny $n-4$};
%   \draw (n-22) node[above] {\tiny $n-2$};
   \draw (n-21) node[below] {\tiny $n-2$};
   \draw (n-11) node[below] {\tiny $n-1$};
   \draw (n-13) node[above] {\tiny $n-1$};
   \draw (n2) node[below] {\tiny $n$};
   \draw (n-31) node[below] {\tiny $n-3$};
%   \draw (n-12) node[above] {\tiny $n-1$};
   \draw (n-43) node[above] {\tiny $n-4$};
   \draw (n-33) node[above] {\tiny $n-3$};
   \draw (n-23) node[above] {\tiny $n-2$};
   \path[every node/.style={font=\scriptsize}]
    (11) edge node {} (22)
    (11) edge node {} (21)
    (21) edge node {} (31)
    (02) edge node {} (22)
    (02) edge node {} (13)
    (11) edge node {} (02)
    (n-23) edge node {} (n-13)
    (n-21) edge node {} (n-11)
    (n-13) edge node {} (n2)
    (n-11) edge node {} (n2)    
    (n-12) edge node {} (n2)
    (21) edge node {} (32)
    (31) edge node {} (42)
    (32) edge node {} (42)
    (22) edge node {} (32)
    (42) edge node {} (52)
    (31) edge node {} (41)    
    (41) edge node {} (52)
    (13) edge node {} (43)
    (13) edge node {} (22)
    (23) edge node {} (32)
    (33) edge node {} (42)
    (43) edge node {} (52)
    (43) edge node {} (n-43)
    (n-43) edge node {} (n-23)
    (n-43) edge node {} (n-32)
    (n-33) edge node {} (n-22)
    (n-23) edge node {} (n-12)        
    (52) edge node {} (n-32)
    (41) edge node {} (n-41)
    (n-41) edge node {} (n-31)
    (n-41) edge node {} (n-32)
    (n-31) edge node {} (n-22)
    (n-31) edge node {} (n-21)
    (n-22) edge node {} (n-12)
    (n-32) edge node {} (n-22)    
    (n-21) edge node {} (n-12)
    (02) edge [bend left=45] node {} (n-11);

\end{tikzpicture}
\caption{The full quiver factorization of $yF_y$.}
\label{BnG}
\end{center}\end{figure}
\item[III.] Let $\sigma\in S_{2n}$ be the permutation reordering $X$ so that $W_0$ is the set $\{(F_{a_n\cdots a_2a_1a_2\cdots a_n}, a_2',a_3',\dots a_n',a_1,a_2,a_3,\dots,a_n)\}.$ Then 
$$\begin{array}{l} W_1=\{(a_2',a_3',\dots a_n',a_1,a_2,a_3,\dots,a_n)\vert\; a_i\in \tilde{A}_i,a_i'\in \tilde{A}_i'\},\\
W_2=\{(a_3',\dots a_n',a_1,a_2,a_3,\dots,a_n)\vert\; a_i\in \tilde{A}_i, a_i'\in \tilde{A}_i'\},\\
\vdots\\
W_{2n-1}=\{(a_n)\vert\;a_n\in \tilde{A}_n\}.\end{array}$$
Note that
$$\begin{array}{ll}\vert W_{i-1}\vert=\vert \tilde{A}_i'\vert\cdots \vert \tilde{A}_n'\vert\vert \tilde{A}_1\vert\cdots \vert \tilde{A}_n\vert, &2\leq i\leq n,\\
\vert W_{i-1}\vert=\vert \tilde{A}_{i-n}\vert\cdots \vert \tilde{A}_n\vert,&n<i\leq 2n.\end{array}$$ 
%\item[V.] Define the maps $L_i$ as in step V of the Separation of Variables Algorithm.
\end{itemize}
By Theorem \ref{efficiencyfirststate}, we may compute $\sum yF_y$ in at most 
\begin{equation}\label{bythm2}\sum_{i=2}^{2n}\vert W_{i-1}\vert \#\Hom((Q_1^\sigma\triangle \cdots \triangle Q_i^\sigma)\cup Q_{i+1}^\sigma;\mathcal{B})\end{equation}
multiplications, with $(Q_1^\sigma\triangle\cdots \triangle Q_i^\sigma)\cup Q_{i+1}^\sigma$ as in Figure \ref{BnR}.
Thus, the complexity of the computation comes down to determining $\#\Hom((Q_1^\sigma\triangle \cdots \triangle Q_i^\sigma)\cup Q_{i+1}^\sigma;\mathcal{B})$, i.e., the number of occurrences of each quiver of Figure \ref{BnR} in the Bratteli diagram $\mathcal{B}$. Figure \ref{BnH} gives the general kinds of quivers that appear in Figure \ref{BnR}. The first $n$ quivers of Figure \ref{BnR} (the top row) have general form $\mathcal{H}_i^n$, as in Figure \ref{BnR}. The $n$th quiver (bottom left quiver of Figure \ref{BnR}) has form $\mathcal{K}^n$, while the remaining quivers have general form $\mathcal{J}_i^n$. 
\begin{figure}\begin{center}
\begin{tikzpicture}[shorten >=1pt,node distance=2cm,on grid,auto,/tikz/initial text=,scale=0.83] 
   \begin{scope}[shift={(-1,0)}]
   \node at (-3.75,1.5) (q) {$Q_1^\sigma\cup Q_2^\sigma$};
   \node at (-3,3.25) (q) {\tiny $\tilde{A}_2$};
   \node[pnt] at (-2,3.5) (00) {};
   \node[pnt] at (-3,3) (11) {};
   \node[pnt] at (-4,3.5) (12) {};
   \node[pnt] at (-4,3.5) (22) {};
   \node[pnt] at (-5.5,3) (n0) {};
   \end{scope}
   \begin{scope}[shift={(4,2.5)}]
   \node at (-4,-1) (q) {$(Q_1^\sigma\triangle Q_2^\sigma)\cup Q_3^\sigma$};
   \node at (-4,.75) (q) {\tiny $\tilde{A}_3$};
   \node[pnt] at (-2.25,1) (01) {};
   \node[pnt] at (-6,.5) (n1) {};
   \node[pnt] at (-3,.5) (13) {};
   \node[pnt] at (-4,.5) (23) {};
   \node[pnt] at (-4,1) (24) {};
   \node[pnt] at (-4.75,1) (34) {};
\end{scope}   
\node at (2.5,3) (q) {$\cdots$};
\begin{scope}[shift={(5.25,5.5)}] 
   \node at (-.25,-4) (q) {$(Q_1^\sigma\triangle \cdots \triangle Q_{n-1}^\sigma)\cup Q_{n}^\sigma$};
   \node at (-1,-2.25) (q) {\tiny $\tilde{A}_{n}$};
   \node[pnt] at (1.5,-2) (05) {};
   \node[pnt] at (-1.2,-2.5) (n-11) {};
   \node[pnt] at (-2,-2) (n5) {};
   \node[pnt] at (-1,-2) (n-12) {};
   \node[pnt] at (0,-2.5) (n-21) {};  
\end{scope} 
\begin{scope}[shift={(-5,-4)}] 
   \node at (0.25,1.5) (q) {$(Q_1^\sigma\triangle\cdots \triangle Q_n^\sigma)\cup Q_{n+1}^\sigma$};
   \node at (1.5,3.7) (q) {\tiny $\tilde{A}_{1}$};
   \node[pnt] at (2,3.5) (06) {};
   \node[pnt] at (1,3.5) (18) {};
   \node[pnt] at (-.5,3) (n-13) {};
   \node[pnt] at (-1.5,3.5) (n6) {};
\end{scope}
\begin{scope}[shift={(-2,-4)}]
   \node at (2,1.5) (q) {$(Q_1^\sigma\triangle\cdots \triangle Q_{n+1}^\sigma)\cup Q_{n+2}^\sigma$};
   \node at (2.75,3.75) (q) {\tiny $\tilde{A}_{2}'$};
   \node[pnt] at (3.75,3.5) (07) {};
   \node[pnt] at (2.75,3.5) (19) {};
   \node[pnt] at (2.75,4) (110) {};
   \node[pnt] at (2,3.5) (29) {};
   \node[pnt] at (1.25,3) (n-15) {};
   \node[pnt] at (.25,3.5) (n7) {};
\end{scope}
%\begin{scope}[shift={(2.5,-3)}] 
%   \node at (-2,4) (q) {$R_{n+2}^\sigma\cup F_{\sigma(n+3)}$};
%   \node at (2,3.75) (q) {\tiny $\tilde{A}_{3}'$};
%   \node[pnt] at (4,3.5) (08) {};
%   \node[pnt] at (1,3.5) (311) {};
%   \node[pnt] at (3,4) (112) {};
%   \node[pnt] at (2,3.5) (211) {};
%   \node[pnt] at (2,4) (212) {};
%   \node[pnt] at (-1.2,3) (n-17) {};
%   \node[pnt] at (-2,3.5) (n8) {};
%\end{scope}
   \node at (2.5,-1) (q) {$\cdots$};
\begin{scope}[shift={(3.25,-3.5)}] 
   \begin{scope}[shift={(2,0)}]
  \node at (-.25,1) (q) {$(Q_1^\sigma\triangle \cdots \triangle Q_{2n-1}^\sigma)\cup Q_{2n}^\sigma$};
   \node at (-1,3.25) (q) {\tiny $\tilde{A}_{n}'$};
   \node[pnt] at (1.5,3) (09) {};
   \node[pnt] at (1,3.5) (115) {};
   \node[pnt] at (-1,2.5) (n-19) {};
   \node[pnt] at (-1,3) (n-110) {};
   \node[pnt] at (-1,3.5) (n-111) {};
   \node[pnt] at (0,3.5) (n-211) {};
   \node[pnt] at (-2,3) (n9) {};
\end{scope}
\end{scope}
   \draw (09) node[below] {\tiny $0$};
   \draw (n9) node[below] {\tiny $n$};
   \draw (n-19) node[below] {\tiny $n-1$};
   \draw (n-110) node[below] {\tiny $n-1$};
   \draw (n-111) node[above] {\tiny $n-1$};
   \draw (n-211) node[above] {\tiny $n-2$};
%   \draw (08) node[below] {\tiny $0$};
%   \draw (n8) node[below] {\tiny $n$};
%   \draw (112) node[above] {\tiny $1$};
%   \draw (311) node[below] {\tiny $3$};
%   \draw (211) node[below] {\tiny $2$};
%   \draw (212) node[above] {\tiny $2$};
%   \draw (n-17) node[below] {\tiny $n-1$};
   \draw (07) node[below] {\tiny $0$};
   \draw (n7) node[below] {\tiny $n$};
   \draw (19) node[below] {\tiny $1$};
   \draw (110) node[above] {\tiny $1$};
   \draw (29) node[below] {\tiny $2$};
   \draw (n-15) node[below] {\tiny $n-1$};
   \draw (00) node[below] {\tiny $0$};
   \draw (n0) node[below] {\tiny $n-1$};
   \draw (01) node[above] {\tiny $0$};
   \draw (11) node[below] {\tiny $1$};
   \draw (12) node[above] {\tiny $1$};
   \draw (22) node[above] {\tiny $2$};
   \draw (01) node[above] {\tiny $0$};
   \draw (n1) node[below] {\tiny $n-1$};
   \draw (13) node[below] {\tiny $1$};
   \draw (23) node[below] {\tiny $2$};
   \draw (24) node[above] {\tiny $2$};
   \draw (34) node[above] {\tiny $3$};
   \draw (05) node[below] {\tiny $0$};
   \draw (n5) node[above] {\tiny $n$};
   \draw (n-21) node[below] {\tiny $n-2$};
   \draw (n-11) node[below] {\tiny $n-1$};
   \draw (n-12) node[above] {\tiny $n-1$};
   \draw (06) node[above] {\tiny $0$};
   \draw (18) node[above] {\tiny $1$};
   \draw (n-13) node[below] {\tiny $n-1$};
   \draw (n6) node[below] {\tiny $n$};
   \path[every node/.style={font=\scriptsize}]
    (00) edge node {} (11)
    (11) edge node {} (n0)
    (00) edge node {} (12)
    (12) edge node {} (22)
    (11) edge node {} (22)
    (00) edge [bend left=65] node {} (n0)
    (01) edge node {} (24)
    (13) edge node {} (n1)
    (24) edge node {} (34)
    (13) edge node {} (24)
    (23) edge node {} (34)
    (01) edge [bend left=65] node {} (n1)
    (05) edge node {} (n-12)
    (n-12) edge node {} (n5)
    (n-11) edge node {} (n5)
    (n-21) edge node {} (n-11)
    (n-21) edge node {} (n-12)
    (05) edge [bend left=65] node {} (n-11)
    (06) edge node {} (n6)
    (n-13) edge node {} (n6)
    (06) edge [bend left=65] node {} (n-13)
    (07) edge node {} (n7)
    (07) edge node {} (110)
    (110) edge node {} (29)
    (n-15) edge node {} (n7)
    (07) edge [bend left=65] node {} (n-15)
%    (211) edge node {} (n8)
%    (08) edge node {} (112)
%    (112) edge node {} (212)
%    (112) edge node {} (211)
%    (311) edge node {} (212)
%    (n-17) edge node {} (n8)
%   (08) edge [bend left=65] node {} (n-17)
    (09) edge node {} (115)
    (115) edge node {} (n-211)
    (n9) edge node {} (n-111)
    (n-211) edge node {} (n-111)
    (n-110) edge node {} (n-211)
    (n-110) edge node {} (n9)
    (n-19) edge node {} (n9)
    (09) edge [bend left=65] node {} (n-19);
\end{tikzpicture}
\caption{Schematic of the stepwise aggregation of quivers as directed by SOV.}
\label{BnR}
\end{center}\end{figure}
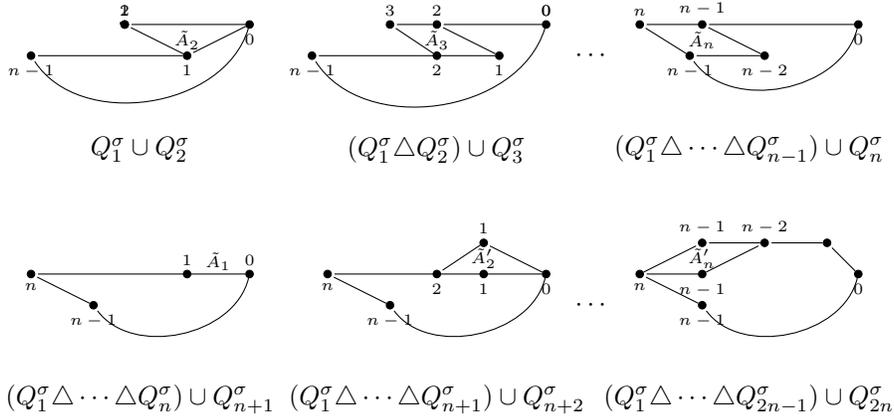
Then    
$$\begin{array}{ll}\#\Hom((Q_1^\sigma\triangle\cdots\triangle Q_{i-1}^\sigma)\cup Q_i^\sigma;\mathcal{B}))=\#\Hom(\mathcal{H}_i^n\uparrow\mathcal{Q};\mathcal{B}),&1< i\leq n,\\ \#\Hom ((Q_1^\sigma\triangle\cdots\triangle Q_{n}^\sigma)\cup Q_{n+1}^\sigma; \mathcal{B})=\#\Hom(\mathcal{K}^n\uparrow \mathcal{Q};\mathcal{B}),&\\
\#\Hom((Q_1^\sigma\triangle\cdots\triangle Q_{i-1}^\sigma)\cup Q_i^\sigma;\mathcal{B}))=\#\Hom(\mathcal{J}_{i-n}^n\uparrow \mathcal{Q};\mathcal{B}),&n+2\leq i\leq 2n,\end{array}$$
where for a subquiver $Q$ of $\mathcal{Q}$, $\Hom(Q\uparrow \mathcal{Q};\mathcal{B})$ denotes the number of quiver morphisms from $Q$ to $\mathcal{B}$ that extend to morphisms from $\mathcal{Q}$ to $\mathcal{B}$ (see Definition \ref{defmorph}).
\begin{figure}\begin{center}
\begin{tikzpicture}[shorten >=1pt,node distance=2cm,on grid,auto,/tikz/initial text=,scale=0.7] 
   \node at (1.5,-1.2) (q) {\large $\mathcal{H}_i^n$};
   \node at (2.4,.75) (q) {\tiny $\tilde{A}_i$};
   \node[pnt] at (4.5,1) (04) {};
   \node[pnt] at (-.5,.5) (n4) {};
   \node[pnt] at (3.5,.5) (p5) {};
   \node[pnt] at (2.5,.5) (p+15) {};
   \node[pnt] at (2.5,1) (p+16) {};
   \node[pnt] at (1.5,1) (p+26) {};
\begin{scope}[shift={(8,-.5)}] 
   \node at (1.5,-.9) (q) {\large $\mathcal{J}_i^n$};
   \node at (2.4,1.25) (q) {\tiny $\tilde{A}_i'$};
   \node[pnt] at (4.5,1) (05) {};
   \node[pnt] at (4,1.5) (15) {};
   \node[pnt] at (-.5,1) (n5) {};
   \node[pnt] at (3.5,1.5) (i-26) {};
   \node[pnt] at (2.5,1) (i-15) {};
   \node[pnt] at (1.5,1) (i5) {};
   \node[pnt] at (2.5,1.5) (i-16) {};
\end{scope}
\begin{scope}[shift={(4.5,-4)}] 
   \node at (1.5,-1) (q) {\large $\mathcal{K}^n$};
   \node at (3.25,1.4) (q) {\tiny $\tilde{A}_1'$};
   \node[pnt] at (3.5,1) (06) {};
   \node[pnt] at (2.9,1.37) (16) {};
   \node[pnt] at (-1.5,1) (n6) {};
\end{scope}
   \draw (06) node[below] {\footnotesize $0$};
   \draw (n6) node[below] {\footnotesize $n$};
   \draw (05) node[below] {\footnotesize $\hat{0}$};
   \draw (n5) node[below] {\footnotesize $\beta_n$};
   \draw (i-26) node[above] {\footnotesize $\alpha_{i-2}$};
   \draw (i-15) node[below] {\footnotesize $\beta_{i-1}$};
   \draw (i-16) node[above] {\footnotesize $\alpha_{i-1}$};
   \draw (i5) node[below] {\footnotesize $\beta_{i}$};
   \draw (04) node[below] {\footnotesize $\hat{0}$};
   \draw (n4) node[below] {\footnotesize $\beta_{n-1}$};
   \draw (p5) node[below] {\footnotesize $\beta_{i-2}$};
   \draw (p+15) node[below] {\footnotesize $\beta_{i-1}$};
   \draw (p+16) node[above] {\footnotesize $\alpha_{i-1}$};
   \draw (p+26) node[above] {\footnotesize $\alpha_i$};
   \path[every node/.style={font=\scriptsize}]
    (04) edge node {} (p+16)
    (p+16) edge node {} (p+26)
    (p5) edge node {} (n4)
    (p5) edge node {} (p+16)
    (p+15) edge node {} (p+26)
    (04) edge [bend left=55] node {} (n4)
    (05) edge node {} (15)
    (15) edge node {} (i-16)
    (i-16) edge node {} (i5)
    (i-26) edge node {} (i-15)
    (i-15) edge node {} (n5)
    (06) edge [bend left=35] node {} (n6)                
    (06) edge [bend right=35] node {} (n6)                
    (05) edge [bend left=55] node {} (n5);                

\end{tikzpicture}
\caption{Various subquiver schema in the $B_n$ calculation.}
\label{BnH}
\end{center}\end{figure}
Lemma \ref{configspaces} in Section \ref{proofsofstuff} shows $$\#\Hom(\mathcal{K}^n\uparrow \mathcal{Q};\mathcal{B})=\vert B_n\vert.$$
In Appendix \ref{appC} we show:
$$\begin{array}{l}\#\Hom(\mathcal{J}_i^n\uparrow \mathcal{Q};\mathcal{B})\leq 2\vert B_n\vert,\\
\#\Hom(\mathcal{H}_i^n\uparrow \mathcal{Q};\mathcal{B})\leq \frac{4(i-1)}{n}\vert B_n\vert.\end{array}$$

Finally, note that since computing $*$ with the identity element $e$ requires no operations to compute (Corollary \ref{e}), for all $1\leq i\leq n$, $\vert \tilde{A}_i\vert=\vert \tilde{A}_i'\vert = 1.$

Plugging in to (\ref{bythm2}), we may compute $\sum_{y\in \tilde{Y}} yF_y$ in at most
$$\#\Hom(\mathcal{K}^n\uparrow \mathcal{Q};\mathcal{B}) + \sum_{i=2}^n \#\Hom(\mathcal{H}_i^n\uparrow \mathcal{Q};\mathcal{B}) + \sum_{j=2}^n \#\Hom(\mathcal{J}_j^n\uparrow \mathcal{Q};\mathcal{B})=(4n-3)\vert B_n\vert$$ 
multiplications (and fewer additions). By Lemma \ref{factorsum},
$$t_{B_n}(R)\leq t_{B_{n-1}}(R_{B_{n-1}})+4n-3,$$
and so $$t_{B_n}(R)\leq n(2n-1).$$
\end{proof}

Analogous arguments give the following result for Weyl groups of type $D_n$.
\begin{theorem}[cf. Theorem \ref{Dnthm}]
For the Weyl group $D_n$ and $R$ a complete set of irreducible matrix representations of $D_n$ adapted to the subgroup chain
$D_n> D_{n-1}>\cdots> D_0=\{e\},$
$$C(D_n)\leq T_{D_n}(R)\leq\frac{n(13n-11)}{2}\vert D_n\vert.$$  
\end{theorem}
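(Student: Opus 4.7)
The plan is to mimic the proof of Theorem \ref{Bnthm2} step by step with $B_n$ replaced by $D_n$. Two things change: the set of Coxeter generators (the $D_n$ Dynkin diagram is a chain that forks at one end rather than terminating in a double bond), and the branching rule $D_n \downarrow D_{n-1}$, which is the combinatorial data governing edges and multiplicities in the associated Bratteli diagram $\mathcal{B}$.

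First I would label the simple reflections of $D_n$ as $s_1', s_1, s_2, \ldots, s_{n-1}$ (with $s_1'$ and $s_1$ the two prongs of the fork attached at $s_2$) and write down minimal length coset representatives for $D_n/D_{n-1}$. As in the $B_n$ case these admit a symmetric two-arm factorization, so that a complete set of coset representatives is contained in
\[
Y = \bigl\{ a_{n-1} \cdots a_2 \, \omega \, a_2' \cdots a_{n-1}' \bigm| a_i \in A_i,\; a_i' \in A_i',\; \omega \in \Omega \bigr\},
\]
where $A_i = A_i' = \{e, s_i\}$ and $\Omega$ is the small set of at most four short words in $\{s_1, s_1'\}$ needed to span the fork. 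By Lemma \ref{factorsum} applied iteratively to the chain $\mathbb{C}[D_n] > \mathbb{C}[D_{n-1}] > \cdots > \mathbb{C}$, computation of the Fourier transform reduces to computation of $\sum_{y \in \tilde{Y}} y F_y$ in the path algebra $\mathbb{C}[\mathcal{B}_n]$.

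Next I would apply the SOV Approach from Section \ref{sepvarstate}: take $X$ to be the factorization set induced by $Y$, set each $X_i = \mathbb{C}[\mathcal{B}_{i^+}] \cap \cent(\mathbb{C}[\mathcal{B}_{i^-}])$ as in Definition \ref{widef}, and choose the permutation $\sigma$ that places $F_y$ first and then processes the two arms from the fork outwards. The iterated symmetric differences $(Q_1^\sigma \triangle \cdots \triangle Q_{i-1}^\sigma) \cup Q_i^\sigma$ will fall into three families $\mathcal{H}_i^n$, $\mathcal{J}_i^n$, $\mathcal{K}^n$ directly analogous to Figure \ref{BnH}, augmented with the extra short edges near the root forced by the Dynkin fork. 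Section \ref{proofsofstuff} and the analog of Appendix \ref{appC} then supply bounds $\#\Hom(\mathcal{K}^n \uparrow \mathcal{Q}; \mathcal{B}) = |D_n|$ from Lemma \ref{configspaces}, together with $\#\Hom(\mathcal{H}_i^n \uparrow \mathcal{Q}; \mathcal{B}) \leq c_1 \tfrac{i-1}{n} |D_n|$ and $\#\Hom(\mathcal{J}_i^n \uparrow \mathcal{Q}; \mathcal{B}) \leq c_2 |D_n|$ for explicit constants $c_1, c_2$ strictly larger than the $B_n$ analogs. Plugging these into Theorem \ref{efficiencyfirststate} and summing should give a per-step recursion of the form $t_{D_n}(R) \leq t_{D_{n-1}}(R_{D_{n-1}}) + (13n - 12)$, which telescopes to $\tfrac{n(13n-11)}{2}$ after reducing via Lemma \ref{factorsum}.

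The main obstacle is the bookkeeping in the counts $\#\Hom(-\uparrow \mathcal{Q}; \mathcal{B})$ arising from the more intricate branching rule $D_n \downarrow D_{n-1}$. Unlike $B_n \downarrow B_{n-1}$, this branching is not multiplicity-free: irreducibles of $D_n$ indexed by self-conjugate pairs $\{\lambda, \lambda\}$ split into $\pm$ components, and the corresponding vertices in $\mathcal{B}$ acquire doubled edges. The delicate step is propagating these doubled edges through the symmetric-difference aggregation of subquivers without overcounting, and verifying that the cumulative effect produces precisely the leading coefficient $13/2$ rather than a larger constant. Everything else — the choice of $Y$, the definition of $X$, the identification of the subquiver shapes, and the telescoping of Lemma \ref{factorsum} — is structurally identical to the $B_n$ argument and should go through with only cosmetic changes.
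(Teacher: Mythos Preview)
Your approach is essentially the paper's: factor minimal coset representatives for $D_n/D_{n-1}$ through the two-arm structure, apply the SOV machinery with the same three quiver shapes $\mathcal{H}_i^n,\mathcal{J}_i^n,\mathcal{K}^n$ (the paper reuses Figure~\ref{BnH} verbatim), and obtain the per-step bound $13n-12$ from constants $c_2=3$ and $c_1=20$ (Lemma~\ref{DnJlemma}, Corollary~\ref{DnH3}) which telescopes to $\tfrac{n(13n-11)}{2}$.

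One correction to your ``main obstacle'' paragraph: the $D_n$ Bratteli diagram \emph{is} multiplicity-free. The splitting $(\lambda,\lambda)\mapsto(\lambda,\lambda)^{\pm}$ creates two distinct \emph{vertices} at even levels, not doubled edges; each $(\lambda,\lambda)^{\pm}$ connects by a single edge to the same neighbors that $(\lambda,\lambda)$ would. The extra work in the $D_n$ counts (Lemmas~\ref{DnJlemma} and~\ref{DnH1}) comes from these additional vertices inflating $M_\mathcal{B}(D_i,D_{i-2})$ to $3$ rather than $2$ and forcing separate even/odd-$i$ case analyses in the $\mathcal{H}_i^n$ count, not from any failure of multiplicity-freeness. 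So the ``propagating doubled edges through the symmetric-difference aggregation'' concern is misplaced; the bookkeeping is parity-based vertex counting, and it is this that produces the $20$ in place of $4$.
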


\begin{proof}
Let $s_1,\dots,s_n$ denote the simple reflections for $D_n$, labeled according to its standard Dynkin diagram (see Figure \ref{Dn}).
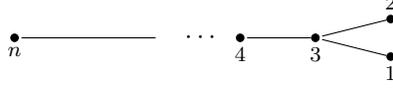
\begin{figure}[H]\begin{center}
\begin{tikzpicture}[shorten >=1pt,node distance=2cm,on grid,auto,/tikz/initial text=] 
   \node[pnt] at (6,.25) (1) {};
   \node[pnt] at (6,.75) (12) {};
   \node[pnt] at (5,.5) (2) {};
   \node[pnt] at (4,.5) (3) {};
   \node at (3.5,.5) (q) {$\cdots$};   
   \node at (3,.5) (n-1) {};   
   \node[pnt] at (1,.5) (n) {};
   \draw (1) node[below] {\footnotesize $1$};
   \draw (12) node[above] {\footnotesize $2$};
   \draw (2) node[below] {\footnotesize $3$};
   \draw (3) node[below] {\footnotesize $4$};
   \draw (n) node[below] {\footnotesize $n$};
   \path[every node/.style={font=\scriptsize}]
    (1) edge  node {} (2)
    (12) edge  node {} (2)
    (2) edge node {} (3)
    (n-1) edge node {} (n);               

\end{tikzpicture}
\caption{Dynkin diagram for $D_n$.}
\label{Dn}
\end{center}\end{figure}
Recall from \cite{sovi} that elements in a set of minimal coset representatives for $D_n/D_{n-1}$ have the following factorizations:
$$e,\;s_n,\;s_{n-1}s_n,\;\dots\;,\;s_3\cdots s_n,\; s_2s_3\cdots s_n,\;s_1s_3\cdots s_n,\;$$
$$s_1s_2s_3\cdots s_n,\;s_3s_1s_2s_3\cdots s_n,\;\dots\;,\;s_n\cdots s_3s_2s_1s_3\cdots s_n.$$
Then for $A_i=\{e,s_i\}=A_i'$, following the proof of Theorem \ref{Bnthm} shows we need only determine $\#\Hom(\mathcal{H}_i^n\uparrow \mathcal{Q};\mathcal{B}),$ $\#\Hom(\mathcal{J}_i^n\uparrow \mathcal{Q};\mathcal{B}),$ and $\#\Hom(\mathcal{K}^n\uparrow \mathcal{Q};\mathcal{B}),$
for $\mathcal{H}_i^n,\mathcal{J}_i^n,\mathcal{K}^n$ the quivers of Figure \ref{BnH}.
As before,
$$\#\Hom(\mathcal{K}^n\uparrow \mathcal{Q};\mathcal{B})=\vert D_n\vert.$$

By Lemma \ref{DnJlemma} and Corollary \ref{DnH3} of Appendix \ref{appC},
$$\begin{array}{l}\#\Hom(\mathcal{J}_i^n\uparrow \mathcal{Q};\mathcal{B})\leq 3\vert D_n\vert,\\
\#\Hom(\mathcal{H}_i^n\uparrow \mathcal{Q};\mathcal{B})\leq \displaystyle\frac{20(i-1)}{n}\vert D_n\vert,\end{array}$$
so by Theorem \ref{efficiencyfirststate} we may compute $\sum yF_y$ in at most 
$$\begin{array}{l}\#\Hom(\mathcal{K}^n\uparrow \mathcal{Q};\mathcal{B}) + \sum_{i=2}^n \#\Hom(\mathcal{H}_i^n\uparrow \mathcal{Q};\mathcal{B}) + \sum_{j=2}^n \#\Hom(\mathcal{J}_j^n\uparrow \mathcal{Q};\mathcal{B})\\=(13n-12)\vert D_n\vert\end{array}$$ 
multiplications (and fewer additions). Then by Lemma \ref{factorsum},
$$t_{D_n}(R)\leq t_{D_{n-1}}(R_{D_{n-1}}) +13n-12,$$
and so
$$t_{D_n}(R)\leq \frac{n(13n-11)}{2}.$$

\end{proof}

\subsection{The General Linear Group} 

Let $\mathbb{F}_q:=\mathbb{F}_{p^k}$ be a finite field of characteristic $p$ and order $q=p^k$. Let $Gl_n(q)$ denote the matrix group $Gl_n(\mathbb{F}_q)$ and consider $Gl_{n-1}(q)$ as a subgroup of $Gl_n(q)$ under the embedding $$A\rightarrow \begin{pmatrix}1&0\\0&A\end{pmatrix},$$ for $A\in Gl_{n-1}(q)$.
\begin{theorem}[cf. Theorem \ref{Gln}]  For the matrix group $Gl_n(q)$ and $R$ a complete set of irreducible matrix representations of $Gl_n(q)$ adapted to the subgroup chain $Gl_n(q)> Gl_{n-1}(q)>\cdots> \{e\}$

$$C(Gl_n(q))\leq T_{Gl_n(q)}(R)\leq \left(\frac{4^nq^{n+1}-q}{4q-1}\right)\vert Gl_n(q)\vert.$$
\end{theorem}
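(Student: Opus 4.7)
The approach closely parallels the proofs of Theorems \ref{Bnthm2} and \ref{Dnthm}. Let $\mathcal{B}$ denote the Bratteli diagram for the chain $\mathbb{C}[Gl_n(q)] > \mathbb{C}[Gl_{n-1}(q)] > \cdots > \mathbb{C}$, and let $R_i$ denote the restriction of $R$ to $Gl_i(q)$. First, I would exhibit for each $i$ a compact factorization of a set of minimal coset representatives for $Gl_i(q)/Gl_{i-1}(q)$; there are $|Gl_i(q)|/|Gl_{i-1}(q)| = q^{i-1}(q^i-1)$ such cosets, and in analogy with the sets $A_j = \{e,s_j\}$ of the Weyl-group arguments I would write each representative $y$ as an ordered product $y = a_i \cdots a_1 a_1' \cdots a_i'$ of elementary factors drawn from small subsets $A_{i,j}, A_{i,j}' \subseteq Gl_i(q)$ of transvections and dilations arising from a Bruhat-type decomposition relative to the standard maximal parabolic with Levi $Gl_1(q) \times Gl_{i-1}(q)$. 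Each factor $a_j$ (resp.\ $a_j'$) should lie in $\mathbb{C}[\mathcal{B}_j] \cap \cent(\mathbb{C}[\mathcal{B}_{j-2}])$ so that it corresponds to a quiver of the basic shape of Figure~\ref{Qi}.

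Next, I would apply the SOV approach of Section~\ref{sepvarstate}. For each level $i$, factor $\tilde{y}F_y = x_1\cdots x_m$, associate to each $x_k$ its quiver $Q_k$ as in Definition~\ref{widef}, and choose a permutation $\sigma$ so that the incremental symmetric-difference--unions $(Q_1^\sigma \triangle \cdots \triangle Q_k^\sigma) \cup Q_{k+1}^\sigma$ remain controllably small. By Theorem~\ref{efficiencyfirststate}, the number of scalar multiplications needed to form $\sum_{y\in\tilde{Y}_i} yF_y$ is at most
$$\sum_{k=1}^{m-1}|W_{k-1}|\cdot\#\Hom\bigl((Q_1^\sigma\triangle\cdots\triangle Q_k^\sigma)\cup Q_{k+1}^\sigma;\mathcal{B}\bigr),$$
and the task reduces, as in the $B_n$ and $D_n$ cases, to estimating these $\#\Hom$ quantities using the morphism-counting machinery of Sections~\ref{proofsofstuff} and~\ref{generalcounts}. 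The target per-level estimate is
$$m_{Gl_i(q)}(R_i, Y_i, Gl_{i-1}(q)) \leq q(4q)^{i-1}.$$

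Once this per-level bound is secured, iterating Lemma~\ref{factorsum} down the chain $Gl_n(q) > Gl_{n-1}(q) > \cdots > Gl_1(q) > \{e\}$ (with trivial base case $t_{\{e\}} = 0$ and $i=1$ contribution $m_{Gl_1(q)} \leq q$ absorbing the naive DFT on the abelian group $\mathbb{F}_q^*$) yields the telescoping geometric sum
$$t_{Gl_n(q)}(R) \;\leq\; \sum_{i=1}^n q(4q)^{i-1} \;=\; \frac{q\bigl((4q)^n-1\bigr)}{4q-1} \;=\; \frac{4^n q^{n+1}-q}{4q-1},$$
and multiplying through by $|Gl_n(q)|$ gives the stated bound on $T_{Gl_n(q)}(R)$, which majorizes $C(Gl_n(q))$.

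The main obstacle is the $\#\Hom$ counting step. Unlike the multiplicity-free Bratteli diagrams for $S_n$, $B_n$, and $D_n$, the branching $Gl_i(q)\downarrow Gl_{i-1}(q)$ is \emph{not} multiplicity-free, and the edge multiplicities of $\mathcal{B}$ depend non-trivially on $q$ (reflecting Green's parameterization of the irreducibles of $Gl_n(\mathbb{F}_q)$). Extracting the clean closed form $q(4q)^{i-1}$ will require the differential-poset generalizations developed in Section~\ref{generalcounts}, together with careful bookkeeping of how the quivers $Q_k$ associated to the elementary factors overlap along the Levi block structure. I expect the factor of $4q$ per level to arise as the product of a linear-in-$q$ contribution from the $q$-dependent branching multiplicities and an absolute constant contribution from the overlap pattern of the glued quivers, in direct analogy with the bounds $\#\Hom(\mathcal{H}_i^n\uparrow\mathcal{Q};\mathcal{B})\leq \frac{4(i-1)}{n}|B_n|$ and $\#\Hom(\mathcal{J}_i^n\uparrow\mathcal{Q};\mathcal{B})\leq 2|B_n|$ derived for $B_n$ in Appendix~\ref{appC}.
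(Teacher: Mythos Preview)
Your high-level framework is correct and matches the paper: apply SOV at each level, aim for the per-level bound $m_{Gl_i(q)}\leq 4^{i-1}q^i$, and iterate Lemma~\ref{factorsum} to obtain the geometric sum $\sum_{i=1}^n 4^{i-1}q^i=(4^nq^{n+1}-q)/(4q-1)$. The arithmetic of the final telescoping is exactly what the paper does.

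However, the coset factorization you propose is a genuine gap. You suggest a symmetric product $y=a_i\cdots a_1a_1'\cdots a_i'$ modeled on the $B_n$ argument, but the paper's factorization (developed in Appendix~\ref{appD}) is quite different in shape: each coset representative is written as $\pi s_i$ where $\pi$ is a permutation matrix and $s_i$ is a \emph{one-directional} product $u_2\cdots u_{p-1}u_{p+1}'t_pu_{p+1}\cdots u_iv_{i+1}\cdots v_n$ with factors in $Gl_j(q)\cap\cent(Gl_{j-2}(q))$. This factorization is obtained not from a Bruhat decomposition but from an explicit transitive action of $Gl_n(q)$ on the set $Z_n=\{(x_1y_1,\dots,x_ny_n):\mathbf{x}\cdot\mathbf{y}=1\}$, and it depends essentially on the characteristic $p$ of $\mathbb{F}_q$ (with separate treatment for $p=2$). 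The sizes of the factor sets---$|U_j|=|U_j'|=q-1$, $|V_j|=q^2$, $|T_j|=1$---are what make the $|W_{k-1}|$ bounds mesh with the $\#\Hom$ bounds to produce $2^{2j-4}q^n|Gl_n(q)|$ per step. A generic parabolic or Bruhat factorization would not obviously yield factors with the required centralizer property, nor these specific cardinalities.

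Two further points you are missing. First, the permutation prefix $\pi$ is handled separately: multiplication by $\pi$ costs no arithmetic operations (its matrix entries are $0$ or $1$), so the sum $\sum_\pi \pi F_\pi$ is free. Second, the $\#\Hom$ bounds in the paper do not come from a differential-poset argument alone; they rely on the explicit multiplicity estimates $M_\mathcal{B}(Gl_j(q),Gl_{j-1}(q))\leq 2^{j-1}$ and $|\widehat{Gl}_j(q)|\leq q^j$ quoted from \cite[Lemma~5.9]{sovi}. These are the source of the factor $4^j$ you were anticipating, and without them the morphism counts for the non-multiplicity-free diagram cannot be closed.
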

\begin{proof} Let $P$ be the set of permutation matrices of $Gl_n(q)$.
By Proposition \ref{easier} in Appendix \ref{appD}, for $p\neq 2$,  
$$Y=\{\pi s_i\vert\;1\leq i\leq n, (i-1)\text{ divisible by } p, \pi\in P\}$$
contains a complete set of coset representatives for $GL_n(q)/Gl_{n-1}(q)$, where $s_i$ has form 
$$u_2\cdots u_{p-1}u_{p+1}'t_pu_{p+1}\cdots u_{2p-1}u_{2p+1}'t_{2p}u_{2p+1}\cdots u_iv_{i+1}\cdots v_n,$$ for  $t_j$ the permutation matrix corresponding to $(j\; j-1)$, and $$u_j,u_j',v_j\in Gl_j(q)\cap\cent(Gl_{j-2}(q)),$$ with $(q-1)$ possible matrices for $u_j$ and $u_j'$, and $q^2$ possible matrices for $v_j$. 

Let $U_j$ (respectively $U_j'$, $V_j$) be the set of matrices $u_j$ (respectively $u_j'$, $v_j$), and let $T_j=\{t_j\}$. Let $\tilde{Y}=\{\tilde{y}\mid y\in Y\}$, and similarly define $\tilde{U}_j, \tilde{U}_j'$, $\tilde{V}_j$, $\tilde{T}_j,\tilde{P}$. Note that $\tilde{U}_j,\tilde{U}_j',\tilde{V}_j,\tilde{T}_j\in\mathbb{C}[\mathcal{B}_{j}]\cap\cent(\mathbb{C}[\mathcal{B}_{j-2}])$.

By Lemma \ref{factorsum} computation of the Fourier transform of a complex function $f$ on $GL_n(q)$ is equivalent to computation of:
 \begin{equation}\label{sum}\sum_{y\in\tilde{Y}} yF_y 
 \end{equation}
 for $F_{y}\in \mathbb{C}[\mathcal{B}_{n-1}]$. The number of operations to compute (\ref{sum}) is bounded by the number of operations to compute
$$\sum_{\pi}\sum_{\substack{1\leq i\leq n\\p\mid(i-1)}}\sum_{\substack{u_j, u_j',\\v_j, t_j}} \pi u_2\cdots v_{i+1}\cdots v_n F_{\pi u_2\cdots v_{i+1}\cdots v_n}$$
\begin{equation}\label{firstsum}=\sum_{\pi}\pi\sum_{\substack{1\leq i\leq n\\p\mid(i-1)}}\sum_{\substack{u_j, u_j'\\v_j, t_j}} u_2\cdots v_{i+1}\cdots v_n F_{\pi u_2\cdots v_{i+1}\cdots v_n}.\end{equation}

To compute (\ref{firstsum}), fix $\pi$ and
compute: 
\begin{equation}\label{sum1}\sum_{\substack{1\leq i\leq n\\p\mid(i-1)}}\sum_{\substack{u_j, u_j'\\v_j, t_j}} u_2\cdots v_{i+1}\cdots v_n F_{\pi u_2\cdots v_{i+1}\cdots v_n},
\end{equation}
then multiply by $\pi$ and sum. To compute sums of the form (\ref{sum1}):

\begin{itemize}
\item[I.]Let $X=\{(u_2,\dots,u_{p-1},u_{p+1}',t_p,u_{p+1},\dots,u_i,v_{i+1},\dots,v_n, F_{u_2\cdots v_n})\}$, ranging over $u_j\in\tilde{U}_j,u_j'\in\tilde{U}_j',v_j\in\tilde{V}_j,t_j\in\tilde{T}_j$
and $i\leq n$ with  $i\mid(p-1)\}.$
\item[II.] Fig. \ref{quiverstoglue2} shows the various component subquivers corresponding to the coset representatives. They combine together as per Fig. \ref{GlnG} to give the factorization of $yF_y$. Thus, the algorithm proceeds by gluing together quivers $Q_{i}$ of Figure \ref{quiverstoglue2} to build the quiver $\mathcal{Q}$ of Figure \ref{GlnG}. 
\begin{figure}\begin{center}
\begin{tikzpicture}[shorten >=1pt,node distance=2cm,on grid,auto,/tikz/initial text=] 
%\begin{scope}[shift={(0,1.5)}] 
   \node at (-13,1.5) (q) {$\tilde{U}_2$};
   \node[pnt] at (-2,1.5) (01) {};
   \node[pnt] at (-12,1.5) (n1) {};
   \node[pnt] at (-3.5,1.5) (21) {};
   \node at (-13,0.5) (q) {$\tilde{U}_3$};
   \node[pnt] at (-2,0.5) (02) {};
   \node[pnt] at (-12,0.5) (n2) {};
   \node[pnt] at (-2.75,0.5) (11) {};
   \node[pnt] at (-4.25,0.5) (31) {};
   \node at (-7,0) (q) {$\vdots$};
  \begin{scope}[shift={(0,1.25)}] 
 \node at (-13,-2) (q) {$\tilde{U}_{p-1}$};
   \node[pnt] at (-2,-2) (03) {};
   \node[pnt] at (-12,-2) (n3) {};
   \node[pnt] at (-4.5,-2) (p-31) {};
   \node[pnt] at (-6,-2) (p-11) {};
   \node at (-13,-3.25) (q) {$\tilde{U}_{p+1}'$};
   \node[pnt] at (-2,-3.25) (04) {};
   \node[pnt] at (-12,-3.25) (n4) {};
   \node[pnt] at (-6,-3.25) (p-12) {};
   \node[pnt] at (-7.5,-3.25) (p+11) {};
   \node at (-13,-4.5) (q) {$\tilde{T}_{p}$};
   \node[pnt] at (-2,-4.5) (05) {};
   \node[pnt] at (-12,-4.5) (n5) {};
   \node[pnt] at (-5.25,-4.5) (p-21) {};
   \node[pnt] at (-6.75,-4.5) (p1) {};
\end{scope}
\begin{scope}[shift={(-5,2.25)}]
   \node at (-8,-6.5) (q) {$\tilde{U}_{p+1}$};
   \node[pnt] at (3,-6.5) (06) {};
   \node[pnt] at (-7,-6.5) (n6) {};
   \node[pnt] at (-1,-6.5) (p-13) {};
   \node[pnt] at (-2.5,-6.5) (p+12) {};
\end{scope}
\begin{scope}[shift={(-5,-7)}]
   \node at (-8,1.5) (q) {$\tilde{U}_{p+2}$};
   \node[pnt] at (3,1.5) (07) {};
   \node[pnt] at (-7,1.5) (n7) {};
   \node[pnt] at (-1.75,1.5) (p2) {};
   \node[pnt] at (-3.25,1.5) (p+21) {};
   \node at (-2,.75) (q) {$\vdots$};
   \end{scope}

   \draw (01) node[below] {\footnotesize $0$};
   \draw (n1) node[below] {\footnotesize $n$};
   \draw (21) node[below] {\footnotesize $2$};
   \draw (02) node[below] {\footnotesize $0$};
   \draw (n2) node[below] {\footnotesize $n$};
   \draw (11) node[below] {\footnotesize $1$};
   \draw (31) node[below] {\footnotesize $3$};
   \draw (03) node[below] {\footnotesize $0$};
   \draw (n3) node[below] {\footnotesize $n$};
   \draw (p-31) node[below] {\footnotesize $p-3$};
   \draw (p-11) node[below] {\footnotesize $p-1$};
   \draw (04) node[below] {\footnotesize $0$};
   \draw (n4) node[below] {\footnotesize $n$};
   \draw (p-12) node[below] {\footnotesize $p-1$};
   \draw (p+11) node[below] {\footnotesize $p+1$};
   \draw (05) node[below] {\footnotesize $0$};
   \draw (n5) node[below] {\footnotesize $n$};
   \draw (p-21) node[below] {\footnotesize $p-2$};
   \draw (p1) node[below] {\footnotesize $p$};
   \draw (06) node[below] {\footnotesize $0$};
   \draw (n6) node[below] {\footnotesize $n$};
   \draw (p-13) node[below] {\footnotesize $p-1$};
   \draw (p+12) node[below] {\footnotesize $p+1$};
   \draw (07) node[below] {\footnotesize $0$};
   \draw (n7) node[below] {\footnotesize $n$};
   \draw (p2) node[below] {\footnotesize $p$};
   \draw (p+21) node[below] {\footnotesize $p+2$};
   
   \path[every node/.style={font=\scriptsize}]
    (21) edge node {} (n1)
    (01) edge [bend left=45] node {} (21)
    (01) edge [bend right=45] node {} (21)        
    (02) edge node {} (11)
    (31) edge node {} (n2)
    (11) edge [bend left=45] node {} (31)
    (11) edge [bend right=45] node {} (31)    
    (03) edge node {} (p-31)
    (p-11) edge node {} (n3)
    (p-31) edge [bend left=45] node {} (p-11)
    (p-31) edge [bend right=45] node {} (p-11)    
    (04) edge node {} (p-12)
    (p+11) edge node {} (n4)
    (p-12) edge [bend left=45] node {} (p+11)
    (p-12) edge [bend right=45] node {} (p+11)
    (05) edge node {} (p-21)
    (p1) edge node {} (n5)
    (p-21) edge [bend left=45] node {} (p1)
    (p-21) edge [bend right=45] node {} (p1)
    (06) edge node {} (p-13)
    (p+12) edge node {} (n6)
    (p-13) edge [bend left=45] node {} (p+12)
    (p-13) edge [bend right=45] node {} (p+12)
    (07) edge node {} (p2)
    (p+21) edge node {} (n7)
    (p2) edge [bend left=45] node {} (p+21)
    (p2) edge [bend right=45] node {} (p+21);

\begin{scope}[shift={(-5,-6.5)}]
   
  %\node at (1.5,.5) (q) {$\vdots$};
   \node at (-8,-.5) (q) {$\tilde{U}_{i}$};
   \node[pnt] at (3,-.5) (08) {};
   \node[pnt] at (-7,-.5) (n8) {};
   \node[pnt] at (-2.5,-.5) (i-21) {};
   \node[pnt] at (-4,-.5) (i1) {};
   \node at (-8,-1.5) (q) {$\tilde{V}_{i+1}$};
   \node[pnt] at (3,-1.5) (09) {};
   \node[pnt] at (-7,-1.5) (n9) {};
   \node[pnt] at (-3.25,-1.5) (i-11) {};
   \node[pnt] at (-4.75,-1.5) (i+11) {};
   \node at (-2,-2) (q) {$\vdots$};
  \begin{scope}[shift={(0,-.5)}]
  \node at (-8,-2) (q) {$\tilde{V}_{n}$};
   \node[pnt] at (3,-2) (010) {};
   \node[pnt] at (-7,-2) (n10) {};
   \node[pnt] at (-5.5,-2) (n-21) {};
   \node at (-8.2,-3) (q) {$F_{u_2\cdots v_n}$};
   \node[pnt] at (3,-3) (011) {};
   \node[pnt] at (-7,-3) (n11) {};
   \node[pnt] at (-4.25,-3) (n-11) {};
   \end{scope}
   
   \end{scope}
\draw (08) node[below] {\footnotesize $0$};
   \draw (n8) node[below] {\footnotesize $n$};
   \draw (i-21) node[below] {\footnotesize $i-2$};
   \draw (i1) node[below] {\footnotesize $i$};
   \draw (09) node[below] {\footnotesize $0$};
   \draw (n9) node[below] {\footnotesize $n$};
   \draw (i-11) node[below] {\footnotesize $i-1$};
   \draw (i+11) node[below] {\footnotesize $i+1$};
   \draw (010) node[below] {\footnotesize $0$};
   \draw (n10) node[below] {\footnotesize $n$};
   \draw (n-21) node[below] {\footnotesize $n-2$};
   \draw (011) node[below] {\footnotesize $0$};
   \draw (n11) node[below] {\footnotesize $n$};
   \draw (n-11) node[below] {\footnotesize $n-1$};
   
 \path[every node/.style={font=\scriptsize}]
 (08) edge node {} (i-21)
    (i1) edge node {} (n8)
    (i-21) edge [bend left=45] node {} (i1)
    (i-21) edge [bend right=45] node {} (i1)
    (09) edge node {} (i-11)
    (i+11) edge node {} (n9)
    (i-11) edge [bend left=45] node {} (i+11)
    (i-11) edge [bend right=45] node {} (i+11)
    (010) edge node {} (n-21)
    (n-21) edge [bend left=45] node {} (n10)
    (n-21) edge [bend right=45] node {} (n10)        
    (n-11) edge node {} (n11)
    (011) edge [bend left=10] node {} (n-11)
    (011) edge [bend right=10] node {} (n-11);    
  \end{tikzpicture}
\caption{Component subquivers of the factorization of $yF_y$.}
\label{quiverstoglue2}
\end{center}\end{figure}
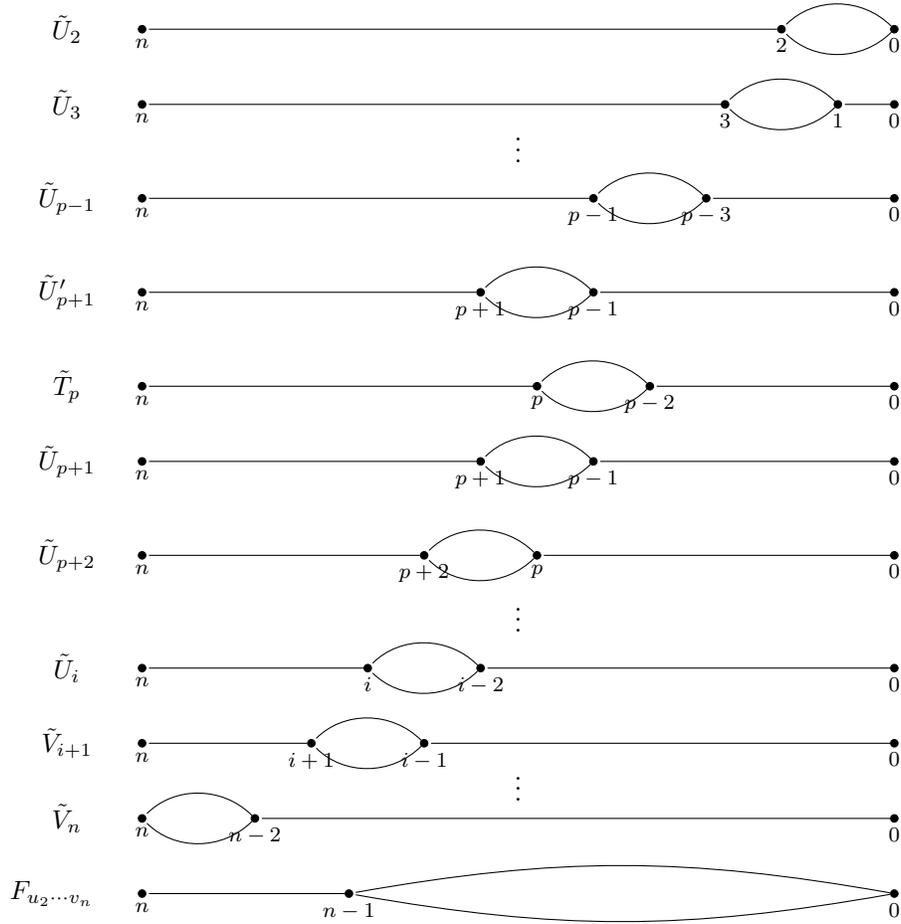  
\begin{figure}\begin{center}
\begin{tikzpicture}[shorten >=1pt,node distance=2cm,on grid,auto,/tikz/initial text=,scale=0.85] 
   
   \node at (-2,1.5) (q) {\large $\mathcal{Q}$};
   \node at (4.5,.25) (q) {\tiny $\tilde{U}_2$};
   \node at (3.75,.25) (q) {\tiny $\tilde{U}_3$};
   \node at (2,.25) (q) {\tiny $\tilde{U}_{p-1}$};
   \node at (1.25,.25) (q) {\tiny $\tilde{T}_{p}$};
   \node at (.5,.25) (q) {\tiny $\tilde{U}_{p+1}$};
   \node at (.5,.69) (q) {\tiny $\tilde{U}'_{p+1}$};
   \node at (-.25,.25) (q) {\tiny $\tilde{U}_{p+2}$};
   \node at (-2,.26) (q) {\tiny $\tilde{U}_{mp-1}$};
   \node at (-2.75,.24) (q) {\tiny $\tilde{T}_{mp}$};
   \node at (-3.5,.26) (q) {\tiny $\tilde{U}_{mp+1}$};
   \node at (-3.5,.69) (q) {\tiny $\tilde{U}'_{mp+1}$};
%   \node at (-4.25,.25) (q) {\tiny $u_{mp+2}$};
   \node at (-6,.25) (q) {\tiny $\tilde{U}_{i}$};
   \node at (-6.75,.25) (q) {\tiny $\tilde{V}_{i+1}$};
   \node at (-8.5,.25) (q) {\tiny $\tilde{V}_{n}$};
   \node at (-2,-1) (q) {\tiny $F_{u_2\cdots v_n}$};
   \node[pnt] at (5.25,0) (01) {};
   \node[pnt] at (4.5,0) (11) {};
   \node[pnt] at (3.75,0) (21) {};
   \node[pnt] at (4.5,.5) (12) {};    
   \node[pnt] at (3.75,.5) (22) {};
   \node[pnt] at (3,.5) (32) {};
   \node at (2.875,.25) (q) {$\dots$};
   \node[pnt] at (2.75,0) (p-31) {};
   \node[pnt] at (2,0) (p-21) {};
   \node[pnt] at (1.25,0) (p-11) {};
   \node[pnt] at (.5,0) (p1) {};
   \node[pnt] at (2,.5) (p-22) {};   
   \node[pnt] at (1.25,.5) (p-12) {};
   \node[pnt] at (.5,.5) (p2) {};
   \node[pnt] at (-.25,.5) (p+12) {};
   \node[pnt] at (-.25,0) (p+11) {};
   \node[pnt] at (-1,.5) (p+22) {};
   \node at (-1.125,.25) (q) {$\dots$};
   \node[pnt] at (-1.25,0) (mp-31) {};
   \node[pnt] at (-2,0) (mp-21) {};
   \node[pnt] at (-2.75,0) (mp-11) {};
   \node[pnt] at (-3.5,0) (mp1) {};
   \node[pnt] at (-2,.5) (mp-22) {};   
   \node[pnt] at (-2.75,.5) (mp-12) {};
   \node[pnt] at (-3.5,.5) (mp2) {};
   \node[pnt] at (-4.25,.5) (mp+12) {};
   \node[pnt] at (-4.25,0) (mp+11) {};
   \node[pnt] at (-5,.5) (mp+22) {};
   \node at (-5.125,.25) (q) {$\dots$};
   \node[pnt] at (-5.25,0) (i-21) {};
   \node[pnt] at (-6,0) (i-11) {};
   \node[pnt] at (-6.75,0) (i1) {};
   \node[pnt] at (-6,.5) (i-12) {};
   \node[pnt] at (-6.75,.5) (i2) {};
   \node[pnt] at (-7.5,.5) (i+12) {};
   \node at (-7.675,.25) (q) {$\dots$};
   \node[pnt] at (-7.75,0) (n-21) {};
   \node[pnt] at (-8.5,0) (n-11) {};
   \node[pnt] at (-9.25,.5) (n2) {};
   \node[pnt] at (-8.5,.5) (n-12) {};
   \draw (01) node[below] {\tiny $0$};
   \draw (11) node[below] {\tiny $1$};
   \draw (12) node[above] {\tiny $1$};
   \draw (21) node[below] {\tiny $2$};
   \draw (22) node[above] {\tiny $2$};
   \draw (32) node[above] {\tiny $3$};
   \draw (p-31) node[below] {\tiny $p-3$};
   \draw (p-22) node[above] {\tiny $p-2$};
   \draw (p-11) node[below] {\tiny $p-1$};
%   \draw (p1) node[below] {\tiny $p$};
   \draw (p+11) node[below] {\tiny $p+1$};
   \draw (p+22) node[above] { \tiny $p+2$};
   \draw (mp-31) node[below] {\tiny $mp-3$};
   \draw (mp-22) node[above] {\tiny $mp-2$};
   \draw (mp-11) node[below] {\tiny $mp-1$};
%   \draw (mp2) node[above] {\tiny $mp$};
   \draw (mp+11) node[below] {\tiny $mp+1$};
   \draw (mp+22) node[above] { \tiny $mp+2$};
   \draw (i-21) node[below] {\tiny $i-2$};
   \draw (i-12) node[above] {\tiny $i-1$};
   \draw (i1) node[below] {\tiny $i$};
   \draw (i+12) node[above] {\tiny $i+1$};
   \draw (n-12) node[above] {\tiny $n-1$};
   \draw (n-21) node[below] {\tiny $n-2$};
   \draw (n2) node[below] {\tiny $n$};
   \path[every node/.style={font=\scriptsize}]
    (01) edge node {} (12)
    (01) edge node {} (11)
    (11) edge node {} (21)
    (11) edge node {} (22)
    (21) edge node {} (32)
    (22) edge node {} (32)
    (12) edge node {} (22)
    (32) edge node {} (p-22)
    (21) edge node {} (p-31)    
    (p-31) edge node {} (p-21)
    (p-31) edge node {} (p-22)
    (p-21) edge node {} (p-11)
    (p-21) edge node {} (p-12)
    (p-11) edge node {} (p1)
    (p-11) edge node {} (p2)    
    (p-22) edge node {} (p-12)
    (p-12) edge [bend right=55] node {} (p+12)
    (p-12) edge node {} (p2)
    (p1) edge node {} (p+12)
    (p2) edge node {} (p+12)
    (p+12) edge node {} (22)
    (p+12) edge node {} (p+22)
    (p1) edge node {} (p+11)
    (p+11) edge node {} (p+22)
    (p+22) edge node {} (mp-22)
    (p+11) edge node {} (mp-31)    
    (mp-31) edge node {} (mp-21)
    (mp-31) edge node {} (mp-22)
    (mp-21) edge node {} (mp-11)
    (mp-21) edge node {} (mp-12)
    (mp-11) edge node {} (mp1)
    (mp-11) edge node {} (mp2)    
    (mp-22) edge node {} (mp-12)
    (mp-12) edge [bend right=55] node {} (mp+12)
    (mp-12) edge node {} (mp2)
    (mp1) edge node {} (mp+12)
    (mp2) edge node {} (mp+12)
    (mp+12) edge node {} (mp+22)
    (mp1) edge node {} (mp+11)
    (mp+11) edge node {} (mp+22)
    (i-21) edge node {} (i-11)
    (i-21) edge node {} (i-12)
    (i-11) edge node {} (i1)
    (i-11) edge node {} (i2)    
    (i-12) edge node {} (i2)
    (i1) edge node {} (i+12)
    (i2) edge node {} (i+12)
    (p+11) edge node {} (i-21)
    (p+22) edge node {} (i-12)
    (n-21) edge node {} (n-12)
    (n-21) edge node {} (n-11)
    (n-12) edge node {} (n2)    
    (i1) edge node {} (n-21)
    (i+12) edge node {} (n-12)    
    (n-11) edge node {} (n2)
    (01) edge [bend left=35] node {} (n-11);

\end{tikzpicture}
\caption{The full quiver factorization of $yF_y$.}
\label{GlnG}
\end{center}\end{figure}

\item[III.] Let $\sigma\in S_{n+m-1}$ be the permutation reordering $X$ so that $$\begin{array}{l}W_0=\{( F_{u_2\cdots v_n}, u_2, u_3,\dots, u_{p-1}, t_p, u_{p+1}, u_{p+1}', u_{p+2},...,u_{i},v_{i+1},\dots,v_n)\}.\end{array}$$ Then
$$\begin{array}{l} W_1=\{(u_2, u_3,\dots, u_{p-1}, t_p, u_{p+1}, u_{p+1}', u_{p+2},...,u_{i},v_{i+1},\dots,v_n)\}\\
W_2=\{(u_3,\dots, u_{p-1}, t_p, u_{p+1}, u_{p+1}', u_{p+2},...,u_{i},v_{i+1},\dots,v_n)\}\\
\vdots\\
W_{m+n-2}=\{( v_n)\}.\end{array}$$
%\item[V.] Define the maps $L_j$ as in step V of the Separation of Variables Algorithm.
\end{itemize}

By Theorem \ref{efficiencyfirststate}, we may compute (\ref{sum1}) in at most $$\sum_{k=2}^{n+m-1}\vert W_{k-1}\vert \#\Hom((Q_1^\sigma\triangle\cdots\triangle Q_{k-1}^\sigma)\cup Q_{k}^\sigma;\mathcal{B})$$ multiplications, with $(Q_1^\sigma\triangle\cdots\triangle Q_{k-1}^\sigma)\cup Q_{k}^\sigma$ as in Figure \ref{GlnR}.
\begin{figure}\begin{center}
\begin{tikzpicture}[shorten >=1pt,node distance=2cm,on grid,auto,/tikz/initial text=,scale=0.78] 
   \node at (-7.5,2) (q) {$Q_1^\sigma\cup Q_{2}^\sigma$};
   \node at (-3,3.25) (q) {\tiny $\tilde{U}_2$};
   \node[pnt] at (-2,3) (00) {};
   \node[pnt] at (-3,3) (11) {};
   \node[pnt] at (-3,3.5) (12) {};
   \node[pnt] at (-4,3.5) (22) {};
   \node[pnt] at (-7,3) (n0) {};
\begin{scope}[shift={(0,-.5)}]
   \node at (-7.5,-.8) (q) {$(Q_1^\sigma\triangle Q_2^\sigma)\cup Q_3^\sigma$};
   \node at (-4,.75) (q) {\tiny $\tilde{U}_3$};
   \node[pnt] at (-2,.5) (01) {};
   \node[pnt] at (-7,.5) (n1) {};
   \node[pnt] at (-3,.5) (13) {};
   \node[pnt] at (-4,.5) (23) {};
   \node[pnt] at (-4,1) (24) {};
   \node[pnt] at (-5,1) (34) {};
   \node at (-5,-1.5) (q) {$\vdots$};
\end{scope}
\begin{scope}[shift={(0,-1.25)}]
  \node at (-6.5,-3.7) (q) {$(Q_1^\sigma\triangle\cdots\triangle Q_{p}^\sigma)\cup Q_{p+1}^\sigma$};
   \node at (-3.75,-2.25) (q) {\tiny $\tilde{U}_{p+1}$};
   \node[pnt] at (-2,-2.5) (02) {};
   \node[pnt] at (-7,-2.5) (n2) {};
   \node[pnt] at (-3,-2.5) (p-11) {};
   \node[pnt] at (-4,-2.5) (p1) {};
   \node[pnt] at (-4,-2) (p2) {};
   \node[pnt] at (-5,-2) (p+12) {};
  \end{scope}
  \begin{scope}[shift={(2,.25)}]
   \node at (-1.25,1.8) (q) {$(Q_1^\sigma\triangle\cdots\triangle Q_{p+1}^\sigma)\cup Q_{p+2}^\sigma$};
   \node at (1,3.7) (q) {\tiny $\tilde{U}_{p+1}'$};
   \node[pnt] at (4,2.8) (03) {};
   \node[pnt] at (2,3.55) (p-14) {};
   \node[pnt] at (0,3.3) (p+14) {};
   \node[pnt] at (1,2.8) (p3) {};
   \node[pnt] at (-1,2.8) (n3) {};
  \begin{scope}[shift={(1,-.65)}]
   \node at (-2.25,-.8) (q) {$(Q_1^\sigma\triangle\cdots\triangle Q_{p+2}^\sigma)\cup Q_{p+3}^\sigma$};
   \node at (0,.75) (q) {\tiny $\tilde{U}_{p+2}$};
   \node[pnt] at (3,.5) (04) {};
   \node[pnt] at (-2,.5) (n4) {};
   \node[pnt] at (1,.5) (p5) {};
   \node[pnt] at (0,.5) (p+15) {};
   \node[pnt] at (0.25,1) (p+16) {};
   \node[pnt] at (-1,1) (p+26) {};
  \end{scope}
  \node at (1.5,-2.25) (q) {$\vdots$};
    \begin{scope}[shift={(1,-1.5)}]
 \node at (-1.75,-3.7) (q) {$(Q_1^\sigma\triangle\cdots\triangle Q_{m+n-2}^\sigma)\cup Q_{m+n-1}^\sigma$};
   \node at (-1,-2.25) (q) {\tiny $\tilde{V}_{n}$};
   \node[pnt] at (3,-2.5) (05) {};
   \node[pnt] at (-1,-2.5) (n-11) {};
   \node[pnt] at (-2,-2) (n5) {};
   \node[pnt] at (-1,-2) (n-12) {};
   \node[pnt] at (0,-2.5) (n-21) {};  
  \end{scope}
    \end{scope}
   \draw (00) node[below] {\tiny $0$};
   \draw (n0) node[below] {\tiny $n-1$};
   \draw (01) node[below] {\tiny $0$};
   \draw (11) node[below] {\tiny $1$};
   \draw (12) node[above] {\tiny $1$};
   \draw (22) node[above] {\tiny $2$};
   \draw (01) node[below] {\tiny $0$};
   \draw (n1) node[below] {\tiny $n-1$};
   \draw (13) node[below] {\tiny $1$};
   \draw (23) node[below] {\tiny $2$};
   \draw (24) node[above] {\tiny $2$};
   \draw (34) node[above] {\tiny $3$};
   \draw (02) node[below] {\tiny $0$};
   \draw (n2) node[below] {\tiny $n-1$};
   \draw (p-11) node[below] {\tiny $p-1$};
   \draw (p1) node[below] {\tiny $p$};
   \draw (p2) node[above] {\tiny $p$};
   \draw (p+12) node[above] {\tiny $p+1$};
   \draw (03) node[below] {\tiny $0$};
   \draw (n3) node[below] {\tiny $n-1$};
   \draw (p-14) node[above] {\tiny $p-1$};
   \draw (p+14) node[above] {\tiny $p+1$};
   \draw (p3) node[below] {\tiny $p$};
   \draw (04) node[below] {\tiny $0$};
   \draw (n4) node[below] {\tiny $n-1$};
   \draw (p5) node[below] {\tiny $p$};
   \draw (p+15) node[below] {\tiny $p+1$};
   \draw (p+16) node[above] {\tiny $p+1$};
   \draw (p+26) node[above] {\tiny $p+2$};
   \draw (n-11) node[below] {\tiny $n-1$};
   \draw (n-12) node[above] {\tiny $n-1$};
   \draw (05) node[below] {\tiny $0$};
   \draw (n5) node[above] {\tiny $n$};
   \draw (n-21) node[below] {\tiny $n-2$};
   \path[every node/.style={font=\scriptsize}]
    (00) edge node {} (n0)
    (00) edge node {} (12)
    (12) edge node {} (22)
    (11) edge node {} (22)
    (00) edge [bend left=35] node {} (n0)
    (01) edge [bend right=35] node {} (24)
    (13) edge node {} (n1)
    (24) edge node {} (34)
    (13) edge node {} (24)
    (23) edge node {} (34)
    (01) edge [bend left=35] node {} (n1)
    (02) edge [bend right=25] node {} (p2)
    (p2) edge node {} (p+12)
    (p-11) edge node {} (n2)
    (p-11) edge node {} (p2)
    (p1) edge node {} (p+12)
    (02) edge [bend left=35] node {} (n2)
    (03) edge [bend left=35] node {} (n3)
    (03) edge [bend right=25] node {} (p+14)
    (p+14) edge node {} (p3)
    (p3) edge node {} (n3)
    (p-14) edge [bend right=55] node {} (p+14)
    (04) edge [bend right=25] node {} (p+16)
    (p+16) edge node {} (p+26)
    (p5) edge node {} (n4)
    (p5) edge node {} (p+16)
    (p+15) edge node {} (p+26)
    (04) edge [bend left=35] node {} (n4)
    (05) edge [bend right=25] node {} (n-12)
    (n-12) edge node {} (n5)
    (n-11) edge node {} (n5)
    (n-21) edge node {} (n-11)
    (n-21) edge node {} (n-12)
    (05) edge [bend left=35] node {} (n-11);
\end{tikzpicture}
\caption{Schematic of the stepwise aggregation of quivers as directed by SOV.}
\label{GlnR}
\end{center}\end{figure}
Then as in the proof of Theorem \ref{Bnthm2} for $\mathcal{H}_j^n,\mathcal{J}_j^n$ the quivers of Figure \ref{GlnFGH}, 
$$\#\Hom((Q_1^\sigma\triangle\cdots\triangle Q_{k-1}^\sigma)\cup Q_{k}^\sigma;\mathcal{B})=\#\Hom(\mathcal{H}_j^n\uparrow\mathcal{Q};\mathcal{B}) \text{ or } \#\Hom(\mathcal{J}_j^n\uparrow\mathcal{Q};\mathcal{B}).$$

\begin{figure}\begin{center}
\begin{tikzpicture}[shorten >=1pt,node distance=2cm,on grid,auto,/tikz/initial text=,scale=0.9] 
   \node at (1,.25) (q) {\large $\mathcal{J}_j^n$};
   \node at (1,3.7) (q) {\tiny $\tilde{U}'_{j}$};
   \node[pnt] at (4,2.8) (03) {};
   \node[pnt] at (2,3.6) (p-14) {};
   \node[pnt] at (0,3.3) (p+14) {};
   \node[pnt] at (1,2.8) (p3) {};
   \node[pnt] at (-1,2.8) (n3) {};
\begin{scope}[shift={(7,2.3)}] 
  \node at (1,-2) (q) {\large $\mathcal{H}_j^n$};
   \node at (1,1) (q) {\tiny $\tilde{U}_j,  \tilde{V}_j$};
   \node at (1.2,.7) (q) {\tiny or $\tilde{T}_j$};
   \node[pnt] at (4,.5) (04) {};
   \node[pnt] at (-1,.5) (n4) {};
   \node[pnt] at (2.25,.5) (p5) {};
   \node[pnt] at (1,.5) (p+15) {};
   \node[pnt] at (1,1.3) (p+16) {};
   \node[pnt] at (-.25,1.3) (p+26) {};
   \end{scope}
   \draw (03) node[below] {\footnotesize $0$};
   \draw (n3) node[below] {\footnotesize $n-1$};
   \draw (p-14) node[below] {\footnotesize $j-2$};
   \draw (p+14) node[below] {\footnotesize $j$};
   \draw (p3) node[below] {\footnotesize $j-1$};
   \draw (04) node[below] {\footnotesize $0$};
   \draw (n4) node[below] {\footnotesize $n-1$};
   \draw (p5) node[below] {\footnotesize $j-2$};
   \draw (p+15) node[below] {\footnotesize $j-1$};
   \draw (p+16) node[above] {\footnotesize $j-1$};
   \draw (p+26) node[above] {\footnotesize $j$};
   \path[every node/.style={font=\scriptsize}]
    (03) edge [bend left=55] node {} (n3)
    (03) edge [bend right=25] node {} (p+14)
    (p+14) edge node {} (p3)
    (p3) edge node {} (n3)
    (p-14) edge [bend right=55] node {} (p+14)
    (04) edge [bend right=25] node {} (p+16)
    (p+16) edge node {} (p+26)
    (p5) edge node {} (n4)
    (p5) edge node {} (p+16)
    (p+15) edge node {} (p+26)
    (04) edge [bend left=55] node {} (n4);                

\end{tikzpicture}
\caption{Various subquiver schema in the $Gl_n$ calculation.}
\label{GlnFGH}
\end{center}\end{figure}

First consider the quiver $\mathcal{H}_j^n$ of Figure \ref{GlnFGH}, which corresponds to: 
$$\begin{array}{l} \tilde{U}_j, \text{ when }1\leq j\leq i,\; p\nmid j\\  \tilde{T}_j,  \text{ when }1\leq j\leq i, \; p\mid j,\\
\tilde{V}_j,  \text{ when } i<j\leq n.\end{array}$$ 

In Appendix \ref{appgenlin} we show
$$\begin{array}{l}\#\Hom(\mathcal{H}_j^n\uparrow \mathcal{Q};\mathcal{B})\leq 2^{2j-4}q^{j-2}\displaystyle\frac{q^{j-1}(q^j-1)}{q^{n-1}(q^n-1)}\vert Gl_{n}(q)\vert\end{array}.$$
Further, $$\vert \tilde{W}_j\vert:=
    \left\{
     \begin{array}{cccc}
        \#(u_j,\dots,u_i,v_{i+1},\dots,v_{n})\; & 1\leq j\leq i,\; p\nmid j,\\
        \#(t_j,\dots,u_i,v_{i+1},\dots,v_{n}) \;& 1\leq j\leq i, \; p\mid j,\\
         \#(v_j,\dots,v_n)\; & i<j\leq n. 
     \end{array}
   \right. $$
   
   In particular,
   $$\vert \tilde{W}_j\vert \leq\left\{
     \begin{array}{cccc}
        (q-1)^{i-j+1+m}(q^2)^{n-i} \; & 1\leq j\leq i,\\
        (q^2)^{n-j+1} \; & i<j\leq n, 
     \end{array}
     \right.$$
\noindent and so for all quivers $(Q_1^\sigma\triangle\cdots\triangle Q_{k-1}^\sigma)\cup Q_{k}^\sigma$ of form $\mathcal{H}_j^n$, 
 $$\vert W_{k-1}\vert \#\Hom((Q_1^\sigma\triangle\cdots\triangle Q_{k-1}^\sigma)\cup Q_{k}^\sigma;\mathcal{B})=\vert \tilde{W}_j\vert \#\Hom(\mathcal{H}_j^n,\mathcal{B})
%\leq
% \left\{
%     \begin{array}{cccc}
%        O(q^{2j-i}) \;1\leq j\leq i\\
%        O(q^{j}) \; i<j\leq n 
%       
%       
%     \end{array}
%     \right.
     \leq 2^{2j-4}q^n\vert Gl_{n}(q)\vert.$$
In Appendix \ref{appgenlin} we also show
$$\#\Hom(\mathcal{J}_j\uparrow \mathcal{Q};\mathcal{B})\leq 2^{2j-3}q^{j-1}\frac{q^{j-1}(q^j-1)}{q^{n-1}(q^n-1)}\vert Gl_n(q)\vert.$$
Further, $$\vert \tilde{W}_j\vert :=\#(u_j',\dots,u_i,v_{i+1},\dots,v_n)\leq\left\{
     \begin{array}{cccc}
        (q-1)^{i-j+1+m}(q^2)^{n-i} \; & 1\leq j\leq i,\\
        (q^2)^{n-j+1} \; & i<j\leq n, 
     \end{array}
     \right. $$
and so for all quivers $(Q_1^\sigma\triangle\cdots\triangle Q_{k-1}^\sigma)\cup Q_{k}^\sigma$ of form $\mathcal{J}_j^n$, 
\begin{align*}
\vert W_{k-1}\vert \#\Hom((Q_1^\sigma\triangle\cdots\triangle Q_{k-1}^\sigma)\cup Q_{k}^\sigma;\mathcal{B})&=\vert \tilde{W}_j\vert \dim A(\mathcal{J}_j\uparrow \mathcal{Q};\mathcal{B})\\
&\leq 2^{2j-3}q^n\vert Gl_n(q)\vert.
\end{align*}

Thus, by Theorem \ref{efficiencyfirststate}, we may compute (\ref{sum1}) in at most
\begin{align*}
\sum_{k=1}^n|\tilde{W}_k|\#\Hom(\mathcal{H}_k^n\mathcal{Q};\mathcal{B})+\sum_{l=1}^m|\tilde{W}_l|\#\Hom(\mathcal{J}_l^n\mathcal{Q};\mathcal{B})
&\leq \frac{2^{2n}-1}{4}q^n\vert Gl_n(q)\vert\\
&\leq 4^{n-1}q^n\vert Gl_n(q)\vert
\end{align*}
operations. 

 To compute (\ref{sum}) we must multiply by $\pi$. Let $F_\pi\in \mathbb{C}[\mathcal{B}_{n}]$. To compute 
$\sum_\pi \pi F_\pi,$ note that $\pi$ is a permutation matrix, and so every row and column contains exactly one nonzero entry, and that entry is $1$. Then a single multiplication $\pi F_{\pi}$ requires no multiplications, and so $\sum_\pi \pi F_\pi$ does not add to the complexity. Then 
$$t_{Gl_n(q)}(R)\leq t_{Gl_{n-1}(q)}(R_{Gl_{n-1}(q)})+4^{n-1}q^n,$$
and so $$t_{Gl_n(q)}(R)\leq \frac{4^nq^{n+1}-q}{4q-1}.$$

Now suppose $p=2$. By Theorem \ref{p=2} in Appendix \ref{appD}, 
$$Y=\{\pi s_i\vert\; 1\leq i\leq n, (i-1)\text{ divisible by } p,\pi\in P\}$$
contains a complete set of coset representatives for $Gl_n(q)/Gl_{n-1}(q)$, where $s_i$ is of form
$$a_3b_2c_3a_5b_4c_5\cdots a_ib_{i-1}c_iv_{i+1}\cdots v_n,$$
for $a_j,b_j,c_j\in Gl_{j}(q)\cap\cent(Gl_{j-2}(q))$ with $(q-1)$ possible matrices for $a_j$ and $b_j$, $q^2$ possible matrices for $v_j$, and $c_j$ completely determined by $a_j$ and $b_{j-1}$. The same arguments as in the $p\neq 2$ case then yield the quiver $\mathcal{Q}$ of Figure \ref{p=2G}, from which it is clear that analogous arguments give the result.
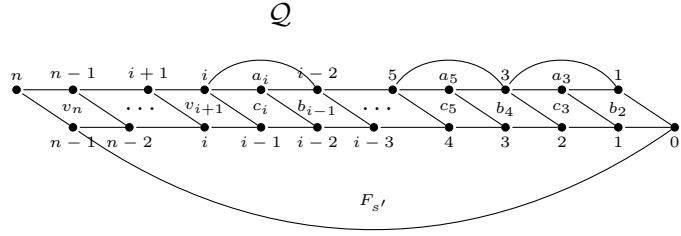
\begin{figure}[H]\begin{center}
\begin{tikzpicture}[shorten >=1pt,node distance=2cm,on grid,auto,/tikz/initial text=] 
   
   \node at (0,1.5) (q) {\large $\mathcal{Q}$};
   \node at (4.5,.25) (q) {\tiny $b_2$};
   \node at (3.75,.25) (q) {\tiny $c_3$};
   \node at (3.75,.65) (q) {\tiny $a_3$};
   \node at (3,.25) (q) {\tiny $b_4$};
   \node at (2.25,.25) (q) {\tiny $c_5$};
   \node at (2.25,.65) (q) {\tiny $a_5$};
   \node at (.5,.25) (q) {\tiny $b_{i-1}$};
   \node at (-.25,.25) (q) {\tiny $c_i$};
   \node at (-.25,.65) (q) {\tiny $a_i$};
   \node at (-1,.25) (q) {\tiny $v_{i+1}$};
   \node at (-2.75,.25) (q) {\tiny $v_n$};
   \node at (1.25,-1) (q) {\tiny $F_{s'}$};
   \node[pnt] at (5.25,0) (01) {};
   \node[pnt] at (4.5,0) (11) {};
   \node[pnt] at (3.75,0) (21) {};
   \node[pnt] at (4.5,.5) (12) {};    
   \node[pnt] at (3.75,.5) (22) {};
   \node[pnt] at (3,.5) (32) {};
   \node[pnt] at (2.25,0) (41) {};
   \node[pnt] at (2.25,.5) (42) {};
   \node[pnt] at (3,0) (31) {};
   \node[pnt] at (1.5,.5) (52) {};    
   \node at (1.325,.25) (q) {$\dots$};
   \node[pnt] at (1.25,0) (i-31) {};
   \node[pnt] at (.5,0) (i-21) {};
   \node[pnt] at (-.25,0) (i-11) {};
   \node[pnt] at (-1,0) (i1) {};
   \node[pnt] at (.5,.5) (i-22) {};   
   \node[pnt] at (-.25,.5) (i-12) {};
   \node[pnt] at (-1,.5) (i2) {};
   \node[pnt] at (-1.75,.5) (i+12) {};
   \node at (-1.825,.25) (q) {$\dots$};
   \node[pnt] at (-2,0) (n-21) {};
   \node[pnt] at (-2.75,0) (n-11) {};
   \node[pnt] at (-3.5,.5) (n2) {};
   \node[pnt] at (-2.75,.5) (n-12) {};
   \draw (01) node[below] {\tiny $0$};
   \draw (11) node[below] {\tiny $1$};
   \draw (12) node[above] {\tiny $1$};
   \draw (21) node[below] {\tiny $2$};
   \draw (32) node[above] {\tiny $3$};
   \draw (31) node[below] {\tiny $3$};
   \draw (41) node[below] {\tiny $4$};
   \draw (52) node[above] {\tiny $5$};
   \draw (i-31) node[below] {\tiny $i-3$};
   \draw (i-21) node[below] {\tiny $i-2$};
   \draw (i-11) node[below] {\tiny $i-1$};
   \draw (i1) node[below] {\tiny $i$};
   \draw (i-22) node[above] {\tiny $i-2$};
   \draw (i2) node[above] {\tiny $i$};
   \draw (i+12) node[above] {\tiny $i+1$};
   \draw (n-12) node[above] {\tiny $n-1$};
   \draw (n-21) node[below] {\tiny $n-2$};
   \draw (n-11) node[below] {\tiny $n-1$};
   \draw (n2) node[above] {\tiny $n$};
   \path[every node/.style={font=\scriptsize}]
    (01) edge node {} (12)
    (01) edge node {} (11)
    (11) edge node {} (21)
    (11) edge node {} (22)
    (21) edge node {} (32)
    (22) edge node {} (32)
    (12) edge node {} (22)
    (12) edge [bend right=55] node {} (32)
    (32) edge node {} (42)
    (21) edge node {} (31)    
    (31) edge node {} (41)
    (42) edge node {} (52)
    (31) edge node {} (42)
    (41) edge node {} (52)
    (32) edge [bend right=55] node {} (52)
    (41) edge node {} (i-31)    
    (52) edge node {} (i-22)    
    (i-31) edge node {} (i-21)
    (i-31) edge node {} (i-22)
    (i-21) edge node {} (i-11)
    (i-21) edge node {} (i-12)
    (i-11) edge node {} (i1)
    (i-11) edge node {} (i2)    
    (i-22) edge node {} (i-12)
    (i-22) edge [bend right=55] node {} (i2)
    (i-12) edge node {} (i2)
    (i1) edge node {} (i+12)
    (i2) edge node {} (i+12)
    (n-21) edge node {} (n-12)
    (n-21) edge node {} (n-11)
    (n-12) edge node {} (n2)    
    (i1) edge node {} (n-21)
    (i+12) edge node {} (n-12)    
    (n-11) edge node {} (n2)
    (01) edge [bend left=35] node {} (n-11);    
\end{tikzpicture}
\caption{The full quiver factorization for $p=2$.}
\label{p=2G}
\end{center}\end{figure}
%From this, it is clear that the same arguments show that (\ref{sum1p2}) may be computed in at most $O(q^n)\vert Gl_n(q)\vert$ operations. Multiplication by $\pi\in\tilde{P}$ does not add to the complexity; thus, computation of (\ref{sump2}) and hence of the corresponding Fourier transform for a complex function on $Gl_n(q)$ requires at most $O(q^n)\vert Gl_n(q)\vert$ operations for $p=2$. 

\end{proof}

\subsection{Generalized Symmetric Group Case}\label{sym} We next give a general result (Theorem \ref{Gn}) to find efficient Fourier transforms on groups with special subgroup structure. As the proof follows the same structure of the proofs of Theorems \ref{Bn}, \ref{Dn}, and \ref{Gln}, we leave it as an exercise.

Suppose $G_n> G_{n-1}>\cdots> G_0=e$
is a chain of subgroups with subsets $A_i\subseteq G_i$ such that
\begin{itemize}
\item[(1)] $A_1=G_1$,
\item[(2)] $ G_i=A_2\cdots A_i G_{i-1}$ for $2\leq i\leq n$,
\item[(3)] $A_i$ commutes with $G_{i-2}.$
\end{itemize}

Let $\mathcal{B}$ be the Bratteli diagram associated to the chain 
$$\mathbb{C}[G_{n}]> \mathbb{C}[G_{n-1}]>\cdots> \mathbb{C},$$ and let $\{\mathbb{C}[\mathcal{B}_{i}]\}$ be the chain of path algebras associated to this group algebra chain. Let $M_\mathcal{B}(G_i,G_j):=\max M_\mathcal{B}(\alpha_i,\alpha_j)$ over all $\alpha_i\in\mathcal{B}^i, \alpha_j\in\mathcal{B}^j$ and let $\vert \hat{G}_i\vert$ denote the number of conjugacy classes of $G_i$; equivalently, the number of irreducible representations in a complete set of inequivalent irreducible representations of $G_i$.
\begin{theorem}\label{Gn}
Let $G_i$, $A_i$ be as described above. Then the Fourier transform of a complex function on $G_n$ may be computed at a complete set $R$ of irreducible representations of $G_n$ adapted to the chain
$G_n> G_{n-1}>\cdots> G_0=e$
in at most $$\vert G_n\vert \sum_{k=1}^n \sum_{i=2}^k M_\mathcal{B}(G_{i-1},G_{i-2})^2\vert \hat{G}_{i-2}\vert \frac{\vert G_i\vert}{\vert G_{i-1}\vert}\frac{\vert G_{k-1}\vert}{\vert G_k\vert} \prod_{j=i}^k \vert A_j\vert$$ operations.
\end{theorem}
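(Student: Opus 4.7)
The proof plan mirrors the structure of the proofs of Theorems \ref{Bnthm2}, \ref{Dnthm}, and \ref{Gln}: iterate Lemma \ref{factorsum} down the chain, apply the SOV Approach at each level using the distinguished factorization produced by $A_2,\ldots,A_k$, and bound each resulting $\#\Hom$ term by a product of multiplicity and index data attached to $\mathcal{B}$.

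First, conditions (1) and (2) imply that $Y_k:=\{a_2 a_3\cdots a_k : a_j\in A_j\}$ contains a complete set of coset representatives for $G_k/G_{k-1}$, while condition (3) places each $\tilde{a}_j$ in $\mathbb{C}[\mathcal{B}_j]\cap\cent(\mathbb{C}[\mathcal{B}_{j-2}])$, giving it the centralizer-bubble quiver $Q_j$ of Figure \ref{Qi} with $j^+=j$ and $j^-=j-2$. Iterating Lemma \ref{factorsum} from $G_n$ down to $G_0=\{e\}$ (where $t_{G_0}(R_0)=0$) produces
\[
T_{G_n}(R) \;\le\; |G_n|\sum_{k=1}^{n}m_{G_k}(R_k,Y_k,G_{k-1}),
\]
so that it suffices to bound each $|G_n|\,m_{G_k}$ by the corresponding inner sum in the theorem.

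Next, fix $k$ and apply the SOV Approach to compute $\sum_{y\in\tilde{Y}_k} y F_y$ with $F_y\in\mathbb{C}[\mathcal{B}_{k-1}]$. Take $m=k$, $X=\{(a_2,\ldots,a_k,F_y):a_j\in\tilde{A}_j\}$, and choose $\sigma\in S_k$ that places $F_y$ first and then $\tilde{a}_2,\tilde{a}_3,\ldots,\tilde{a}_k$. Then $|W_{i-1}|\le\prod_{j=i}^{k}|A_j|$, and by Theorem \ref{efficiencyfirststate} the SOV cost at level $k$ is bounded above by
\[
\sum_{i=2}^{k}\Bigl(\prod_{j=i}^{k}|A_j|\Bigr)\,\#\Hom\bigl((Q_1^\sigma\triangle\cdots\triangle Q_{i-1}^\sigma)\cup Q_i^\sigma;\,\mathcal{B}\bigr).
\]
At stage $i$ the aggregated quiver inherits a \emph{tail} of paired paths from $F_y$ between the root and level $k-1$, a central \emph{bubble} spanning levels $i-2$ to $i$ arising from $\tilde{a}_i$, and a single coset-selecting segment between levels $i-1$ and $i$; this is the obvious generalization of the $\mathcal{H}_i^n$, $\mathcal{J}_i^n$, $\mathcal{K}^n$ schematics of Figures \ref{BnH} and \ref{GlnFGH}.

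The main technical obstacle is the uniform quiver-counting estimate
\[
\#\Hom\bigl((Q_1^\sigma\triangle\cdots\triangle Q_{i-1}^\sigma)\cup Q_i^\sigma;\,\mathcal{B}\bigr) \;\le\; M_\mathcal{B}(G_{i-1},G_{i-2})^2\,|\hat{G}_{i-2}|\,\frac{|G_i|}{|G_{i-1}|}\,|G_{k-1}|,
\]
which I would prove by decomposing the morphism count into three independent pieces: (i) the choice of vertex at level $i-2$ and the arrows closing the $\tilde{a}_i$-bubble through some vertex at level $i-1$ and then up to level $i$, contributing $M_\mathcal{B}(G_{i-1},G_{i-2})^2|\hat{G}_{i-2}|$; (ii) the coset-selecting arrow between levels $i-1$ and $i$, contributing the index $|G_i|/|G_{i-1}|$; and (iii) the free tail of paired paths from the root to level $k-1$, contributing $\sum_{\rho\in\hat{G}_{k-1}}(\dim\rho)^2=|G_{k-1}|$. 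Together with the factor $|G_n|/|G_k|$ produced by iteration, summing over $i$ and $k$ then gives exactly the stated bound. The hard part, as always, is justifying (i)--(iii) uniformly in $i$ and $k$ via the general morphism-counting techniques of Section \ref{generalcounts}, exactly as was carried out case-by-case in Appendices \ref{appC} and \ref{appgenlin} for the Weyl and general linear group families.
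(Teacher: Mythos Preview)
Your proposal is correct and is exactly the approach the paper has in mind: the paper itself leaves this theorem as an exercise, saying only that ``the proof follows the same structure of the proofs of Theorems \ref{Bn}, \ref{Dn}, and \ref{Gln}.'' Your outline---iterate Lemma \ref{factorsum}, at each level $k$ run the SOV Approach with $F_y$ first followed by $\tilde a_2,\ldots,\tilde a_k$, bound $|W_{i-1}|$ by $\prod_{j=i}^k|A_j|$, and bound the relevant $\#\Hom$ by $M_\mathcal{B}(G_{i-1},G_{i-2})^2|\hat G_{i-2}|\frac{|G_i|}{|G_{i-1}|}|G_{k-1}|$ exactly as in the $Gl_n$ appendix computation $\langle D^{k-1}U^{k-i}DU^i\hat 0,\hat 0\rangle=\lambda_i\lambda_{k-1}\cdots\lambda_1$---reproduces the stated bound on the nose.
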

\begin{note}
Theorem \ref{Gn} is a refinement of Theorem 3.1 of \cite{MR-adapted}: rather than considering the maximum length of a factorization in terms of coset representatives, we need only multiply by $\prod \vert A_j\vert$. Note that our choice of coset representatives in the proofs of Theorems \ref{Bnthm} and \ref{Dnthm} were such that  $\prod \vert A_j\vert=1$, much smaller than the length of the longest factorization in terms of coset representatives. 
\end{note}

\begin{note} For $G_i=S_i$, this theorem gives an efficient algorithm for the computation of the Fourier transform of a function on the symmetric group by letting $A_1=\{e\}$ and $A_i=\{e, t_{i-1}\}$ for $2\leq i\leq n$. 
\end{note}

\subsection{The Complexity of Fourier Transforms on Homogeneous Spaces}

We next consider the Fourier transform of a function on a homogeneous space, a special case of harmonic analysis on groups. This can be viewed as a coset space $G/K$, so a Fourier transform on a homogeneous space is a Fourier transform of the space of functions on $G/K$ or, equivalently, of the space of associated right-$K$ invariant functions on $G$. See \cite{maslen, sovi} for further background on Fourier transforms on homogeneous spaces and some of their applications.

\begin{definition} Let $G$ be a finite group with subgroup $K$ and let $f$ be a complex-valued function 
on $G/K$. The \textbf{Fourier transform of f} at a $K$-adapted representation $\rho$ of $G$, denoted $\hat{f}(\rho)^K$, or a $K$-adapted set $R$ of matrix representations of $G$, denoted $\mathcal{F}_R^K f$, is the Fourier transform of the right $K$-invariant function $\tilde{f}:G\rightarrow \mathbb{C}$ defined by
$$\tilde{f}(g)=\frac{1}{\vert K\vert}f(gK).$$
\end{definition}
Note that $\hat{f}(\rho)$ is zero unless the representation space, $V_\rho$, contains a nontrivial $K$-invariant vector. Such a representation is said to be \textbf{class 1 relative to K}, and we could restrict to class 1 representations if desired. 

\begin{definition} Let $G$ be a finite group with subgroup $K$ and let $R$ be a set of representations of $G$. 
\begin{itemize}
\item[(i)] The \textbf{arithmetic complexity} of a Fourier transform on $R$, denoted $T_{G/K}(R)$, is the minimum number of arithmetic multiplications (or additions, whichever is largest) needed to compute the Fourier transform of $f$ on $R$ via a straight-line program for an arbitrary complex-valued function $f$ defined on $G/K$.
\item[(ii)] The \textbf{reduced complexity}, denoted $t_{G/K}(R)$, is defined by $$t_G(R)=\frac{1}{\vert G/K\vert}T_{G/K}(R).$$
\end{itemize}
\end{definition}
Note that the complexity always satisfies the inequalities
$$\vert G/K\vert-1\leq T_{G/K}(R)\leq \vert G/K\vert^2.$$

Further, the proof of Lemma \ref{factorsum} gives an analogous result for the case of homogenous spaces: for $H$ a subgroup of $G$, $R$ a complete $H$-adapted set of inequivalent irreducible representations of $G$, and $Y\subseteq G$ a set of coset representatives,
$$t_{G/K}(R)\leq t_{H/K}(R_H)+m_{G/K}(R,\tilde{Y},H).$$

Let $G$ be a group with chain of subgroups $G=G_n>G_{n-1}>\cdots>G_0$. For $f$ a function on $G_{n}$ that is right $G_{n-k}$-invariant, the corresponding element $\sum_{s\in G_n}f(s)s$ in $\mathbb{C}[G_n]$ is invariant under right multiplication by elements of $\mathbb{C}[G_{n-k}]$. In particular, the elements $F_y$ in the proofs of Section \ref{applications} are $\mathbb{C}[\mathcal{B}_{n-k}]$-invariant, so as in \cite[Theorem 6.2]{sovi} the nonzero coefficients of $F_y$ correspond to paths passing through $1_{n-k}$.       
Using the SOV approach as in the proofs of Section \ref{applications}, the final quiver used when constructing $\mathcal{Q}$,  i.e., the quiver  corresponding to $F_y$, now has form as in Figure \ref{Qcn}, with $\#\Hom(Q_{i};\mathcal{B})$ counting only occurrences of $Q_i$ in $\mathcal{B}$ with $\alpha_{n-k}$ the vertex $1_{n-k}$. 
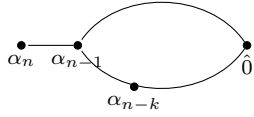
\begin{figure}[H]\begin{center}
\begin{tikzpicture}[shorten >=1pt,node distance=2cm,on grid,auto,/tikz/initial text=] 
   \node[pnt] at (3,-5.5) (011) {};
   \node[pnt] at (0,-5.5) (n11) {};
   \node[pnt] at (.75,-5.5) (n-11) {};
   \node[pnt] at (1.5,-6.05) (n-k1) {};
   \draw (011) node[below] {\footnotesize $\hat{0}$};
   \draw (n11) node[below] {\footnotesize $\alpha_n$};
   \draw (n-11) node[below] {\footnotesize $\alpha_{n-1}$};
   \draw (n-k1) node[below] {\footnotesize $\alpha_{n-k}$};
   \path[every node/.style={font=\scriptsize}]
    (n-11) edge node {} (n11)
    (011) edge [bend left=55] node {} (n-11)
    (011) edge [bend right=55] node {} (n-11);    
    
\end{tikzpicture}
\caption{The component subquiver associated to $F_y$ in the factorization of $yF_y$.}
\label{Qcn}
\end{center}\end{figure}
Then the proofs of Section \ref{applications} extend to the following results for homogenous spaces:

\begin{theorem}[cf. Theorem \ref{hombnthm}]
For the homogenous space $B_n/B_{n-k}$ of the Weyl group $B_n$ and $R$ a complete set of irreducible matrix representations of $B_n$ adapted to the subgroup chain $B_n> B_{n-1}>\cdots> \{e\},$

$$C(B_n/B_{n-k})\leq T_{B_n/B_{n-k}}(R)\leq k(4n-2k-1)\frac{\vert B_n\vert}{\vert B_{n-k}\vert}.$$ 
\end{theorem}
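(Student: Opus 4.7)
The plan is to mirror the proof of Theorem \ref{Bnthm2} with two modifications: replace Lemma \ref{factorsum} by its homogeneous-space analog stated just before the theorem, and truncate the iteration to the chain $B_n > B_{n-1} > \cdots > B_{n-k}$ rather than running down to $B_0$. At level $j$ with $n-k+1 \leq j \leq n$, use the same minimal coset representatives $Y_j$ for $B_j/B_{j-1}$ with factorizations $a_j\cdots a_2 a_1 a_2'\cdots a_j'$ in the simple reflections, and apply the SOV Approach with the same permutation $\sigma$ and the same component subquivers as in Fig. \ref{BnQ}. The one structural change is that, because the input function is right-$B_{n-k}$-invariant, each $F_y$ appearing in the decomposition $\sum_{y\in\tilde{Y}_j} y F_y$ is right-$\mathbb{C}[\mathcal{B}_{n-k}]$-invariant, and its nonzero coefficients therefore correspond only to pairs of paths one of which passes through the fixed vertex $1_{n-k}$ (Fig. \ref{Qcn}). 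Consequently, when applying Theorem \ref{efficiencyfirststate}, every $\#\Hom$ count over the quivers $\mathcal{H}_i^j$, $\mathcal{J}_i^j$, and $\mathcal{K}^j$ of Fig. \ref{BnH} is restricted to those morphisms sending this intermediate vertex to $1_{n-k}\in\mathcal{B}$.

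The key technical observation is that this single vertex-constraint divides each such $\#\Hom$ count by exactly $|B_{n-k}|$. Take as leading example $\#\Hom(\mathcal{K}^j\uparrow\mathcal{Q};\mathcal{B}) = |B_j|$ from Lemma \ref{configspaces}: imposing that one path pass through $\mathbf{1}_{n-k}$ transforms the count into
$$\sum_{\rho\in\widehat{B_j}} M(\rho,\mathbf{1})\dim\rho \;=\; \dim\bigl(\mathrm{Ind}_{B_{n-k}}^{B_j}\mathbf{1}\bigr) \;=\; |B_j|/|B_{n-k}|$$
by Frobenius reciprocity. I expect analogous identities to hold for the bounds on $\mathcal{H}_i^j$ and $\mathcal{J}_i^j$ proven in Appendix \ref{appC}, since those proofs also proceed by summing path counts against representation dimensions at level $n-k$. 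Granting this, the per-step multiplication count from the proof of Theorem \ref{Bnthm2} drops from $(4j-3)|B_j|$ to $(4j-3)|B_j|/|B_{n-k}|$, so dividing through by $|B_j/B_{n-k}|$ produces the reduced per-step bound $m_{B_j/B_{n-k}}(R_{B_j},Y_j,B_{j-1})\leq 4j-3$.

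Iterating the homogeneous-space analog of Lemma \ref{factorsum} along the truncated chain and using $t_{B_{n-k}/B_{n-k}}(R_{B_{n-k}})=0$ then yields
$$t_{B_n/B_{n-k}}(R)\;\leq\;\sum_{j=n-k+1}^{n}(4j-3)\;=\;k(4n-2k-1),$$
and multiplying by $|B_n/B_{n-k}|=|B_n|/|B_{n-k}|$ produces the stated bound. The main obstacle will be verifying that the factor-of-$|B_{n-k}|$ reduction holds uniformly across all three quiver families of Appendix \ref{appC}, rather than only for the simplest family $\mathcal{K}^j$; everything else transfers directly from the proof of Theorem \ref{Bnthm2}.
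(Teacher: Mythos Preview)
Your proposal is correct and follows essentially the same route the paper takes: the paper simply states that ``the proofs of Section~\ref{applications} extend'' to homogeneous spaces once the $F_y$ quiver is modified as in Figure~\ref{Qcn} so that one path is pinned through $1_{n-k}$, and then records the resulting bound without further detail. Your write-up is in fact more explicit than the paper's on the mechanism of the $|B_{n-k}|$ reduction (the Frobenius-reciprocity computation for $\mathcal{K}^j$ and the expectation that the Appendix~\ref{appC} bounds for $\mathcal{H}_i^j$ and $\mathcal{J}_i^j$ scale the same way); the ``main obstacle'' you flag is exactly the step the paper leaves implicit.
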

\begin{theorem}
For the homogenous space $D_n/D_{n-k}$ of the Weyl group $D_n$ and $R$ a complete set of irreducible matrix representations of $D_n$ adapted to the subgroup chain $D_n> D_{n-1}>\cdots> \{e\},$

$$C(D_n/D_{n-k})\leq T_{D_n/D_{n-k}}(R)\leq \frac{k(26n-13k-11)}{2}\frac{\vert D_n\vert}{\vert D_{n-k}\vert}.$$ 
\end{theorem}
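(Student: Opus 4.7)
The plan is to mirror the proof of Theorem \ref{Dnthm} while applying the homogeneous-space adjustment used to derive Theorem \ref{hombnthm} from Theorem \ref{Bnthm2}. First I would identify a Fourier transform on $D_n/D_{n-k}$ with the Fourier transform of the associated right-$D_{n-k}$-invariant function on $D_n$, then apply the homogeneous-space version of Lemma \ref{factorsum} iteratively along the chain $D_n > D_{n-1} > \cdots > D_{n-k}$ to obtain
$$t_{D_n/D_{n-k}}(R) \leq \sum_{i=n-k+1}^n m_{D_i/D_{n-k}}(R_{D_i}, Y_i, D_{i-1}),$$
where each $Y_i$ is the minimal-length coset transversal for $D_i/D_{i-1}$ already chosen in the proof of Theorem \ref{Dnthm}, and the recursion bottoms out because $t_{D_{n-k}/D_{n-k}}(R_{D_{n-k}})=0$.

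At each level $i$ I would run the SOV procedure on $\sum_{y\in \tilde{Y}_i} y F_y$ with precisely the factorization of $y$ used in Theorem \ref{Dnthm}, reproducing the subquivers $Q_j^\sigma$, gluings, and composite schemas $\mathcal{H}_j^i$, $\mathcal{J}_j^i$, $\mathcal{K}^i$ of Figure \ref{BnH}. The sole new ingredient is that right-$D_{n-k}$-invariance forces every $F_y$ to live in the right-$\mathbb{C}[\mathcal{B}_{n-k}]$-invariant subspace of $\mathbb{C}[\mathcal{B}_{i-1}]$, so the component subquiver associated to $F_y$ collapses to the restricted form of Figure \ref{Qcn} with $\alpha_{n-k}$ pinned at $1_{n-k}$. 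This pinning replaces the factor $\vert D_i\vert$ appearing in each of the counts
$$\#\Hom(\mathcal{K}^i\uparrow\mathcal{Q};\mathcal{B}), \quad \#\Hom(\mathcal{J}_j^i\uparrow\mathcal{Q};\mathcal{B}), \quad \#\Hom(\mathcal{H}_j^i\uparrow\mathcal{Q};\mathcal{B})$$
by $\vert D_i\vert/\vert D_{n-k}\vert$, while the reduced bounds $1$, $3$, and $20(j-1)/i$ supplied by Lemma \ref{DnJlemma} and Corollary \ref{DnH3} persist unchanged. Summing the level-$i$ contributions exactly as in Theorem \ref{Dnthm} then gives $m_{D_i/D_{n-k}}(R_{D_i},Y_i,D_{i-1}) \leq 13i-12$.

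Telescoping the per-level bounds yields
$$t_{D_n/D_{n-k}}(R) \leq \sum_{i=n-k+1}^n (13i-12) = \frac{k(26n-13k-11)}{2},$$
which, after multiplication by $\vert D_n\vert/\vert D_{n-k}\vert$, is the stated bound. The main technical obstacle is verifying rigorously that restricting the $F_y$-paths to pass through $1_{n-k}$ rescales each configuration-space dimension by exactly $\vert D_{n-k}\vert^{-1}$. This is the direct analog of the homogeneous-space rescaling underlying Theorem \ref{hombnthm} and follows from the same multiplicity bookkeeping for Bratteli-diagram paths through $1_{n-k}$ that is implicit in the discussion surrounding Figure \ref{Qcn}; the remainder of the argument is a verbatim transfer of the Theorem \ref{Dnthm} bookkeeping.
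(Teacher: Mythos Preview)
Your proposal is correct and matches the paper's approach: the paper does not give a separate proof of this theorem but states that ``the proofs of Section~\ref{applications} extend to the following results for homogeneous spaces'' after explaining that right-$D_{n-k}$-invariance forces the $F_y$-subquiver into the pinned form of Figure~\ref{Qcn}, which is exactly the modification you describe. Your telescoping computation $\sum_{i=n-k+1}^n(13i-12)=\tfrac{k(26n-13k-11)}{2}$ is the intended derivation, parallel to how Theorem~\ref{hombnthm} is obtained from the per-level bound $4i-3$ for $B_n$.
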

\begin{theorem}
For the homogenous space $Gl_n(q)/Gl_{n-k}(q)$ of the general linear group $Gl_n(q)$ and $R$ a complete set of irreducible matrix representations of $Gl_n(q)$ adapted to the subgroup chain $Gl_n(q)> Gl_{n-1}(q)>\cdots> \{e\},$

$$\displaystyle C(Gl_n(q)/Gl_{n-k}(q))\leq T_{Gl_n(q)/Gl_{n-k}(q)}(R)$$
$$\displaystyle\leq \left(\frac{4^nq^{n+1}-4^{n-k}q^{n-k+1}}{4q-1}\right)\frac{\vert G_n\vert}{\vert G_{n-k}\vert}.$$

\end{theorem}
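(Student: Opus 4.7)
The plan is to mimic the proof of Theorem \ref{Gln} verbatim, replacing the subgroup chain $Gl_n(q) > Gl_{n-1}(q) > \cdots > \{e\}$ by the truncated chain $Gl_n(q) > Gl_{n-1}(q) > \cdots > Gl_{n-k}(q)$ and using the homogeneous-space analogue of Lemma \ref{factorsum} noted just before the theorem. At each level $n-k < i \le n$, apply the coset-representative factorization from Proposition \ref{easier} (or Theorem \ref{p=2} when $p=2$) and run Steps I--III of the SOV approach exactly as in Theorem \ref{Gln}. The resulting quiver $\mathcal{Q}$ is unchanged, as are the component subquivers $\mathcal{H}_j^n$ and $\mathcal{J}_j^n$ of Figure \ref{GlnFGH}.

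The only structural difference is that, per the discussion surrounding Figure \ref{Qcn}, the terminal component subquiver associated to $F_y$ now has its base vertex constrained to be $1_{n-k}$, because $F_y$ is right $\mathbb{C}[\mathcal{B}_{n-k}]$-invariant. This constraint only further restricts the quiver morphisms into $\mathcal{B}$; it never enlarges the counts $\#\Hom(\mathcal{H}_j^n \uparrow \mathcal{Q}; \mathcal{B})$ and $\#\Hom(\mathcal{J}_j^n \uparrow \mathcal{Q}; \mathcal{B})$ established in Appendix \ref{appgenlin}. Consequently, the per-level reduction
$$ t_{Gl_i(q)/Gl_{n-k}(q)}(R) \leq t_{Gl_{i-1}(q)/Gl_{n-k}(q)}(R_{Gl_{i-1}(q)}) + 4^{i-1}q^i $$
goes through verbatim from the proof of Theorem \ref{Gln} for $n-k+1 \leq i \leq n$, with the permutation multiplications $\pi F_\pi$ again costing nothing.

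The recursion terminates at $i = n-k$ with the trivial base case $t_{Gl_{n-k}(q)/Gl_{n-k}(q)}(R) = 0$, since any Fourier transform of a right-$Gl_{n-k}(q)$-invariant function viewed on $Gl_{n-k}(q)$ itself is constant. Iterating the recursion and summing the geometric series yields
$$ t_{Gl_n(q)/Gl_{n-k}(q)}(R) \leq \sum_{i=n-k+1}^{n} 4^{i-1}q^i = \frac{4^n q^{n+1} - 4^{n-k} q^{n-k+1}}{4q-1}, $$
and multiplying by $|Gl_n(q)|/|Gl_{n-k}(q)|$ delivers the stated bound. The one place that deserves careful verification is that pinning the base of the $F_y$-subquiver to $1_{n-k}$ does not disturb any of the inequalities in Appendix \ref{appgenlin}; this is expected to be immediate, as those estimates depend only on the local Bratteli-diagram structure at levels $i-2, i-1, i$ for $i > n-k$ and are monotone under further restriction of the morphism count, but this monotonicity is the one technical point to double-check.
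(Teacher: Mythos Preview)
Your proposal is correct and follows essentially the same route as the paper. The paper gives no separate proof for this theorem; it simply states that the proofs of Section~\ref{applications} extend to homogeneous spaces once one uses the homogeneous-space version of Lemma~\ref{factorsum} and replaces the $F_y$-quiver by the one in Figure~\ref{Qcn} (with the vertex at level $n-k$ pinned to $1_{n-k}$), which is exactly the structure you outline, and your geometric-series computation matches the stated bound.
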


As in Section \ref{sym}, suppose 
$G_n> G_{n-1}>\cdots> G_1=e$
is a chain of groups with subsets $A_i\subseteq G_i$ such that

\begin{itemize}
\item[(1)] $A_1=G_1$
\item[(2)] $ G_i=A_2\cdots A_i G_{i-1}$ for $2\leq i\leq n$.
\item[(3)] $A_i$ commutes with $G_{i-2}.$
\end{itemize}
\begin{theorem}\label{GnHom} Let $G_i$, $A_i$ be as above. For the homogeneous space $G_n/G_{n-k}$ and $R$ a complete set of irreducible matrix representations of $G_n$ adapted to the chain
$G_n> G_{n-1}>\cdots> G_1=e$,
$$C(G_n/G_{n-k})\leq T_{G_n/G_{n-k}}(R)\leq \sum_{_{j=n-k+1}}^n \sum_{_{i=2}}^j M_\mathcal{B}(G_{i-1},G_{i-2})^2\vert \hat{G}_{i-2}\vert \frac{\vert G_i\vert}{\vert G_{i-1}\vert}\frac{\vert G_{j-1}\vert}{\vert G_j\vert} \prod_{l=i}^j \vert A_l\vert.$$ 
\end{theorem}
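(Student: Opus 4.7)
The plan is to parallel the proof of Theorem \ref{Gn}, substituting the homogeneous-space version of Lemma \ref{factorsum} recorded just above the statement, and iterating it only along the top $k$ links of the chain $G_n > G_{n-1} > \cdots > G_{n-k}$. Because $\tilde f$ is right $G_{n-k}$-invariant, the recursion stops at $G_{n-k}$ and no base Fourier transform on $G_{n-k}$ is required, which is precisely what forces the outer sum to run from $j = n-k+1$ to $n$. Concretely, iterating gives
\[
t_{G_n/G_{n-k}}(R) \;\leq\; \sum_{j=n-k+1}^{n} m_{G_j/G_{n-k}}\bigl(R_{G_j},\tilde Y_j, G_{j-1}\bigr),
\]
for any choice of coset representative sets $Y_j \subseteq G_j$ for $G_j/G_{j-1}$.

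At each level, I would take coset representatives supplied by condition~(2): every $y \in Y_j$ factors as $y = a_2 a_3 \cdots a_j$ with $a_i \in A_i$, so that $|Y_j| \leq \prod_{l=2}^{j}|A_l|$. Condition~(3) places each $\tilde a_i$ inside $\mathbb{C}[\mathcal{B}_i] \cap \cent(\mathbb{C}[\mathcal{B}_{i-2}])$, so the quiver $Q_i$ attached via Definition \ref{widef} has $i^+ = i$, $i^- = i-2$ (Figure \ref{Qi}). Running Algorithm \ref{alg2} on the factorisation $\tilde y F_y = F_y \cdot \tilde a_2 \tilde a_3 \cdots \tilde a_j$, with $\sigma$ reordering so that $F_y$ is absorbed first, yields by Theorem \ref{efficiencyfirststate} a stage-$i$ cost of
\[
|W_{i-1}|\cdot \#\Hom\bigl((Q_1^\sigma \triangle \cdots \triangle Q_{i-1}^\sigma)\cup Q_i^\sigma;\mathcal{B}\bigr),
\]
where $|W_{i-1}| = \prod_{l=i}^{j}|A_l|$ counts the remaining factor combinations.

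The homogeneous-space hypothesis enters only through $F_y$: since $F_y$ is $\mathbb{C}[\mathcal{B}_{n-k}]$-invariant, its attached quiver has the form of Figure \ref{Qcn}, with paths pinned to pass through the vertex $1_{n-k}$. The Hom-count for each aggregated quiver then separates into a multiplicity contribution $M_\mathcal{B}(G_{i-1}, G_{i-2})^2 \cdot |\hat G_{i-2}|$ together with the dimension ratios $\frac{|G_i|}{|G_{i-1}|}$ and $\frac{|G_{j-1}|}{|G_j|}$ that arise from summing squared Bratteli dimensions at the top and bottom of the aggregated picture, exactly as in Theorem \ref{Gn} and as in Theorem~3.1 of \cite{MR-adapted}. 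Assembling the per-stage costs and summing over $i$ from $2$ to $j$ and over $j$ from $n-k+1$ to $n$ produces the claimed bound.

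The main obstacle is the quiver-morphism count in paragraph three: one must verify that combining the aggregate factorisation quiver with the pinned $F_y$-quiver produces the multiplicative formula $M_\mathcal{B}(G_{i-1}, G_{i-2})^2 \cdot |\hat G_{i-2}| \cdot \frac{|G_i|}{|G_{i-1}|} \cdot \frac{|G_{j-1}|}{|G_j|}$. This is exactly the type of bookkeeping formalised in Section \ref{generalcounts} and Appendix \ref{appC} for the group case, and one must check that restricting paths through $1_{n-k}$ only truncates the outer index range (eliminating stages $j \leq n-k$) without altering the per-stage formula at the remaining levels.
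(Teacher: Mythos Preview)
Your proposal is correct and follows essentially the same approach the paper indicates: the paper does not spell out a proof for Theorem \ref{GnHom} (nor for Theorem \ref{Gn}, which it explicitly leaves as an exercise), but simply says that ``the proofs of Section \ref{applications} extend to the following results for homogeneous spaces,'' with the sole modification being that the quiver for $F_y$ takes the form of Figure \ref{Qcn} (paths pinned through $1_{n-k}$). You have correctly identified the iteration of the homogeneous-space version of Lemma \ref{factorsum} over the top $k$ links, the factorisation of coset representatives via conditions (2)--(3), the application of Theorem \ref{efficiencyfirststate}, and the morphism-count bookkeeping that must be checked---precisely what the paper's sketch requires.
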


\section{Configuration Spaces and the Maps $*$}\label{proofsofstuff}

In Section \ref{sepvarstate} we gave an overview of the SOV algorithm, assuming the existence of bilinear maps $*$ with the properties described in part II of the SOV approach \ref{alg2}. In this section we determine such maps and investigate their properties.

Recall from Definition \ref{widef} that for a path algebra product $x_1\cdots x_m$,  $X_i:=\mathbb{C}[\mathcal{B}_{i^+}]\cap\cent(\mathbb{C}[\mathcal{B}_{i^-}]).$ We first show (Lemma \ref{configspaces} below), that each space $X_i$ is isomorphic to the \textit{configuration space} of a specific quiver $Q_i$, with dimension $\#\Hom(Q_i;\mathcal{B})$.

\begin{definition}\label{defmorph} For graded quivers $Q$ and $B$, a \textbf{morphism} $\phi:Q\rightarrow B$ is a mapping from arrows in $Q$ to paths in $B$, along with a grading-preserving mapping between vertices so that $\phi(t(e))=t(\phi(e))$ and $\phi(s(e))=s(\phi(e))$ for all arrows $e\in E(Q)$.
\end{definition}
\begin{example} For $Q$, $B$ as in Figure \ref{morphex}, let $\phi:Q\rightarrow B$ send the arrow $e_1$ to the path $f_3\circ f_2\circ f_1$. 
\begin{figure}[H]\begin{center}
\begin{tikzpicture}[shorten >=1pt,node distance=2cm,on grid,auto,/tikz/initial text=] 
   \node at (-2.25, .75)(q){Q};
   \node at (.25, .75)(q){B};
   \node[pnt] at (-3, 0)(30){};
   \node[pnt] at (-1.5, 0)(00){};
   \node[pnt] at (-.5,0)(31){};
   \node[pnt] at (0, 0)(21){};
   \node[pnt] at (.5, 0)(11){};
   \node[pnt] at (1, 0)(01){};
   \draw (30) node[above] {\footnotesize $3$};
   \draw (00) node[above] {\footnotesize $0$};
   \draw (31) node[above] {\footnotesize $3$};
   \draw (21) node[above] {\footnotesize $2$};
   \draw (11) node[above] {\footnotesize $1$};
   \draw (01) node[above] {\footnotesize $0$};
    \path[->,every node/.style={font=\scriptsize}]
    (00) edge  node {$e_1$} (30)
    (01) edge  node {$f_1$} (11)
    (11) edge  node {$f_2$} (21)
    (21) edge  node {$f_3$} (31);
    
\end{tikzpicture}
\caption{An example morphism would send $e_1$ to $f_3\circ f_2\circ f_1$.}
\label{morphex}
\end{center}\end{figure}
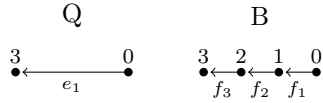

\end{example}

For two graded quivers $Q$ and $B$, let $\Hom(Q;B)$ denote the set of morphisms from $Q$ to $B$. For $Q, R$, and $B$ graded quivers such that $Q$ is a subquiver of $R$, let $\Hom(Q\uparrow R;B)$ denote the set of morphisms from $Q$ to $B$ that extend to $R$. 
\begin{definition} The \textbf{configuration space} associated to $Q$ and $R$ relative to $B$, denoted $A(Q\uparrow R;B)$, is the space of finitely supported formal $\mathbb{C}$-linear combinations of morphisms in $\Hom(Q\uparrow R;B)$.
\end{definition}
\begin{note}\label{finitedim} When $Q=R$, we simplify notation by writing $A(Q;B)$. If $Q$ is a finite subquiver of $R$ and $B$ is locally finite, i.e. each vertex has finitely many neighbors, then $\#\Hom(Q\uparrow R; B)=\dim A(Q\uparrow R; B)$.
\end{note}
\begin{lemma}\label{configspaces} Let  $\{\mathbb{C}[G_i]\}$ be a chain of group algebras with corresponding Bratteli diagram $\mathcal{B}$ of highest grading at least $n$. Consider the quivers $Q_{n0}$ and $Q_{ji}^n$ of Figure \ref{figgif}, along with the subquiver $Q_{ji}$ of $Q_{ji}^n$ consisting of the two vertices at level $i$ and level $j$, along with the two paths from level $i$ to level $j$:

\begin{figure}[H]\begin{center}
\begin{tikzpicture}[shorten >=1pt,node distance=2cm,on grid,auto,/tikz/initial text=] 
   \node[pnt] at (-.5, 0)(q_3){};
   \draw (q_3) node[below] {\footnotesize $n$};
   \node[pnt] at (2.5, 0)(q_4){};
   \draw (q_4) node[below] {\footnotesize $0$};
   \draw (1,1) node {\large $Q_{n0}$};
  \begin{scope}[shift={(1,0)}]
  \node[pnt] at (3, 0)(q_5){};
   \draw (q_5) node[below] {\footnotesize $n$};
   \node[pnt] at (7, 0)(q_6){};
   \draw (q_6) node[below] {\footnotesize $0$};
   \node[pnt] at (4, 0)(q_7){};
   \draw (q_7) node[below] {\footnotesize $j$};
   \node[pnt] at (6, 0)(q_8){};
   \draw (q_8) node[below] {\footnotesize $i$};
   \draw (5,1) node {\large $Q_{ji}\hookrightarrow Q_{ji}^n$};
   \end{scope}
   \path[->, every node/.style={font=\scriptsize}]
    (q_4) edge [bend left=45] node {} (q_3)
    (q_4) edge [bend right=45] node {} (q_3)
    (q_8) edge [bend right=65] node {} (q_7)
    (q_8) edge [bend left=65] node {} (q_7);
    \path[->, every node/.style={font=\scriptsize}]
    (q_7) edge node {} (q_5)
    (q_6) edge node {} (q_8);
    
\end{tikzpicture}
\caption{}
\label{figgif}
\end{center}\end{figure}
Then as vector spaces,
\begin{itemize}
\item[(i)]  $A(Q_{n0}; \mathcal{B})\cong\mathbb{C}[\mathcal{B}_n]$,
\item[(ii)]$A(Q_{ji}; \mathcal{B})=A(Q_{ji}\uparrow Q_{ji}^n; \mathcal{B})\cong \mathbb{C}[\mathcal{B}_j]\cap \cent(\mathbb{C}[\mathcal{B}_i]).$
\end{itemize}
\end{lemma}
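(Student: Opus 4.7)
My plan is to handle (i) directly from the definitions of morphism and of the path algebra, and then derive (ii) via the Wedderburn decomposition of $\mathbb{C}[\mathcal{B}_j]$ combined with a standard double-centralizer argument.

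For (i), since $\mathcal{B}$ has a unique level-$0$ vertex (the root), any morphism $\phi\colon Q_{n0} \to \mathcal{B}$ must send the level-$0$ vertex of $Q_{n0}$ to that root, and it sends the level-$n$ vertex to some vertex $\beta$ at grading $n$ in $\mathcal{B}$. The two parallel arrows are then each mapped to a path from the root to $\beta$, so morphisms are in bijection with ordered pairs $(P, Q)$ of length-$n$ paths from the root to a common endpoint---exactly the standard basis of $\mathbb{C}[\mathcal{B}_n]$. Extending this bijection linearly will give the claimed vector space isomorphism.

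For the first claim of (ii), I would note that conditions (ii) and (iii) of the Bratteli diagram definition, applied inductively, show that every level-$i$ vertex is the endpoint of some length-$i$ path from the root and that every level-$j$ vertex is the source of some length-$(n-j)$ path to level $n$. Consequently every $\phi \in \Hom(Q_{ji}; \mathcal{B})$ extends to a morphism of $Q_{ji}^n$, giving $A(Q_{ji}; \mathcal{B}) = A(Q_{ji} \uparrow Q_{ji}^n; \mathcal{B})$. For the substantive isomorphism, I would define on basis elements
\[
\Phi(\phi) \;:=\; \sum_{P_0 \colon \text{root} \to \alpha}\bigl(R_1 \circ P_0,\; R_2 \circ P_0\bigr) \;\in\; \mathbb{C}[\mathcal{B}_j],
\]
where $\phi$ is encoded by the vertices $\alpha$ (at level $i$) and $\beta$ (at level $j$) and the ordered pair $(R_1, R_2)$ of paths from $\alpha$ to $\beta$. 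Injectivity is immediate because distinct morphisms produce $\Phi(\phi)$'s with disjoint supports in the standard basis. A direct calculation with the multiplication rule of Figure \ref{multinpath} and the explicit embedding $\mathbb{C}[\mathcal{B}_i] \hookrightarrow \mathbb{C}[\mathcal{B}_j]$ then shows that each $\Phi(\phi)$ commutes with every basis element of $\mathbb{C}[\mathcal{B}_i]$, so $\Phi$ lands in $\mathbb{C}[\mathcal{B}_j] \cap \cent(\mathbb{C}[\mathcal{B}_i])$.

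The main obstacle is surjectivity. Here I plan to transport the question through the Wedderburn decomposition $\mathbb{C}[\mathcal{B}_j] \cong \bigoplus_\beta \End(V_\beta)$, where $V_\beta$ has basis the length-$j$ paths from the root to $\beta$ (cf.\ Lemma \ref{bratchain} and Appendix \ref{appB}). As a $\mathbb{C}[\mathcal{B}_i]$-module, each $V_\beta$ decomposes isotypically as $\bigoplus_\alpha V_\alpha \otimes W_{\alpha\beta}$, with the multiplicity space $W_{\alpha\beta}$ spanned by paths from $\alpha$ to $\beta$ and carrying the trivial $\mathbb{C}[\mathcal{B}_i]$-action; this follows from the unique factorization of a root-to-$\beta$ path as a root-to-$\alpha$ prefix followed by an $\alpha$-to-$\beta$ suffix. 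Schur's Lemma then identifies the centralizer on each $V_\beta$ with $\bigoplus_\alpha \mathrm{Id}_{V_\alpha} \otimes \End(W_{\alpha\beta})$, whose natural matrix unit basis, translated back into path-algebra coordinates, is precisely $\{\Phi(\phi)\}$. Comparing dimensions gives $\dim\bigl(\mathbb{C}[\mathcal{B}_j] \cap \cent(\mathbb{C}[\mathcal{B}_i])\bigr) = \sum_{\alpha, \beta}(\#\text{paths }\alpha \to \beta)^2 = \#\Hom(Q_{ji}; \mathcal{B})$, so $\Phi$ is the desired isomorphism and Note \ref{finitedim} completes the dimensional identification.
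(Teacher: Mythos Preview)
Your argument is correct and is precisely the standard Gel'fand--Tsetlin/Schur's Lemma computation that the paper defers to the references \cite{maslen} and \cite{towers}; the paper itself gives no details beyond that citation, so you have simply written out what those references contain. In particular, your identification of $V_\beta$ as $\bigoplus_\alpha V_\alpha \otimes W_{\alpha\beta}$ and the resulting centralizer $\bigoplus_\alpha \mathrm{Id}_{V_\alpha}\otimes\End(W_{\alpha\beta})$ is exactly the ``standard fact about Gel'fand--Tsetlin bases'' the paper invokes.
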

\begin{proof} This follows from an application of standard facts about Gel'fand-Tsetlin bases. For futher details, see eg. \cite[Lemma 4.1]{maslen}, \cite[Proposition 2.3.12]{towers}.
\end{proof}
%(i) follows from the proof of (ii). Alternatively, note that morphisms $\phi:Q_{n0}\rightarrow \mathcal{B}$ correspond to pairs of paths of length $n$ in $\mathcal{B}$ starting at the root, $\hat{0}$, and ending at the same vertex, i.e., basis elements of the path algebra $\mathbb{C}[\mathcal{B}_n]$.

%To prove (ii), note that morphisms $\phi:Q_{ji}\rightarrow B$ correspond to pairs of paths in $\mathcal{B}$ starting at the same vertex at level $i$ and ending at the same vertex at level $j$.

%Let $w_{ji}\in W_{ji}:=A_j\cap\cent(A_i)$. Recall that a pair of paths in $\mathcal{B}$, 
%$$\begin{array}{l}P=p_n\leftarrow p_{n-1}\leftarrow\cdots\leftarrow p_1\leftarrow\hat{0},\\Q=q_n\leftarrow q_{n-1}\leftarrow\cdots\leftarrow q_1\leftarrow\hat{0},\end{array}$$ 
%with $p_n=q_n$ determines an element of the Gel'fand-Tsetlin basis for $\mathbb{C}[\mathcal{B}_n]$ corresponding to the path algebra chain. Represent $w_{ji}$ in its coordinates relative to the Gel'fand-Tsetlin basis by $[w_{ji}]_{PQ}$. Using standard facts about Gel'fand-Tsetlin bases (see eg. \cite[Lemma 4.1]{maslen}, \cite[Proposition 2.3.12]{towers}), 
%$$[w_{ji}]_{PQ}=\delta_{p_{n-1}q_{n-1}}\cdots\delta_{p_jq_j}P_{w_{ji}}^{ji}\delta_{p_iq_i}\cdots\delta_{p_1q_1},$$
%for $P_{w_{ji}}^{ji}$ a complex function on the vertices $p_j,\dots p_i,$ $q_j,\dots,q_i$ in $\mathcal{B}$. Thus, under the map 
%$w_{ji}\rightarrow P_{w_{ji}}^{ji},$ 
%the space $W_{ji}$ is isomorphic to the space of complex functions on vertices $p_j,\dots p_i,$ $q_j,\dots,q_i$ of $\mathcal{B}$ that form paths in $\mathcal{B}$ with $p_j=q_j$ and $p_i=q_i$.
By Lemma \ref{configspaces}, for $X_i=\mathbb{C}[\mathcal{B}_{i^+}]\cap\cent(\mathbb{C}[\mathcal{B}_{i^-}])$ as in Definition \ref{widef} and for $Q_i:=Q_{i^+i^-}$,  $X_i\cong A(Q_{i}; \mathcal{B}).$
\subsection{The Bilinear Maps `$*$'}\label{maps}
\begin{definition}\label{symdif} For a graded quiver $R$ with subquivers $Q_1, Q_2$, the \textbf{symmetric difference} of $Q_1$ and $Q_2$ is $Q_1\triangle Q_2=(Q_1\setminus (Q_1\cap Q_2))\cup(Q_2\setminus (Q_1\cap Q_2)).$ 
\end{definition} 
%note: took out isolated vertices in def of symmetric difference, don't think it should make a difference
\begin{example} The quiver $Q_2\triangle Q_3$ in Figure \ref{exQ} is the symmetric difference of $Q_2$ and $Q_3$, while the quivers $Q_1^\sigma\triangle Q_2^\sigma$ in Figures \ref{BnR} and \ref{GlnFGH} show the symmetric difference of $Q_1^\sigma$ and $Q_2^\sigma$. 
\end{example}
\begin{definition}\label{resprod} Let $B$ be a locally finite graded quiver, $R$  a graded quiver with finite subquivers $Q_1$ and $Q_2$, and $\iota_j$ the inclusion $Q_j\hookrightarrow R$, for $j=1,2$. For   $(f,g)\in A(Q_1\uparrow R;B)\times A(Q_2\uparrow R;B)$, define the \textbf{restricted product relative to R},
$*:A(Q_1\uparrow R;B)\times A(Q_2\uparrow R;B)\rightarrow A(Q_1\triangle Q_2\uparrow R; B)$, by
$$*(f,g)=f*g:=\sum_{\tau\in\Hom(Q_1\triangle Q_2\uparrow R; B)}\sum_{\substack{\eta\in\Hom(Q_1\cup Q_2\uparrow R;B)\\\eta\downarrow_{Q_1\triangle Q_2}=\tau}}f\vert_{\eta\circ\iota_1}g\vert_{\eta\circ\iota_2} \tau.$$ 
\end{definition}
\begin{note}\label{commrest}
It is clear from the definition that the restricted product is bilinear and commutative. In Appendix \ref{appA} we show that the restricted product is associative. 
\end{note}
\begin{lemma}\label{rprodcount} For $B$ a locally finite graded quiver, $R$ a graded quiver with finite subquivers $Q_1$ and $Q_2$, $f\in A(Q_1\uparrow R;B)$, and $g\in A(Q_2\uparrow R;B)$, the restricted product $f*g$ requires at most $\#\Hom((Q_1\cup Q_2)\uparrow R; B)$ scalar multiplications and at most $\#\Hom((Q_1\cup Q_2)\uparrow R; B)-\#\Hom((Q_1\triangle Q_2)\uparrow R; B)$ scalar additions.
\end{lemma}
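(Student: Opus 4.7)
The proof is a direct counting argument applied to the formula in Definition \ref{resprod}. The plan is to reorganize the double sum defining $f*g$ as a single sum indexed by $\eta \in \Hom(Q_1\cup Q_2 \uparrow R; B)$, grouped by the restriction map
\[
\pi : \Hom(Q_1\cup Q_2 \uparrow R; B) \longrightarrow \Hom(Q_1\triangle Q_2 \uparrow R; B), \qquad \pi(\eta) := \eta\downarrow_{Q_1\triangle Q_2},
\]
and then to read off the arithmetic costs from this organization.

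The first step is to check that $\pi$ is well defined and \emph{surjective}. Well-definedness uses only that $Q_1\triangle Q_2 \subseteq Q_1\cup Q_2$ and that the restriction of a morphism of graded quivers to a subquiver is again a morphism. Surjectivity is the one point that uses the ``$\uparrow R$'' qualifier: given $\tau \in \Hom(Q_1\triangle Q_2 \uparrow R; B)$, by definition $\tau$ admits an extension $\tilde\tau : R \to B$, and its restriction to $Q_1\cup Q_2$ then lies in $\Hom(Q_1\cup Q_2 \uparrow R; B)$ (with $\tilde\tau$ itself serving as the witnessing extension to $R$) and maps to $\tau$ under $\pi$.

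With surjectivity of $\pi$ in hand, the counts are immediate. Set $n_\tau := |\pi^{-1}(\tau)|$; by surjectivity, $n_\tau \geq 1$ for every $\tau$ in the codomain. Each $\eta$ contributes exactly one scalar product $f|_{\eta\circ\iota_1}\, g|_{\eta\circ\iota_2}$, so the total number of multiplications is $\sum_\tau n_\tau = \#\Hom(Q_1\cup Q_2 \uparrow R; B)$. For the additions, the coefficient of a basis element $\tau$ in $f*g$ is a sum of $n_\tau$ scalar products, which requires $n_\tau - 1$ additions (zero when $n_\tau = 1$). Summing over $\tau$ and using surjectivity gives
\[
\sum_\tau (n_\tau - 1) \;=\; \#\Hom(Q_1\cup Q_2 \uparrow R; B) - \#\Hom(Q_1\triangle Q_2 \uparrow R; B),
\]
matching the stated bound.

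The only genuine obstacle is the surjectivity of $\pi$: without it, the addition count would be $\#\Hom(Q_1\cup Q_2 \uparrow R; B) - |\mathrm{im}(\pi)|$, which could strictly exceed the stated bound. As shown above, surjectivity is a one-line consequence of the definition of ``$\uparrow R$'', so everything beyond this reduces to routine bookkeeping.
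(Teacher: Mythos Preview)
Your proof is correct and follows essentially the same counting argument as the paper's: one multiplication per $\eta\in\Hom(Q_1\cup Q_2\uparrow R;B)$, and $n_\tau-1$ additions per fiber of the restriction map. You are in fact more careful than the paper in explicitly verifying the surjectivity of $\pi$, which is indeed needed for the addition bound (the paper's proof silently assumes it).
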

\begin{proof} To compute $f*g$, first compute $(f\vert_{\eta\circ{\iota_1}})(g\vert_{\eta\circ{\iota_2}})$ for each $\eta\in\Hom(Q_1\cup Q_2\uparrow R; B)$. This requires $\#\Hom(Q_1\cup Q_2\uparrow R; B)$ scalar multiplications.

Next note that a scalar addition comes from each pair $\eta_i,\eta_j\in\Hom(Q_1\cup Q_2\uparrow R; B)$ with $\eta_i\downarrow_{Q_1\triangle Q_2}=\eta_j\downarrow_{Q_1\triangle Q_2}=\tau\in\Hom(Q_1\triangle Q_2\uparrow R; B);$ 
in total, $\#\Hom((Q_1\cup Q_2)\uparrow R; B)-\#\Hom((Q_1\triangle Q_2)\uparrow R; B)$ scalar additions.
\end{proof}

Lemma \ref{configspaces} gives a correspondence between $\mathbb{C}[\mathcal{B}_n]$ and the configuration space of the associated quiver $Q_{n0}$. With Theorem \ref{multthm} below, we see that under this isomorphism multiplication of path algebra elements corresponds to restricted products in the associated configuration spaces.
\begin{theorem}\label{multthm} Let $\mathcal{B}$ be a Bratteli diagram of highest grading at least $n$ and let $f,g\in\mathbb{C}[\mathcal{B}_n]$. Let $Q_1$ and $Q_2$ be the quivers of Figure \ref{bigthm} with paths $p$, $q$, and $p'$, $q'$, respectively. Let $q=p'$ and let $R=Q_1\cup Q_2$. 
\begin{figure}[H]\begin{center}
\begin{tikzpicture}[shorten >=1pt,node distance=2cm,on grid,auto,/tikz/initial text=] 
   \node at (-3,1) (q) {\large $Q_1$};
   \node at (1,1) (q) {\large$R$};     
   \node at (5,1) (q) {\large$Q_{1}\triangle Q_{2}$};    \node at (-3,-3.5) (q) {\large $Q_2$};
   \node[pnt] at (-2,0) (q_0) {};
   \node[pnt] (q_2) [left of=q_0] {};
   \node[pnt] (q_1) [below of=q_0] {};
   \node[pnt] (q_3) [below of=q_2] {};
   \node[pnt] at (0,-1) (y4) {};
   \node[pnt] (y5) [right of=y4] {};
   \node[pnt] at (4,-1) (y8) {};
   \node[pnt] (y9) [right of=y8] {};
   \draw (q_0) node[below] {\footnotesize $0$};
   \draw (q_2) node[below] {\footnotesize $n$};
   \draw (q_1) node[below] {\footnotesize $0$};
   \draw (q_3) node[below] {\footnotesize $n$};
   \draw (y4) node[below] {\footnotesize $n$};
   \draw (y5) node[below] {\footnotesize $0$};
   \draw (y9) node[below] {\footnotesize $0$};   
   \draw (y8) node[below] {\footnotesize $n$};
   \path[->,every node/.style={font=\scriptsize}]
    (q_0) edge [bend left=55] node {$q$} (q_2)
    (q_0) edge [bend right=55] node {$p$} (q_2)
    (q_1) edge [bend left=55] node {$q'$} (q_3)
	(q_1) edge [bend right=55] node {$p'$} (q_3)
    (y5) edge [bend left=55] node {$q'$} (y4)
    (y5) edge  node {$q=p'$} (y4)
    (y5) edge [bend right=55] node {$p$} (y4)
    (y9) edge [bend left=55] node {$q'$} (y8)
    (y9) edge  [bend right=55] node {$p$} (y8);    
\end{tikzpicture}
\caption{}
\label{bigthm}
\end{center}\end{figure}

Then under the isomorphisms $\phi_i:\mathbb{C}[\mathcal{B}_n]\rightarrow A(Q_i\uparrow R; \mathcal{B})$ and  $\phi: \mathbb{C}[\mathcal{B}_n]\rightarrow A(Q_1\triangle Q_2\uparrow R;\mathcal{B})$ of Lemma \ref{configspaces}, $$\phi(fg)=\phi_1(f)*\phi_2(g).$$
\end{theorem}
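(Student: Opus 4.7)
The plan is a direct verification by bilinearity. Both sides of the claimed identity $\phi(fg) = \phi_1(f) * \phi_2(g)$ are $\mathbb{C}$-bilinear in $(f, g)$, so it suffices to check the identity on basis pairs $f = (P, Q)$ and $g = (P', Q')$ of $\mathbb{C}[\mathcal{B}_n]$, where these are pairs of length-$n$ paths from the root to a common endpoint.

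First I unpack the three isomorphisms supplied by Lemma \ref{configspaces}. The map $\phi_1$ identifies the basis element $(P, Q) \in \mathbb{C}[\mathcal{B}_n]$ with the morphism $\mu^{(1)}_{PQ} \in \Hom(Q_1; \mathcal{B})$ that sends $p \mapsto P$ and $q \mapsto Q$; analogously $\phi_2$ sends $(P', Q')$ to the morphism $\mu^{(2)}_{P'Q'} \in \Hom(Q_2; \mathcal{B})$ with $p' \mapsto P'$ and $q' \mapsto Q'$; and $\phi$ sends any pair $(A, C) \in \mathbb{C}[\mathcal{B}_n]$ to the morphism $\nu_{AC} \in \Hom(Q_1 \triangle Q_2; \mathcal{B})$ sending $p \mapsto A$ and $q' \mapsto C$. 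Using the path-algebra product $(P,Q)(P',Q') = \delta_{QP'}(P,Q')$, the left-hand side evaluates to $\phi(fg) = \delta_{QP'}\,\nu_{PQ'}$.

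For the right-hand side, I apply Definition \ref{resprod} with $R = Q_1 \cup Q_2$, inside which $q$ and $p'$ are the same arrow. A morphism $\eta \in \Hom(R; \mathcal{B})$ is thus determined by a triple of length-$n$ paths $(A, S, C)$ in $\mathcal{B}$ sharing a common endpoint, via $p \mapsto A$, $q = p' \mapsto S$, and $q' \mapsto C$. Its restrictions are $\eta \circ \iota_1 = \mu^{(1)}_{AS}$, $\eta \circ \iota_2 = \mu^{(2)}_{SC}$, and its restriction to $Q_1 \triangle Q_2$ is $\nu_{AC}$. The scalar $\mu^{(1)}_{PQ}|_{\mu^{(1)}_{AS}} \cdot \mu^{(2)}_{P'Q'}|_{\mu^{(2)}_{SC}}$ equals $1$ precisely when $A = P$, $S = Q = P'$, and $C = Q'$, and $0$ otherwise; in particular, it is nonzero only when $Q = P'$, in which case the accompanying $\tau$ in the outer sum is forced to be $\nu_{PQ'}$. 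Collecting terms yields $\phi_1(f) * \phi_2(g) = \delta_{QP'}\,\nu_{PQ'}$, matching $\phi(fg)$.

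The only real obstacle is notational bookkeeping: one must keep straight that $R$ has three arrows between its two vertices, so morphisms $R \to \mathcal{B}$ are parametrized by \emph{triples} of paths sharing endpoints rather than by pairs, and that summing over extensions $\eta$ which restrict to a fixed $\tau$ on $Q_1 \triangle Q_2$ is exactly summation over the image $S$ of the middle arrow $q = p'$. This is what recovers the convolution $\sum_S a_{PS} b_{SQ'}$ underlying path-algebra multiplication from the restricted product, and completes the identification.
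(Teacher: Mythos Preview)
Your proof is correct and follows essentially the same route as the paper's own argument: both unwind the definition of the restricted product by parametrizing morphisms $\eta\in\Hom(R;\mathcal{B})$ as triples of paths $(P,S,Q')$ with the middle path $S$ being the summation variable, and then match this against the convolution formula for multiplication in $\mathbb{C}[\mathcal{B}_n]$. The only cosmetic difference is that you reduce by bilinearity to basis pairs $f=(P,Q)$, $g=(P',Q')$ at the outset, whereas the paper carries general $f=\sum f|_{PQ}(P,Q)$ and $g=\sum g|_{PQ}(P,Q)$ through the computation; the underlying verification is identical.
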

\begin{proof} For $P,Q$ paths of length $n$ in $\mathcal{B}$, let $\gamma_{PQ}\in\Hom(Q_1;\mathcal{B})$ denote the morphism that sends $p$ to $P$ and $q$ to $Q$. Similarly, let $\mu_{PQ}\in\Hom(Q_2;\mathcal{B})$ (respectively, $\tau_{PQ}\in\Hom(Q1\triangle Q_2;\mathcal{B})$) denote the morphism that sends $p'$ to $P$ and $q'$ to $Q$ (respectively, $p$ to $P$ and $q'$ to $Q$).

Let $f=\sum f\vert_{PQ}(P,Q)\in\mathbb{C}[\mathcal{B}_n]$, $g=\sum g\vert_{PQ}(P,Q)\in\mathbb{C}[\mathcal{B}_n]$. Then $$\begin{array}{ll}\displaystyle\phi_1(f)=\sum_{\gamma_{PQ}\in\Hom(Q_1;\mathcal{B})} f\vert_{PQ}\gamma_{PQ}, &\displaystyle\phi_2(g)=\sum_{\mu_{PQ}\in\Hom(Q_2;\mathcal{B})} g\vert_{PQ}\gamma_{PQ},\end{array}$$
and 
$$\phi_1(f)*\phi_2(g)=\sum_{\tau_{PQ'}\in\Hom(Q_1\triangle Q_2\uparrow R;\mathcal{B})}\sum_{\substack{\eta\in\Hom(R;\mathcal{B})\\\eta\downarrow_{Q_1\triangle Q_2}=\tau_{PQ'}}} f\vert_{\eta\circ\iota_1}g\vert_{\eta\circ\iota_2}\tau_{PQ'}.$$

A morphism $\eta\in\Hom(R;\mathcal{B})$ with $\eta\downarrow_{Q_1\triangle Q_2}=\tau_{PQ'}$ must send $p$ to $P$, $q'$ to $Q'$, and $q, p'$ to the same path, $Q$. Then $\eta\circ\iota_1=\gamma_{PQ}$, $\eta\circ\iota_2=\mu_{QQ'}$, and 

$$ \phi_1(f)*\phi_2(g)=\sum_{P,Q'}\left(\sum_{Q}f\vert_{PQ}f\vert_{QQ'}\right)\tau_{PQ'}=\phi(fg).$$
\end{proof}

\begin{corollary}\label{e} For $e$ the identity element of $\mathbb{C}[\mathcal{B}_{n}]$, a restricted product with $\phi(e)$ requires no arithmetic operations to compute.
\end{corollary}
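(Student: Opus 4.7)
The plan is to identify $\phi(e)$ explicitly as an element of the relevant configuration space and then verify that, inside the restricted-product formula of Definition \ref{resprod}, every "multiplication" is by $0$ or $1$ and every "sum" has at most one nonzero term.

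First I would pin down $\phi(e)$. In the path algebra basis, the identity of $\mathbb{C}[\mathcal{B}_n]$ is the diagonal element $e=\sum_{P}(P,P)$, summed over length-$n$ paths from the root; this element lies in every subalgebra $X_i=\mathbb{C}[\mathcal{B}_{i^+}]\cap\cent(\mathbb{C}[\mathcal{B}_{i^-}])$. Tracing through the isomorphism of Lemma \ref{configspaces}, a basis pair $(P,P')$ supported in $X_i$ corresponds to the morphism sending the two parallel paths of the associated quiver $Q$ to $P$ and $P'$, respectively; thus $\phi(e)=\sum_{\gamma\,\text{diagonal}}\gamma$, where a morphism $\gamma\colon Q\to\mathcal{B}$ is \emph{diagonal} if it sends the top and bottom paths of $Q$ to the same path in $\mathcal{B}$. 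Equivalently, $\phi(e)|_{\gamma}\in\{0,1\}$, equal to $1$ iff $\gamma$ is diagonal.

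Next I would substitute into Definition \ref{resprod}. Taking $\phi(e)\in A(Q_2\uparrow R;\mathcal{B})$ and any $f\in A(Q_1\uparrow R;\mathcal{B})$, each term of
\[
f*\phi(e)=\sum_{\tau}\sum_{\substack{\eta\in\Hom(R;\mathcal{B})\\ \eta\downarrow_{Q_1\triangle Q_2}=\tau}} f|_{\eta\circ\iota_1}\,\phi(e)|_{\eta\circ\iota_2}\,\tau
\]
involves the scalar $\phi(e)|_{\eta\circ\iota_2}\in\{0,1\}$, so the product $f|_{\eta\circ\iota_1}\cdot\phi(e)|_{\eta\circ\iota_2}$ is either $0$ or $f|_{\eta\circ\iota_1}$ and requires no arithmetic multiplications to form.

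Finally I would rule out additions. For a fixed $\tau\in\Hom(Q_1\triangle Q_2\uparrow R;\mathcal{B})$, an extension $\eta$ is obtained by choosing the values of $\eta$ on the edges of $Q_1\cap Q_2$. The condition $\phi(e)|_{\eta\circ\iota_2}=1$ says that the two parallel paths of $Q_2$ have the same image under $\eta$; since one of them already has its image pinned down on $Q_1\triangle Q_2$ by $\tau$, this diagonal constraint forces the other to match on the overlapping portion $Q_1\cap Q_2$. Hence there is at most one $\eta$ extending $\tau$ for which $\phi(e)|_{\eta\circ\iota_2}\neq 0$, so the inner sum collapses to (at most) a single term and no additions are performed. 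The only mildly technical point is this last bookkeeping on $Q_1\cap Q_2$, but it is immediate once one notes that any edge of $Q_2$ lies either in $Q_1\triangle Q_2$ (where $\tau$ already decides its image) or in $Q_1\cap Q_2$ (where the diagonal condition uniquely propagates the forced image).
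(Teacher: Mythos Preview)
Your argument is correct and is essentially the right way to justify the operational claim. The paper states this as an immediate corollary of Theorem~\ref{multthm} without supplying a proof; the intended reasoning is that $\phi_1(f)*\phi_2(e)=\phi(fe)=\phi(f)$ is just a relabeling of $\phi_1(f)$. Your explicit unwinding of Definition~\ref{resprod}---showing that every scalar factor $\phi(e)|_{\eta\circ\iota_2}$ is $0$ or $1$ and that the diagonal constraint forces a unique extension $\eta$ of each $\tau$---is exactly what is needed to substantiate that no multiplications or additions are performed, and matches the paper's implicit argument.
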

\subsection{Use of `$*$' in the SOV Algorithm}\label{finalalg} In this section we combine the results of Section \ref{maps} with Section \ref{sepvarstate} to show how the restricted product is used in the SOV algorithm.

Note that by Lemma \ref{configspaces}, for $f\in X_i=\mathbb{C}[\mathcal{B}_{i^+}]\cap\cent(\mathbb{C}[\mathcal{B}_{i^-}])$, $f\vert_{PQ}=0$ unless $P$ and $Q$ are two paths that agree from level $0$ to level $i^-$ and from level $i^+$ to level $n$. The same is true for $g\in X_j$. Then $Q_1$ and $Q_2$ have form as in Figure \ref{fig:2}, and $R$ has one of three possible forms, depending on the relation between $i^+$, $i^-$, $j^+,$ and $j^-$ (see Figures \ref{fig:2}, \ref{fig:3}, \ref{fig:4}).

\begin{figure}[H]\begin{center}
\begin{tikzpicture}[shorten >=1pt,node distance=2cm,on grid,auto,/tikz/initial text=] 
   \node at (-3,1) (q) {\large $Q_1$};
   \node at (-3,-3) (q) {\large $Q_2$};
   \node at (1.5,1) (q) {\large $R$};
   \node at (5.5,1) (q) {\large $Q_{1}\triangle Q_{2}$};   
   \node[pnt] at (-2,0) (q_0) {};
   \node[pnt] (q_2) [left of=q_0] {};
   \node[pnt] (q_1) [below of=q_0] {};
   \node[pnt] (q_3) [below of=q_2] {};
   \node[pnt] at (-2.5,0) (i1) {};
   \node[pnt] at (-3.5,0) (j1) {};
   \node[pnt] at (-2.5,-2) (k1) {};
   \node[pnt] at (-3.5,-2) (l1) {};
   \node at (-1,-.7)(y3){};
   \node at (-1,-1)(y3){};   
   \node[pnt] at (0,-1) (y4) {};
   \node[pnt] at (3,-1) (y5) {};
   \node[pnt] at (2.5,-1) (i2) {};
   \node[pnt] at (.5,-1) (j2) {};  
   \node[pnt] at (2.25,-1.3) (k2) {};
   \node[pnt] at (.75,-1.3) (l2) {};
   \node at (3.5,-.7)(y3){};   
   \node at (3.5,-1)(y3){};
   \node[pnt] at (6.5,-1) (i3) {};
   \node[pnt] at (4.5,-1) (j3) {};  
   \node[pnt] at (6.25,-1.3) (k3) {};
   \node[pnt] at (4.75,-1.3) (l3) {};  
   \draw (q_0) node[below] {\footnotesize $0$};
   \draw (q_1) node[below] {\footnotesize $0$};
   \draw (q_2) node[below] {\footnotesize $n$};
   \draw (q_3) node[below] {\footnotesize $n$};
   \draw (y5) node[below] {\footnotesize $0$};
   \draw (y4) node[below] {\footnotesize $n$};
   \draw (i1) node[below] {\footnotesize $i^-$};
   \draw (i2) node[below] {\footnotesize $i^-$};
   \draw (i3) node[below] {\footnotesize $i^-$};
   \draw (j1) node[below] {\footnotesize $i^+$};
   \draw (j2) node[below] {\footnotesize $i^+$};
   \draw (j3) node[below] {\footnotesize $i^+$};
   \draw (l1) node[below] {\footnotesize $j^+$};
   \draw (l2) node[below] {\footnotesize $j^+$};
   \draw (l3) node[below] {\footnotesize $j^+$};
   \draw (k1) node[below] {\footnotesize $j^-$};
   \draw (k2) node[below] {\footnotesize $j^-$};
   \draw (k3) node[below] {\footnotesize $j^-$};
   \path[every node/.style={font=\scriptsize}]
    (i1) edge [bend left=65] node {} (j1)
    (i1) edge [bend right=65] node {} (j1)
    (k1) edge [bend left=65] node {} (l1)
    (k1) edge [bend right=65] node {} (l1)
    
    (k2) edge [bend left=75] node {} (l2)
    (j2) edge [bend left=55] node {} (i2)
    (j2) edge [bend right=55] node {} (i2)
    
    (k3) edge [bend left=75] node {} (l3)
    (k3) edge [bend right=35] node {} (i3)
    (j3) edge [bend left=55] node {} (i3)
    (j3) edge [bend right=35] node {} (l3);
   
   \path[->, every node/.style={font=\scriptsize}]
    (q_0) edge node {} (i1)
    (q_1) edge node {} (k1)
    (j1) edge node {} (q_2)
    (l1) edge node {} (q_3)
    (y5) edge node {} (i2)
    (j2) edge node {} (y4);

\end{tikzpicture}
\caption{$i^-\leq j^-\leq j^+\leq i^+$}
\label{fig:2}

\end{center}\end{figure}
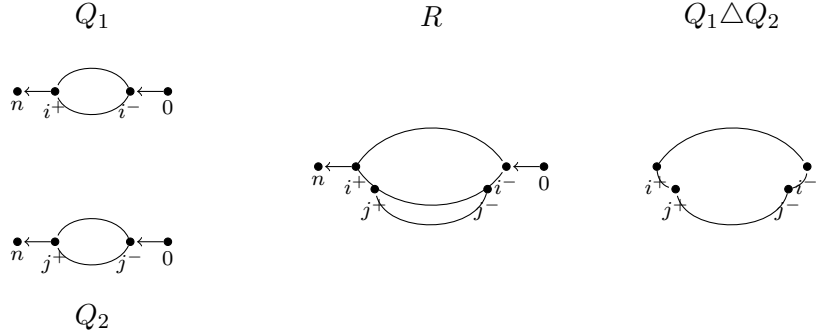

\begin{figure}[H]
\begin{tikzpicture}[shorten >=1pt,node distance=2cm,on grid,auto,/tikz/initial text=] 
 
   \node at (-3,1) (q) {\large $Q_1$};
   \node at (-3,-3) (q) {\large $Q_2$};
      \node at (1.5,1) (q) {\large $R$};
   \node at (5.5,1) (q) {\large $Q_1\bigtriangleup Q_2$};   
   \node[pnt] at (-2,0) (q_0) {};
   \node[pnt] (q_2) [left of=q_0] {};
   \node[pnt] (q_1) [below of=q_0] {};
   \node[pnt] (q_3) [below of=q_2] {};
   \node[pnt] at (-2.5,0) (i1) {};
   \node[pnt] at (-3.5,0) (j1) {};
   \node[pnt] at (-2.5,-2) (k1) {};
   \node[pnt] at (-3.5,-2) (l1) {};
 %  \node at (-1,-.7)(y3){$\iota_1\sqcup\iota_2$};
 %  \node at (-1,-1)(y3){$\longrightarrow$};   
   \node[pnt] at (0,-1) (y4) {};
   \node[pnt] at (3,-1) (y5) {};
   \node[pnt] at (2.5,-1) (i2) {};
   \node[pnt] at (1.1,-1) (j2) {};  
   \node[pnt] at (1.9,-1.4) (k2) {};
   \node[pnt] at (.5,-1) (l2) {};
%   \node at (3.5,-.7)(y3){$\iota_\bigtriangleup$};   
%   \node at (3.5,-1)(y3){$\hookleftarrow$};
   \node[pnt] at (4.5,-1) (y8) {};
   \node[pnt] at (7.5,-1) (y9) {};
   \node[pnt] at (7,-1) (i3) {};
   \node[pnt] at (5.7,-1) (j3) {};  
   \node[pnt] at (6.5,-1.4) (k3) {};
   \node[pnt] at (5,-1) (l3) {};
   
   \draw (q_0) node[below] {\footnotesize $0$};
   \draw (q_1) node[below] {\footnotesize $0$};
   \draw (q_2) node[below] {\footnotesize $n$};
   \draw (q_3) node[below] {\footnotesize $n$};
   \draw (y5) node[below] {\footnotesize $0$};
   \draw (y4) node[below] {\footnotesize $n$};
   \draw (y8) node[below] {\footnotesize $n$};
   \draw (y9) node[below] {\footnotesize $0$};
   \draw (i1) node[below] {\footnotesize $i^-$};
   \draw (i2) node[below] {\footnotesize $i^-$};
   \draw (i3) node[below] {\footnotesize $i^-$};
   \draw (j1) node[below] {\footnotesize $i^+$};
   \draw (j2) node[below] {\footnotesize $i^+$};
   \draw (j3) node[below] {\footnotesize $i^+$};
   \draw (l1) node[below] {\footnotesize $j^+$};
   \draw (l2) node[below] {\footnotesize $j^+$};
   \draw (l3) node[below] {\footnotesize $j^+$};
   \draw (k1) node[below] {\footnotesize $j^-$};
   \draw (k2) node[below] {\footnotesize $j^-$};
   \path[every node/.style={font=\scriptsize}]
    (q_0) edge node {} (i1)
    (j1) edge node {} (q_2)
    (i1) edge [bend left=65] node {} (j1)
    (i1) edge [bend right=65] node {} (j1)
    (q_1) edge node {} (k1)
    (l1) edge node {} (q_3)
    (k1) edge [bend left=65] node {} (l1)
    (k1) edge [bend right=65] node {} (l1)
    (y5) edge node {} (i2)
    (j2) edge node {} (y4)
    (j2) edge [bend left=65] node {} (i2)
    (j2) edge [bend right=65] node {} (i2)
    (k2) edge [bend left=65] node {} (l2)
    (i3) edge [bend right=65] node {} (j3)
    (i3) edge [bend left=65] node {} (k3)
    (k3) edge [bend left=65] node {} (l3)
    (y9) edge  node {} (i3)
    (j3) edge  node {} (y8); 
   
\end{tikzpicture}
\caption{$i^-\leq j^-\leq i^+\leq j^+$}
\label{fig:3}
\end{figure}
\begin{figure}[H]
\begin{tikzpicture}[shorten >=1pt,node distance=2cm,on grid,auto,/tikz/initial text=] 
  
   \node at (-3,1) (q) {\large $Q_1\sqcup Q_2$};
   \node at (-3,-3) (q) {\large $Q_2$};
   \node at (1.5,1) (q) {\large $R$};
   \node at (5.5,1) (q) {\large $Q_1\bigtriangleup Q_2$};   
   \node[pnt] at (-2,0) (q_0) {};
   \node[pnt] (q_2) [left of=q_0] {};
   \node[pnt] (q_1) [below of=q_0] {};
   \node[pnt] (q_3) [below of=q_2] {};
   \node[pnt] at (-2.5,0) (i1) {};
   \node[pnt] at (-3.5,0) (j1) {};
   \node[pnt] at (-2.5,-2) (k1) {};
   \node[pnt] at (-3.5,-2) (l1) {};
 %  \node at (-1,-.7)(y3){$\iota_1\sqcup\iota_2$};
 %  \node at (-1,-1)(y3){$\longrightarrow$};   
   \node[pnt] at (0,-1) (y4) {};
   \node[pnt] at (3,-1) (y5) {};
   \node[pnt] at (2.5,-1) (i2) {};
   \node[pnt] at (1.8,-1) (j2) {};  
   \node[pnt] at (1.3,-1) (k2) {};
   \node[pnt] at (.5,-1) (l2) {};
  % \node at (3.5,-.7)(y3){$\iota_\bigtriangleup$};   
 %  \node at (3.5,-1)(y3){$\hookleftarrow$};
   \node[pnt] at (4.5,-1) (y8) {};
   \node[pnt] at (7.5,-1) (y9) {};
   \node[pnt] at (7,-1) (i3) {};
   \node[pnt] at (6.3,-1) (j3) {};  
   \node[pnt] at (5.8,-1) (k3) {};
   \node[pnt] at (5,-1) (l3) {};
   \draw (q_0) node[below] {\footnotesize $0$};
   \draw (q_1) node[below] {\footnotesize $0$};
   \draw (q_2) node[below] {\footnotesize $n$};
   \draw (q_3) node[below] {\footnotesize $n$};
   \draw (y5) node[below] {\footnotesize $0$};
   \draw (y4) node[below] {\footnotesize $n$};
   \draw (y8) node[below] {\footnotesize $n$};
   \draw (y9) node[below] {\footnotesize $0$};
   \draw (i1) node[below] {\footnotesize $i^-$};
   \draw (i2) node[below] {\footnotesize $i^-$};
   \draw (i3) node[below] {\footnotesize $i^-$};
   \draw (j1) node[below] {\footnotesize $i^+$};
   \draw (j2) node[below] {\footnotesize $i^+$};
   \draw (j3) node[below] {\footnotesize $i^+$};
   \draw (l1) node[below] {\footnotesize $j^+$};
   \draw (l2) node[below] {\footnotesize $j^+$};
   \draw (l3) node[below] {\footnotesize $j^+$};
   \draw (k1) node[below] {\footnotesize $j^-$};
   \draw (k2) node[below] {\footnotesize $j^-$};
   \draw (k3) node[below] {\footnotesize $j^-$};
   \path[every node/.style={font=\scriptsize}]
    (q_0) edge node {} (i1)
    (j1) edge node {} (q_2)
    (i1) edge [bend left=65] node {} (j1)
    (i1) edge [bend right=65] node {} (j1)
    (q_1) edge node {} (k1)
    (l1) edge node {} (q_3)
    (k1) edge [bend left=65] node {} (l1)
    (k1) edge [bend right=65] node {} (l1)
    (y5) edge node {} (i2)
    (l2) edge node {} (y4)
    (j2) edge node {} (k2)
    (j2) edge [bend left=55] node {} (i2)
    (k2) edge [bend left=55] node {} (l2)
    (k2) edge  node {} (l2)
    (i2) edge [bend left=55] node {} (j2)

    (y9) edge node {} (i3)
    (l3) edge node {} (y8)
    (j3) edge node {} (k3)
    (j3) edge [bend left=55] node {} (i3)
    (k3) edge [bend left=55] node {} (l3)
    (k3) edge  node {} (l3)
    (i3) edge [bend left=55] node {} (j3); 
   
\end{tikzpicture}
\caption{$i^-\leq i^+\leq j^-\leq j^+$}
\label{fig:4}
\end{figure}

For $x_i\in X_i$, let $\tilde{x}_i:=\phi_i(x_i)\in A(Q_{i}\uparrow Q_{i}^n;\mathcal{B})$. By Theorem \ref{multthm}, for $x_1\in X_1$ and $x_2\in X_2$, the product $x_1x_2$ corresponds to the restricted product $\tilde{x}_1*\tilde{x}_2:A(Q_{1};\mathcal{B})\times A(Q_{2};\mathcal{B})\rightarrow A(Q_{1}\triangle Q_{2};\mathcal{B})$, with the paths of $Q_{1}$ and $Q_{2}$ identified as in Figures \ref{fig:2}, \ref{fig:3}, or \ref{fig:4}.

More generally, for $x_i\in X_i$ and the paths of $Q_{i}$ identified as in Figures \ref{fig:2}, \ref{fig:3}, \ref{fig:4}, the product $x_1x_2\cdots x_m$ corresponds to the restricted product $(x_1*x_2*\cdots*x_{m-1})*x_m:A(Q_1\triangle\cdots\triangle Q_{m-1};\mathcal{B})\times A(Q_{m};\mathcal{B})\rightarrow A(Q_1\triangle\cdots\triangle Q_m;\mathcal{B}).$

%dropped arrows for notations sake 
%For $x_i\in X_i$, let $\tilde{x}_i:=\phi_i(x_i)\in A(Q_{i}\uparrow Q_{i}^n;\mathcal{B})$. By Theorem \ref{multthm}, for $x_1\in X_1$ and $x_2\in X_2$, the product $x_1x_2$ corresponds to the restricted product $\tilde{x}_1*\tilde{x}_2:A(Q_{1}\uparrow Q_{1}^n\cup Q_{2}^n;\mathcal{B})\times A(Q_{2}\uparrow Q_{1}^n\cup Q_{2}^n;\mathcal{B})\rightarrow A(Q_{1}\triangle Q_{2}\uparrow Q_{1}^n\cup Q_{2}^n;\mathcal{B})$, with the paths of $Q_{1}$ and $Q_{2}$ identified as in Figures \ref{bigthm} and \ref{fig:2}.

%More generally, let $T_i=Q_{1^+1^-}\triangle Q_{2^+2^-}\triangle\cdots\triangle Q_{i^+i^-}.$ For $x_i\in X_i$ and the paths of $Q_{i^+i^-}$ identified as in Figures \ref{bigthm} and \ref{fig:2}, the product $x_1x_2\cdots x_m$ corresponds to the restricted product $(x_1*x_2*\cdots*x_{m-1})*x_m:A(T_{m-1}\uparrow T_{m-1}^n\cup Q_{m^+m^-}^n;\mathcal{B})\times A(Q_{m^+m^-}\uparrow T_{m-1}^n\cup Q_{m^+m^-}^n;\mathcal{B})\rightarrow A(T_m\uparrow T_{m-1}^n\cup Q_{m^+m^-}^n;\mathcal{B}).$

For $x_i\in X_i$, let $\tilde{x}_i=\phi_i(x_i)$ as in Theorem \ref{multthm}. By Note \ref{commrest}, commutativity of the restricted product ensures that $\tilde{x}_1*\cdots*\tilde{x}_m=\tilde{x}_{\sigma(1)}*\cdots*\tilde{x}_{\sigma(m)}$ for any $\sigma\in S_m$. As in Section \ref{sepvarstate} let $Q_i^\sigma$ denote the quiver associated to $X_{\sigma(i)}$.
\begin{theorem}\label{cond45}
For $x_i\in X_i=\mathbb{C}[\mathcal{B}_{i^+}]\cap\cent(\mathbb{C}[\mathcal{B}_{i^-}])$, $\sigma\in S_m$, and $\phi$ the isomorphism of Lemma \ref{configspaces},
\begin{itemize}
\item[1.] $x_1\cdots x_m= \phi^{-1}((((\tilde{x}_{\sigma(1)}*\tilde{x}_{\sigma(2)})*\tilde{x}_{\sigma(3)})\cdots \tilde{x}_{\sigma(m-1)})*\tilde{x}_{\sigma(m)})$
\item[2.] This may be computed in at most $\displaystyle\sum_{i=1}^{m-1}\dim A(Q_1^\sigma\triangle\cdots\triangle Q_i^\sigma\cup Q_{i+1}^\sigma;\mathcal{B})$ scalar multiplications, and fewer additions. %$$\begin{array}{l}\displaystyle\sum_{i=1}^{m-1}(\dim A(Q_1^\sigma\triangle\cdots\triangle Q_i^\sigma\cup Q_{i+1}^\sigma;\mathcal{B})-\dim A(Q_1^\sigma\triangle\cdots\triangle Q_{i+1}^\sigma;\mathcal{B}))\end{array}$$ scalar additions.
\end{itemize} 

\end{theorem}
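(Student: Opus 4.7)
The plan is to handle the two parts separately: part (1) is a structural identity built from Theorem \ref{multthm}, and part (2) is a direct accumulation of costs from Lemma \ref{rprodcount}.

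For part (1), I would proceed by induction on $m$, anchored on Theorem \ref{multthm}. The base case $m=2$ is exactly that theorem. The key ingredients for the inductive step are: (a) the associativity of the restricted product established in Appendix \ref{appA}, which lets us rebracket the iterated $*$ as $(\tilde{x}_{\sigma(1)} * \cdots * \tilde{x}_{\sigma(m-1)}) * \tilde{x}_{\sigma(m)}$; (b) its commutativity (Note \ref{commrest}), together with the pairwise commutation of the $x_i$ in the group algebra forced by the centralizer conditions defining $X_i$ (for any $i \neq j$, one of $\mathbb{C}[\mathcal{B}_{i^+}] \subseteq \mathbb{C}[\mathcal{B}_{j^-}]$ or $\mathbb{C}[\mathcal{B}_{j^+}] \subseteq \mathbb{C}[\mathcal{B}_{i^-}]$ will hold in the setting of the SOV algorithm, giving $x_i x_j = x_j x_i$), which reduces the case of general $\sigma$ to that of the identity permutation; and (c) the inductive hypothesis applied to the first $m-1$ factors, followed by one more use of Theorem \ref{multthm} to pair the result with $\tilde{x}_{\sigma(m)}$.

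For part (2), I would simply accumulate the cost of the $m-1$ nested multiplications. After the $i$-th step the intermediate value $\tilde{x}_{\sigma(1)} * \cdots * \tilde{x}_{\sigma(i)}$ lies in the configuration space $A(Q_1^\sigma \triangle \cdots \triangle Q_i^\sigma; \mathcal{B})$ by iterated application of Definition \ref{resprod}. Combining it with $\tilde{x}_{\sigma(i+1)}$ is a single restricted product, so Lemma \ref{rprodcount} and Note \ref{finitedim} together bound its cost by $\dim A((Q_1^\sigma \triangle \cdots \triangle Q_i^\sigma) \cup Q_{i+1}^\sigma; \mathcal{B})$ scalar multiplications, with strictly fewer additions. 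Summing over $i = 1, \ldots, m-1$ yields exactly the claim.

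The main obstacle I expect is the bookkeeping of ambient quivers: the $\uparrow R$ convention in Definition \ref{resprod} requires all subquivers to embed in a common $R$, and as the iteration proceeds we must ensure each partial-product space remains compatible with the next restricted product. Choosing the ambient quiver to be $R := Q_1^\sigma \cup \cdots \cup Q_m^\sigma$ throughout should resolve this, provided one checks that morphisms of the accumulated symmetric difference that arise at step $i$ lift uniformly to morphisms of $R$. This verification is routine but is where most of the technical care lies, since it underlies the identification of each intermediate space with the configuration space appearing in the complexity bound.
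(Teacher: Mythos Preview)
Your overall strategy---induction on $m$ via Theorem~\ref{multthm} for part~(1), and summing the per-step costs from Lemma~\ref{rprodcount} for part~(2)---matches the paper's (very terse) proof. However, ingredient~(b) in your plan contains a genuine error. You assert that the centralizer conditions force $x_i x_j = x_j x_i$ in the group algebra because ``one of $\mathbb{C}[\mathcal{B}_{i^+}] \subseteq \mathbb{C}[\mathcal{B}_{j^-}]$ or $\mathbb{C}[\mathcal{B}_{j^+}] \subseteq \mathbb{C}[\mathcal{B}_{i^-}]$ will hold.'' This is false in general: Figure~\ref{fig:3} explicitly depicts the overlapping case $i^- \leq j^- \leq i^+ \leq j^+$, and in the $B_n$ application the consecutive factors $a_n \in \mathbb{C}[\mathcal{B}_n]\cap\cent(\mathbb{C}[\mathcal{B}_{n-2}])$ and $a_{n-1} \in \mathbb{C}[\mathcal{B}_{n-1}]\cap\cent(\mathbb{C}[\mathcal{B}_{n-3}])$ have no reason to commute.

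Fortunately the claim is also unnecessary. The correct route (and the one the paper takes, in the sentence preceding Theorem~\ref{cond45}) is to first establish $\phi(x_1\cdots x_m)=\tilde{x}_1*\cdots*\tilde{x}_m$ for the \emph{identity} permutation by iterating Theorem~\ref{multthm}, and \emph{then} invoke commutativity and associativity of $*$ (Note~\ref{commrest}, Lemma~\ref{rprodass}) on the configuration-space side to rewrite the right-hand side as $\tilde{x}_{\sigma(1)}*\cdots*\tilde{x}_{\sigma(m)}$ in any bracketing. No reordering of the $x_i$ on the group-algebra side is ever required.

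One small omission in part~(2): Lemma~\ref{rprodcount} delivers $\#\Hom((Q_1^\sigma\triangle\cdots\triangle Q_i^\sigma)\cup Q_{i+1}^\sigma \uparrow R;\mathcal{B})$, whereas the stated bound has no $\uparrow R$. The paper closes this gap with the observation $\dim A(Q\uparrow R;\mathcal{B}) \leq \dim A(Q;\mathcal{B})$; you should make that inequality explicit.
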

\begin{proof} Part 1 follows from Definition \ref{resprod} and Theorem \ref{multthm}. To prove Part 2, apply Lemma \ref{rprodcount}, note that the map $\phi^{-1}$ requires no operations to compute, and also note that for any quiver $R$ with subquiver $Q$, $\dim A(Q\uparrow R;\mathcal{B})\leq \dim A(Q;\mathcal{B}).$
\end{proof}

Now let  $V_j^\sigma:=A(Q_1^\sigma\triangle\cdots\triangle Q_j^\sigma;\mathcal{B})$ and for $2\leq j\leq m-1$ let $*_j:V_{j-1}^\sigma \times X_{\sigma(j)}\rightarrow V_{j}^\sigma$ send $(v_j,x_{\sigma(j+1)})$ to $v_j*\tilde{x}_{\sigma(j+1)}$. Let $*_{m}(v_m, x_{\sigma(m)})=\phi^{-1}(v_m*\tilde{x}_{\sigma(m)})$. Then by Theorem \ref{cond45}, for $w_i=x_{\sigma(i)}$, $$x_1\cdots x_m=(((w_1*_2w_2)*_3w_3)\cdots*_mw_m)$$ as required by part II of the SOV approach, giving Theorem \ref{efficiencyfirststate}:

\begin{theorem}\label{efficiency}[Restatement of Theorem \ref{efficiencyfirststate}]
For $x_i$ and $\sigma$ as in the SOV Approach \ref{alg2}, let $Q_i^\sigma$ denote the quiver such that $X_{\sigma(i)}\cong A(Q_i;\mathcal{B})$. Then we may compute $\sum_{y\in Y} \tilde{y}F_y$ in at most $$\sum_{i=1}^{m-1}\vert W_{i-1}\vert\dim A(Q_1^\sigma\triangle\cdots \triangle Q_i^\sigma\cup Q_{i+1}^\sigma;\mathcal{B})$$ multiplications and $$ (\vert W_0\vert-\vert W_1\vert)+\sum_{i=1}^{m-1} (\vert W_{i-1}\vert-\vert W_i)\vert(\dim A(Q_1^\sigma\triangle\cdots \triangle Q_i^\sigma\cup Q_{i+1}^\sigma;\mathcal{B})-\dim A(Q_1^\sigma\triangle\cdots \triangle Q_{i+1}^\sigma;\mathcal{B}))$$ additions. 
\end{theorem}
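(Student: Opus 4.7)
The plan is to combine the iterative procedure of Algorithm~\ref{alg2}, now made rigorous by the restricted product of Section~\ref{maps}, with the per-product operation bound of Lemma~\ref{rprodcount}. First I would invoke Theorem~\ref{cond45}(1) to translate the group-algebra product $x_1\cdots x_m$ into the iterated restricted product $(((\tilde x_{\sigma(1)}*\tilde x_{\sigma(2)})*\tilde x_{\sigma(3)})\cdots *\tilde x_{\sigma(m)})$ in configuration spaces, and then invoke Theorem~\ref{algthm} to conclude that the recursively defined $L_m$ equals $\sum_{y\in Y}\tilde y F_y$. This reduces the theorem to bounding the number of scalar operations needed to compute $L_m$ stage by stage.

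Next I would analyse the cost of each stage. Stage~$1$ produces $L_1$ by summation only, so it contributes no multiplications and at most $|W_0|-|W_1|$ scalar additions (one addition per pair of tuples in $W_0$ that get collapsed by matching on $(w_2,\dots,w_m)$). For each $i\ge 2$, producing $L_i$ from $L_{i-1}$ requires exactly one restricted product $L_{i-1}(w_i,\dots,w_m)*w_i$ for each tuple in $W_{i-1}$, followed by summation of these products over the fibre above each element of $W_i$. Since $L_{i-1}\in A(Q_1^\sigma\triangle\cdots\triangle Q_{i-1}^\sigma;\mathcal{B})$ and $\tilde w_i\in A(Q_i^\sigma;\mathcal{B})$, Lemma~\ref{rprodcount} bounds the cost of each restricted product by $\dim A\bigl((Q_1^\sigma\triangle\cdots\triangle Q_{i-1}^\sigma)\cup Q_i^\sigma;\mathcal{B}\bigr)$ multiplications and $\dim A\bigl((Q_1^\sigma\triangle\cdots\triangle Q_{i-1}^\sigma)\cup Q_i^\sigma;\mathcal{B}\bigr)-\dim A\bigl(Q_1^\sigma\triangle\cdots\triangle Q_i^\sigma;\mathcal{B}\bigr)$ additions, while the outer summation over $w_i$ contributes $(|W_{i-1}|-|W_i|)\cdot\dim A(Q_1^\sigma\triangle\cdots\triangle Q_i^\sigma;\mathcal{B})$ further scalar additions. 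Summing the multiplication contributions over the stages, and applying the monotonicity $|W_{i-1}|\ge |W_i|$ and the identification of the $i$th stage in the theorem's indexing with the product $L_{i-1}*w_i$, yields the stated multiplication bound. Grouping the two types of addition contributions and telescoping the dimension differences gives the stated addition bound.

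The main obstacle is not any single step but the careful bookkeeping of additions: one must separate the additions internal to each restricted product (governed by the difference of $\#\Hom$ counts in Lemma~\ref{rprodcount}) from those used to collapse fibres of the map $W_{i-1}\twoheadrightarrow W_i$, and then regroup so the telescoping aligns with the formula in the statement. A minor side point is that if some $w_i$ equals the identity then by Corollary~\ref{e} the associated restricted product is free; this only tightens the bound, so the estimate remains valid without modification.
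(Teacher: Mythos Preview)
Your proposal is correct and follows essentially the same route as the paper: invoke Theorem~\ref{cond45} to identify $x_1\cdots x_m$ with the iterated restricted product, run the stage-by-stage analysis of Algorithm~\ref{alg2} with Theorem~\ref{algthm}, and apply Lemma~\ref{rprodcount} at each stage to bound multiplications and additions separately. The paper in fact leaves the addition count entirely implicit, so your explicit decomposition into ``internal'' restricted-product additions and ``outer'' fibre-collapse additions is more detailed than what appears there; just be careful that after regrouping the two contributions your sum actually matches the (somewhat awkwardly indexed) formula in the statement rather than a merely equivalent-looking expression.
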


%\begin{remark} This is a slightly simplified approach in that we treat each element $x_i$ in the $i$th coordinate the same by using the maximum $c^+(x_i)$ and minimum $c^-(x_i)$. We may use a more detailed approach to get a better count by considering each different value of $c^+(x_i)$ and $c^-(x_i)$ separately. This approach is outlined in Appendix \ref{appendix}. **dont think this is needed?**
%\end{remark}

%**see how $c^+(x)$ defined in tex draft page $23$ and change to that**
\section{Determining the Dimension of Configuration Spaces}\label{quivcounts}
In Section \ref{proofsofstuff} we developed aspects of the general quiver formalism to provide the technical bedrock for the SOV algorithm (esp., the definitions of configuration space and restricted product, and basic complexity counts in terms of morphisms). The final step in computing the complexities of the algorithms outlined in Section \ref{applications} is to finally rewrite the morphism counts in terms of multiplicities for the restriction of representations from one group algebra to another. That is the purpose of this section. Here we accomplish this by adapting earlier work of Stanley's on differential posets \cite{diffposets}, a context that can also be used for Bratelli diagrams. In this section our main result is the final Corollary (Corollary \ref{hom=dim}) that computes $\#\Hom(Q,\mathcal{B})$ (for a so-called "n-toothed quiver" $Q$ and Bratteli diagram $\mathcal{B}$) in terms of spectral information from "up" and "down" operators on the diagram. We apply these results in Appendix \ref{appC} and Appendix \ref{appgenlin} to give the explicit counts of Section \ref{applications}.
\subsection{General Morphism Counts}\label{generalcounts}
 Recall from Note \ref{finitedim} that if $Q$ is a finite subquiver of $R$ and $B$ a locally finite quiver, $\dim A(Q\uparrow R; B)=\#\Hom(Q\uparrow R; B).$ In the SOV approach, $B$ is the Bratteli diagram associated to a chain of semisimple algebras and hence locally finite, so in this section we give results to count $\#\Hom(Q\uparrow R;B)$. 

%First note for a graded quiver $Q$ that the set of vertices $V(Q)$ is itself a graded quiver. 
For $B$ a locally finite graded quiver, $\alpha,\beta\in V(B)$, let $M_B(\alpha,\beta)$ denote the number of paths from $\beta$ to $\alpha$ in $B$. Note that for $\mathcal{B}$ a Bratteli diagram, $\alpha,\beta\in V(\mathcal{B})$ correspond to irreducible representations $\gamma$, $\rho$ and
$M_\mathcal{B}(\alpha,\beta)=M(\gamma, \rho),$ as in Definition \ref{brattdef}.  
\begin{theorem}\label{countingmorph}
Let $Q,R,B$ be graded quivers with $Q$ a finite subquiver of $R$ and $B$ locally finite. Then 
$$\#\Hom(Q\uparrow R; B)=\sum_{\phi\in\Hom(V(Q)\uparrow R; B)}\prod_{\substack{\text{arrows}\\ \beta\rightarrow \alpha\\\text{in}\;\; Q}} M_B(\phi \alpha,\phi \beta)$$
%$$=\sum_{\phi\in\Hom(V(Q)\uparrow V(Q)\cup(R\setminus Q); B)}\prod_{\substack{\text{arrows}\\\beta\rightarrow\alpha\\\text{in}\;\; Q}} M_B(\phi\alpha,\phi\beta).$$
\end{theorem}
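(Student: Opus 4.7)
The plan is to partition $\Hom(Q\uparrow R; B)$ by the induced vertex map and show that each fibre is counted by the displayed product. Any morphism $\psi\in \Hom(Q\uparrow R; B)$ restricts to a grading-preserving vertex map $\phi := \psi|_{V(Q)}:V(Q)\to V(B)$, and since $\psi$ extends to a morphism of $R$ into $B$, this $\phi$ lies in $\Hom(V(Q)\uparrow R; B)$. It therefore suffices, for each fixed $\phi\in\Hom(V(Q)\uparrow R; B)$, to count those $\psi$'s with $\psi|_{V(Q)}=\phi$ and then sum over $\phi$.

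By Definition \ref{defmorph}, specifying such a $\psi$ amounts to assigning to each arrow $e:\beta\to\alpha$ in $Q$ a path in $B$ whose source and target are forced to be $\phi\beta$ and $\phi\alpha$ by the grading-preservation of $\phi$, but which is otherwise unconstrained; moreover the choices are mutually independent across distinct arrows of $Q$. The number of paths in $B$ from $\phi\beta$ to $\phi\alpha$ is $M_B(\phi\alpha,\phi\beta)$ by definition, so at most $\prod_{\beta\to\alpha\in E(Q)}M_B(\phi\alpha,\phi\beta)$ such $\psi$'s exist; equality will follow once we verify that each of these $\psi$'s really does extend to $R$.

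The main obstacle is precisely this last point: ensuring that no overcount occurs. Since $\phi\in\Hom(V(Q)\uparrow R; B)$, fix any $\Psi\in\Hom(R;B)$ whose restriction to $V(Q)$ equals $\phi$; such a $\Psi$ supplies both a vertex extension $\hat\phi$ of $\phi$ to $V(R)$ and a chosen path in $B$ over every arrow of $R$. For an arbitrary $\psi$ on $Q$ with vertex map $\phi$, we then construct an extension to $R\to B$ by retaining $\psi$'s arrow-paths on $E(Q)$ and using those from $\Psi$ on $E(R)\setminus E(Q)$; this is well-defined because the path assignment over each arrow is independent of the others. Hence the fibre over $\phi$ has cardinality exactly $\prod_{\beta\to\alpha\in E(Q)}M_B(\phi\alpha,\phi\beta)$, and summing over $\phi\in\Hom(V(Q)\uparrow R; B)$ yields the claimed identity.
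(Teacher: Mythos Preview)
Your proof is correct and follows the same approach as the paper: partition the morphisms by their underlying vertex map and count the arrow assignments over each fibre. The paper's proof is a one-sentence sketch of exactly this idea, whereas you additionally spell out why every such arrow assignment genuinely extends to $R$ (by splicing with a fixed extension $\Psi$ of $\phi$), a point the paper leaves implicit.
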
 
\begin{proof}
A morphism specifies the image of each vertex and each arrow. This may be counted by first fixing the image of each vertex and counting all possible arrow images, then varying over all possible images of $V(Q)$. 
\end{proof}
Theorem \ref{countingmorph} gives a procedure for computing $\#\Hom(Q\uparrow R; B)$. For a quiver $Q$, let $Q^i$ denote the vertices of $Q$ at level $i$. Then:
\begin{itemize}
\item[1.] label each vertex $\alpha_i\in Q^i$ with a vertex $\alpha_i'\in B^i$ such that this labeling could extend to a map from $R$ into $B$;
\item[2.] label each edge of $Q$ from $\beta$ to $\alpha$ by $M_B(\alpha',\beta')$;
\item[3.] multiply the labels and sum over all possible labellings.
\end{itemize}

\begin{example}\label{Q1R1} Let $Q_1=R_1$ be as in Figure \ref{labelling}. Steps $1$ and $2$ then give the labelling of the figure and by Theorem \ref{countingmorph}, $$\#\Hom(Q\uparrow R; B)=\sum_{\alpha_i'\in B^i}M_B(\alpha_5',\alpha_0')M_B(\alpha_5',\alpha_3')M_B(\alpha_4',\alpha_3')M_B(\alpha_4',\alpha_0').$$

\begin{figure}\begin{center}
\begin{tikzpicture}[shorten >=1pt,node distance=2cm,on grid,auto,/tikz/initial text=] 
   \node at (-7.875,2) (qr) {\large $Q_1=R_1$};
   \node at (-1.875,2) (label) {\large After Labelling};
   \node[pnt] at (-6,0) (a0) {};
   \node[pnt] at (-8.25,0) (a3) {};
   \node[pnt] at (-9,1) (a4) {};
   \node[pnt] at (-9.75,-1) (a5) {};
   \node[pnt] at (0,0) (a0') {};
   \node[pnt] at (-2.25,0) (a3') {};
   \node[pnt] at (-3,1) (a4') {};
   \node[pnt] at (-3.75,-1) (a5') {};
   \node at (-1.4,1.25) (m40) {\scriptsize $M_B(\alpha_4',\alpha_0')$};
   \draw (a0') node[below] {\footnotesize $\alpha_0'$};
   \draw (a3') node[below] {\footnotesize $\alpha_3'$};
   \draw (a4') node[above] {\footnotesize $\alpha_4'$};
   \draw (a5') node[below] {\footnotesize $\alpha_5'$};
   \draw (a0) node[below] {\footnotesize $\alpha_0$};
   \draw (a3) node[below] {\footnotesize $\alpha_3$};
   \draw (a4) node[above] {\footnotesize $\alpha_4$};
   \draw (a5) node[below] {\footnotesize $\alpha_5$};
   
   \path[->,every node/.style={font=\scriptsize}]
    (a0') edge [bend right=35] node {} (a4')
    (a3') edge  node {$M_B(\alpha_4',\alpha_3')$} (a4')    
    (a3') edge  node {$M_B(\alpha_5',\alpha_3')$} (a5')
    (a0') edge [bend left=35] node {$M_B(\alpha_5',\alpha_0')$} (a5')
    (a0) edge [bend right=35] node {} (a4)
    (a3) edge  node {} (a4)    
    (a3) edge  node {} (a5)
    (a0) edge [bend left=35] node {} (a5);

\end{tikzpicture}
\caption{Labelling $Q_1$ using the procedure of Theorem \ref{countingmorph}. }
\label{labelling}
\end{center}\end{figure}
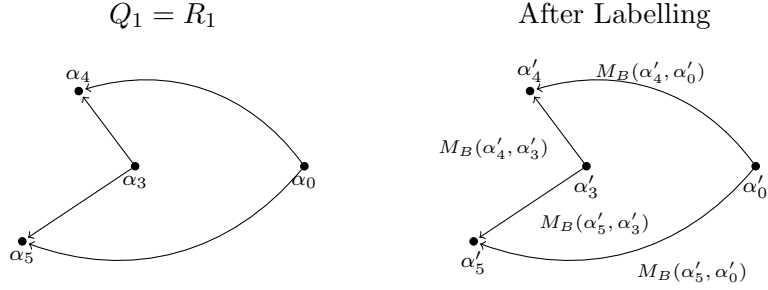

\end{example}
To further simplify counts, we first `smooth' quivers before counting morphisms, i.e. we remove superfluous vertices (see Corollary \ref{smooth} in Appendix \ref{quiversmooth}).

\begin{example} Let $Q_2=R_2$ be as in Figure \ref{Q2=R2}. Then for $Q_1,R_1$ as in Figure \ref{labelling}, Corollary \ref{smooth} gives the isomorphism
$$A(Q_2\uparrow R_2;B)\cong A(Q_1\uparrow R_1; B).$$
To compute $\#\Hom(Q_2\uparrow R_2;B),$ remove vertices $\alpha_2$ and $\alpha_1$, then use the labelling of Figure \ref{labelling}.
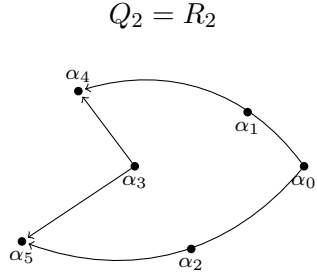
\begin{figure}[H]\begin{center}
\begin{tikzpicture}[shorten >=1pt,node distance=2cm,on grid,auto,/tikz/initial text=] 
   \node at (-1.875,2) (qr) {\large $Q_2=R_2$};
   \node[pnt] at (0,0) (a0) {};
   \node[pnt] at (-.75,.72) (a1) {};
   \node[pnt] at (-1.5,-1.1) (a2) {};
   \node[pnt] at (-2.25,0) (a3) {};
   \node[pnt] at (-3,1) (a4) {};
   \node[pnt] at (-3.75,-1) (a5) {};
   \draw (a0) node[below] {\footnotesize $\alpha_0$};
   \draw (a1) node[below] {\footnotesize $\alpha_1$};
   \draw (a2) node[below] {\footnotesize $\alpha_2$};
   \draw (a3) node[below] {\footnotesize $\alpha_3$};
   \draw (a4) node[above] {\footnotesize $\alpha_4$};
   \draw (a5) node[below] {\footnotesize $\alpha_5$};
   
   \path[->,every node/.style={font=\scriptsize}]
    (a0) edge [bend right=35] node {} (a4)
    (a3) edge  node {} (a4)    
    (a3) edge  node {} (a5)
    (a0) edge [bend left=35] node {} (a5);

\end{tikzpicture}
\caption{A quiver $Q_2$ that `smooths' to $Q_1$.}
\label{Q2=R2}
\end{center}\end{figure}
\end{example}
\subsection{Morphisms into Locally Free Bratteli Diagrams}\label{morphisms}
In Section \ref{generalcounts} we obtained general quiver morphism counting results for $B$ a locally finite quiver. For \textit{locally free} Bratteli diagrams we rewrite these results in terms of the dimensions of the corresponding subalgebras. 

\begin{definition}\label{locfree}
A Bratteli diagram $\mathcal{B}$ is \textbf{locally free} if for each $i\geq 1$, $\mathbb{C}[\mathcal{B}_i]$ is free as a module over $\mathbb{C}[\mathcal{B}_{i-1}]$. 
\end{definition}
\begin{example}
For $G$ a finite group, the Bratteli diagram associated to the chain of group algebras $\mathbb{C}[G]=\mathbb{C}[G_n]>\cdots> \mathbb{C}[G_0]=\mathbb{C}$ is locally free.
\end{example}

Let $\mathbb{C}[V(\mathcal{B})]$ denote the space of finitely supported linear combinations of vertices of $\mathcal{B}$, let $\mathcal{B}^i$ denote the vertices $\alpha\in V(\mathcal{B})$ with $gr(\alpha)=i$,  and let $\mathbb{C}[\mathcal{B}^i]$ denote the space of finitely supported linear combinations of vertices at level $i$ in $\mathcal{B}$. Define an inner product $\langle\;,\;\rangle$ on $\mathbb{C}[V(\mathcal{B})]$ making the vertices orthonormal. As in \cite{diffposets} define linear operators $U$ and $D$ on $\mathbb{C}[V(\mathcal{B})]$ by linearly extending the action on $\alpha\in \mathcal{B}^i$:
$$U\alpha=\sum_{\gamma\in \mathcal{B}^{i+1}} M_\mathcal{B}(\gamma,\alpha)\gamma,$$
$$D\alpha=\sum_{\beta\in \mathcal{B}^{i-1}} M_\mathcal{B}(\alpha,\beta)\beta,$$
where, by convention, if $\mathcal{B}$ has highest grading $n$, $\mathcal{B}^{-1}=\emptyset=\mathcal{B}^{n+1}=\mathcal{B}^{n+2}=\cdots.$

\begin{note}\label{virtualrep}
As the vertices of $\mathcal{B}$ are labeled by the irreducible representations of $\mathbb{C}[\mathcal{B}_{i}]$, elements of $\mathbb{C}[\mathcal{B}^i]$ correspond to representations of the path algebra $\mathbb{C}[\mathcal{B}_{i}]$. In this context, $U$ is induction and $D$ restriction (see \cite{towers} Proposition 2.3.1).
\end{note}
\begin{example}\label{words}
For $Q_1$ as in Example \ref{Q1R1}, trace each arrow on the quiver by starting at the root, moving up four levels to vertex $\alpha_4$, down to vertex $\alpha_3$, up two levels to vertex $\alpha_5$, and back down to the root. It is then easily checked that 
$$\begin{array}{ll}\langle D^5U^2DU^4\hat{0},\hat{0}\rangle & =\sum_{\alpha_i'\in B^i}M_B(\alpha_5',\alpha_0')M_B(\alpha_5',\alpha_3')M_B(\alpha_4',\alpha_3')M_B(\alpha_4',\alpha_0')\\
&=\#\Hom(Q\uparrow R; B).\end{array}$$
\end{example}
In Corollary \ref{hom=dim} we give explicit formulas for these inner products. For $\alpha\in V(B)$ and $\hat{0}$ the root of $\mathcal{B}$, let $d_\alpha=M_\mathcal{B}(\alpha,0)$ and let $d_i=\sum_{\alpha\in\mathcal{B}^i} d_\alpha \alpha$. 
\begin{lemma}\label{di=ui0}\begin{itemize}
\item[]
\item[(i)]For $\alpha\in\mathcal{B}^i$, $\langle d_i,\alpha\rangle = d_\alpha.$
\item[(ii)]$d_i=U^i\hat{0},$
\end{itemize}
\end{lemma}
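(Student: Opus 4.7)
The plan is to dispatch (i) by a direct unpacking and then prove (ii) by induction on $i$, using the elementary fact that paths in a Bratteli diagram decompose uniquely at each intermediate level.

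For part (i), the vector $d_i = \sum_{\alpha \in \mathcal{B}^i} d_\alpha\, \alpha$ lies in $\mathbb{C}[\mathcal{B}^i]$, and since $\langle\,,\,\rangle$ makes the vertices an orthonormal basis, pairing with any $\alpha \in \mathcal{B}^i$ picks off the coefficient $d_\alpha$. There is essentially nothing to do beyond writing this down.

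For part (ii), I would induct on $i$. The base case $i = 0$ holds because the only vertex at grading $0$ is the root $\hat{0}$, and $d_{\hat{0}} = M_\mathcal{B}(\hat{0},\hat{0}) = 1$ (the empty path), so $d_0 = \hat{0} = U^0 \hat{0}$. For the inductive step, assume $d_{i-1} = U^{i-1}\hat{0}$. Then
\[
U^i \hat{0} \;=\; U\, d_{i-1} \;=\; \sum_{\beta \in \mathcal{B}^{i-1}} d_\beta\, U\beta \;=\; \sum_{\gamma \in \mathcal{B}^i} \Bigl( \sum_{\beta \in \mathcal{B}^{i-1}} d_\beta\, M_\mathcal{B}(\gamma,\beta) \Bigr) \gamma.
\]
The result follows once the inner coefficient is identified with $d_\gamma$. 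This is the key (and only) combinatorial point: by axiom (iv) of a Bratteli diagram, every path from $\hat{0}$ to a vertex $\gamma \in \mathcal{B}^i$ has length exactly $i$ and passes through a unique vertex $\beta$ at grading $i-1$, and its final step is an arrow from $\beta$ to $\gamma$. Classifying such paths by the penultimate vertex $\beta$ and noting that $M_\mathcal{B}(\gamma,\beta)$ counts exactly the arrows $\beta \to \gamma$ (since the levels differ by $1$) gives the path-decomposition identity
\[
d_\gamma \;=\; \sum_{\beta \in \mathcal{B}^{i-1}} d_\beta\, M_\mathcal{B}(\gamma,\beta),
\]
which completes the induction.

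There is no substantive obstacle here; the only subtlety worth naming is making sure one uses Definition~\ref{brattdef}(iv) to guarantee that arrows go up exactly one level, which is what makes path-decomposition at the second-to-last vertex both exhaustive and disjoint. Together, (i) and (ii) express the representation-theoretic content noted in Remark~\ref{virtualrep}: iteratively inducing the trivial module up the chain produces the regular decomposition at level $i$, with each irreducible appearing with multiplicity equal to its dimension $d_\alpha$.
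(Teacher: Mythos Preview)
Your proof is correct and is precisely the ``definition and induction'' argument the paper's one-line proof gestures at; you have simply made the path-decomposition step explicit. One small quibble: property (iv) you invoke is part of the unlabeled definition of a Bratteli diagram, not Definition~\ref{brattdef} (which is the \emph{associated} diagram for a group-algebra chain), so adjust the cross-reference.
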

\begin{proof}
Clear from definition and induction.
\end{proof}
\begin{proposition}\label{locfreeprop} Let $\mathcal{B}$ be a Bratteli diagram. Then the following properties are equivalent:
\begin{itemize}
\item[(i)] $\mathcal{B}$ is locally free.
\item[(ii)] For each $i$ and all $\beta\in \mathcal{B}^{i-1}$, there exists $\lambda_i\in \mathbb{C}$ such that $$\sum_{\alpha\in\mathcal{B}^i} M_\mathcal{B}(\alpha,\beta)d_\alpha=\lambda_i d_\beta.$$
\item[(iii)] For each $i$, $d_i$ is an eigenvector of $DU$. 
\item[(iv)] For each $i$ there exists $\lambda_i\in \mathbb{C}$ with $DU^i\hat{0}=\lambda_iU^{i-1}\hat{0}$.
\end{itemize}
\end{proposition}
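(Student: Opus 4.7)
The plan is to establish the cycle of equivalences by peeling off the easy implications first and leaving the module-theoretic content of \textbf{(i)} for last. I would organize the proof around the chain (iv) $\Leftrightarrow$ (iii) $\Leftrightarrow$ (ii) $\Leftrightarrow$ (i).

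First, \textbf{(iii) $\Leftrightarrow$ (iv)} is essentially a restatement. By Lemma \ref{di=ui0}(ii), $d_i = U^i\hat 0$, so saying that $d_i$ is a $DU$-eigenvector is precisely the statement $DU^{i+1}\hat 0 = \mu\, U^i\hat 0$ for some scalar $\mu$. Re-indexing gives exactly condition (iv).

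Next, for \textbf{(ii) $\Leftrightarrow$ (iv)}, I would unpack $Dd_i$ by applying the definition of $D$ to $d_i = \sum_{\alpha\in\mathcal{B}^i} d_\alpha\,\alpha$, producing
$$Dd_i \;=\; \sum_{\beta\in\mathcal{B}^{i-1}} \Bigl(\sum_{\alpha\in\mathcal{B}^i} M_\mathcal{B}(\alpha,\beta)\,d_\alpha\Bigr)\beta.$$
Using Lemma \ref{di=ui0} to identify $d_{i-1} = U^{i-1}\hat 0 = \sum_\beta d_\beta\,\beta$, condition (iv) becomes the coefficient-wise equality $\sum_\alpha M_\mathcal{B}(\alpha,\beta)d_\alpha = \lambda_i d_\beta$ for every $\beta\in\mathcal{B}^{i-1}$, which is exactly (ii) (with the same $\lambda_i$).

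The heart of the proof is \textbf{(i) $\Leftrightarrow$ (ii)}, which I will establish via Wedderburn decompositions and a multiplicity count. By Lemma \ref{bratchain} and semisimplicity, we have
$$\mathbb{C}[\mathcal{B}_i] \;\cong\; \bigoplus_{\alpha\in\mathcal{B}^i} V_\alpha^{\,d_\alpha}$$
as a left $\mathbb{C}[\mathcal{B}_i]$-module, and by the definition of the Bratteli diagram each $V_\alpha$ restricts to $\bigoplus_{\beta\in\mathcal{B}^{i-1}} V_\beta^{M_\mathcal{B}(\alpha,\beta)}$ as a $\mathbb{C}[\mathcal{B}_{i-1}]$-module. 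Hence the multiplicity of the irreducible $V_\beta$ in $\mathbb{C}[\mathcal{B}_i]$, viewed as a $\mathbb{C}[\mathcal{B}_{i-1}]$-module, equals $\sum_{\alpha\in\mathcal{B}^i} d_\alpha M_\mathcal{B}(\alpha,\beta)$. On the other hand, since $\mathbb{C}[\mathcal{B}_{i-1}] \cong \bigoplus_\beta V_\beta^{d_\beta}$, a free $\mathbb{C}[\mathcal{B}_{i-1}]$-module of rank $\lambda_i$ contains each $V_\beta$ with multiplicity exactly $\lambda_i d_\beta$. Therefore $\mathbb{C}[\mathcal{B}_i]$ is free over $\mathbb{C}[\mathcal{B}_{i-1}]$ if and only if there is a single integer $\lambda_i$ (the rank) such that $\sum_\alpha d_\alpha M_\mathcal{B}(\alpha,\beta) = \lambda_i d_\beta$ for every $\beta\in\mathcal{B}^{i-1}$, which is precisely (ii).

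The only point requiring any real care is the module-theoretic step: one must remember that freeness is characterized not merely by the existence of \emph{some} $\beta$-dependent multiplicities, but by the uniformity of the ratio $\mathrm{mult}(V_\beta)/d_\beta$ across all $\beta\in\mathcal{B}^{i-1}$, since the left regular representation of $\mathbb{C}[\mathcal{B}_{i-1}]$ already contains $V_\beta$ with multiplicity $d_\beta$. This uniformity is exactly the content of (ii), and it is the step where the dimension vector $d_\beta$ enters nontrivially rather than just combinatorial multiplicities $M_\mathcal{B}(\alpha,\beta)$.
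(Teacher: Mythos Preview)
Your overall strategy matches the paper's: the equivalences (ii)$\Leftrightarrow$(iii)$\Leftrightarrow$(iv) are unwound directly from Lemma~\ref{di=ui0} and the definitions, and the link to (i) goes through identifying $d_i$ with the regular $\mathbb{C}[\mathcal{B}_i]$-module and $D$ with restriction, so that freeness becomes a statement about multiplicities. That is exactly what the paper does (phrased as (i)$\Leftrightarrow$(iv) rather than (i)$\Leftrightarrow$(ii), but the content is the same Wedderburn/multiplicity count you give).

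There is, however, one genuine gap in your (ii)$\Rightarrow$(i). You correctly characterize freeness as: there exists a \emph{nonnegative integer} $\lambda_i$ with $\sum_\alpha d_\alpha M_\mathcal{B}(\alpha,\beta)=\lambda_i d_\beta$ for all $\beta$. You then say ``which is precisely (ii).'' But (ii) only supplies $\lambda_i\in\mathbb{C}$. From the equation one sees immediately that $\lambda_i$ is a positive rational (both sides are positive integers), but integrality does \emph{not} follow automatically---there need not be any $\beta\in\mathcal{B}^{i-1}$ with $d_\beta=1$. The paper closes this gap with a short gcd argument: writing $\lambda_i=p/q$ in lowest terms and $m=\gcd_\beta d_\beta$, one expands $\tfrac{1}{m}DUd_{i-1}$ two ways and reads off that $q$ divides every $d_\beta/m$, forcing $q=1$. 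You should insert this (or an equivalent) step before concluding that the module is free; without it the implication (ii)$\Rightarrow$(i) is not justified.
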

\begin{proof} As this proof comes down to definitions and the fact that $D$ is restriction (cf. Note \ref{virtualrep}), we defer it to Appendix \ref{proplocfree}. 
\end{proof}

\begin{corollary}
Let $\mathcal{B}$ be a locally free Bratteli diagram and $\lambda_i$ the eigenvalue of $DU$ associated to $d_{i-1}$. Then $\lambda_i$ is integral and \begin{itemize}
\item[(i)] $\displaystyle\lambda_i=\frac{\dim_\mathbb{C}\mathbb{C}[\mathcal{B}_i]}{\dim_\mathbb{C}\mathbb{C}[\mathcal{B}_{i-1}]}$,
\item[(ii)] $\displaystyle\dim_\mathbb{C}\mathbb{C}[\mathcal{B}_i]=\prod_{j=1}^i\lambda_j$.
\end{itemize}
\end{corollary}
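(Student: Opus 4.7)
The plan is to extract the dimension formula by pairing the eigenvector equation of Proposition \ref{locfreeprop} with $d_{i-1}$ under the inner product $\langle\,,\,\rangle$. Combining Proposition \ref{locfreeprop}(iv) with Lemma \ref{di=ui0} gives
\[
D d_i \;=\; D U^i \hat{0} \;=\; \lambda_i U^{i-1} \hat{0} \;=\; \lambda_i d_{i-1},
\]
so that $\lambda_i \langle d_{i-1}, d_{i-1}\rangle = \langle D d_i, d_{i-1}\rangle$, and the proof reduces to computing each side explicitly in terms of the path numbers $d_\alpha$.

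First I would use orthonormality of the vertices to rewrite the left hand side as
\[
\langle d_{i-1}, d_{i-1}\rangle \;=\; \sum_{\beta \in \mathcal{B}^{i-1}} d_\beta^2 \;=\; \dim_\mathbb{C}\mathbb{C}[\mathcal{B}_{i-1}],
\]
the second equality being the Wedderburn decomposition, which identifies the path algebra at level $i-1$ with a direct sum of matrix algebras of sizes $d_\beta\times d_\beta$ (cf.\ the discussion following Lemma \ref{equivcomp}). Next I would expand the right hand side by the definition of $D$ and apply the identity $d_\alpha = \sum_{\beta} M_\mathcal{B}(\alpha,\beta)\, d_\beta$, which simply counts paths from the root to $\alpha\in\mathcal{B}^i$ by their penultimate vertex. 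This yields
\[
\langle D d_i, d_{i-1}\rangle \;=\; \sum_{\alpha\in\mathcal{B}^i}\sum_{\beta\in\mathcal{B}^{i-1}} d_\alpha\, M_\mathcal{B}(\alpha,\beta)\, d_\beta \;=\; \sum_{\alpha\in\mathcal{B}^i} d_\alpha^2 \;=\; \dim_\mathbb{C}\mathbb{C}[\mathcal{B}_i].
\]
Part (i) is now immediate upon dividing by $\dim_\mathbb{C}\mathbb{C}[\mathcal{B}_{i-1}]$, and part (ii) then follows by induction, using that $\dim_\mathbb{C}\mathbb{C}[\mathcal{B}_0]=1$.

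Integrality is handled separately, directly from local freeness (Definition \ref{locfree}): $\mathbb{C}[\mathcal{B}_i]$ is free of some rank $r\in\mathbb{Z}_{\geq 0}$ as a $\mathbb{C}[\mathcal{B}_{i-1}]$-module, so comparing $\mathbb{C}$-dimensions of a free rank-$r$ module gives $\dim_\mathbb{C}\mathbb{C}[\mathcal{B}_i] = r\cdot \dim_\mathbb{C}\mathbb{C}[\mathcal{B}_{i-1}]$, whence $\lambda_i = r \in \mathbb{Z}$ by (i).

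I do not anticipate a genuine obstacle: the argument is a short inner-product manipulation once Proposition \ref{locfreeprop} is in hand. The only point requiring care is keeping the index conventions straight between $d_i = U^i\hat{0}$ and the eigenvalue $\lambda_i$ of $DU$ attached to $d_{i-1}$ --- the eigenvector lives at level $i-1$, but the ``new'' factor of dimension it encodes is the ratio $\dim_\mathbb{C}\mathbb{C}[\mathcal{B}_i]/\dim_\mathbb{C}\mathbb{C}[\mathcal{B}_{i-1}]$ between consecutive path algebras in the chain.
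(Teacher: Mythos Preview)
Your argument is correct. The paper does not actually supply a separate proof of this Corollary; it is left implicit, with the relevant ingredients appearing inside the appendix proof of Proposition~\ref{locfreeprop}. There, in the direction (iv)$\Rightarrow$(i), the paper writes $Dd_i=\lambda_i d_{i-1}$ ``and so $\dim_\mathbb{C}\mathbb{C}[\mathcal{B}_i]=\lambda_i\dim_\mathbb{C}\mathbb{C}[\mathcal{B}_{i-1}]$'' without spelling out the step, and in (i)$\Rightarrow$(iv) identifies $\lambda_i$ with the rank of the free module, which is your integrality argument verbatim.

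The one genuine difference in method is how the dimension identity is extracted from $Dd_i=\lambda_i d_{i-1}$. The paper relies on Note~\ref{virtualrep}: $d_i$ corresponds to the regular representation of $\mathbb{C}[\mathcal{B}_i]$ and $D$ is restriction, so equating dimensions of both sides gives $\dim_\mathbb{C}\mathbb{C}[\mathcal{B}_i]=\lambda_i\dim_\mathbb{C}\mathbb{C}[\mathcal{B}_{i-1}]$. You instead pair against $d_{i-1}$ under the orthonormal inner product and reduce to the path-counting identity $\sum_\beta M_\mathcal{B}(\alpha,\beta)d_\beta=d_\alpha$. Your route is slightly more elementary and self-contained (no appeal to the module-theoretic interpretation of $D$), while the paper's route makes the structural reason more visible. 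Both are short and valid.
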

\begin{example} For a group algebra chain $\mathbb{C}[G_n]>\cdots>\mathbb{C}[G_0]$, the corresponding Bratteli diagram $\mathcal{B}$ is locally free and 
$$\lambda_i=\frac{\dim_\mathbb{C}\mathbb{C}[\mathcal{B}_i]}{\dim_\mathbb{C}\mathbb{C}[\mathcal{B}_{i-1}]}=\frac{\dim_\mathbb{C}\mathbb{C}[G_i]}{\dim_\mathbb{C}\mathbb{C}[G_{i-1}]}=\vert G_i/G_{i-1}\vert.$$ 
\end{example}
Theorem \ref{freecount} below generalizes Theorem 3.7 of \cite{diffposets} and Theorem 2.3 of  \cite{diffposets2}.
\begin{definition} Let $w=w_l\cdots w_1$ be a word in $U$ and $D$ and let $\mathcal{S}=\{i\mid w_i=D\}.$ For each $i\in \mathcal{S}$, let
$a_i=\# \{D\text{'s in} \;w\; \text{to the right of} \;w_i\},$ and similarly let $b_i=\# \{U\text{'s in}\; w\; \text{to the right of} \;w_i\}.$ 
If $b_i-a_i\geq 0$ for all $i\in \mathcal{S}$, we call $w$ an \textbf{admissible word}.
\end{definition}
\begin{theorem}\label{freecount}
Let $\mathcal{B}$ be a locally free Bratteli diagram and $w=D^{d_n}U^{u_n}\cdots D^{d_1}U^{u_1}$ an admissible word in $U$ and $D$. Then for $s=\sum_{i=1}^n u_i-d_i$ and $\alpha\in \mathcal{B}^{s}$, 
$$\langle w\hat{0},\alpha\rangle=d_\alpha\prod_{i\in \mathcal{S}}\lambda_{b_i-a_i}.$$
\end{theorem}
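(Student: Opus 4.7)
The plan is to apply the letters of $w$ to $\hat{0}$ one at a time, reading right-to-left, and to show by induction that every intermediate vector is a scalar multiple of a ``level vector'' $U^h\hat{0} = d_h$. The engine driving the argument is Proposition \ref{locfreeprop}(iv), which asserts $DU^k\hat{0} = \lambda_k U^{k-1}\hat{0}$: applying $U$ trivially preserves the form $c\cdot U^h\hat{0}$, and $D$ preserves the form while picking up a factor of $\lambda_h$. So once this shape is established, the bookkeeping of the scalar is forced.

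More precisely, I would write $w = w_l\cdots w_1$ with $w_j \in \{U,D\}$, set $v_0 = \hat{0}$, $v_j = w_j v_{j-1}$, and define
$$h_j = \#\{k\leq j : w_k = U\} - \#\{k\leq j : w_k = D\}, \qquad h_0 = 0.$$
Admissibility of $w$ is precisely the statement that $h_{j-1} \geq 0$ whenever $w_j = D$. I claim by induction on $j$ that $v_j = c_j\, U^{h_j}\hat{0}$ for scalars $c_j$ with $c_0 = 1$, $c_j = c_{j-1}$ when $w_j = U$, and $c_j = \lambda_{h_{j-1}}\, c_{j-1}$ when $w_j = D$. The $U$-step is immediate from $U(c\, U^{h}\hat{0}) = c\, U^{h+1}\hat{0}$, and the $D$-step is exactly Proposition \ref{locfreeprop}(iv) applied to $U^{h_{j-1}}\hat{0}$.

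Unrolling the recursion yields $c_l = \prod_{j : w_j = D} \lambda_{h_{j-1}}$. For $i \in \mathcal{S}$ (so $w_i = D$), the height $h_{i-1}$ is the net excess of $U$'s over $D$'s among $w_1,\ldots, w_{i-1}$, which by the very definitions of $b_i$ and $a_i$ equals $b_i - a_i$. Therefore $c_l = \prod_{i\in\mathcal{S}} \lambda_{b_i - a_i}$. Since $h_l = \sum_{i=1}^n (u_i - d_i) = s$ and $U^s\hat{0} = d_s$ by Lemma \ref{di=ui0}(ii), Lemma \ref{di=ui0}(i) gives
$$\langle w\hat{0},\alpha\rangle \;=\; c_l\,\langle d_s, \alpha\rangle \;=\; d_\alpha \prod_{i\in\mathcal{S}}\lambda_{b_i - a_i}.$$

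The only subtlety is the boundary case $b_i - a_i = 0$ for some $i\in\mathcal{S}$, which admissibility permits: then $D\hat{0} = 0$ collapses the entire calculation to zero, and the right-hand side likewise vanishes under the natural convention $\lambda_0 = 0$ (consistent with $\mathbb{C}[\mathcal{B}_{-1}] = 0$). This bookkeeping at the root is the one step requiring a brief comment; every other ingredient is a mechanical unwinding of the recursion.
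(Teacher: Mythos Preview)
Your proof is correct and follows essentially the same approach as the paper: both show inductively that each partial application of $w$ to $\hat{0}$ yields a scalar multiple of some $U^h\hat{0}$, using Proposition~\ref{locfreeprop}(iv) for the $D$-steps. The only cosmetic difference is that you induct letter by letter while the paper inducts block by block on the segments $D^{d_k}U^{u_k}$; your treatment of the degenerate case $b_i-a_i=0$ is a nice explicit remark that the paper leaves implicit.
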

\begin{proof}
The proof comes down to inductively showing: $$w_k\hat{0}=\prod_{i\in \mathcal{S}_k}\lambda_{b_i-a_i}\sum_{\alpha\in\mathcal{B}^{s_k}} d_\alpha \alpha.$$ For full details, see Appendix \ref{proplocfree}
\end{proof}
%**compare with stanley's notation and proof**

For $\mathcal{B}$ a locally free Bratteli diagram and $Q$ an \textit{n-toothed} quiver, Theorem \ref{freecount} allows us to determine $\#\Hom(Q; \mathcal{B})$.
\begin{definition} A quiver $Q$ is \textbf{n-toothed} if it consists of $2n+1$ (not necessarily distinct) vertices $\gamma_0,\dots,\gamma_n,\beta_1,\dots,\beta_n$ and distinct arrows connecting $\gamma_{i-1}$ to $\beta_i$ and $\gamma_i$ to $\beta_i$.
\end{definition}
\begin{example} The quiver of Figure \ref{masex} is an example of a  $3$-toothed quiver.
\end{example}

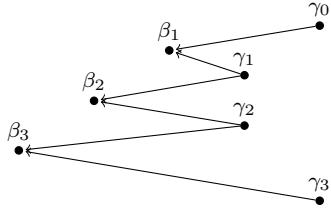
\begin{figure}[H]\begin{center}
\begin{tikzpicture}[shorten >=1pt,node distance=2cm,on grid,auto,/tikz/initial text=] 
   \node[pnt] at (3,0) (g0) {};
   \node[pnt] at (3,-2.33) (g3) {};
   \node[pnt] at (2,-.66) (g1) {};
   \node[pnt] at (2,-1.33) (g2) {};
   \node[pnt] at (1,-.33) (b1) {};
   \node[pnt] at (0,-1) (b2) {};
   \node[pnt] at (-1,-1.66) (b3) {};
   \draw (g0) node[above] {\footnotesize $\gamma_0$};
   \draw (g1) node[above] {\footnotesize $\gamma_1$};
   \draw (g2) node[above] {\footnotesize $\gamma_2$};
   \draw (g3) node[above] {\footnotesize $\gamma_3$};
   \draw (b1) node[above] {\footnotesize $\beta_1$};
   \draw (b2) node[above] {\footnotesize $\beta_2$};
   \draw (b3) node[above] {\footnotesize $\beta_3$};
   \path[->,every node/.style={font=\scriptsize}]
    (g0) edge  node {} (b1)
    (g1) edge  node {} (b1)
    (g1) edge  node {} (b2)
    (g2) edge  node {} (b2)
    (g2) edge  node {} (b3)
    (g3) edge  node {} (b3);
\end{tikzpicture}
\caption{A $3$-toothed quiver.}
\label{masex}
\end{center}\end{figure}
\begin{example}The quiver $Q_1$ of Figure \ref{labelling} is $2$-toothed, with $\gamma_0=\alpha_0, \gamma_1=\alpha_3, \gamma_2=\alpha_0, \beta_1=\alpha_4, \beta_2=\alpha_5$ .  
\end{example}

\begin{theorem}\label{hominner}
Let $\mathcal{B}$ be a locally free Bratteli diagram, $Q$ an $n$-toothed quiver with vertices $\gamma_i$ at level $l_i$, $\beta_i$ at level $m_i$. Then for $w=D^{{m_n}-{l_n}}U^{{m_n}-{l_{n-1}}}\cdots D^{{m_1}-{l_1}}U^{{m_1}-{l_0}},$ 
$$\sum_{\alpha\in \mathcal{B}^{l_n-l_0}}\langle w\hat{0},\alpha\rangle=\#\Hom(Q;\mathcal{B}).$$
\end{theorem}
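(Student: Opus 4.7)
The plan is to apply Theorem \ref{countingmorph} to express $\#\Hom(Q;\mathcal{B})$ as a sum over vertex-level assignments, then verify by induction on the number of teeth that $w\hat{0}$ reproduces exactly that sum. Since the only arrows of an $n$-toothed quiver are $\gamma_{i-1}\to\beta_i$ and $\gamma_i\to\beta_i$ for $1\leq i\leq n$, Theorem \ref{countingmorph} gives
\[
\#\Hom(Q;\mathcal{B}) \;=\; \sum_{\substack{\gamma_i'\in\mathcal{B}^{l_i}\\ \beta_i'\in\mathcal{B}^{m_i}}} \;\prod_{i=1}^{n} M_\mathcal{B}(\beta_i',\gamma_{i-1}')\,M_\mathcal{B}(\beta_i',\gamma_i').
\]
I take $l_0=0$ (as in the setup where $\gamma_0 = \hat{0}$), so that $\gamma_0' = \hat{0}$ is forced in the outer sum and $M_\mathcal{B}(\beta_1',\hat{0}) = d_{\beta_1'}$.

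For $1\leq k\leq n$, set $w_k := D^{m_k-l_k}U^{m_k-l_{k-1}}\cdots D^{m_1-l_1}U^{m_1}$. I claim by induction on $k$ that
\[
w_k\hat{0} \;=\; \sum_{\gamma_k'\in\mathcal{B}^{l_k}} C_k(\gamma_k')\,\gamma_k',
\qquad
C_k(\gamma_k') \;:=\; \sum_{\substack{\gamma_i'\in\mathcal{B}^{l_i},\,0<i<k\\ \beta_i'\in\mathcal{B}^{m_i},\,1\leq i\leq k}}\;\prod_{i=1}^{k}M_\mathcal{B}(\beta_i',\gamma_{i-1}')M_\mathcal{B}(\beta_i',\gamma_i').
\]
The base case follows from $U^{m_1}\hat{0} = \sum_{\beta_1'}d_{\beta_1'}\beta_1'$ (Lemma \ref{di=ui0}), followed by $D^{m_1-l_1}\beta_1' = \sum_{\gamma_1'}M_\mathcal{B}(\beta_1',\gamma_1')\gamma_1'$. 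For the inductive step I use the iterated identities
\[
U^{m_k-l_{k-1}}\gamma_{k-1}' \;=\; \sum_{\beta_k'\in\mathcal{B}^{m_k}}M_\mathcal{B}(\beta_k',\gamma_{k-1}')\,\beta_k',
\qquad
D^{m_k-l_k}\beta_k' \;=\; \sum_{\gamma_k'\in\mathcal{B}^{l_k}}M_\mathcal{B}(\beta_k',\gamma_k')\,\gamma_k',
\]
which come from iterating the single-step definitions of $U$ and $D$ together with the multiplicativity $M_\mathcal{B}(\alpha,\beta) = \sum_{\delta}M_\mathcal{B}(\alpha,\delta)M_\mathcal{B}(\delta,\beta)$ of path counts through an intermediate level. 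Applying these to $w_{k-1}\hat{0}$ and substituting the inductive hypothesis attaches exactly the $k$th pair of multiplicity factors, yielding $C_k$.

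Taking $k = n$ and summing coefficients over $\alpha\in\mathcal{B}^{l_n-l_0}$ gives $\sum_{\gamma_n'}C_n(\gamma_n')$, which is precisely the morphism count displayed above. The main technical point is the inductive bookkeeping: one must confirm that the specific exponents $m_k-l_{k-1}$ and $m_k-l_k$ in $w$ are exactly those needed so that the composite $D^{m_k-l_k}U^{m_k-l_{k-1}}$ contributes one pair of factors per tooth, with $\beta_k'$ freely summed over $\mathcal{B}^{m_k}$ and $\gamma_k'$ handed off to the next block. Local freeness of $\mathcal{B}$ is not required at this stage; it enters only in Corollary \ref{hom=dim}, where Theorem \ref{freecount} is invoked to rewrite the inner product $\langle w\hat{0},\alpha\rangle$ in closed form using the eigenvalues $\lambda_i$.
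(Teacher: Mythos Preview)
Your proposal is correct and follows exactly the route the paper indicates: the paper's own proof is the single line ``Follows from Theorem~\ref{countingmorph} and induction,'' and you have carried out precisely that program, expanding the morphism count via Theorem~\ref{countingmorph} and then verifying by induction on the number of teeth that the partial word $w_k\hat{0}$ accumulates the correct product of multiplicities. Your observation that local freeness plays no role here and is only needed for Corollary~\ref{hom=dim} is accurate and worth noting; your assumption $l_0=0$ matches how the result is actually used throughout the paper (all the applied $n$-toothed quivers have $\gamma_0$ at the root).
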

\begin{proof} Follows from Theorem \ref{countingmorph} and induction. 
\end{proof}
\begin{corollary}\label{hom=dim} Let $\mathcal{B}$ be a locally free Bratteli diagram, $Q$ an $n$-toothed quiver with vertices $\gamma_i$ at level $l_i$, $\beta_i$ at level $m_i$. Then for $w=D^{{m_n}-{l_n}}U^{{m_n}-{l_{n-1}}}\cdots D^{{m_1}-{l_1}}U^{{m_1}-{l_0}},$ 
$$\#\Hom(Q;\mathcal{B})=\sum_{\alpha\in \mathcal{B}^{l_n-l_0}}\langle w\hat{0},\alpha\rangle =\prod_{i\in \mathcal{S}}\lambda_{b_i-a_i}\sum_{\alpha\in \mathcal{B}^{l_n-l_0}}d_\alpha.$$
\end{corollary}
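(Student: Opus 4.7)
The plan is to read this Corollary as the direct composition of Theorem \ref{hominner} with Theorem \ref{freecount}, stacked end to end. The first equality in the statement is already the conclusion of Theorem \ref{hominner}, so no new work is required there. For the second equality, my strategy is to apply Theorem \ref{freecount} term by term to each $\langle w\hat{0},\alpha\rangle$ and then, since the resulting prefactor $\prod_{i\in\mathcal{S}}\lambda_{b_i-a_i}$ depends only on the combinatorics of $w$ and not on $\alpha$, pull it outside the sum, leaving exactly $\sum_{\alpha\in\mathcal{B}^{l_n-l_0}} d_\alpha$ behind.

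Before invoking Theorem \ref{freecount}, I would confirm that $w=D^{m_n-l_n}U^{m_n-l_{n-1}}\cdots D^{m_1-l_1}U^{m_1-l_0}$ is admissible. Reading $w$ from right to left (the order in which operators act on $\hat{0}$), and using the defining inequalities $m_k>l_{k-1}$ and $m_k>l_k$ of an $n$-toothed quiver, a short induction on $k$ shows that the running level after the $k$-th $U$-block equals $m_k-l_0$ and after the $k$-th $D$-block equals $l_k-l_0$. Admissibility therefore reduces to $l_k\geq l_0$ for all $k$, which is the natural geometric constraint on a tooth pattern anchored at height $l_0$; outside this regime both sides of the identity vanish, since $D$ annihilates $\hat{0}$ and any eventual application of $D$ below level zero returns $0$.

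With admissibility confirmed, Theorem \ref{freecount} gives $\langle w\hat{0},\alpha\rangle = d_\alpha\prod_{i\in\mathcal{S}}\lambda_{b_i-a_i}$ for every $\alpha\in\mathcal{B}^{l_n-l_0}$, and summing over $\alpha$ yields the stated formula. The main obstacle I anticipate is the indexing bookkeeping: keeping straight which subscripts $b_i-a_i$ arise at each $D$-position as one traces the tooth-pattern exponents right to left, and verifying that the running level is strictly positive (not merely nonnegative) at each $D$ so that the invoked eigenvalue $\lambda_{b_i-a_i}$ is meaningful. Once the right-to-left exponent tracing is written out carefully, the rest is pure substitution.
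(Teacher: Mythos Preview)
Your proposal is correct and follows essentially the same approach as the paper: the corollary is stated there without proof, as the immediate composition of Theorem~\ref{hominner} (the first equality) with Theorem~\ref{freecount} (the second), and your argument is exactly that composition. Your added admissibility verification is a useful piece of bookkeeping that the paper leaves implicit.
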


\begin{example} For $Q_1$ as in Example \ref{Q1R1}, we see that $$\begin{array}{llll} l_o=l_2=0, &l_1=3,& m_1=4,& m_2=5.\end{array}$$ Then $\mathcal{B}^{l_2-l_0}=\mathcal{B}^0=\hat{0}$ and by Corollary \ref{hom=dim}, for $w=D^5U^2DU^4$, 
$$\#\Hom(Q_1;\mathcal{B})=\langle w\hat{0},\hat{0}\rangle=\lambda_4\lambda_5\lambda_4\lambda_3\lambda_2\lambda_1.$$
Note that this is the inner product of Example \ref{words}.\end{example}

We use Corollary \ref{hom=dim} in Appendix \ref{appC} and Appendix \ref{appgenlin} to give many of the complexity results needed for the proofs in Section \ref{applications}.
\section{Further Directions}
The SOV approach produces savings by first treating the Fourier transform as a collection of scalar equations and then recursively structuring the summation so as to collect together irreducible matrix elements, viewed under the translation to the path algebra as pairs of paths. Through this translation, a sequence of multiplications becomes a sequence of bilinear maps indexed by subgraphs. Efficiency counts are determined by the size of the factorization sets needed for these multiplications, as well as the number of occurrences of these subgraphs in the Bratteli diagram. The resultant savings are dependent on the choice of factorization as well as combinatorial path-counting methods used to provide the bounds in Appendix \ref{appC} and Appendix \ref{appgenlin}. Different choices of subgroups could provide better bounds, and in fact some applications of the Fourier transform require particular chains of parabolic subgroups \cite{diaconisspec, isotypicproj,lanczosit}, which we will investigate in further work. 

In addition, our results can be generalized beyond Fourier transforms on groups. In fact, the path algebra isomorphism of Corollary \ref{bratchain} is true for the Bratteli diagram associated to any semisimple algebra. In work being prepared for publication, we extend the SOV approach to Fourier transforms on semisimple algebras and determine complexity results for the Hecke, Brauer and Birman-Wenzl-Murakami algebras \cite{algpaper}. 

\appendix

\section{Gel'fand-Tsetlin Bases and Adapted Representations}\label{appB}
In Section \ref{prelim} we introduce adapted bases and systems of Gel'fand Tsetlin bases. Here we make the formal connection between adapted bases of a group algebra chain and systems of Gel'fand Tsetlin bases for the corresponding chain of path algebras.
\begin{definition} Given a Bratteli diagram $\mathcal{B}$, a \textbf{representation of $\mathbf{\mathcal{B}}$} assigns to each $\alpha\in V(\mathcal{B})$ a linear space $V_\alpha$ and to each edge $e\in E(\mathcal{B})$  a linear map $L_e:V_{s(e)}\rightarrow V_{t(e)}$. Given two representations $(\{V_\alpha\}_{\alpha\in V(\mathcal{B})}, \{L_e\}_{e\in E(\mathcal{B})})$, $(\{W_\alpha\}_{\alpha\in V(\mathcal{B})}, \{S_e\}_{e\in E(\mathcal{B})})$, of $\mathcal{B}$, a \textbf{morphism} $m:V\rightarrow W$ is a family of linear maps $\{m_\alpha:V_\alpha\rightarrow W_\alpha\}_{\alpha\in V(\mathcal{B})}$ such that the diagram
$$\begin{array}{llll}&V_{s(e)}&\xrightarrow{\;L_e\;} &V_{t(e)}\\
m_{s(e)}&\downarrow&&\downarrow \;\;m_{t(e)}\\
&W_{s(e)}&\xrightarrow{\;S_e\;}&W_{t(e)}\end{array}$$
commutes for all $e\in E(\mathcal{B})$.

A \textbf{model representation of} $\mathbf{\mathcal{B}}$ is a representation of $\mathcal{B}$ such that for all $e\in E(\mathcal{B})$, $L_e$ is injective, and for all nonroot vertices $\alpha\in V(\mathcal{B})$, 
$$V_\alpha=\displaystyle\bigoplus_{t(e)=\alpha} Im(L_e).$$ 
\end{definition}
\begin{definition} Given a chain of group algebras $\{\mathbb{C}[G_i]\}$, a \textbf{model representation for} $\{\mathbb{C}[G_i]\}$ is a model representation of the corresponding Bratteli diagram $\mathcal{B}$ such that: \begin{itemize}
\item[(i)] for each $\alpha\in V(\mathcal{B})$ at level $i$, $V_\alpha$ is the representation space of the representation of $\mathbb{C}[G_i]$ corresponding to $\alpha$,
\item[(ii)]for each $e\in E(\mathcal{B})$ from level $i$ to level $i+1$, $L_e$ is $\mathbb{C}[G_i]$-equivariant, i.e., for $\rho_{s(e)}$  the representation of $\mathbb{C}[G_i]$ corresponding to $s(e)$ and $\rho_{t(e)}$ the representation of $\mathbb{C}[G_{i+1}]$ corresponding to $t(e)$, the diagram 
$$\begin{array}{llll}&V_{s(e)}&\xrightarrow{\;L_e\;} &V_{t(e)}\\
\rho_{s(e)}&\downarrow&&\downarrow \;\;\rho_{t(e)}\\
&V_{s(e)}&\xrightarrow{\;L_e\;}&V_{t(e)}\end{array}$$
commutes for all $e\in E(\mathcal{B})$.
\end{itemize}
\end{definition}  
%\begin{remark} Recall that a \textbf{model} for a finite group $G$ is a representation of $G$ on a vector space $V$ such that every irreducible representation of $G$ occurs exactly once in the irreducible decomposition of $V$. Thus, if $\mathcal{B}$ is the Bratteli diagram for a chain of group algebras, a model representation of $\mathcal{B}$ reflects a model for each group in the associated chain of groups, as well as morphisms indicating the direct sum decomposition of the restriction of each irreducible component to the preceding subgroup in the chain.
%\end{remark}
%Given a Bratteli diagram $\mathcal{B}$, we defined a canonical algebra chain associated to $\mathcal{B}$. Similarly, we define a canonical model representation of $\mathcal{B}$ called the path representation:
%\begin{definition}\label{pathrep} Let $\mathcal{B}$ be a Bratteli diagram. The \textbf{path representation} of the chain of path algebras of $\mathcal{B}$ is defined as follows:
%\begin{itemize}
%\item[(i)] for each $\alpha\in V(\mathcal{B})$, let $W_\alpha$ be the vector space generated by paths from the root to $\alpha$ in $\mathcal{B}$,
%\item[(ii)] for each $e\in E(\mathcal{B})$, let $L_e:W_{s(e)}\rightarrow W_{t(e)}$ map path $P\in W_{s(e)}$ to $e\circ P$,
%\item[(iii)] for each $\alpha\in V(\mathcal{B})$ with $gr(\alpha)=i$, define an action of $\mathbb{C}[\mathcal{B}_i]$ on $W_\alpha$ via $(P,Q).R=\delta_{QR}P$.
%\end{itemize}
%\end{definition}
%It follows by definition and Lemma \ref{bratchain} that the path representation of $\mathcal{B}$ is a model representation of $\mathcal{B}$. 
%
A model representation of an algebra chain has a natural basis of paths:
\begin{lemma}\label{modelpaths} Given a model representation of a chain of subalgebras with Bratteli diagram $\mathcal{B}$, the collection of distinct paths in $\mathcal{B}$ from the root to a vertex $\alpha\in V(\mathcal{B})$ corresponds to a choice of basis for $V_\alpha$.
\end{lemma}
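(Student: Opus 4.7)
The plan is to argue by induction on the grading $i = \mathrm{gr}(\alpha)$, using the two defining properties of a model representation: the injectivity of each edge map $L_e$ and the internal direct sum decomposition $V_\alpha = \bigoplus_{t(e)=\alpha} \mathrm{Im}(L_e)$. The correspondence will send a path $P = (e_1,\dots,e_k)$ from the root to $\alpha$ to the vector $v_P := L_{e_k}\circ L_{e_{k-1}}\circ\cdots\circ L_{e_1}(v_{\hat 0})$, where $v_{\hat 0}$ is a chosen basis vector of the one-dimensional root space $V_{\hat 0}$.

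For the base case, take $\alpha = \hat 0$. The only path from the root to itself is the empty path, which corresponds to $v_{\hat 0}$, a basis of $V_{\hat 0}$. For the inductive step, assume the statement holds for every vertex of grading less than $i$, and let $\alpha$ be a vertex of grading $i$. Every edge $e$ with $t(e) = \alpha$ has source $s(e)$ of grading $i-1$ (if $\mathcal{B}$ comes from a chain of algebras) or less, so by the induction hypothesis $V_{s(e)}$ has basis $\{v_Q : Q \text{ a path from } \hat 0 \text{ to } s(e)\}$. Since $L_e$ is injective, $\{L_e(v_Q) : Q \text{ a path from } \hat 0 \text{ to } s(e)\}$ is a basis of $\mathrm{Im}(L_e)$. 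These images are precisely the vectors $v_P$ for paths $P$ to $\alpha$ whose final edge is $e$.

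Next I would combine these local bases using the direct sum decomposition $V_\alpha = \bigoplus_{t(e)=\alpha}\mathrm{Im}(L_e)$. A basis of a direct sum is the (disjoint) union of bases of the summands, so
\[
\bigsqcup_{e : t(e)=\alpha} \{v_P : P \text{ a path to } \alpha \text{ with last edge } e\}
\]
is a basis of $V_\alpha$. Since every path from $\hat 0$ to $\alpha$ has a unique final edge, this union is exactly $\{v_P : P \text{ a path from } \hat 0 \text{ to } \alpha\}$, completing the induction.

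The argument is essentially just unpacking the definition, so there is no serious obstacle; the only subtlety is verifying that the map $P \mapsto v_P$ is well-defined and injective on the set of paths. Well-definedness is immediate from the definition of $v_P$ as an iterated composition of the $L_e$; injectivity is built into the inductive step, since distinct paths either differ in their final edge (and so lie in distinct summands of the direct sum decomposition) or agree in their final edge $e$ and differ earlier, in which case injectivity of $L_e$ together with the induction hypothesis applied at $s(e)$ yields distinct images.
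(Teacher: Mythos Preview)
Your proof is correct and follows essentially the same inductive approach as the paper's own argument: start from the one-dimensional root space, then use injectivity of each $L_e$ together with the direct sum decomposition $V_\alpha=\bigoplus_{t(e)=\alpha}\mathrm{Im}(L_e)$ to propagate the path-indexed basis up the diagram. The paper's proof is a two-line sketch (``Induction gives the result''), whereas you have made the correspondence $P\mapsto v_P$ explicit and verified its injectivity carefully, but the underlying idea is identical.
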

\begin{proof}
Consider the space $V_\beta$ corresponding to the root $\beta$, i.e., $V_\beta$ is the representation space of $\mathbb{C}[\mathcal{B}_0]=\mathbb{C}$, so $V_\beta$ is one-dimensional. Now let $\alpha$ be a vertex in $\mathcal{B}$ with $gr(\alpha)=1$. Then $V_\alpha=\displaystyle\displaystyle\bigoplus_{t(e)=\alpha} Im(L_e)\cong\displaystyle\bigoplus_{t(e)=\alpha}V_\beta$ since $L_e$ is injective. %and $\beta$ unique. 
Induction gives the result.  
\end{proof}
Thus, given an irreducible representation $\rho$ of $\mathbb{C}[G_i]$ corresponding to a vertex $\alpha$ in the Bratteli diagram associated to the chain of group algebras, there is a basis for the representation space of $\rho$ indexed by the paths from the root to $\alpha$. We call such a basis a Gel'fand-Tsetlin basis, as in Definition \ref{defgel}. 
Given a model representation of a Bratteli diagram $\mathcal{B}$, Lemma \ref{modelpaths} gives a system of Gel'fand-Tsetlin bases for $\mathcal{B}$. In fact, these are equivalent concepts:
\begin{theorem}\label{geleqmod}
A system of Gel'fand-Tsetlin bases for a Bratteli diagram $\mathcal{B}$ uniquely determines a model representation for $\mathcal{B}$. 
Conversely, a model representation uniquely determines a system of Gel'fand-Tsetlin bases for $\mathcal{B}$. 
\end{theorem}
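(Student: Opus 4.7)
The strategy is to set up an explicit correspondence between Gel'fand-Tsetlin data and model representation data, and check that the two constructions are mutually inverse. The underlying observation is that in both objects the basis of $V_\alpha$ is naturally indexed by paths from the root to $\alpha$: in the Gel'fand-Tsetlin data this is part of the definition, and in a model representation it is produced by iterating the direct-sum decomposition, as in Lemma~\ref{modelpaths}. Thus on the level of vector spaces and their bases the two data are literally the same, and the only question is how to package the edge maps $L_e$.

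For the forward direction, suppose we are given a system of Gel'fand-Tsetlin bases; write $w_P \in V_\alpha$ for the basis vector associated to a path $P$ from the root to $\alpha$. For each edge $e \in E(\mathcal{B})$ with $s(e)=\beta$ and $t(e)=\alpha$, define $L_e \colon V_\beta \to V_\alpha$ by
\[
L_e(w_P) = w_{e \circ P}
\]
on basis vectors and extend linearly. Since distinct paths $P$ to $\beta$ give distinct extended paths $e \circ P$ to $\alpha$, the map $L_e$ sends a basis to a linearly independent set, so it is injective. Moreover, every path to $\alpha$ factors uniquely as $e \circ P$ for some edge $e$ with $t(e)=\alpha$ and some path $P$ to $s(e)$; grouping basis vectors of $V_\alpha$ by the final edge yields
\[
V_\alpha \;=\; \bigoplus_{t(e)=\alpha} \operatorname{Im}(L_e),
\]
so $(\{V_\alpha\}, \{L_e\})$ is a model representation of $\mathcal{B}$. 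The maps $L_e$ are forced by the requirement that $L_e(w_P) = w_{e\circ P}$, so the model representation is uniquely determined.

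For the reverse direction, start with a model representation. The root space $V_{\hat 0}$ is one-dimensional (it corresponds to the trivial representation of $\mathbb{C}[\mathcal{B}_0]=\mathbb{C}$), so there is a canonical basis vector $w_{\hat 0}$ associated to the empty path. Proceed by induction on the grading: if $P = e \circ P'$ is a path of length $i+1$ with $P'$ of length $i$, set $w_P := L_e(w_{P'})$. Injectivity of each $L_e$ and the direct-sum decomposition $V_\alpha = \bigoplus_{t(e)=\alpha} \operatorname{Im}(L_e)$ imply by induction (exactly the argument of Lemma~\ref{modelpaths}) that $\{w_P : P \text{ ends at }\alpha\}$ is a basis of $V_\alpha$. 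This produces the required system of Gel'fand-Tsetlin bases, and the recursive formula $w_{e\circ P'} = L_e(w_{P'})$ forces uniqueness (up to the fixed choice $w_{\hat 0}$).

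Finally I would observe that the two procedures are mutually inverse: starting from a Gel'fand-Tsetlin system, constructing $L_e$ as above, and then recursively redefining $w_P$ using $w_P = L_e(w_{P'})$ returns the original basis vectors; conversely, starting from a model representation, reading off the bases via the recursion and then defining $L_e(w_P) = w_{e\circ P}$ recovers the original $L_e$, since both agree on a basis. I do not expect any real obstacle beyond carefully bookkeeping the path-extension notation; the one point deserving explicit comment is the normalization at the root, which is handled by fixing the canonical generator of the one-dimensional space $V_{\hat 0}$.
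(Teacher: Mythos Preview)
Your proposal is correct and follows essentially the same approach as the paper's proof: define $L_e$ by $w_P \mapsto w_{e\circ P}$ in the forward direction, and in the reverse direction recover the basis vectors as the images of $1\in\mathbb{C}$ under the composed injections along each path. Your version is more detailed---you explicitly verify the model-representation axioms, check that the constructions are mutually inverse, and flag the root normalization---but the underlying argument is the same.
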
 
\begin{proof}
Both require a choice of vector space for each vertex of $\mathcal{B}$, so we need only show how a choice of basis corresponds with linear maps $L_e$ for each edge $e$.

Given a system of bases and an edge $e\in\mathcal{B}$, a basis vector for $V_{s(e)}$ corresponds to a path $P$ from the root to $s(e)$. Then $e\circ P$ is a path from the root to $t(e)$, which corresponds to a basis vector for $V_{t(e)}$. In other words, we have an injection of $V_{s(e)}$ into $V_{t(e)}$. 

Conversely, given a model representation and a vertex $\alpha$, every path from the root to $\alpha$ corresponds to an injection of $\mathbb{C}$ into $V_\alpha$. Since $$V_\alpha=\displaystyle\bigoplus_{t(e)=\alpha} Im(L_e),$$ the union of the distinct images of $1\in\mathbb{C}$ over the collection of injections gives a basis for $V_\alpha$ as we vary over all possible paths from the root to $\alpha$.
\end{proof}
%We may now make the following definition:
%\begin{definition} A \textbf{system of Gel'fand-Tsetlin bases for a chain of subalgebras} is a system of Gel'fand-Tsetlin bases for the associated Bratteli diagram $\mathcal{B}$ such that the model representation determined by this system is one for the chain of subalgebras.  
%\end{definition}
%By Theorem \ref{geleqmod} a system of Gel'fand-Tsetlin bases for the chain of subalgebras uniquely determines a model representation for the associated Bratteli diagram $\mathcal{B}$. By Lemma \ref{modelpaths} and Definition \ref{pathrep}, this model representation is isomorphic to the path representation of the chain of path algebras associated to $\mathcal{B}$ and so by Theorem \ref{geleqmod} determines a system of Gel'fand-Tsetlin bases for the path algebra chain. We have thus shown:
%\begin{theorem} A system of Gel'fand-Tsetlin bases for a chain of subalgebras determines an isomorphism of that chain with the chain of path algebras of the associated Bratteli diagram and an isomorphism of the associated model representation with the path representation. \end{theorem}
%Thus, any chain of semisimple algebras is uniquely determined up to isomorphism by its Bratteli diagram.
%
\begin{remark}\label{gel=adapt} The equivalent definitions of Gel'fand-Tsetlin bases and model representations coincide with the notion of a complete set of adapted representations for chains of groups.
%, the latter of use in a wide number of computational areas of applied science (see eg. \cite{see dans draft}). More precisely,  
Clearly a model representation for the group algebra chain gives rise to an adapted basis since the isomorphism 
$$V_\alpha=\displaystyle\bigoplus_{t(e)=\alpha} Im(L_e)\cong\displaystyle\bigoplus_{\substack{e\in E(\mathcal{B}),\\ t(e)=\alpha}} V_{s(e)}$$
describes how the representation space $V_{\alpha}$ decomposes at level $i-1$. Equivariance of the maps $L_{e}$ then gives the decomposition of the representation $\rho_\alpha$. 

Further, a complete set $R$  of inequivalent irreducible representations adapted to a chain of subgroups $G_n>G_{n-1}>\cdots>G_0$ determines the paths in the Bratteli diagram $\mathcal{B}$ of the group algebra chain by drawing $M(\rho, \gamma)$ arrows from a representation $\gamma\in R$ of $G_i$ to a representation $\rho\in R$ of $G_{i+1}$. Then a set of bases for the representation spaces of the representations in $R$ determines a system of Gel-fand Tsetlin bases for the group algebra chain, and so by Theorem \ref{geleqmod} a model representation.

\end{remark}

\section{Restricted Product Lemmas}\label{appA}
In this Appendix, we prove the associativity of the restricted product defined in Section \ref{proofsofstuff}.
\begin{lemma}\label{rprodass} Let $B$ be a locally finite graded quiver and $R$ a graded quiver with finite subquivers $Q_1, Q_2,\dots,Q_m$ such that $Q_i\cap Q_j\cap Q_k$ has no edges for all $i\neq j\neq k$. Let $Q_i^\triangle$ denote the quiver $Q_1\triangle\cdots\triangle Q_i$ and let $Q_i^\cup$ denote the quiver $Q_1\cup\cdots\cup Q_i$. Then for $f_i\in A(Q_i;B)$, $f_1*f_2*\cdots *f_m$ is independent of bracketing. Moreover, for $\tau\in \Hom(Q_m^\triangle;B)$ and $\iota_k$ the natural injection $Q_k\hookrightarrow R$,
\begin{equation}\label{ass}(f_1*f_2*\cdots *f_m)\vert_{\tau}=\sum_{\substack{\eta\in\Hom(Q_m^\cup\uparrow R; B), \\ \eta\downarrow_{Q_m^\triangle}=\tau}}\prod_{k=1}^m f_k\vert_{\eta\circ\iota_k}.
\end{equation} 
\end{lemma}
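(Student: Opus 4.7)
The plan is to prove both assertions simultaneously by strong induction on $m$: at each step I will verify that the explicit formula (\ref{ass}) computes the bracketed product, independently of how the factors are grouped. Since the right-hand side of (\ref{ass}) makes no reference to a bracketing (it only refers to $Q_m^\cup$, $Q_m^\triangle$, and the full ordered list of $f_k$'s), associativity will be an immediate consequence once the formula is established for every valid parenthesization.

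The base case $m=2$ is simply the definition of the restricted product (Definition~\ref{resprod}), unpacked coefficient-by-coefficient. For the inductive step, consider any legal bracketing of $f_1*\cdots *f_m$. At its outermost level this bracketing splits the factors into two consecutive blocks indexed by sets $A,B\subseteq\{1,\dots,m\}$ with $A\sqcup B=\{1,\dots,m\}$, producing $F*G$ where $F$ and $G$ are inductively computed products over $A$ and $B$ respectively. By the induction hypothesis applied to each sub-bracketing, $F\in A(Q_A^\triangle\uparrow R;B)$ and $G\in A(Q_B^\triangle\uparrow R;B)$ satisfy the analog of (\ref{ass}) for their respective factor sets. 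Applying the $m=2$ definition to $F*G$, then substituting the inductive formulas for $F|_{\mu|_{Q_A^\triangle}}$ and $G|_{\mu|_{Q_B^\triangle}}$, produces a triple sum over morphisms $(\mu,\eta_A,\eta_B)$ with $\mu\downarrow_{Q_A^\triangle\triangle Q_B^\triangle}=\tau$, $\eta_A\downarrow_{Q_A^\triangle}=\mu|_{Q_A^\triangle}$, $\eta_B\downarrow_{Q_B^\triangle}=\mu|_{Q_B^\triangle}$, weighted by $\prod_{k\in A}f_k|_{\eta_A\circ\iota_k}\cdot\prod_{k\in B}f_k|_{\eta_B\circ\iota_k}$.

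The heart of the argument, and the step I expect to be the main technical obstacle, is then to construct a bijection between such triples $(\mu,\eta_A,\eta_B)$ and the morphisms $\eta\in\Hom(Q_m^\cup\uparrow R;B)$ with $\eta\downarrow_{Q_m^\triangle}=\tau$ that appear in (\ref{ass}). This is where the hypothesis that no edge lies in three distinct $Q_i$'s is essential, and it will enter via two combinatorial identities: (i) $Q_A^\triangle\triangle Q_B^\triangle = Q_m^\triangle$, and (ii) $Q_A^\cup\cap Q_B^\cup = Q_A^\triangle\cap Q_B^\triangle$. Both follow from case-analysis on the number of $Q_i$ containing a given edge $e$, which is at most $2$: an edge contained in one $Q_i\subseteq A$ and one $Q_j\subseteq B$ necessarily lies in both $Q_A^\triangle$ and $Q_B^\triangle$, while an edge in two members of the same block lies in neither $Q_A^\triangle$ nor $Q_B^\triangle$. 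Given these identities, one direction of the bijection is restriction: send $\eta$ to $(\mu,\eta_A,\eta_B)$ with $\eta_A=\eta|_{Q_A^\cup}$, $\eta_B=\eta|_{Q_B^\cup}$, and $\mu=\eta|_{Q_A^\triangle\cup Q_B^\triangle}$. The reverse direction glues $\eta_A$ and $\eta_B$, which is well-defined precisely because identity (ii) forces the overlap $Q_A^\cup\cap Q_B^\cup$ to sit inside $Q_A^\triangle\cap Q_B^\triangle$, where both $\eta_A$ and $\eta_B$ are constrained to coincide with $\mu$.

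Once this bijection is in place, the weighted sum rearranges to $\sum_\eta \prod_{k=1}^m f_k|_{\eta\circ\iota_k}$, matching (\ref{ass}); the product of scalars does not depend on the ordering, so the value of any bracketing is the same. This establishes both associativity and the closed-form expression, completing the induction. The only place the hypothesis on triple intersections is used is in verifying identities (i) and (ii), and a short check confirms that without it the gluing step may fail, so the hypothesis cannot be relaxed by this method.
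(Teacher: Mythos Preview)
Your proof is correct and follows the same inductive strategy as the paper: both arguments establish the closed formula (\ref{ass}) by induction on $m$, with the base case being the definition of $*$ and the inductive step reducing to a gluing argument for morphisms, justified by set-theoretic identities about symmetric differences that rely on the triple-intersection hypothesis.

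The one substantive difference is in the shape of the induction. The paper peels off a single factor, proving the formula only for the bracketing $(f_1*\cdots*f_{n-1})*f_n$; its key identities (that $(Q_{n-1}^\triangle\cup Q_n)\cap Q_{n-1}^\cup=Q_{n-1}^\triangle$ and $(Q_{n-1}^\triangle\cup Q_n)\cup Q_{n-1}^\cup=Q_n^\cup$) are the special case $B=\{n\}$ of your identities (i) and (ii). The paper then asserts that associativity ``clearly follows,'' leaving implicit the symmetric argument needed to get three-term associativity and hence all bracketings. Your version is slightly more general: by allowing the outermost split to be an arbitrary partition $A\sqcup B$, you prove the formula simultaneously for \emph{every} bracketing, so associativity is immediate without a separate reduction. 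Your identities (i) $Q_A^\triangle\triangle Q_B^\triangle=Q_m^\triangle$ and (ii) $Q_A^\cup\cap Q_B^\cup=Q_A^\triangle\cap Q_B^\triangle$ are the natural generalizations, and your verification via the case analysis on the multiplicity $a+b\le 2$ of an edge is exactly right. The gluing bijection you describe is well-defined for the reason you give: identity (ii) forces $\eta_A$ and $\eta_B$ to agree on their overlap because both coincide there with $\mu$.
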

\begin{proof}
We first prove (\ref{ass}) inductively, as associativity clearly follows. For $n=2$, (\ref{ass}) is the definition of the restricted product $f_1*f_2$. 

Now suppose (\ref{ass}) holds for $n-1$. Since $Q_i\cap Q_j\cap Q_k=\emptyset$, 
\begin{equation}\label{symdifprop1}[(Q_1\triangle\cdots\triangle Q_{n-1})\cup Q_n]\cap[Q_1\cup\cdots\cup Q_{n-1}]= Q_1\triangle\cdots\triangle Q_{n-1},
\end{equation} 
and
\begin{equation}\label{symdifprop2}
[(Q_1\triangle\cdots\triangle Q_{n-1})\cup Q_n]\cup[Q_1\cup\cdots\cup Q_{n-1}]= Q_1\cup\cdots\cup Q_{n}.
\end{equation}
By the induction hypothesis, 
$$(f_1*\cdots*f_{n-1}*f_n)\vert_\tau=\sum_{\substack{\eta\in\Hom(Q_{n-1}^\triangle\cup Q_n\uparrow R;B)\\ \eta\downarrow_{Q_n^\triangle}=\tau}}\sum_{\substack{\mu\in\Hom(Q_{n-1}^\cup\uparrow R; B)\\ \mu\downarrow_{Q_{n-1}^\triangle}=\eta\downarrow_{Q_{n-1}^\triangle}}}\left(\prod_{k=1}^{n-1} f_k\vert_{\mu\circ\iota_k}\cdot f_n\vert_{\eta\circ\iota_n}\right).$$
 By (\ref{symdifprop1}) and (\ref{symdifprop2}), each choice of $\mu$ and $\eta$ which agree on their intersection, the subquiver $Q_{n-1}^\triangle$, uniquely determines a morphism $\gamma\in\Hom(Q_n^\cup\uparrow R; B)$ such that \begin{equation*}\begin{array}{ll}\gamma\downarrow_{Q_1\triangle\cdots\triangle Q_{n}}=&\eta\downarrow_{Q_1\triangle\cdots\triangle Q_{n}}=\tau,\\\gamma\circ\iota_k=\mu\circ\iota_k,& \text{for } 1\leq k\leq n-1,\\ \gamma\circ\iota_n=\eta\circ\iota_n.\end{array}\end{equation*} 
\end{proof}

\section{Quiver Counts}
In Section \ref{quivcounts}., we rewrite morphism counts in terms of multiplicities of representations and dimensions of subgroup algebras (Corollary \ref{hom=dim}). Here we give the details needed for the proofs of Section \ref{quivcounts}.
\subsection{Smoothing Quivers}\label{quiversmooth} An important simplification in morphism counts is to remove superfluous vertices from quivers, i.e., `smooth' them.
\begin{definition} A quiver $B$ \textbf{factors at level} $\mathbf{i}$ if there are no arrows from a vertex $\alpha\in V(B)$ with $gr(\alpha)<i$ to a vertex $\beta\in V(B)$ with $gr(\beta)>i$.  
\end{definition}
\begin{example} Let $\mathcal{B}$ be a Bratteli diagram with highest grading $n$. Then for all $0\leq i \leq n$,  $\mathcal{B}$ factors at level $i$.
\end{example}
\begin{definition} Let $Q$ be a quiver with a vertex $v$ that is the target of exactly one arrow, $e_1$, and the source of exactly one arrow, $e_2$. To \textbf{smooth Q at v}, remove $v$ and replace $e_1$ and $e_2$ with an arrow from the source of $e_1$ to the target of $e_2$. To \textbf{smooth Q}, smooth $Q$ at all possible $v$. 
\end{definition}
\begin{example}
The quiver $Q'$ of Figure \ref{Q'} results from smoothing the quiver $Q$.
\begin{figure}[H]\begin{center}
\begin{tikzpicture}[shorten >=1pt,node distance=2cm,on grid,auto,/tikz/initial text=] 
   \node at (-2,1) (q) {$Q$};
   \node at (2,1) (q) {$Q'$};
   \node[pnt] at (-1,0) (01) {};
   \node[pnt] at (-2,0) (11) {};
   \node[pnt] at (-3,0) (21) {};
   \node[pnt] at (3,0) (02) {};
   \node[pnt] at (1,0) (22) {};
   \draw (01) node[above] {\footnotesize $0$};
   \draw (11) node[above] {\footnotesize $1$};
   \draw (21) node[above] {\footnotesize $2$};
   \draw (02) node[above] {\footnotesize $0$};
   \draw (22) node[above] {\footnotesize $2$};
   \path[<-,every node/.style={font=\scriptsize}]
    (22) edge  node {} (02)
    (11) edge  node {} (01)
    (21) edge  node {} (11);
\end{tikzpicture}
\caption{Smoothing $Q$}
\label{Q'}
\end{center}\end{figure}
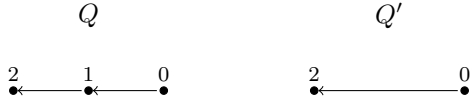
\end{example}
\begin{lemma}
Let $B$ be a graded quiver that factors at level $i$, $R$ a graded quiver with subquiver $Q$, and $v$ a vertex of $Q$ at level $i$ such that $Q$ can be smoothed at $v$. Let $Q'$ (respectively $R'$) be the quiver obtained by smoothing $Q$ (respectively $R$) at $v$. Then $\#\Hom(Q\uparrow R; B)=\#\Hom(Q'\uparrow R'; B).$
\end{lemma}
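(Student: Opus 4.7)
The plan is to construct an explicit bijection between $\Hom(Q \uparrow R; B)$ and $\Hom(Q' \uparrow R'; B)$ that uses the factoring hypothesis on $B$ in exactly one step. Write $e_1$ for the unique arrow in $Q$ with target $v$ and $e_2$ for the unique arrow in $Q$ with source $v$, and let $e'$ denote the smoothed arrow $s(e_1) \to t(e_2)$ in $Q'$. Since $Q$ is graded we have $gr(s(e_1)) < gr(v) = i < gr(t(e_2))$.

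First I would build the forward map $\Phi : \Hom(Q \uparrow R; B) \to \Hom(Q' \uparrow R'; B)$. Given $\phi \in \Hom(Q \uparrow R; B)$, discard the vertex $v$ from the vertex data of $\phi$ and define $\Phi(\phi)$ on the new arrow $e'$ as the concatenation $\phi(e_2) \circ \phi(e_1)$, leaving the rest of $\phi$ unchanged. Because the smoothing of $R$ at $v$ is performed identically to that of $Q$ (and $v$ still has in-degree and out-degree one in $R$, since the extendability of $\phi$ to $R$ certainly did not require introducing new arrows at $v$), any extension of $\phi$ to $R$ can be smoothed at $v$ in the same way to give an extension of $\Phi(\phi)$ to $R'$.

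Next I would produce the inverse $\Psi$, which is where the factoring hypothesis enters. Given $\phi' \in \Hom(Q' \uparrow R'; B)$, the arrow $e'$ is sent to some path $P$ in $B$ from $\phi'(s(e_1))$ to $\phi'(t(e_2))$. Since $gr(s(e_1)) < i < gr(t(e_2))$ and $B$ is graded, $P$ strictly increases in grading, so it meets level $i$ in at most one vertex; and since $B$ factors at level $i$, no arrow of $P$ skips past level $i$, so $P$ meets level $i$ in \emph{exactly one} vertex $\alpha \in B^i$. This gives a canonical decomposition $P = P_2 \circ P_1$ with $P_1$ ending at $\alpha$ and $P_2$ starting at $\alpha$. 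Define $\Psi(\phi')(v) = \alpha$, $\Psi(\phi')(e_1) = P_1$, $\Psi(\phi')(e_2) = P_2$, and leave the rest alone. The same decomposition performed on any extension of $\phi'$ to $R'$ reinserts $v$ into $R$ consistently, so extendability is preserved.

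The remaining work is routine: check that $\Phi$ and $\Psi$ are mutually inverse (which is immediate from the uniqueness of the level-$i$ passage point of $P$) and that both respect the "extends to $R$" condition (which follows because the same smoothing/unsmoothing operation can be applied to the extension). The main conceptual step — and the only nontrivial one — is the uniqueness of the level-$i$ vertex through which $P$ passes, which is precisely the content of $B$ factoring at level $i$; everything else is bookkeeping on vertex and arrow data.
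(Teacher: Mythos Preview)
Your proposal is correct and follows essentially the same approach as the paper: both construct the bijection by concatenating $\phi(e_2)\circ\phi(e_1)$ in one direction and, in the other, splitting the image of the smoothed arrow at the unique level-$i$ vertex guaranteed by the factoring hypothesis. Your write-up is in fact more careful than the paper's, which only exhibits the map $\phi \mapsto \tilde{\phi}$ in one direction and asserts it is a bijection without explicitly noting the uniqueness of the level-$i$ passage point; your remark that paths in a graded quiver have strictly increasing grading is exactly what makes that uniqueness (and hence the inverse) well defined.
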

\begin{proof}
Let $\phi\in\Hom(Q'\uparrow R';B)$ and let $f$ be the arrow in $Q'$ resulting from smoothing $Q$ at $v$. Then $f$ replaced two arrows, $e_1, e_2$ in $Q$, with $t(e_1)=v, s(e_2)=v$. Further, $s(e_1)=s(f), t(e_2)=t(f)$, so $\phi(f)$ is a path in $B$ from a vertex $\alpha$ with $gr(\alpha)<i$ to a vertex $\beta$ with $gr(\beta)>i$. Since $B$ factors at level $i$, this path contains a vertex, $v'$, with $gr(v')=i$. Let $e_1'$ be the subpath of $f$ starting at the source of $f$ and ending at $v'$. Similarly, let $e_2'$ be the subpath of $f$ starting at $v'$ and ending at the target of $f$.

Denote by $\tilde{\phi}$ the morphism in $\Hom(Q\uparrow R;B)$ such that:
$$\begin{array}{ll}\tilde{\phi}(e_1)=e_1',& \tilde{\phi}(e_2)=e_2'\\ \tilde{\phi}(e_i)=\phi(e_i),&\text{for } i\neq 1,2,\\ \tilde{\phi}(v_j)=\phi(v_j),&\text{for } v_j\neq\alpha,\beta,v'.\end{array}$$

Clearly $\phi\rightarrow\tilde{\phi}$ is a bijection.
\end{proof}
\begin{corollary}\label{smooth}
Let $\mathcal{B}$ be a Bratteli diagram, $R$ a graded quiver with subquiver $Q$, and $Q'$ (respectively $R'$) the quiver obtained by smoothing $Q$ (respectively $R$). Then $$\#\Hom(Q\uparrow R; \mathcal{B})=\#\Hom(Q'\uparrow R';\mathcal{B}).$$

\end{corollary}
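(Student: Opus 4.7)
The plan is to reduce the corollary to the preceding lemma by iterating the single-vertex smoothing operation. First I would observe that a Bratteli diagram $\mathcal{B}$ factors at every level $i$: by Definition \ref{brattdef}(iv), every arrow in $\mathcal{B}$ goes from a vertex of grading $i$ to a vertex of grading $i+1$, so in particular there is no arrow from a vertex of grading $<i$ to a vertex of grading $>i$. Thus the factoring hypothesis of the preceding lemma is automatic for every level whenever the target quiver is a Bratteli diagram.

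Next I would formalize ``smoothing $Q$'' as a finite sequence of single-vertex smoothings. Let $v_1,\dots,v_N$ be an enumeration of the vertices of $Q$ at which $Q$ can be smoothed (each is the target of exactly one arrow and the source of exactly one arrow). Let $Q_0 = Q$, $R_0 = R$, and define $Q_k$ (resp.\ $R_k$) to be the quiver obtained from $Q_{k-1}$ (resp.\ $R_{k-1}$) by smoothing at $v_k$, where $v_k$ is taken to lie at some level $i_k$. The preceding lemma, applied at each step (with $B = \mathcal{B}$, which factors at level $i_k$), gives
\[
\#\Hom(Q_{k-1}\uparrow R_{k-1};\mathcal{B}) \;=\; \#\Hom(Q_k\uparrow R_k;\mathcal{B}).
\]
Chaining these equalities yields $\#\Hom(Q\uparrow R;\mathcal{B}) = \#\Hom(Q_N\uparrow R_N;\mathcal{B}) = \#\Hom(Q'\uparrow R';\mathcal{B})$.

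The main subtlety (and the only real point to verify) is that successive smoothings behave well: smoothing $Q_{k-1}$ at $v_k$ still leaves $Q_k$ as a subquiver of $R_k$, and smoothing does not create new smoothable vertices whose treatment would depend on the order chosen. For the subquiver point, note that because $v_k$ is smoothable in $Q_{k-1}$, its unique incoming and outgoing arrows lie in $Q_{k-1}$ and hence in $R_{k-1}$; replacing them by a single composite arrow in $R_{k-1}$ produces $R_k$, and the induced map $Q_k \hookrightarrow R_k$ is just the restriction of $Q_{k-1}\hookrightarrow R_{k-1}$ with the three data at $v_k$ replaced consistently. For the order independence, I would simply note that the final ``fully smoothed'' quivers $Q'$ and $R'$ are defined as the result of smoothing at \emph{all} smoothable vertices, and that the preceding lemma already shows each individual smoothing is an invariant of the morphism count, so the common value does not depend on the order. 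This makes the argument essentially a one-line induction on $N$, with the preceding lemma doing all the work.
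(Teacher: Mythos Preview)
Your proposal is correct and matches the paper's intended argument: the paper states the corollary without proof, the implicit reasoning being precisely that a Bratteli diagram factors at every level (since each arrow raises the grading by exactly one), so the preceding lemma applies at every smoothable vertex and one simply iterates. Your careful check that smoothing at $v$ does not alter the in/out-degree at any other vertex (hence neither creates nor destroys smoothable vertices elsewhere) is a nice touch that the paper omits.
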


\subsection{Properties of Locally Free Quivers}\label{proplocfree}
We give the details of the proofs of Proposition \ref{locfreeprop} and Theorem \ref{freecount} of Section \ref{quivcounts}.

\begin{proposition}[Proposition \ref{locfreeprop}] Let $\mathcal{B}$ be a Bratteli diagram. Then the following properties are equivalent:
\begin{itemize}
\item[(i)] $\mathcal{B}$ is locally free.
\item[(ii)] For each $i$ and all $\beta\in \mathcal{B}^{i-1}$, there exists $\lambda_i\in \mathbb{C}$ such that $$\sum_{\alpha\in\mathcal{B}^i} M_\mathcal{B}(\alpha,\beta)d_\alpha=\lambda_i d_\beta.$$
\item[(iii)] For each $i$, $d_i$ is an eigenvector of $DU$. 
\item[(iv)] For each $i$ there exists $\lambda_i\in \mathbb{C}$ with $DU^i\hat{0}=\lambda_iU^{i-1}\hat{0}$.
\end{itemize}
\end{proposition}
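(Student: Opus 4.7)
The plan is to treat (ii), (iii), and (iv) as essentially restatements of one another obtained by direct unwinding of the definitions of $U$, $D$, and $d_i$, while the substantive content lies in the equivalence (i) $\Leftrightarrow$ (ii), which encodes local freeness of $\mathbb{C}[\mathcal{B}_i]$ over $\mathbb{C}[\mathcal{B}_{i-1}]$ as a combinatorial identity on restriction multiplicities and irreducible dimensions. I will prove the three computational equivalences first, then handle the module-theoretic step.

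For the computational equivalences, I would first compute
$$D d_i \;=\; \sum_{\beta \in \mathcal{B}^{i-1}} \Bigl(\sum_{\alpha \in \mathcal{B}^i} M_\mathcal{B}(\alpha,\beta)\, d_\alpha\Bigr)\, \beta,$$
and compare coefficients with $\lambda_i d_{i-1} = \lambda_i \sum_\beta d_\beta \beta$; this makes (ii) literally the statement that $D d_i = \lambda_i d_{i-1}$. Invoking Lemma \ref{di=ui0} (namely $d_i = U^i\hat{0}$ and $U d_{i-1} = d_i$), this identity becomes $D U^i \hat{0} = \lambda_i U^{i-1} \hat{0}$, which is (iv), and also $DU d_{i-1} = \lambda_i d_{i-1}$, exhibiting $d_{i-1}$ as an eigenvector of $DU$, which is (iii). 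Conversely, given (iii), the eigenvalue must equal $\lambda_i$ by taking inner product with any vertex $\beta$ at level $i-1$, recovering (ii).

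For (i) $\Leftrightarrow$ (ii), the plan is to apply the Wedderburn decomposition $\mathbb{C}[\mathcal{B}_i] \cong \bigoplus_{\alpha \in \mathcal{B}^i} \End(V_\alpha)$, where $V_\alpha$ is the irreducible module indexed by $\alpha$ of dimension $d_\alpha$. As a left $\mathbb{C}[\mathcal{B}_{i-1}]$-module, $\End(V_\alpha) \cong V_\alpha^{\oplus d_\alpha}$ (acting on the left on each column), and by the definition of the Bratteli diagram the restriction of $V_\alpha$ to $\mathbb{C}[\mathcal{B}_{i-1}]$ decomposes as $\bigoplus_{\beta \in \mathcal{B}^{i-1}} V_\beta^{\oplus M_\mathcal{B}(\alpha,\beta)}$. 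Summing over $\alpha \in \mathcal{B}^i$, the $V_\beta$-isotypic component of $\mathbb{C}[\mathcal{B}_i]$ viewed as a left $\mathbb{C}[\mathcal{B}_{i-1}]$-module has multiplicity $\sum_\alpha M_\mathcal{B}(\alpha,\beta)\, d_\alpha$, whereas $\mathbb{C}[\mathcal{B}_{i-1}]$ itself has the $V_\beta$-multiplicity $d_\beta$. Freeness of $\mathbb{C}[\mathcal{B}_i]$ over $\mathbb{C}[\mathcal{B}_{i-1}]$ of some rank $\lambda_i$ is therefore equivalent to the existence of a $\beta$-independent constant $\lambda_i$ with $\sum_\alpha M_\mathcal{B}(\alpha,\beta)\, d_\alpha = \lambda_i\, d_\beta$ for every $\beta \in \mathcal{B}^{i-1}$, which is exactly (ii).

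The main obstacle is the (i) $\Leftrightarrow$ (ii) step: one must be careful that freeness of a semisimple algebra $A$ over a semisimple subalgebra $B$ is truly detected by the isotypic multiplicities of the $B$-simples inside $A$, and that the common ratio is forced to equal $\dim_\mathbb{C} \mathbb{C}[\mathcal{B}_i]/\dim_\mathbb{C}\mathbb{C}[\mathcal{B}_{i-1}]$. Once this is in place, the integrality and the product formula in the corollary following the proposition fall out by summing the identity $\sum_\alpha M_\mathcal{B}(\alpha,\beta) d_\alpha = \lambda_i d_\beta$ against $d_\beta$ and iterating.
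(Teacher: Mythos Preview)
Your plan matches the paper's proof closely: the equivalences (ii)$\Leftrightarrow$(iii)$\Leftrightarrow$(iv) are handled exactly as you describe (the paper even leaves most of these to the reader after exhibiting (iii)$\Leftrightarrow$(iv)), and your Wedderburn/isotypic-multiplicity argument for (i)$\Leftrightarrow$(ii) is precisely the paper's ``$D$ is restriction, $d_i$ is the regular representation'' argument, just written out more explicitly.

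There is one small gap. For (ii)$\Rightarrow$(i) you need $\lambda_i$ to be a nonnegative \emph{integer}: over a semisimple algebra $B$, a module $M$ is free exactly when its simple multiplicities are an integer multiple of those of $B$, not merely a constant multiple. Your proposed route---summing $\sum_\alpha M_\mathcal{B}(\alpha,\beta)d_\alpha=\lambda_i d_\beta$ against $d_\beta$---yields only
\[
\lambda_i=\frac{\sum_{\alpha\in\mathcal{B}^i} d_\alpha^2}{\sum_{\beta\in\mathcal{B}^{i-1}} d_\beta^2}=\frac{\dim_\mathbb{C}\mathbb{C}[\mathcal{B}_i]}{\dim_\mathbb{C}\mathbb{C}[\mathcal{B}_{i-1}]},
\]
which shows $\lambda_i\in\mathbb{Q}_{>0}$ but not $\lambda_i\in\mathbb{Z}$. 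The paper closes this with a gcd argument: writing $\lambda_i=p/q$ in lowest terms, the fact that $\lambda_i d_\beta=\sum_\alpha M_\mathcal{B}(\alpha,\beta)d_\alpha\in\mathbb{Z}$ for every $\beta$ forces $q\mid d_\beta$ for all $\beta$, and one then argues $q$ divides $\gcd_\beta(d_\beta/m)=1$ where $m=\gcd_\beta d_\beta$. You should incorporate an argument of this type; once $\lambda_i\in\mathbb{Z}_{>0}$, your isotypic comparison immediately gives $\mathbb{C}[\mathcal{B}_i]\cong\mathbb{C}[\mathcal{B}_{i-1}]^{\oplus\lambda_i}$ and hence freeness.
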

\begin{proof}
Statements (ii), (iii), and (iv) are equivalent by definition and Lemma \ref{di=ui0}. For example:
\begin{itemize}
\item[(iii)$\Rightarrow$ (iv)] $DU^i\hat{0}=DUd_{i-1}=\lambda_id_{i-1}=\lambda_iU^{i-1}\hat{0}$
\item[(iv)$\Rightarrow$ (iii)]
$DUd_i=DU^{i+1}\hat{0}=\lambda_{i+1}U^i\hat{0}=\lambda_{i+1}d_i$
\end{itemize}
We leave the remaining equivalences of (ii), (iii), and (iv) to the reader.

To show the equivalence of (i) and (iv), recall from Note \ref{virtualrep} that elements of $\mathbb{C}[\mathcal{B}^i]$ correspond to representations of $\mathbb{C}[\mathcal{B}_i]$, i.e.
$\mathbb{C}[\mathcal{B}_i]$-modules. Under this identification, the regular representation of $\mathbb{C}[\mathcal{B}_i]$ corresponds to the sum 
$$\sum_{\alpha\in\mathcal{B}^i} d_\alpha \alpha=d_i\in \mathbb{C}[\mathcal{B}^i].$$
Since $D$ is restriction (\cite{towers}[Proposition 2.3.1]), the restriction of the regular representation of $\mathbb{C}[\mathcal{B}_i]$ to $\mathbb{C}[\mathcal{B}_{i-1}]$ corresponds to $Dd_i=DUd_{i-1}$.
%, and thus as a $\mathbb{C}[\mathcal{B}_{i-1}]$-module, $\mathbb{C}[\mathcal{B}_i]$ corresponds to $DUd_{i-1}$.
\begin{itemize}
\item[(i)$\Rightarrow$(iv)] For $\mathcal{B}$ locally free, $\mathbb{C}[\mathcal{B}_i]$ is free as a module over $\mathbb{C}[\mathcal{B}_{i-1}]$ with rank $\lambda_i\in \mathbb{C}$. Then the regular representation of $\mathbb{C}[\mathcal{B}_i]$ decomposes in $\mathbb{C}[\mathcal{B}_{i-1}]$ as $\lambda_i$ copies of the regular representation of $\mathbb{C}[\mathcal{B}_{i-1}]$. Thus, $$DU^i\hat{0}=DUd_{i-1}=Dd_i=\lambda_id_{i-1}=\lambda_iU^{i-1}\hat{0}.$$ 
\item[(iv)$\Rightarrow$(i)] $DUd_{i-1}=Dd_i=\lambda_id_{i-1}$ and so $\dim_\mathbb{C}\mathbb{C}[\mathcal{B}_i]=\lambda_i\dim_\mathbb{C}\mathbb{C}[\mathcal{B}_{i-1}]$; hence $\lambda_i$ is rational and positive. To show $\lambda_i$ integral, let 
$$\begin{array}{lll}\lambda_i=\frac{p}{q},& \gcd(p,q)=1,& p,q>0.\end{array}$$
Let $m=\gcd(d_\beta)$ over all $\beta\in\mathcal{B}^{i-1}$, so $\frac{d_\beta}{m}$ an integer for all $\beta\in\mathcal{B}^{i-1}$. Then $$\begin{array}{ll}\displaystyle\frac{1}{m}\frac{p}{q}\sum_{\beta\in\mathcal{B}^{i-1}} d_\beta \beta &=\displaystyle\frac{1}{m}\lambda_id_{i-1}\\
\\
&=\displaystyle\frac{1}{m}DUd_{i-1}\\
\\
&=\displaystyle\frac{1}{m}\sum_{\alpha\in\mathcal{B}^i}\sum_{\beta\in\mathcal{B}^{i-1}}M_\mathcal{B}(\alpha,\beta)d_\alpha \beta\\
\\
&=\displaystyle\sum_{\alpha\in\mathcal{B}^i}\sum_{\beta\in\mathcal{B}^{i-1}}M_\mathcal{B}(\alpha,\beta)^2\frac{d_\beta}{m}\beta.\end{array}$$
Then the coefficient of $\beta$ is an integer and thus $q\vert \frac{d_\beta}{m}$ for all $\beta\in \mathcal{B}^i$. But $m=\gcd(d_\beta)$, and thus $q=1$, making $\lambda_i$ an integer.
\end{itemize}
\end{proof}

\begin{theorem}[cf. Theorem \ref{freecount}]
Let $\mathcal{B}$ be a locally free Bratteli diagram and $w=D^{d_n}U^{u_n}\cdots D^{d_1}U^{u_1}$ an admissible word in $U$ and $D$. Then for $s=\sum_{i=1}^n u_i-d_i$ and $\alpha\in \mathcal{B}^{s}$, 
$$\langle w\hat{0},\alpha\rangle=d_\alpha\prod_{i\in \mathcal{S}}\lambda_{b_i-a_i}.$$
\end{theorem}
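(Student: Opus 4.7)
The plan is to prove, by induction on the number $k$ of letters processed (reading the word $w=w_L\cdots w_1$ from right to left, so $w_1$ acts first on $\hat{0}$), the refined statement
\begin{equation*}
w_k\cdots w_1\hat{0}=\Bigl(\prod_{i\in\mathcal{S}_k}\lambda_{b_i-a_i}\Bigr)d_{s_k},
\end{equation*}
where $\mathcal{S}_k:=\{i\le k:w_i=D\}$ and $s_k$ is the level reached after applying $w_k\cdots w_1$, namely the number of $U$'s minus the number of $D$'s among the first $k$ letters. Specializing to $k=L$ gives $w\hat{0}=C_n d_s$ with $C_n:=\prod_{i\in\mathcal{S}}\lambda_{b_i-a_i}$; pairing with $\alpha\in\mathcal{B}^s$ and invoking Lemma \ref{di=ui0}(i) then yields $\langle w\hat{0},\alpha\rangle=C_n d_\alpha$, which is the theorem.

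The two ingredients needed for the induction are the identities $Ud_j=d_{j+1}$, which is immediate from Lemma \ref{di=ui0}(ii), and $Dd_j=\lambda_j d_{j-1}$ (valid for $j\ge1$), which is exactly Proposition \ref{locfreeprop}(iv). The base case $k=0$ is the tautology $\hat{0}=d_0$ with the empty product equal to $1$. For the inductive step, I would split into two cases. When $w_k=U$, we have $\mathcal{S}_k=\mathcal{S}_{k-1}$ and $s_k=s_{k-1}+1$, so applying $U$ to the inductive expression simply advances $d_{s_{k-1}}$ to $d_{s_k}$ with no new scalar factor. When $w_k=D$, the set $\mathcal{S}_k$ gains the index $k$ and $s_k=s_{k-1}-1$; the key observation is that the quantity $b_k-a_k$ is by definition the number of $U$'s minus the number of $D$'s strictly to the right of position $k$, which is exactly $s_{k-1}$. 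Hence the factor produced by Proposition \ref{locfreeprop}(iv) is precisely $\lambda_{s_{k-1}}=\lambda_{b_k-a_k}$, matching the claimed product.

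Admissibility $b_i-a_i\ge 0$ guarantees $s_{k-1}\ge 0$ whenever $w_k=D$, so we never attempt to restrict below the root. The corner case $s_{k-1}=0$ is harmless: $D$ annihilates $\hat{0}$, forcing both sides to vanish on every $\alpha$ from then on, consistent with the convention $\lambda_0=0$. The main obstacle is not analytic but notational: one must consistently verify that the combinatorial index $b_k-a_k$ coming from the admissibility data agrees with the spectral index $s_{k-1}$ at which Proposition \ref{locfreeprop}(iv) is evaluated. Once this identification is made the induction is mechanical, and the final inner-product identity follows by orthonormality of the vertex basis.
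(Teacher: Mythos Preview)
Your proposal is correct and follows essentially the same approach as the paper: both argue by induction along the word using the identities $Ud_j=d_{j+1}$ (Lemma~\ref{di=ui0}) and $Dd_j=\lambda_j d_{j-1}$ (Proposition~\ref{locfreeprop}(iv)), identifying the spectral index with the running height $s_{k-1}=b_k-a_k$. The only cosmetic difference is that the paper inducts block-by-block on the syllables $D^{d_k}U^{u_k}$ while you induct letter-by-letter, which amounts to the same computation.
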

\begin{proof}
To be admissible, $d_{i},u_{i}> 0$ for all $1\leq i\leq n$ and $$\sum_{j=1}^i d_j\leq \sum_{j=1}^i u_j.$$
Let 
$\mathcal{S}_k=\{i\in \mathcal{S}\vert i\leq \sum_{j=1}^k (d_j+u_j)\}$ 
let $s_k=\sum_{i=1}^k u_i-d_i$, and let $w_k=D^{d_k}U^{u_k}\cdots D^{d_1}U^{u_1}$. We prove inductively that $$w_k\hat{0}=\prod_{i\in \mathcal{S}_k}\lambda_{b_i-a_i}\sum_{\alpha\in\mathcal{B}^{s_k}} d_\alpha \alpha.$$

Note that $w_1=D^{d_1}U^{u_1}$. Then $\mathcal{S}_1=\{u_1+1,u_1+2\dots,u_1+d_1\}$
and for all $i\in \mathcal{S}_1$, $b_i=u_1$ and $a_i=i-u_1-1$. By Proposition \ref{locfreeprop}, Lemma \ref{di=ui0}, and induction, 
$$w_1\hat{0}=\displaystyle\prod_{i\in\mathcal{S}_1}\lambda_{b_i-a_i}\sum_{\alpha\in \mathcal{B}^{s_1}}d_\alpha \alpha $$
%$$\begin{array}{ll} w_1\hat{0}&=D^{d_1}U^{u_1}\hat{0}\\
%&=D^{d_1-1}\lambda_{u_1}U^{u_1-1}\hat{0}\\
%&=\lambda_{u_1}D^{d_1-1}U^{u_1-1}\hat{0}\\
%&\;\;\;\; \vdots\\
%&=\lambda_{u_1}\lambda_{{u_1}-1}\cdots\lambda_{{u_1}-{d_1}+1}U^{{u_1}-{d_1}}\hat{0}\\
%&=\displaystyle\prod_{i\in\mathcal{S}_1}\lambda_{b_i-a_i}\sum_{\alpha\in \mathcal{B}^{s_1}}d_\alpha \alpha .\end{array}$$
Now suppose true for $n-1$. Then 
$$\begin{array}{ll} w\hat{0}&=w_n\hat{0}\\
&=D^{d_n}U^{u_n}w_{n-1}\hat{0}\\
&=D^{d_n}U^{u_n}\displaystyle\prod_{i\in \mathcal{S}_{n-1}}\lambda_{b_i-a_i}U^{s_{n-1}}\hat{0}\\
&=\displaystyle\prod_{i\in \mathcal{S}_{n-1}}\lambda_{b_i-a_i}D^{d_n}U^{u_n+s_{n-1}}\hat{0},\end{array}$$
and the same argument as in the base case gives the result.
\end{proof}

\begin{section}{Combinatorial Lemmas for the Weyl Groups}\label{appC}

The SOV approach reduces Theorem \ref{Bnthm} (respectively, Theorem \ref{Dnthm}) to counting the number of morphisms of the quivers of Figure \ref{BnH} into the Bratteli diagram of $B_n$ (respectively, $D_n$).
In this section we consider the Bratteli diagrams associated to $B_n$ and $D_n$ to provide the bounds used in the proofs of Theorems \ref{Bnthm} and \ref{Dnthm}. Note that Lemmas \ref{BnJlemma}, \ref{BnH1}, \ref{DnJlemma}, \ref{DnH1} and Corollaries \ref{BnH3} and \ref{DnH3} all hold for $n\geq 2$, $i\geq 2$.
\subsection{The Weyl Group $B_n$}

The Bratteli diagram $\mathcal{B}$ associated to the chain 
$\mathbb{C}[B_n]>\mathbb{C}[B_{n-1}]>\cdots> \mathbb{C}$
is a generalization of Young's diagram --- inequivalent irreducible representations of $\mathbb{C}[B_i]$ are indexed by pairs of partitions $(\lambda_1,\lambda_2)$,  of $k$ and $l$, respectively, with $k+l=i$. Pairs $(\lambda_1,\lambda_2)$, $(\mu_1,\mu_2)$ are connected by an edge if either $\lambda_1$ may be obtained from $\mu_1$ by adding a box, or if $\lambda_2$ may be obtained from $\mu_2$ by adding a box \cite{seminormal} (see Figure \ref{BnBratt}). Note that this is a multiplicity-free diagram.
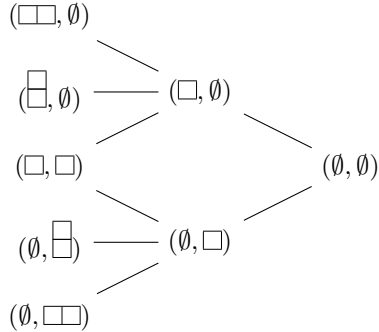
\begin{figure}[H]
\begin{center}
\begin{tikzpicture}[shorten >=1pt,node distance=2cm,on grid,auto,/tikz/initial text=] 
   
  % \node at (-6,0) (00) {$\cdots$};
   \node at (-4,0) (33) {$(\tiny{\yng(1)},\tiny{\yng(1)})$};
   \node at (-4,1) (32) {$(\tiny{\yng(1,1)},\emptyset$)};
   \node at (-4,-1) (34) {$(\emptyset,\tiny{\yng(1,1)})$};
   \node at (-4,2) (31) {$(\tiny{\yng(2)},\emptyset$)};
   \node at (-4,-2) (35) {$(\emptyset,\tiny{\yng(2)})$};
   \node at (-2,1) (21) {$(\tiny{\yng(1)},\emptyset)$};
   \node at (-2,-1) (22) {$(\emptyset,\tiny{\yng(1)})$};
   \node at (0,0) (0) {$(\emptyset,\emptyset)$};
   \path[every node/.style={font=\scriptsize}]
    (0) edge  node {} (21)
    (0) edge node {} (22)
    (21) edge node {} (31)
    (21) edge node {} (32)
    (21) edge node {} (33)
    (22) edge node {} (33)
    (22) edge node {} (34)
    (22) edge node {} (35);
\end{tikzpicture}
\caption{Bratteli diagram for $B_2$}
\label{BnBratt}
\end{center}
\end{figure}
\begin{theorem} For $\mathcal{J}_i^n$ as in Figure \ref{BnH} and $\mathcal{B}$ the Bratteli diagram associated to the Weyl group $B_n$, $\# \Hom(\mathcal{J}_i^n\uparrow \mathcal{Q};\mathcal{B})\leq 2\vert B_n\vert$.
\end{theorem}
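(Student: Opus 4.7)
The plan is to bound $\#\Hom(\mathcal{J}_i^n\uparrow \mathcal{Q};\mathcal{B})$ by reducing to a morphism count on a smoothed quiver and then applying the operator-theoretic machinery of Section \ref{quivcounts}.

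\textbf{Step 1: Smoothing and interpretation.} First, I would apply Corollary \ref{smooth} to contract the intermediate degree-two vertices of $\mathcal{J}_i^n$ (the vertex at level $1$ along the long arc, and any other vertex of in-degree and out-degree one not pinned by $\mathcal{Q}$), producing an equivalent quiver $\widetilde{\mathcal{J}}_i^n$ with the same morphism count into $\mathcal{B}$. In smoothed form, $\widetilde{\mathcal{J}}_i^n$ is a 2-toothed quiver: identifying the ``bottom'' vertices $\gamma_0 = \hat 0$, $\gamma_1 = \alpha_{i-2}$, $\gamma_2$ at the appropriate shared endpoint, with ``top'' vertices $\beta_1 = \beta_{i-1}$ and $\beta_2 = \beta_n$, the two teeth encode the detour visiting $\beta_i$ and the long arc up to $\beta_n$.

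\textbf{Step 2: Apply the path-algebra counting formula.} Because $\mathcal{B}$ arises from the group algebra chain $\mathbb{C}[B_n] > \mathbb{C}[B_{n-1}] > \cdots$ it is locally free, with eigenvalues $\lambda_j = |B_j|/|B_{j-1}| = 2j$. By Corollary \ref{hom=dim}, I can write
\[
\#\Hom(\widetilde{\mathcal{J}}_i^n;\mathcal{B}) \;=\; \prod_{k\in\mathcal{S}}\lambda_{b_k-a_k}\,\sum_{\alpha\in\mathcal{B}^{\ell}} d_\alpha
\]
for the admissible word $w$ traversing the two teeth and some terminal level $\ell$. The $U$/$D$ bookkeeping on $\widetilde{\mathcal{J}}_i^n$ is essentially $D^{n-i+2}U^{n-i+2}\cdot D\cdot U$ (on the small tooth) composed with the main arc, so the $\lambda$-product telescopes; only a constant multiple of $\lambda_j$'s survives, independent of $n$.

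\textbf{Step 3: Use the ambient constraint $\uparrow\mathcal{Q}$.} This is where I expect the crux of the argument to lie. The $\uparrow \mathcal{Q}$ constraint forces the free leaf labelings of $\widetilde{\mathcal{J}}_i^n$ (namely $\alpha'_{i-2}$, $\beta'_i$, and $\beta'_n$) to lie on paths already committed in the surrounding quiver $\mathcal{Q}$. Consequently, several of the ``loose'' sums $\sum_{\alpha} M_\mathcal{B}(\cdot,\alpha)$, which would otherwise yield unwanted combinatorial quantities like $\sum_{\alpha}d_\alpha$, get replaced by $\sum_{\alpha} d_\alpha^2 = |B_n|$ (Wedderburn). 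The residual factor of $2$ is precisely the surviving $\lambda$-product, which reflects the 2-differential nature of $\mathcal{B}$ (i.e., $DU-UD = 2I$ on the Young-pair lattice).

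\textbf{Main obstacle.} The step I expect to require the most care is Step 3: correctly tracking which vertices of $\widetilde{\mathcal{J}}_i^n$ are ``pinned'' by the embedding $\widetilde{\mathcal{J}}_i^n \subset \mathcal{Q}$, and which remain free. Without this restriction, $\#\Hom(\mathcal{J}_i^n;\mathcal{B})$ grows with $i$ through a factor of $S_{i-2}=\sum_\alpha d_\alpha$, which is strictly larger than the desired $2|B_n|$ bound. It is the $\uparrow\mathcal{Q}$ constraint that forces the collapse of this sum into a Plancherel-type identity, and verifying this in the $B_n$ Bratteli diagram is the heart of the argument.
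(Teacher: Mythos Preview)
Your proposal has a genuine gap: after smoothing, $\mathcal{J}_i^n$ is \emph{not} an $n$-toothed quiver, so Corollary~\ref{hom=dim} does not apply to it directly. Look at the multiplicity expansion the paper writes down: the vertex $\alpha_{i-2}$ is incident to three arrows (to $\hat 0$, to $\alpha_{i-1}$, and to $\beta_{i-1}$), and the vertex $\beta_i$ is likewise incident to three (to $\beta_n$, to $\alpha_{i-1}$, and to $\beta_{i-1}$). These cannot be smoothed away, and the resulting ``square'' $\alpha_{i-2}\text{--}\alpha_{i-1}\text{--}\beta_i\text{--}\beta_{i-1}\text{--}\alpha_{i-2}$ prevents any description as a word in $U$ and $D$ acting on $\hat 0$. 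Your Step~2 word $D^{n-i+2}U^{n-i+2}\cdot D\cdot U$ does not encode this square.

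The paper handles this obstruction by a different and more elementary move: in the multiplicity sum it crudely bounds a single factor, $M_\mathcal{B}(\beta_i,\alpha_{i-2}) \leq M_\mathcal{B}(B_i,B_{i-2})$, and pulls it outside. What remains \emph{is} a toothed-quiver count, namely $\langle D^nU^n\hat 0,\hat 0\rangle = |B_n|$. The constant~$2$ then comes not from $DU-UD=2I$ but from the separate combinatorial Lemma~\ref{BnJlemma}, which shows $M_\mathcal{B}(B_i,B_{i-2})\leq 2$ by a short case analysis on how two boxes can be added to a pair of partitions. Your Step~3 interpretation of the~$2$ as a surviving $\lambda$-product from the differential-poset relation is therefore misplaced; the 2-differential structure of the pair-of-partitions lattice is suggestive but is not what the argument actually uses.
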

\begin{proof}
By Theorem \ref{countingmorph},  we see that 
$\# \Hom(\mathcal{J}_i^n\uparrow \mathcal{Q};\mathcal{B})$ is equal to the sum
$$\begin{array}{l}\displaystyle\sum_{\alpha_j,\beta_j\in\mathcal{B}^j}M_\mathcal{B}(\beta_{n},\beta_{i})M_\mathcal{B}(\beta_{i},\beta_{i-1})M_\mathcal{B}(\beta_{i-1},\alpha_{i-2})M_\mathcal{B}(\beta_{i},\alpha_{i-1})M_\mathcal{B}(\alpha_{i-1},\alpha_{i-2})d_{\alpha_{i-2}}d_{\beta_{n}}\\
\displaystyle=\sum_{\alpha_j,\beta_j\in\mathcal{B}^j}M_\mathcal{B}(\beta_{n},\beta_{i})M_\mathcal{B}(\beta_{i},\alpha_{i-2})M_\mathcal{B}(\beta_{i},\alpha_{i-1})M_\mathcal{B}(\alpha_{i-1},\alpha_{i-2})d_{\alpha_{i-2}}d_{\beta_{n}}\\
\displaystyle\leq M_\mathcal{B}(B_i,B_{i-2})\sum_{\alpha_j,\beta_j\in\mathcal{B}^j}M_\mathcal{B}(\beta_{n},\beta_{i}) M_\mathcal{B}(\beta_{i},\alpha_{i-1})M_\mathcal{B}(\alpha_{i-1},\alpha_{i-2})d_{\alpha_{i-2}}d_{\beta_n},\end{array}$$
for $M_\mathcal{B}(B_i,B_j):=\max M_\mathcal{B}(\alpha_i,\alpha_j)$ over all $\alpha_i\in\mathcal{B}^i, \alpha_j\in\mathcal{B}^j$.
By Corollary \ref{hom=dim}, 
$$\begin{array}{ll}\# \Hom(\mathcal{J}_i^n\uparrow \mathcal{Q};\mathcal{B})&= M_{\mathcal{B}}(B_{i},B_{i-2}) \langle D^{n}U^{n}\hat{0},\hat{0}\rangle\\
&=M_{\mathcal{B}}(B_{i},B_{i-2})\vert B_{n}\vert.\end{array}$$

Lemma \ref{BnJlemma} below shows that 
$M_{\mathcal{B}}(B_{i},B_{i-2})\leq 2.$ 
Thus 
$$\#\Hom(\mathcal{J}_i^n\uparrow \mathcal{Q};\mathcal{B})\leq 2\vert B_n\vert.$$\end{proof}

\begin{lemma}\label{BnJlemma}
 $M_\mathcal{B}(B_i,B_{i-2})\leq 2$
\end{lemma}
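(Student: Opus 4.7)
The plan is to use the explicit combinatorial description of the Bratteli diagram $\mathcal{B}$ for the chain $\mathbb{C}[B_n] > \mathbb{C}[B_{n-1}] > \cdots > \mathbb{C}$: vertices at level $i$ are pairs of partitions $(\lambda_1,\lambda_2)$ with $|\lambda_1|+|\lambda_2|=i$, and an edge from $(\mu_1,\mu_2)$ to $(\lambda_1,\lambda_2)$ is drawn precisely when $\lambda_j$ differs from $\mu_j$ by a single added box (in exactly one of the two coordinates). A path of length $2$ from a vertex at level $i-2$ to a vertex at level $i$ therefore records a sequence of two single-box additions, each applied to one of the two partitions.

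Fix $(\mu_1,\mu_2)$ at level $i-2$ and $(\lambda_1,\lambda_2)$ at level $i$; any path between them has $\mu_j \subseteq \lambda_j$ and $|\lambda_1/\mu_1|+|\lambda_2/\mu_2|=2$. I would split into three exhaustive cases based on how the two added boxes distribute between the two partitions:
\begin{itemize}
\item[(a)] both boxes land in the first coordinate, so $\mu_2=\lambda_2$ and $\lambda_1/\mu_1$ is a $2$-box skew shape;
\item[(b)] symmetrically, both boxes land in the second coordinate;
\item[(c)] one box lands in each coordinate.
\end{itemize}
In case (a), the number of paths equals the number of standard Young tableaux of the $2$-box skew shape $\lambda_1/\mu_1$. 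Since a skew shape of size $2$ is either a horizontal domino, a vertical domino, or two disconnected cells (in different rows and different columns), it admits either $1$ or $2$ SYT; so at most $2$ paths. Case (b) is identical by symmetry. In case (c), the intermediate vertex at level $i-1$ is obtained by adding exactly one of the two designated boxes first, yielding exactly $2$ paths.

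Combining the three cases gives $M_\mathcal{B}((\lambda_1,\lambda_2),(\mu_1,\mu_2)) \leq 2$ for every pair of vertices at levels $i$ and $i-2$, hence $M_\mathcal{B}(B_i,B_{i-2}) \leq 2$. The argument is entirely combinatorial — no serious obstacle arises, since the shortness of the paths (only two steps) keeps the case analysis finite and elementary. The only point requiring a little care is checking case (a)/(b): one must recognize that the relevant count is the number of SYT of a $2$-box skew shape and that such a shape never supports more than $2$ tableaux, which follows immediately from the fact that for a $2$-cell skew shape the second cell either can only be placed after the first (domino cases) or can be placed independently in either order (disconnected case).
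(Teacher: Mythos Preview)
Your proof is correct. Both your argument and the paper's rest on the same combinatorial description of the Bratteli diagram for $B_n$ (pairs of partitions, edges given by single-box additions), but the organization differs. The paper argues by contradiction: it assumes three distinct intermediate vertices $\rho,\gamma,\eta$ sit between $\lambda$ at level $i-2$ and $\kappa$ at level $i$, then runs a case analysis on which coordinate each intermediate shares with $\lambda$ (and with $\kappa$), deriving a contradiction in each case. You instead fix the endpoints, observe that the pair $(\lambda_1/\mu_1,\lambda_2/\mu_2)$ determines which of three mutually exclusive situations you are in, and count paths directly---recognizing in cases (a)/(b) that the count is the number of standard Young tableaux of a two-cell skew shape (hence $\leq 2$), and in case (c) that the two orders of addition give exactly $2$ paths. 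Your route is slightly more direct and yields the exact count in each case rather than merely ruling out three; the paper's contradiction argument avoids invoking the SYT interpretation but requires a longer case chase. Either way the content is the same elementary observation about adding two boxes to a pair of partitions.
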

\begin{proof}
Suppose not. Then since $\mathcal{B}$ is multiplicity-free, we must have distinct pairs of partitions 
$\mathbf{\kappa}=(\kappa_1,\kappa_2),$, $\mathbf{\rho}=(\rho_1,\rho_2),$, $\mathbf{\gamma}=(\gamma_1,\gamma_2),$ $\mathbf{\eta}=(\eta_1,\eta_2),$ and $\mathbf{\lambda}=(\lambda_1,\lambda_2)$
 as in Figure \ref{Bnpart}.
 
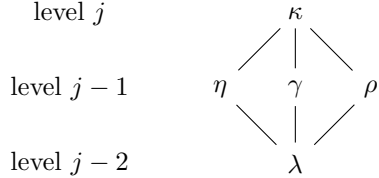
\begin{figure}[H]\begin{center}

\begin{tikzpicture}[shorten >=1pt,node distance=2cm,on grid,auto,/tikz/initial text=] 
   \node at (-3,0) (1) {level $j$};
   \node at (-3,-1) (1) {level $j-1$};
   \node at (-3,-2) (1) {level $j-2$};
   \node at (0,0) (1) {$\mathbf{\kappa}$};
   \node at (0,-1) (22) {$\mathbf{\gamma}$};
   \node at (1,-1) (21) {$\mathbf{\rho}$};
   \node at (-1,-1) (23) {$\mathbf{\eta}$};
   \node at (0,-2) (3) {$\mathbf{\lambda}$};
   \path[every node/.style={font=\scriptsize}]
    (1) edge  node {} (22)
    (1) edge  node {} (21)
    (1) edge node {} (23)
    (21) edge node {} (3)
    (23) edge node {} (3)
    (22) edge node {} (3);               

\end{tikzpicture}
\caption{Subquiver of $\mathcal{B}$ if $M_\mathcal{B}(B_i,B_{i-2})> 2$}
\label{Bnpart}
\end{center}
\end{figure}
Pairs of partitions are adjacent in $\mathcal{B}$ if one is acquired from the other by adding a single box; hence either $\rho_1=\lambda_1$ or $\rho_2=\lambda_2$. The same holds for $\eta,\gamma$. Similarly, $\kappa_1=\rho_1$ or $\kappa_2=\rho_2$ and the same holds for $\eta, \gamma$.

Without loss of generality, we need only consider the following two cases:
 
\noindent \textbf{Case 1:} $\rho_1,\eta_1,\gamma_1=\lambda_1$.

If $\kappa_1\neq\lambda_1$, then $\kappa_2=\rho_2,\gamma_2,\eta_2$, but then $\mathbf{\rho}=\mathbf{\eta}=\mathbf{\gamma}$, a contradiction.

If $\kappa_1=\lambda_1$ then $\kappa_2$ is obtained from $\lambda_2$ by adding two boxes, which may be done in at most two ways, so $\eta,\gamma,\rho$ are not all distinct, a contradiction. 

\noindent \textbf{Case 2:} $\rho_1=\lambda_1=\eta_1$ and $\gamma_2=\lambda_2$. 

If $\kappa_1=\lambda_1$ then since $\gamma_1\neq \lambda_1$, we see that $ \kappa_1\neq\gamma_1$. Thus, $\kappa_2=\gamma_2$. But then $(\kappa_1,\kappa_2)=(\lambda_1,\lambda_2)$, a contradiction.

Now if $\kappa_1\neq\lambda_1$, then $\kappa_2=\rho_2,\eta_2$, but then $\rho=\eta$, a contradiction.

\end{proof}
The following two lemmas provide a bound for $\dim A(\mathcal{H}_i^n\uparrow G;\mathcal{B})$, for $\mathcal{H}_i^n$ as in Figure \ref{BnH}.
\begin{lemma}\label{BnH1} \begin{itemize}
\item[]
\item[(1)] $\displaystyle\#\Hom(\mathcal{H}_i^n\uparrow G;\mathcal{B})=\frac{\vert B_{n-1}\vert}{\vert B_{i-1}\vert}\#\Hom(\mathcal{H}_i^i\uparrow G;\mathcal{B})$
\item[(2)] $\#\Hom(\mathcal{H}_i^i\uparrow G;\mathcal{B})=$
$$2(i-1)\vert B_{i-1}\vert+
\sum_{\substack{(\beta_{i-1}^1,\beta_{i-1}^2)=\\\mathbf{\beta}_{i-1}\in\mathcal{B}^{i-1}}} (\jmp(\beta_{i-1}^1)+\jmp(\beta_{i-1}^2))(\jmp(\beta_{i-1}^1)+\jmp(\beta_{i-1}^2)+1)d_{\beta_{i-1}}^2,$$
\end{itemize} where $\jmp$ denotes the jump of a partition, i.e, the number of ways to remove a single box to form a new partition.
\end{lemma}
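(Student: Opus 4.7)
\medskip

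\noindent\textbf{Proof proposal.} The overall strategy is to reduce both equalities to applications of Theorem~\ref{countingmorph}, using in a crucial way that the Bratteli diagram $\mathcal{B}$ of $B_n$ is locally free (so Proposition~\ref{locfreeprop} applies) and, for part~(2), that it satisfies the $2$-differential relation $DU-UD=2I$.

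For part~(1), I will expand $\#\Hom(\mathcal{H}_i^n\uparrow G;\mathcal{B})$ via Theorem~\ref{countingmorph} as a sum over labellings $\phi$ of $V(\mathcal{H}_i^n)$ into $V(\mathcal{B})$, with each arrow contributing a factor $M_\mathcal{B}(\phi(\text{target}),\phi(\text{source}))$. Inspecting $\mathcal{H}_i^n$ in Figure~\ref{BnH}, the vertex $\beta_{n-1}$ at level $n-1$ appears in exactly two arrows (from $\hat{0}$ and from $\beta_{i-2}$), and the extension condition does not further constrain its image. Hence the sum over $\phi(\beta_{n-1})$ factors out of the rest. Iterating Proposition~\ref{locfreeprop}(ii) gives
\[
\sum_{\gamma\in\mathcal{B}^{n-1}}M_\mathcal{B}(\gamma,\phi(\beta_{i-2}))\,d_\gamma=\frac{|B_{n-1}|}{|B_{i-2}|}\,d_{\phi(\beta_{i-2})}.
\]
Performing the same reduction on $\mathcal{H}_i^i$ (where $\beta_{n-1}$ now sits at level $i-1$) yields the identical expression with $\frac{|B_{i-1}|}{|B_{i-2}|}$ in place of $\frac{|B_{n-1}|}{|B_{i-2}|}$, and since the sum over the remaining vertices is unchanged, dividing gives the ratio $\frac{|B_{n-1}|}{|B_{i-1}|}$.

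For part~(2), after the reduction above I am left with a sum over labellings of $\beta_{i-2},\alpha_{i-1},\beta_{i-1},\alpha_i$. I will then collapse the $\beta_{i-2}$ sum using $\sum_{\beta_{i-2}\to\alpha_{i-1}}d_{\beta_{i-2}}=d_{\alpha_{i-1}}$, and the $\alpha_i$ sum by recognizing that
\[
\sum_{\alpha_i}M(\alpha_i,\alpha_{i-1})M(\alpha_i,\beta_{i-1})=\langle DU\,e_{\alpha_{i-1}},e_{\beta_{i-1}}\rangle.
\]
Splitting into the diagonal piece $\alpha_{i-1}=\beta_{i-1}$ (contributing $\jmp_+(\alpha_{i-1})=\jmp(\alpha_{i-1})+2$) and the off-diagonal piece (pairs differing by a ``box slide'', which contribute $1$ each), and applying the $2$-differential identity $DU-UD=2I$ to rewrite the resulting sums in terms of jump data at level $i-1$, the expression separates into a piece that simplifies to $2(i-1)|B_{i-1}|$ (coming from the constant $2I$ on the right-hand side iterated $i-1$ times against $U^{i-1}\hat{0}$) and a piece of the form $\sum_{\beta_{i-1}}\jmp(\beta_{i-1})(\jmp(\beta_{i-1})+1)d_{\beta_{i-1}}^2$, using $\jmp(\beta)=\jmp(\beta^1)+\jmp(\beta^2)$ for the product-of-Young-lattices structure of $\mathcal{B}$.

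The main obstacle will be the part~(2) bookkeeping: correctly tracking the contributions of the ``diagonal'' and ``box-slide'' cases after the common-successor sum, and verifying that after applying $DU-UD=2I$ the global commutator piece really collapses to the clean factor $2(i-1)|B_{i-1}|$ rather than to a nested sum over predecessors at level $i-2$. The cleanest way to do this is likely to iterate the commutation identity inside an inner product $\langle D U^{i-1}\hat{0},U^{i-1}\hat{0}\rangle$-style computation, peeling off a summand of $2$ at each step (one for each of the $i-1$ commutator applications), and then identifying the leftover as the level-$(i-1)$ jump sum. The remaining part is a routine, if notationally heavy, symbolic manipulation.
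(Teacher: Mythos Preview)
Your Part~(1) is essentially the paper's argument: expand via Theorem~\ref{countingmorph} and sum out the top vertex using local freeness. (One labeling slip: in $\mathcal{H}_i^n$ the vertex $\beta_{n-1}$ is joined to $\beta_{i-1}$, not to $\beta_{i-2}$; the paper's reduction produces the factor $\frac{|B_{n-1}|}{|B_{i-1}|}\,d_{\beta_{i-1}}$ directly.)

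For Part~(2) there is a real gap. After the Part~(1) reduction the remaining sum is
\[
\sum_{\alpha_{i-1},\beta_{i-1},\beta_{i-2},\alpha_i}
M(\beta_{i-1},\beta_{i-2})M(\alpha_{i-1},\beta_{i-2})\,M(\alpha_i,\alpha_{i-1})M(\alpha_i,\beta_{i-1})\,d_{\alpha_{i-1}}d_{\beta_{i-1}},
\]
and there is \emph{no} factor $d_{\beta_{i-2}}$ to collapse. Summing over $\beta_{i-2}$ gives $(UD)_{\alpha_{i-1},\beta_{i-1}}$ (common predecessors), and summing over $\alpha_i$ gives $(DU)_{\alpha_{i-1},\beta_{i-1}}$ (common successors), so the expression becomes $\sum_{\alpha,\beta}(UD)_{\alpha,\beta}(DU)_{\alpha,\beta}\,d_\alpha d_\beta$. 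The identity $DU-UD=2I$ handles the diagonal contribution, but by itself it does \emph{not} close the off-diagonal piece: you still need the special combinatorial fact that $(UD)_{\alpha,\beta}\in\{0,1\}$ for $\alpha\neq\beta$ (two distinct pairs of partitions have at most one common predecessor, and hence at most one common successor). Your ``box slide'' remark gestures at this but never states or uses it, and without it $(UD)_{\alpha,\beta}^2$ cannot be replaced by $(UD)_{\alpha,\beta}$. Also, the ``peel off $2$ at each of $i-1$ commutator steps'' is unnecessary: a single application of $DU-UD=2I$ together with local freeness ($DU\,d_{i-1}=\lambda_i d_{i-1}=2i\,d_{i-1}$) already gives $\langle UD\,d_{i-1},d_{i-1}\rangle=2(i-1)|B_{i-1}|$.

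By contrast, the paper bypasses the $(UD)(DU)$ bilinear manipulation entirely. It splits on $\alpha_{i-1}=\beta_{i-1}$ versus $\alpha_{i-1}\neq\beta_{i-1}$ and, in the latter case, uses directly that distinct $\alpha_{i-1},\beta_{i-1}$ \emph{jointly determine} their common successor $\alpha_i$; this kills the $\alpha_i$-sum immediately. What remains is evaluated by local freeness to $\frac{|B_{i-1}|^2}{|B_{i-2}|}-\sum_\beta\jmp(\beta)d_\beta^2$, and the diagonal case contributes $\sum_\beta\jmp(\beta)(\jmp(\beta)+2)d_\beta^2$; adding the two gives the stated formula. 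Your $2$-differential route can be made to work, but only after you supply the same ``at most one common successor/predecessor'' input that the paper uses explicitly.
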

\begin{proof}
To prove (1), first note by Theorem \ref{countingmorph}, $\# \Hom(\mathcal{H}_i^n\uparrow G;\mathcal{B})$ equals $$\begin{array}{lll}\displaystyle\sum_{\alpha_j,\beta_j\in\mathcal{B}^j}M_\mathcal{B}(\beta_{n-1},\beta_{i-1})M_\mathcal{B}(\beta_{i-1},\beta_{i-2})M_\mathcal{B}(\alpha_{i},\alpha_{i-1})M_\mathcal{B}(\alpha_{i},\beta_{i-1})M_\mathcal{B}(\alpha_{i-1},\beta_{i-2})d_{\alpha_{i-1}}d_{\beta_{n-1}}.\end{array}$$ By Corollary \ref{hom=dim}, 
$$\begin{array}{ll}\displaystyle\sum_{\beta_{n-1}\in\mathcal{B}^{n-1}} M_{\mathcal{B}}(\beta_{n-1},\beta_{i-1})d_{\beta_{n-1}}&= \langle D^{n-i}U^{n-1}\hat{0},\beta_{i-1}\rangle\\
&= \lambda_{n-1}\lambda_{n-2}\cdots\lambda_id_{\beta_{i-1}}\\
&=\displaystyle\frac{\vert B_{n-1}\vert}{\vert B_{i-1}\vert}d_{\beta_{i-1}}.\end{array}$$
%This could also be seen via Frobenius reciprocity.
Then $\# \Hom(\mathcal{H}_i^n\uparrow G;\mathcal{B})$ equals
$$\begin{array}{ll}\displaystyle\frac{\vert B_{n-1}\vert}{\vert B_{i-1}\vert}\sum_{\alpha_j,\beta_j\in\mathcal{B}^j}M_\mathcal{B}(\beta_{i-1},\beta_{i-2})M_\mathcal{B}(\alpha_{i},\alpha_{i-1})M_\mathcal{B}(\alpha_{i},\beta_{i-1})M_\mathcal{B}(\alpha_{i-1},\beta_{i-2})d_{\beta_{i-1}}d_{\alpha_{i-1}}\\
=\displaystyle\frac{\vert B_{n-1}\vert}{\vert B_{i-1}\vert}\#\Hom(\mathcal{H}_i^i\uparrow G;\mathcal{B}).\end{array}$$

To prove (2), $$\begin{array}{ll}\#\Hom(\mathcal{H}_i^i\uparrow G;\mathcal{B})&=\displaystyle\sum_{\alpha_j,\beta_j\in\mathcal{B}^j}M_\mathcal{B}(\beta_{i-1},\beta_{i-2})M_\mathcal{B}(\alpha_{i},\alpha_{i-1})M_\mathcal{B}(\alpha_{i},\beta_{i-1})M_\mathcal{B}(\alpha_{i-1},\beta_{i-2})d_{\beta_{i-1}}d_{\alpha_{i-1}}\\
&=\displaystyle\sum_{\alpha_{i-1}\neq\beta_{i-1}}+\sum_{\alpha_{i-1}=\beta_{i-1}},\end{array}$$
for $\displaystyle\sum_{\alpha_{i-1}\neq\beta_{i-1}}=\sum_{\substack{\alpha_j,\beta_j\in\mathcal{B}^j\\\alpha_{i-1}\neq\beta_{i-1}}}M_\mathcal{B}(\beta_{i-1},\beta_{i-2})M_\mathcal{B}(\alpha_{i},\alpha_{i-1})M_\mathcal{B}(\alpha_{i},\beta_{i-1})M_\mathcal{B}(\alpha_{i-1},\beta_{i-2})d_{\beta_{i-1}}d_{\alpha_{i-1}}$
and $\displaystyle\sum_{\alpha_{i-1}=\beta_{i-1}}=\sum_{\substack{\alpha_j,\beta_j\in\mathcal{B}^j\\\alpha_{i-1}=\beta_{i-1}}}M_\mathcal{B}(\beta_{i-1},\beta_{i-2})^2M_\mathcal{B}(\alpha_{i},\beta_{i-1})^2(d_{\beta_{i-1}})^2.$

First suppose $\mathbf{\alpha}_{i-1}=(\alpha_{i-1}^1,\alpha_{i-1}^2),\mathbf{\beta}_{i-1}=(\beta_{i-1}^1,\beta_{i-1}^2)$ are distinct pairs of partitions. Then they jointly determine $\mathbf{\alpha}_i=(\alpha_i^1,\alpha_i^2)$. Thus, the sum $\displaystyle\sum_{\alpha_{i-1}\neq\beta_{i-1}}$ becomes
%either without loss $\alpha_{i-1}^1=\alpha_i^1$ and $\beta_{i-1}^2=\alpha_i^2$ or without loss $\alpha_{i-1}^1=\beta_{i-1}^1=\alpha_i^1$, in which case $\alpha_{i-1}^2$ and $\beta_{i-1}^2$ are distinct partitions obtained from $\alpha_i^2$ by removing a box and so jointly determine $\alpha_i^2$. Thus,
$$\begin{array}{ll}&\displaystyle\sum_{\substack{\beta_{i-2}\in\mathcal{B}^{i-2}\\\alpha_{i-1}\neq\beta_{i-1}\in\mathcal{B}^{i-1}}}M_\mathcal{B}(\beta_{i-1},\beta_{i-2})M_\mathcal{B}(\alpha_{i-1},\beta_{i-2})d_{\alpha_{i-1}}d_{\beta_{i-1}}\\
\\
=&\displaystyle\sum_{\substack{\beta_{i-2}\in\mathcal{B}^{i-2}\\\alpha_{i-1},\beta_{i-1}\in\mathcal{B}^{i-1}}} M_\mathcal{B}(\beta_{i-1},\beta_{i-2})M_\mathcal{B}(\alpha_{i-1},\beta_{i-2})d_{\beta_{i-1}}d_{\alpha_{i-1}}-\displaystyle\sum_{\substack{\beta_{j}\in\mathcal{B}^{}\\\alpha_{i-1}=\beta_{i-1}}}M_\mathcal{B}(\beta_{i-1},\beta_{i-2})^2(d_{\beta_{i-1}})^2\\
\\
=&\displaystyle\frac{\vert B_{i-1}\vert}{\vert B_{i-2}\vert}\sum_{\beta_{j},\alpha_j\in\mathcal{B}^{j}}{M_\mathcal{B}(\alpha_{i-1},\beta_{i-2})d_{\beta_{i-2}}d_{\alpha_{i-1}}}-\sum_{\substack{\beta_{j}\in\mathcal{B}^{j}\\\alpha_{i-1}=\beta_{i-1}}}M_\mathcal{B}(\beta_{i-1},\beta_{i-2})^2(d_{\beta_{i-1}})^2\\
\\
=&\displaystyle\frac{\vert B_{i-1}\vert}{\vert B_{i-2}\vert}\sum_{\alpha_{i-1}\in\mathcal{B}^{i-1}}(d_{\alpha_{i-1}})^2-\sum_{\substack{\beta_{j}\in\mathcal{B}^{j}\\\alpha_{i-1}=\beta_{i-1}}}M_\mathcal{B}(\beta_{i-1},\beta_{i-2})^2(d_{\beta_{i-1}})^2\\
\\
=&\displaystyle\frac{{\vert B_{i-1}\vert}^2}{\vert B_{i-2}\vert}-\sum_{\beta_{i-1}\in\mathcal{B}^{i-1}}(\jmp(\beta_{i-1}^1)+\jmp(\beta_{i-1}^2))(d_{\beta_{i-1}})^2,\end{array}$$
and so
\begin{equation}\label{Bone}\begin{array}{l}\displaystyle\sum_{\alpha_{i-1}\neq\beta_{i-1}} =\displaystyle 2(i-1)\vert B_{i-1}\vert-\sum_{\beta_{i-1}\in\mathcal{B}^{i-1}}(\jmp(\beta_{i-1}^1)+\jmp(\beta_{i-1}^2))(d_{\beta_{i-1}})^2.\end{array} \end{equation}

Now suppose $\alpha_{i-1}=\beta_{i-1}$. Then $\alpha_{i}$ is obtained from $\beta_{i-1}$ by  adding a box to $\beta_{i-1}^1$ or $\beta_{i-1}^2$, while $\beta_{i-2}$ is obtained from $\beta_{i-1}$ by removing a box from $\beta_{i-1}^1$ or $\beta_{i-1}^2$. Thus,
\begin{equation}\label{Btwo}\sum_{\alpha_{i-1}=\beta_{i-1}}=\sum_{\beta_{i-1}\in\mathcal{B}^{i-1}} (\jmp(\beta_{i-1}^1)+\jmp(\beta_{i-2}^1)) (\jmp(\beta_{i-1}^1)+\jmp(\beta_{i-2}^1)+2)(d_{\beta_{i-1}})^2.\end{equation}

Summing equations (\ref{Bone}) and (\ref{Btwo}) gives (2).
\end{proof}
\begin{lemma}\label{BnH2} For any pair of partitions $(\beta_i^1,\beta_i^2)$ with $\vert \beta_i^1\vert+\vert\beta_i^2\vert=i$, $$(\jmp(\beta_{i}^1)+\jmp(\beta_{i}^2))(\jmp(\beta_{i}^1)+\jmp(\beta_{i}^2)+1)\leq 6i$$ 
\end{lemma}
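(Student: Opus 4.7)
The plan is to reduce the bound on each partition individually via the classical fact that the number of removable corners of a partition of $m$ is at most $\lceil \sqrt{2m}\rceil$, and then combine via Cauchy-Schwarz. Write $a=\jmp(\beta_i^1)$, $b=\jmp(\beta_i^2)$, $m=|\beta_i^1|$, $n=|\beta_i^2|$, so $m+n=i$ and we want $(a+b)(a+b+1)\le 6(m+n)$.

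The first step is the single-partition estimate: for any partition $\lambda$ of $m$, $\jmp(\lambda)(\jmp(\lambda)+1)\le 2m$. The reason is that if $\lambda$ has $k=\jmp(\lambda)$ removable corners located in rows $r_1<\cdots<r_k$, then the row lengths at those rows strictly decrease, $\lambda_{r_1}>\lambda_{r_2}>\cdots>\lambda_{r_k}\ge 1$, so $\lambda_{r_j}\ge k-j+1$. Summing gives $m\ge\sum_j \lambda_{r_j}\ge k(k+1)/2$, which rearranges to $k(k+1)\le 2m$. In particular $a^2\le a(a+1)\le 2m$ and $b^2\le b(b+1)\le 2n$.

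The second step is to add and pass to the joint bound. Using $a^2+b^2\le 2(m+n)=2i$ together with the Cauchy-Schwarz inequality $(a+b)^2\le 2(a^2+b^2)$, one gets
\[
(a+b)^2\le 4i,\qquad a+b\le 2\sqrt{i}.
\]
Therefore
\[
(a+b)(a+b+1)=(a+b)^2+(a+b)\le 4i+2\sqrt{i}.
\]

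The last step is the trivial observation that $2\sqrt{i}\le 2i$ for all $i\ge 1$, which yields $(a+b)(a+b+1)\le 6i$, as required. There is no serious obstacle; the only mildly subtle point is the corner-counting inequality of step one, which is standard combinatorics of Young diagrams. (One could sharpen the constant $6$ slightly, but $6i$ suffices for the application in Lemma \ref{BnH1}.)
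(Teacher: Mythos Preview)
Your proof is correct and follows essentially the same approach as the paper: both hinge on the single-partition estimate $\jmp(\lambda)(\jmp(\lambda)+1)\le 2|\lambda|$ (which the paper cites from \cite{maslen} while you prove it directly), and then combine the two bounds. The only cosmetic difference is in the combination step: the paper expands $(a+b)(a+b+1)=a(a+1)+b(b+1)+2ab$ and bounds $2ab\le 4i$ directly, whereas you pass through $(a+b)^2\le 2(a^2+b^2)\le 4i$ and $a+b\le 2\sqrt{i}\le 2i$; both routes arrive at $6i$ with no real difference in content.
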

\begin{proof} Let $k=\vert \beta_i^1\vert$, $l=\vert\beta_i^2\vert$, $a_k=\jmp(\beta_i^1)$, and $a_l=\jmp(\beta_i^2)$. Then $k+l=i$ and by \cite[Lemma 5.3]{maslen}, $$\begin{array}{ll} a_k(a_k+1)\leq 2k, & a_l(a_l+1)\leq 2l.\end{array}$$
Then  
$$\begin{array}{ll}(\jmp(\beta_{i}^1)+\jmp(\beta_{i}^2))(\jmp(\beta_{i}^1)+\jmp(\beta_{i}^2)+1)&= (a_k+a_l)(a_k+a_l+1)\\
&=a_k(a_k+1)+a_l(a_l+1)+2a_ka_l\\
&\leq 2k+2l+2(2i)\\
&\leq 6i.\end{array}$$

\end{proof}
Combining Lemmas \ref{BnH1} and \ref{BnH2} gives the following bound:
\begin{corollary}\label{BnH3} $\#\Hom(\mathcal{H}_i^n\uparrow G;\mathcal{B})\leq  \frac{4(i-1)}{n}\vert B_n\vert.$
\end{corollary}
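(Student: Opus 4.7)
The plan is to chain together the two preceding lemmas and finish with a size comparison between $B_n$ and $B_{n-1}$. First I would apply Lemma \ref{BnH1}(2) to split $\#\Hom(\mathcal{H}_i^i\uparrow G;\mathcal{B})$ as $2(i-1)\vert B_{i-1}\vert$ plus the weighted sum
\[
\Sigma_i \;:=\; \sum_{\boldsymbol{\beta}_{i-1}\in\mathcal{B}^{i-1}} \bigl(\jmp(\beta_{i-1}^1)+\jmp(\beta_{i-1}^2)\bigr)\bigl(\jmp(\beta_{i-1}^1)+\jmp(\beta_{i-1}^2)+1\bigr)\,d_{\beta_{i-1}}^2.
\]
Lemma \ref{BnH2}, applied at level $i-1$, bounds the double-jump factor in $\Sigma_i$ uniformly by $6(i-1)$. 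Combining this with the Wedderburn identity $\sum_{\boldsymbol{\beta}_{i-1}\in\mathcal{B}^{i-1}} d_{\beta_{i-1}}^2 = \vert B_{i-1}\vert$ (which was already used implicitly in the proof of Lemma \ref{BnH1}(2)) gives $\Sigma_i\leq 6(i-1)\vert B_{i-1}\vert$, and hence
\[
\#\Hom(\mathcal{H}_i^i\uparrow G;\mathcal{B}) \;\leq\; 2(i-1)\vert B_{i-1}\vert + 6(i-1)\vert B_{i-1}\vert \;=\; 8(i-1)\vert B_{i-1}\vert.
\]

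Next I would feed this into the recursion of Lemma \ref{BnH1}(1), which yields
\[
\#\Hom(\mathcal{H}_i^n\uparrow G;\mathcal{B}) \;=\; \frac{\vert B_{n-1}\vert}{\vert B_{i-1}\vert}\,\#\Hom(\mathcal{H}_i^i\uparrow G;\mathcal{B}) \;\leq\; 8(i-1)\vert B_{n-1}\vert.
\]
Since $\vert B_n\vert = 2^n n! = 2n\vert B_{n-1}\vert$, the right-hand side equals exactly $\tfrac{4(i-1)}{n}\vert B_n\vert$, finishing the proof.

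There is essentially no obstacle here: the two inputs (the identity reduction of Lemma \ref{BnH1} and the combinatorial jump bound of Lemma \ref{BnH2}) have been set up precisely so that the constants line up after multiplying by $\vert B_{n-1}\vert/\vert B_{i-1}\vert$. The only minor care point is that Lemma \ref{BnH2} must apply to bipartitions of weight $i-1$, including the degenerate cases $i=2$ where one of the two partitions is empty; this is immediate since both $a_k(a_k+1)\leq 2k$ and $a_l(a_l+1)\leq 2l$ continue to hold (and are trivial) when $k=0$ or $l=0$.
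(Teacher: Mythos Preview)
Your argument is correct and is exactly the intended one: the paper's proof is simply ``Combining Lemmas \ref{BnH1} and \ref{BnH2} gives the following bound,'' and you have filled in precisely those details, including the final step $\vert B_n\vert = 2n\vert B_{n-1}\vert$ that makes the constants match.
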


\subsection{The Weyl Group $D_n$}
The Bratteli diagram $\mathcal{B}$ associated to the chain
$\mathbb{C}[D_n]> \mathbb{C}[D_{n-1}]>\cdots> \mathbb{C}$
 is similar to the Bratteli diagram associated to the Weyl group $B_n$ in that irreducible representations of $\mathbb{C}[D_i]$ are indexed by pairs of partitions, $(\lambda_1,\lambda_2)$ of $k$ and $l$, respectively, with $k+l=i$. However, if $\lambda_1\neq\lambda_2$, the irreducible representation indexed by $(\lambda_1,\lambda_2)$ is the same as that indexed by $(\lambda_2,\lambda_1)$. If $\lambda_1=\lambda_2=\lambda$ then two distinct irreducible representations are indexed by the pair $(\lambda,\lambda)$, and denoted by $(\lambda,\lambda)^+$ and $(\lambda,\lambda)^-$ \cite{seminormal} (see Figure \ref{DnBratt}). Note that this is a multiplicity-free diagram.
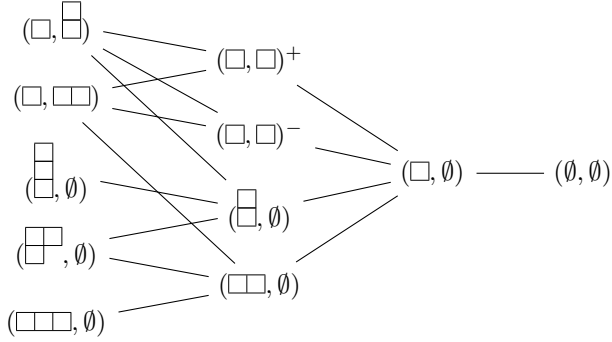
\begin{figure}[H]
\begin{center}
\begin{tikzpicture}[shorten >=1pt,node distance=2cm,on grid,auto,/tikz/initial text=] 
   \node at (-7,2) (35) {$(\tiny{\yng(1)},\tiny{\yng(1,1)})$};
   \node at (-7,1) (34) {$(\tiny{\yng(1)},\tiny{\yng(2)})$};
   \node at (-7,-2) (33) {$(\tiny{\yng(3)},\emptyset)$};
   \node at (-7,-1) (32) {$(\tiny{\yng(2,1)},\emptyset)$};
   \node at (-7,0) (31) {$(\tiny{\yng(1,1,1)},\emptyset)$};
   \node at (-4.3,1.5) (24) {$(\tiny{\yng(1)},\tiny{\yng(1)})^+$};
   \node at (-4.3,0.5) (23) {$(\tiny{\yng(1)},\tiny{\yng(1)})^-$};
   \node at (-4.3,-1.5) (22) {$(\tiny{\yng(2)},\emptyset)$};
   \node at (-4.3,-.5) (21) {$(\tiny{\yng(1,1)},\emptyset$)};
   \node at (-2,0) (1) {$(\tiny{\yng(1)},\emptyset)$};
   \node at (0,0) (0) {$(\emptyset,\emptyset)$};
   %\node at (-9,0) (00) {$\cdots$};
   \path[every node/.style={font=\scriptsize}]
    (0) edge  node {} (1)
    (1) edge node {} (21)
    (1) edge node {} (22)
    (1) edge node {} (23)
    (1) edge node {} (24)
    (24) edge node {} (35)
    (24) edge node {} (34)
    (23) edge node {} (35)
    (23) edge node {} (34)
    (22) edge node {} (34)
    (22) edge node {} (33)
    (22) edge node {} (32)
    (21) edge node {} (35)
    (21) edge node {} (32)
    (21) edge node {} (31);
\end{tikzpicture}
\caption{Bratteli diagram for $D_3$}
\label{DnBratt}
\end{center}\end{figure}
\begin{lemma}\label{DnJlemma}
$M_\mathcal{B}(D_i,D_{i-2})\leq 3$.
\end{lemma}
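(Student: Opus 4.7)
The plan is to mimic the case analysis used for $B_n$ in Lemma~\ref{BnJlemma}, but track the two extra combinatorial features that distinguish the $D_n$ Bratteli diagram from that of $B_n$: (i) a pair $(\mu_1,\mu_2)$ with $\mu_1\neq\mu_2$ labels the \emph{same} vertex as $(\mu_2,\mu_1)$, and (ii) a pair of the form $(\mu,\mu)$ labels \emph{two} vertices $(\mu,\mu)^{\pm}$. Fix vertices $\boldsymbol{\kappa}$ at level $i$ and $\boldsymbol{\lambda}$ at level $i-2$. Since $\mathcal{B}$ is multiplicity-free, each path from $\boldsymbol{\lambda}$ to $\boldsymbol{\kappa}$ is determined by its intermediate vertex $\boldsymbol{\mu}$ at level $i-1$, so it suffices to bound by three the number of such $\boldsymbol{\mu}$ that simultaneously admit a box-adding edge to $\boldsymbol{\kappa}$ and a box-removing edge to $\boldsymbol{\lambda}$.

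First I would show the bound is sharp, which both explains why one cannot improve to $2$ and pinpoints where the extra path appears: in the subdiagram from $(\yng(1),\emptyset)$ to $(\yng(1),\yng(1,1))$ in Figure~\ref{DnBratt}, one finds three intermediates, $(\yng(1),\yng(1))^+$, $(\yng(1),\yng(1))^-$, and $(\yng(1,1),\emptyset)$. Thus the worst case arises precisely when one of the candidate intermediates is a symmetric pair $(\mu,\mu)$, whose $\pm$-split contributes two intermediate vertices instead of one.

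I then split the analysis according to whether $\boldsymbol{\kappa}$ is symmetric. If $\boldsymbol{\kappa}=(\kappa_1,\kappa_2)$ with $\kappa_1\neq\kappa_2$, each admissible $\boldsymbol{\mu}$ is obtained by removing a box from $\kappa_1$ or from $\kappa_2$; the ordered-pair version of this count is the same as in $B_n$ and yields at most $2$ candidate $\boldsymbol{\mu}$'s when they are non-symmetric, by reproducing the two-case contradiction argument of Lemma~\ref{BnJlemma} using the unordered identification of pairs. The only way to exceed $2$ is for one of those candidates to equal $(\mu,\mu)$, in which case it splits into $(\mu,\mu)^\pm$ and we pick up exactly one extra path, giving the bound $3$. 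The case $\boldsymbol{\kappa}=(\kappa,\kappa)^{\pm}$ is handled analogously: here the boxes added must lie on the same side, so the candidate $\boldsymbol{\mu}$ is forced into a small number of shapes, and the same $\pm$-splitting argument bounds the count by $3$ (in fact by $2$, since the intermediate is then generically non-symmetric).

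The main obstacle is the bookkeeping in the intermediate step when $\boldsymbol{\mu}=(\mu,\mu)^{\pm}$: I must verify that both $(\mu,\mu)^{+}$ and $(\mu,\mu)^{-}$ are joined by an edge to each of $\boldsymbol{\lambda}$ and $\boldsymbol{\kappa}$ (so that both $\pm$ copies genuinely contribute distinct paths), and conversely that whenever the unordered-pair contradiction argument from Lemma~\ref{BnJlemma} would yield the $B_n$-bound of $2$, the $\pm$-splitting can add at most one extra intermediate. Once this verification is done case by case, combining the three-or-fewer intermediates gives $M_{\mathcal{B}}(D_i,D_{i-2})\leq 3$.
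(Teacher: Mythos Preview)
Your proposal is essentially correct and follows the same strategy as the paper: invoke the $B_n$ bound of Lemma~\ref{BnJlemma} to limit the number of distinct underlying pairs of partitions to two, then control the extra contribution from the $\pm$-splitting of symmetric pairs. The paper organizes this as a proof by contradiction---assuming four intermediate vertices immediately forces two distinct symmetric pairs $(\alpha,\alpha)^\pm$ and $(\beta,\beta)^\pm$, whereupon a size count on $\boldsymbol{\lambda}$ gives $|\lambda_1|+|\lambda_2|=|\alpha|+|\beta|=j-1>j-2$---while you frame it as a direct case split on whether $\boldsymbol{\kappa}$ is symmetric.

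The one point worth sharpening: the step you flag as ``the main obstacle,'' namely that the $\pm$-splitting can add at most one extra intermediate (equivalently, that at most one of the two candidate underlying pairs can be symmetric), is not merely bookkeeping---it is the entire substantive content of the lemma beyond citing Lemma~\ref{BnJlemma}. The paper dispatches it in one line via the size argument above, and your case-by-case plan would arrive at the same contradiction. So your plan is sound, but be aware that the ``verification'' you defer is exactly the heart of the proof; the contradiction framing gets you there faster because it isolates the unique dangerous configuration immediately.
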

\begin{proof}
Suppose not. Then since $\mathcal{B}$ is multiplicity-free, there exist pairs of partitions $\mathbf{\kappa}=(\kappa_1,\kappa_2),$ $\mathbf{\rho}=(\rho_1,\rho_2),$ $\mathbf{\gamma}=(\gamma_1,\gamma_2),$ $\mathbf{\eta}=(\eta_1,\eta_2),$ $\mathbf{\mu}=(\mu_1,\mu_2),$ and $\mathbf{\lambda}=(\lambda_1,\lambda_2)$ connected in $\mathcal{B}$ as in Figure \ref{Dnpart}.

\begin{figure}[H]\begin{center}
\begin{tikzpicture}[shorten >=1pt,node distance=2cm,on grid,auto,/tikz/initial text=] 
   \node at (-3,0) (1) {level $j$};
   \node at (-3,-1) (1) {level $j-1$};
   \node at (-3,-2) (1) {level $j-2$};
   \node at (0,0) (1) {$\mathbf{\kappa}$};
   \node at (.3,-1) (22) {$\mathbf{\gamma}$};
   \node at (-.3,-1) (21) {$\mathbf{\rho}$};
   \node at (-1,-1) (23) {$\mathbf{\eta}$};
   \node at (1,-1) (24) {$\mathbf{\mu}$};
   \node at (0,-2) (3) {$\mathbf{\lambda}$};
   \path[every node/.style={font=\scriptsize}]
    (1) edge  node {} (22)
    (1) edge  node {} (21)
    (1) edge node {} (23)
    (1) edge node {} (24)
    (21) edge node {} (3)
    (23) edge node {} (3)
    (22) edge node {} (3)
    (24) edge node {} (3);               

\end{tikzpicture}
\caption{Subquiver of $\mathcal{B}$ if $M_\mathcal{B}(D_i,D_{i-2})> 3$.
}
\label{Dnpart}
\end{center}\end{figure}

However, the proof of Lemma \ref{BnJlemma} dictates that no three of $\eta,\mu,\gamma,\rho$ are distinct pairs of partitions. Thus, without loss of generality, 
$$\begin{array}{llll}(\eta_1,\eta_2)=(\alpha,\alpha)^+,& (\mu_1,\mu_2)=(\alpha,\alpha)^-,& (\gamma_1,\gamma_2)=(\beta,\beta)^+,&(\rho_1,\rho_2)=(\beta,\beta)^-,\end{array}$$ for $\alpha, \beta$ distinct partitions of $\frac{j-1}{2}$. Then as in the proof of Lemma \ref{BnJlemma}, either $\lambda_1=\eta_1=\alpha$ or $\lambda_2=\eta_2=\alpha$. Without loss, suppose $\lambda_1=\alpha$. Then since $\alpha\neq\beta$, $\lambda_2$ must be $\beta$. However, $\vert \lambda_1\vert +\vert\lambda_2\vert=\vert \alpha\vert+\vert\beta\vert>j-2$, a contradiction.
\end{proof}
Lemma \ref{DnJlemma} is used in the proof of Theorem \ref{Dnthm} to give a bound on $\dim A(\mathcal{J}_i^n\uparrow G;\mathcal{B})$, for $\mathcal{J}_i^n$ as in Figure \ref{BnH}. The following two lemmas provide a bound for $\dim A(\mathcal{H}_i^n\uparrow G;\mathcal{B})$, for $\mathcal{H}_i^n$ as in Figure \ref{BnH}.
\begin{lemma}\label{DnH1}
\begin{itemize}
\item[]
\item[(1)] $\displaystyle\#\Hom(\mathcal{H}_i^n\uparrow G;\mathcal{B})=\frac{\vert D_{n-1}\vert}{\vert D_{i-1}\vert}\#\Hom(\mathcal{H}_i^i\uparrow G;\mathcal{B}),$
\item[(2)] for $i$ odd, $\#\Hom(\mathcal{H}_i^i\uparrow G;\mathcal{B})$ is at most $$\begin{array}{ll}
\displaystyle\frac{\vert D_{i-1}\vert^2}{\vert D_{i-2}\vert}+\sum_{(\alpha,\alpha)^\pm\in \mathcal{B}^{i-1}} (\jmp(\alpha))(\jmp(\alpha)+1)(d_{(\alpha,\alpha)^+})^2+\\
\displaystyle\sum_{\substack{(\beta_{i-1}^1,\beta_{i-1}^2)=\\\mathbf{\beta}_{i-1}\in\mathcal{B}^{i-1}}} (\jmp(\beta_{i-1}^1)+\jmp(\beta_{i-1}^2))(\jmp(\beta_{i-1}^1)+\jmp(\beta_{i-1}^2)+1)d_{\beta_{i-1}}^2,\end{array}$$
\item[(3)] for $i$ even, $\#\Hom(\mathcal{H}_i^i\uparrow G;\mathcal{B})$ is at most
$$\begin{array}{l}\displaystyle 
2\displaystyle\frac{\vert D_{i-1}\vert^2}{\vert D_{i-2}\vert}+2\displaystyle\sum_{\substack{(\beta_{i-1}^1,\beta_{i-1}^2)=\\\mathbf{\beta}_{i-1}\in\mathcal{B}^{i-1}}} (\jmp(\beta_{i-1}^1)+\jmp(\beta_{i-1}^2))(2\jmp(\beta_{i-1}^1)+2\jmp(\beta_{i-1}^2)+3)d_{\beta_{i-1}}^2,\end{array}$$
where $\jmp$ denotes the jump of a partition, i.e., the number of ways to remove a single box to form a new partition.
\end{itemize} \end{lemma}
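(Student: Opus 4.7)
The plan is to mirror the $B_n$ calculation in Lemma \ref{BnH1} step by step, then carefully track the extra contributions caused by the $(\alpha,\alpha)^\pm$ vertices in the Bratteli diagram for $D_n$. First, for (1) I would apply Theorem \ref{countingmorph} to the quiver $\mathcal{H}_i^n$ of Figure \ref{BnH}, which labels vertices $\alpha_{i-1},\beta_{i-2},\beta_{i-1},\alpha_i,\beta_{n-1}$ and products five multiplicities $M_\mathcal{B}$, weighted by $d_{\alpha_{i-1}}d_{\beta_{n-1}}$. The chain of multiplicities from $\beta_{i-1}$ up to $\beta_{n-1}$ only interacts with $d_{\beta_{n-1}}$, so I can factor out $\sum_{\beta_{n-1}} M_\mathcal{B}(\beta_{n-1},\beta_{i-1})d_{\beta_{n-1}}=\langle D^{n-i}U^{n-1}\hat 0,\beta_{i-1}\rangle$. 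Because the Bratteli diagram for $D_n$ is locally free (being attached to a group algebra chain), Theorem \ref{freecount} and Corollary \ref{hom=dim} evaluate this inner product as $\lambda_{n-1}\cdots\lambda_i\, d_{\beta_{i-1}}=\frac{|D_{n-1}|}{|D_{i-1}|}d_{\beta_{i-1}}$, and the remaining sum is exactly $\#\Hom(\mathcal{H}_i^i\uparrow G;\mathcal{B})$.

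For (2) and (3) I would split the sum for $\#\Hom(\mathcal{H}_i^i\uparrow G;\mathcal{B})$ into the off-diagonal part $\alpha_{i-1}\neq\beta_{i-1}$ and the diagonal part $\alpha_{i-1}=\beta_{i-1}$. In the off-diagonal part, the constraint that both $\alpha_{i-1}$ and $\beta_{i-1}$ cover $\beta_{i-2}$ and are jointly covered by $\alpha_i$ forces $\alpha_i$ to be determined by $(\alpha_{i-1},\beta_{i-1})$ in the $B_n$ world, but in $D_n$ there can be an additional $\pm$ degree of freedom when the lifted vertex has the form $(\mu,\mu)$; so I will allow an extra factor of $2$ in the appropriate parity. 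Extending the sum to all pairs and subtracting the diagonal via $\sum M_\mathcal{B}(\beta_{i-1},\beta_{i-2})M_\mathcal{B}(\alpha_{i-1},\beta_{i-2})d_{\beta_{i-1}}d_{\alpha_{i-1}}$ collapses through Corollary \ref{hom=dim} to $\frac{|D_{i-1}|^2}{|D_{i-2}|}$ (or twice that, in the even case), while the diagonal correction is a sum over $\beta_{i-1}$ of $\jmp$-values squared.

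For the diagonal part $\alpha_{i-1}=\beta_{i-1}$, $\alpha_i$ is obtained from $\beta_{i-1}$ by adding a box (to either coordinate, with the possible $\pm$ multiplicity) and $\beta_{i-2}$ by removing one. I will compute the number of valid $(\alpha_i,\beta_{i-2})$ pairs in terms of $\jmp(\beta_{i-1}^1)+\jmp(\beta_{i-1}^2)$, separating the two parity cases: when $i$ is odd, level $i-1$ is even and we may have $\beta_{i-1}=(\alpha,\alpha)^\pm$, which contributes the extra diagonal term $\sum(\jmp(\alpha))(\jmp(\alpha)+1)d_{(\alpha,\alpha)^+}^2$; when $i$ is even, level $i-1$ is odd so no $(\alpha,\alpha)^\pm$ vertex occurs there, but the extra factor of $2$ must be absorbed uniformly into the $(2\jmp+2\jmp+3)$-style bound because every covering or covered vertex at level $i$ or $i-2$ can be $(\mu,\mu)^\pm$.

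The main obstacle I anticipate is the bookkeeping in the diagonal sum: one must correctly count arrows into and out of $(\alpha,\alpha)^\pm$ vertices in $\mathcal{B}$, since an edge between $(\alpha,\alpha)^\pm$ and a single $(\alpha,\mu)$ vertex can have different multiplicities from the ``generic'' case, and both $\alpha_i$ and $\beta_{i-2}$ may independently lie on such a vertex. I would handle this by a careful case analysis on the $\pm$-type of $\beta_{i-1}$, $\alpha_i$, and $\beta_{i-2}$, and then absorb all the resulting cases into the stated uniform $(\jmp+\jmp)(\cdot+\text{const})$ bound, using that $\jmp$ of a $D_n$-vertex of type $(\alpha,\alpha)$ equals $\jmp(\alpha)$ in each coordinate. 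Combining the off-diagonal and diagonal bounds gives (2) and (3).
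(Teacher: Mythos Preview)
Your approach is essentially the paper's: part (1) is identical, and for (2)--(3) both you and the paper split $\#\Hom(\mathcal{H}_i^i\uparrow G;\mathcal{B})$ according to whether $\alpha_{i-1}$ and $\beta_{i-1}$ coincide, then bound each piece via jump counts and Corollary~\ref{hom=dim}.

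There is one bookkeeping point you have slightly wrong, and it is exactly the obstacle you flagged. In the odd-$i$ case the extra term $\sum_{(\alpha,\alpha)^\pm}\jmp(\alpha)(\jmp(\alpha)+1)d_{(\alpha,\alpha)^+}^2$ does \emph{not} arise from the diagonal $\alpha_{i-1}=\beta_{i-1}$; it comes from the off-diagonal situation $\alpha_{i-1}=(\alpha,\alpha)^+$, $\beta_{i-1}=(\alpha,\alpha)^-$ (or vice versa). These are distinct vertices of $\mathcal{B}^{i-1}$, yet your key off-diagonal simplification ``$\alpha_{i-1}\neq\beta_{i-1}$ jointly determine $\alpha_i$'' fails for them, since every $\alpha_i$ covering $(\alpha,\alpha)^+$ also covers $(\alpha,\alpha)^-$. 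So the paper uses a \emph{three}-way split for $i$ odd: genuine off-diagonal pairs (handled as in $B_n$), the special pairs $\{(\alpha,\alpha)^+,(\alpha,\alpha)^-\}$ (giving the extra term), and the diagonal. Your two-way split would either overcount in the off-diagonal piece or miss this contribution; once you separate out this middle case the rest of your plan goes through exactly as in the paper.
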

\begin{proof}
Part (1) follows from the proof of Lemma \ref{BnH1}.

To prove (2), consider note that $\#\Hom(\mathcal{H}_i^i\uparrow G;\mathcal{B})$ equals 
$$\begin{array}{l}=\displaystyle\sum_{\alpha_j,\beta_j\in\mathcal{B}^j}M_\mathcal{B}(\beta_{i-1},\beta_{i-2})M_\mathcal{B}(\alpha_{i},\alpha_{i-1})M_\mathcal{B}(\alpha_{i},\beta_{i-1})M_\mathcal{B}(\alpha_{i-1},\beta_{i-2})d_{\beta_{i-1}}d_{\alpha_{i-1}}\\
=\displaystyle\sum_{\alpha_{i-1}\neq\beta_{i-1}}+\sum_{\substack{\alpha_{i-1}\neq\beta_{i-1}\\\alpha_{i-1}=(\alpha,\alpha)^\pm=\beta_{i-1}}}+\sum_{\alpha_{i-1}=\beta_{i-1}},\end{array}$$

For
$$\begin{array}{l}\displaystyle\sum_{\alpha_{i-1}\neq\beta_{i-1}}:=\displaystyle\sum_{\substack{\alpha_j,\beta_j\in\mathcal{B}^j\\\alpha_{i-1}\neq\beta_{i-1}}}M_\mathcal{B}(\beta_{i-1},\beta_{i-2})M_\mathcal{B}(\alpha_{i},\alpha_{i-1})M_\mathcal{B}(\alpha_{i},\beta_{i-1})M_\mathcal{B}(\alpha_{i-1},\beta_{i-2})d_{\beta_{i-1}}d_{\alpha_{i-1}},\end{array}$$
 over partitions $\alpha_{i-1}\neq \beta_{i-1}$ such that if $\alpha_{i-1}=(\alpha,\alpha)^\pm$ then $\beta_{i-1}\neq (\alpha,\alpha)^\pm$,
$$\begin{array}{l}\displaystyle\sum_{\substack{\alpha_{i-1}\neq\beta_{i-1}\\\alpha_{i-1}:=(\alpha,\alpha)^\pm\\=\beta_{i-1}}}:=\displaystyle\sum_{\substack{\alpha_j,\beta_j\in\mathcal{B}^j\\\alpha_{i-1}\neq\beta_{i-1}\\\alpha_{i-1}=(\alpha,\alpha)^\pm=\beta_{i-1}}}M_\mathcal{B}(\beta_{i-1},\beta_{i-2})M_\mathcal{B}(\alpha_{i},\alpha_{i-1})M_\mathcal{B}(\alpha_{i},\beta_{i-1})M_\mathcal{B}(\alpha_{i-1},\beta_{i-2})d_{\beta_{i-1}}d_{\alpha_{i-1}},\end{array}$$
and
$$\begin{array}{l}\displaystyle\sum_{\alpha_{i-1}:=\beta_{i-1}}:=\displaystyle\sum_{\substack{\alpha_j,\beta_j\in\mathcal{B}^j\\\alpha_{i-1}=\beta_{i-1}}}M_\mathcal{B}(\beta_{i-1},\beta_{i-2})^2M_\mathcal{B}(\alpha_{i},\beta_{i-1})^2(d_{\beta_{i-1}})^2.\end{array}$$

As in the proof of Lemma \ref{BnH1},
\begin{equation}\label{Done}\sum_{\alpha_{i-1}\neq\beta_{i-1}}\leq\frac{\vert D_{i-1}\vert^2}{\vert D_{i-2}\vert}-\sum_{\beta_{i-1}\in\mathcal{B}^{i-1}}(\jmp(\beta_{i-1}^1)+\jmp(\beta_{i-1}^2))(d_{\beta_{i-1}})^2,\end{equation}
the inequality appearing because if $\beta_{i-1}=(\alpha, \alpha)$, $\jmp(\alpha)+\jmp(\alpha)$ is an overestimate since $(\alpha,\beta)$ represents the same representation as $(\beta, \alpha)$ in $\mathcal{B}$.
Similarly, the proof of Lemma \ref{BnH1} gives
\begin{equation}\label{Dtwo}\sum_{\alpha_{i-1}=\beta_{i-1}}\leq\sum_{\beta_{i-1}\in\mathcal{B}^{i-1}} (\jmp(\beta_{i-1}^1)+\jmp(\beta_{i-1}^2)) (\jmp(\beta_{i-1}^1)+\jmp(\beta_{i-1}^2)+2)(d_{\beta_{i-1}})^2.\end{equation}

Now suppose $\alpha_{i-1}\neq\beta_{i-1}$ and $\alpha_{i-1}=(\alpha,\alpha)^\pm=\beta_{i-1}$. Then $$\begin{array}{ll}\displaystyle\sum_{\substack{\alpha_{i-1}\neq\beta_{i-1}\\\alpha_{i-1}=(\alpha,\alpha)^\pm=\beta_{i-1}}}=&\displaystyle\sum_{\substack{\alpha_j,\beta_j\in\mathcal{B}^j\\(\alpha,\alpha)^\pm}}M_\mathcal{B}((\alpha,\alpha)^\pm,\beta_{i-2})^2M_\mathcal{B}(\alpha_{i},(\alpha,\alpha)^\pm)^2(d_{(\alpha,\alpha)^\pm})^2\end{array}$$
\begin{equation}\label{Dthree}
\leq\displaystyle\sum_{(\alpha,\alpha)^\pm\in\mathcal{B}^{i-1}} \jmp(\alpha)(\jmp(\alpha)+1)(d_{(\alpha,\alpha)^\pm})^2.\end{equation}

Summing equations (\ref{Done}), (\ref{Dtwo}), and (\ref{Dthree}) gives part (2).

To prove (3), note that in this case $$\#\Hom(\mathcal{H}_i^i\uparrow G;\mathcal{B})=\sum_{\alpha_{i-1}\neq\beta_{i-1}}+\sum_{\alpha_{i-1}=\beta_{i-1}},$$
since $i-1$ is odd so $(\alpha, \alpha)^\pm\notin \mathcal{B}^{i-1}$. However, pairs of partitions of this form may be found at levels $i$ and $i-2$.

First suppose $\alpha_{i-1}\neq \beta_{i-1}$. Then as in the proof of Lemma \ref{BnH1} they jointly determine $\alpha_i=(\alpha_i^1,\alpha_i^2)$. This means that they jointly determine at most two pairs of partitions (if $\alpha_i^1=\alpha_i^2$). Thus
%; however, if $\alpha_i^1=\alpha_i^2=\alpha$ there are two possible representations: either $(\alpha,\alpha)^+$ or $(\alpha,\alpha)^-$. Thus, they jointly determine at most 2 pairs of partitions, so 
\begin{equation}\label{Done1}\sum_{\alpha_{i-1}\neq\beta_{i-1}}\leq 2\sum_{\substack{\beta_{i-2}\in\mathcal{B}^{i-2}\\\alpha_{i-1}\neq\beta_{i-1}\in\mathcal{B}^{i-1}}}M_\mathcal{B}(\beta_{i-1},\beta_{i-2})M_\mathcal{B}(\alpha_{i-1},\beta_{i-2})d_{\alpha_{i-1}}d_{\beta_{i-1}}$$
$$=2\frac{\vert D_{i-1}\vert^2}{\vert D_{i-2}\vert}-2\sum_{\beta_{i-1}\in\mathcal{B}^{i-1}}(\jmp(\beta_{i-1}^1)+\jmp(\beta_{i-1}^2))(d_{\beta_{i-1}})^2,\end{equation}
as in the proof of Lemma \ref{BnH1}.

Now suppose $\alpha_{i-1}=\beta_{i-1}$. As before there are $\jmp(\beta_{i-1}^1)$ ways to obtain $\beta_i^1$ and $\jmp(\beta_{i-1}^2)$ ways to obtain $\beta_i^2$, but to account for when $\beta_i^1=\beta_i^2$, we overcount by multiplying by 2. The same holds for the number of ways to obtain $\alpha_{i-2}$ from $\beta_{i-1}$. Thus,
\begin{equation}\label{Dtwo1}\sum_{\alpha_{i-1}\beta_{i-1}}\leq \sum_{\beta_{i-1}\in\mathcal{B}^{i-1}} 2(\jmp(\beta_{i-1}^1)+\jmp(\beta_{i-1}^2)) 2(\jmp(\beta_{i-1}^1)+\jmp(\beta_{i-1}^2)+2)(d_{\beta_{i-1}})^2.\end{equation}

Summing equations (\ref{Done1}) and (\ref{Dtwo1}) gives part 3.
\end{proof}
Combining Lemma \ref{DnH1} with Lemma \ref{BnH2} gives the following bound:
\begin{corollary}\label{DnH3} 
$\#\Hom(\mathcal{H}_i^n\uparrow G;\mathcal{B})\leq \frac{20(i-1)}{n}\vert D_n\vert$.
\end{corollary}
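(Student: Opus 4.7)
The proof is a direct numerical combination of Lemma~\ref{DnH1} with Lemma~\ref{BnH2}, so the plan is computational rather than conceptual: reduce, bound each piece, assemble. I will treat the parities $i$ odd and $i$ even separately, since Lemma~\ref{DnH1} gives different estimates in the two cases, and then observe that the even case produces the dominant constant.

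First I would apply Lemma~\ref{DnH1}(1) to move the global count $\#\Hom(\mathcal{H}_i^n\uparrow G;\mathcal{B})$ down to $\frac{|D_{n-1}|}{|D_{i-1}|}\#\Hom(\mathcal{H}_i^i\uparrow G;\mathcal{B})$, so the whole task reduces to bounding the local count $\#\Hom(\mathcal{H}_i^i\uparrow G;\mathcal{B})$ by a constant multiple of $(i-1)|D_{i-1}|$. For the $\frac{|D_{i-1}|^2}{|D_{i-2}|}$ term appearing in both (2) and (3) of Lemma~\ref{DnH1}, I would use the identity $|D_{i-1}|/|D_{i-2}|=2(i-1)$, which comes from $|D_m|=2^{m-1}m!$, to rewrite it as $2(i-1)|D_{i-1}|$.

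Next I would bound the jump sums. For the mixed-pair sum $\sum_{\boldsymbol{\beta}_{i-1}}(\jmp(\beta_{i-1}^1)+\jmp(\beta_{i-1}^2))(\jmp(\beta_{i-1}^1)+\jmp(\beta_{i-1}^2)+1)d_{\boldsymbol{\beta}_{i-1}}^2$ of (2), Lemma~\ref{BnH2} applied to level $i-1$ gives the pointwise bound $6(i-1)$, and then Burnside's identity $\sum_{\boldsymbol{\beta}_{i-1}}d_{\boldsymbol{\beta}_{i-1}}^2=|D_{i-1}|$ turns the whole sum into $6(i-1)|D_{i-1}|$. For the diagonal-pair sum $\sum_{(\alpha,\alpha)^\pm}\jmp(\alpha)(\jmp(\alpha)+1)(d_{(\alpha,\alpha)^+})^2$ in (2), I would apply Lemma 5.3 of \cite{maslen} (the hook-type bound $a_k(a_k+1)\leq 2k$) with $k=|\alpha|=(i-1)/2$, yielding the pointwise bound $i-1$, and then again Burnside to get at most $(i-1)|D_{i-1}|$. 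Summing the three contributions in case (2) gives $2(i-1)|D_{i-1}|+(i-1)|D_{i-1}|+6(i-1)|D_{i-1}|=9(i-1)|D_{i-1}|$.

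For case (3), the delicate step is to bound the factor $(\jmp^1+\jmp^2)(2\jmp^1+2\jmp^2+3)$ in terms of Lemma~\ref{BnH2}. The key elementary inequality is $2(\jmp^1+\jmp^2)+3\leq 3(\jmp^1+\jmp^2+1)$, which upgrades the factor to $3(\jmp^1+\jmp^2)(\jmp^1+\jmp^2+1)\leq 18(i-1)$ by Lemma~\ref{BnH2}. The second term of (3) is then at most $2\cdot 18(i-1)|D_{i-1}|=36(i-1)|D_{i-1}|$, and with the first $4(i-1)|D_{i-1}|$ term this gives a total of $40(i-1)|D_{i-1}|$. Since $40>9$ the even case dominates. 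Finally, reinstating the prefactor $|D_{n-1}|/|D_{i-1}|$ from Lemma~\ref{DnH1}(1) and using $|D_n|/|D_{n-1}|=2n$ yields
\[
\#\Hom(\mathcal{H}_i^n\uparrow G;\mathcal{B})\leq 40(i-1)|D_{n-1}|=\frac{40(i-1)}{2n}|D_n|=\frac{20(i-1)}{n}|D_n|,
\]
which is the claim. The only mild obstacle is the second term of (3): one must choose the right rearrangement of $2\jmp^1+2\jmp^2+3$ so that Lemma~\ref{BnH2} can be applied cleanly with a small constant, and without that manipulation the constant $20$ is not obtained.
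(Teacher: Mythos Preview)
Your proof is correct and follows exactly the approach the paper intends: the paper states only that the corollary follows by ``combining Lemma~\ref{DnH1} with Lemma~\ref{BnH2},'' and you have supplied precisely that combination, handling the two parities, using $|D_{i-1}|/|D_{i-2}|=2(i-1)$ and $|D_n|/|D_{n-1}|=2n$, and making the key observation $2s+3\le 3(s+1)$ to reduce case~(3) to Lemma~\ref{BnH2}. The even case indeed produces the dominant constant $40$, which after dividing by $2n$ gives the stated $20(i-1)/n$.
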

\end{section}
\section{The General Linear Group}\label{appgenlin}
The SOV approach reduces Theorem \ref{Gln} to counting the number of morphisms of the quivers of Figure \ref{GlnFGH} into the Bratteli diagram of $Gl_n(q)$. In this section we use known results on the number of conjugacy classes and the multiplicities of representations of $Gl_n(q)$ to provide the bounds used in the proof of Theorem \ref{Gln}.

\begin{theorem} For  $\mathcal{H}_j^n$ the quiver of Figure \ref{GlnFGH} and $\mathcal{B}$ the Bratteli diagram for the subgroup chain $Gl_n(q)>Gl_{n-1}(q)>\cdots>\{e\}$, $$\begin{array}{ll}\dim A(\mathcal{H}_j^n\uparrow \mathcal{Q};\mathcal{B})\leq 2^{2j-4}q^{j-2}\displaystyle\frac{q^{j-1}(q^j-1)}{q^{n-1}(q^n-1)}\vert Gl_{n}(q)\vert.\end{array}$$
\end{theorem}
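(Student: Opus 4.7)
The plan is to follow the template of Lemma \ref{BnH1} and Corollary \ref{BnH3}, adapting it to the Bratteli diagram $\mathcal{B}$ for the chain $\mathbb{C}[Gl_n(q)] > \mathbb{C}[Gl_{n-1}(q)] > \cdots$. First I would apply Theorem \ref{countingmorph} to rewrite $\#\Hom(\mathcal{H}_j^n \uparrow \mathcal{Q}; \mathcal{B})$ as a sum over labelings $\phi$ of the six vertices of $\mathcal{H}_j^n$ (at levels $0, j-2, j-1, j-1, j, n-1$; see Figure \ref{GlnFGH}) into $\mathcal{B}$, weighted by the product of the multiplicities $M_\mathcal{B}(\phi(\text{target}),\phi(\text{source}))$ associated to each of the six arrows of $\mathcal{H}_j^n$.

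Next, I would separate the ``outer envelope'' of $\mathcal{H}_j^n$ --- the pair of long paths from level $0$ to level $n-1$, together with the long paths from level $0$ to the upper level-$(j-1)$ vertex and from level $j-2$ to level $n-1$ --- from the ``inner tooth'' at levels $j-2, j-1, j$. Using the locally-free structure (Proposition \ref{locfreeprop}, Theorem \ref{freecount}) with $\lambda_i = \vert Gl_i(q)\vert / \vert Gl_{i-1}(q)\vert = q^{i-1}(q^i-1)$, the outer sums telescope down, producing the factor
\begin{equation*}
\lambda_j\,\vert Gl_{n-1}(q)\vert \;=\; \frac{q^{j-1}(q^j-1)}{q^{n-1}(q^n-1)}\,\vert Gl_n(q)\vert,
\end{equation*}
which is precisely the non-$2^{2j-4}q^{j-2}$ part of the target bound. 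What remains is to show that the inner tooth contributes a factor of at most $2^{2j-4}q^{j-2}$.

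For the inner bound, since $\mathcal{H}_j^n$ has two distinct vertices at level $j-1$ both feeding into the same $\alpha_j$, the inner contribution splits through two one-tooth sums of the form $\sum_{\alpha_{j-1}\in\mathcal{B}^{j-1}} M_\mathcal{B}(\alpha_j,\alpha_{j-1})M_\mathcal{B}(\alpha_{j-1},\alpha_{j-2})$, and a uniform bound of the shape
\begin{equation*}
\sum_{\alpha_{j-1}\in\mathcal{B}^{j-1}} M_\mathcal{B}(\alpha_j,\alpha_{j-1})M_\mathcal{B}(\alpha_{j-1},\alpha_{j-2})\;\leq\;2^{j-2}\,q^{(j-2)/2}
\end{equation*}
for $\alpha_j\in\mathcal{B}^j$, $\alpha_{j-2}\in\mathcal{B}^{j-2}$, when squared, yields the required $2^{2j-4}q^{j-2}$.

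The main obstacle is establishing this two-step multiplicity bound, the $Gl_n(q)$-analog of the partition jump estimate (Lemma \ref{BnH2}) used for $B_n$ and $D_n$. Whereas the Weyl-group case invoked explicit Young-tableau combinatorics on pairs of partitions, the $Gl_n(q)$ setting requires working with Green's parametrization of irreducibles of $Gl_n(q)$ (by functions from the irreducible monic polynomials over $\mathbb{F}_q$ other than $x$ to partitions) and the corresponding two-level branching rule for $Gl_j(q) \supset Gl_{j-2}(q)$. I expect the factor $2^{j-2}$ to reflect binary branch-or-stay choices at intermediate levels, while $q^{(j-2)/2}$ accounts for the field-size contribution of the intermediate Levi subgroup data. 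Once this multiplicity bound is secured, the final assembly via Corollary \ref{hom=dim} is routine bookkeeping.
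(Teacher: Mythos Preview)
Your outer/inner separation does not factor as cleanly as you suggest: the two level-$(j-1)$ vertices each have degree three in $\mathcal{H}_j^n$ (for instance $\alpha_{j-1}$ is joined to $\hat 0$, to $\beta_{j-2}$, and to $\alpha_j$), so neither can be summed out as part of a pure two-step multiplicity between $\alpha_j$ and $\beta_{j-2}$. In particular the factor $\lambda_j$ does not fall out of the ``outer envelope'' alone; in the paper it appears only \emph{after} the vertex $\beta_{j-2}$ has been excised, leaving a genuine $2$-toothed quiver to which Corollary~\ref{hom=dim} applies directly. More seriously, the uniform two-step bound you posit, namely $M_\mathcal{B}(\alpha_j,\alpha_{j-2})\le 2^{j-2}q^{(j-2)/2}$, is vastly stronger than the bound $M_\mathcal{B}(Gl_j(q),Gl_{j-2}(q))\le 2^{2j-3}q^{j-1}$ actually available from \cite[Lemma~5.9]{sovi} (and invoked in the companion $\mathcal{J}_j^n$ estimate). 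Your ``main obstacle'' is therefore not just hard but off by a factor of roughly $2^{j-1}q^{j/2}$ from what is known; there is no reason to expect Green's branching rules to deliver it.

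The paper sidesteps this entirely with a much cruder move. After expanding via Theorem~\ref{countingmorph}, it bounds the two edges incident to $\beta_{j-2}$, namely $M_\mathcal{B}(\beta_{j-1},\beta_{j-2})$ and $M_\mathcal{B}(\alpha_{j-1},\beta_{j-2})$, each by the one-step maximum $M_\mathcal{B}(Gl_{j-1}(q),Gl_{j-2}(q))\le 2^{j-2}$. This disconnects $\beta_{j-2}$ from the rest of the quiver, so the free sum over $\beta_{j-2}\in\mathcal{B}^{j-2}$ contributes only the vertex count $\lvert\hat{Gl}_{j-2}(q)\rvert\le q^{j-2}$. What remains is exactly $\langle D^{n-1}U^{n-j}DU^{j}\hat 0,\hat 0\rangle=\lambda_j\,\lvert Gl_{n-1}(q)\rvert$ by Corollary~\ref{hom=dim}, and assembling the three pieces gives $(2^{j-2})^2\cdot q^{j-2}\cdot\lambda_j\,\lvert Gl_{n-1}(q)\rvert$, which is the stated bound. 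No sharp branching analysis is required; the product of a squared one-step multiplicity and a conjugacy-class count already matches $2^{2j-4}q^{j-2}$.
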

\begin{proof} By Theorem \ref{countingmorph},  we see that 
$\# \Hom(\mathcal{H}_j^n\uparrow \mathcal{Q};\mathcal{B})$ is equal to the sum
$$\begin{array}{l}\displaystyle\sum_{\alpha_i,\beta_i\in\mathcal{B}^i}M_\mathcal{B}(\beta_{n-1},\beta_{j-1})M_\mathcal{B}(\beta_{j-1},\beta_{j-2})M_\mathcal{B}(\alpha_{j},\beta_{j-1})M_\mathcal{B}(\alpha_{j},\alpha_{j-1}) M_\mathcal{B}(\alpha_{j-1},\beta_{j-2})d_{\alpha_{j-1}}d_{\beta_{n-1}}\\
\\
\leq M_{\mathcal{B}}(Gl_{j-1},Gl_{j-2})^2\vert \hat{Gl}_{j-2}(q)\vert\displaystyle\sum_{\alpha_i,\beta_i\in\mathcal{B}^i}M_\mathcal{B}(\beta_{n-1},\beta_{j-1})M_\mathcal{B}(\alpha_{j},\beta_{j-1})M_\mathcal{B}(\alpha_{j},\alpha_{j-1})d_{\alpha_{j-1}}d_{\beta_{n-1}},\end{array}$$ for $M_\mathcal{B}(G_i,G_j)=M_\mathcal{B}(G_i(q),G_j(q)):=\max M_\mathcal{B}(\alpha_i,\alpha_j)$ over all $\alpha_i\in\mathcal{B}^i, \alpha_j\in\mathcal{B}^j$ and $\vert \hat{G}_i(q)\vert$ the number of conjugacy classes of $G_i(q).$
By Corollary \ref{hom=dim},
$$\begin{array}{ll}\# \Hom(\mathcal{H}_j^n\uparrow \mathcal{Q};\mathcal{B})&\leq M_{\mathcal{B}}(Gl_{j-1}(q),Gl_{j-2}(q))^2\vert \hat{Gl}_{j-2}(q)\vert \langle D^{n-1}U^{n-j}DU^j\hat{0},\hat{0}\rangle\\
\\
&=M_{\mathcal{B}}(Gl_{j-1}(q),Gl_{j-2}(q))^2\vert \hat{Gl}_{j-2}(q)\vert \lambda_{j}\lambda_{n-1}\lambda_{n-2}\cdots\lambda_{1}\\
\\
&=M_{\mathcal{B}}(Gl_{j-1}(q),Gl_{j-2}(q))^2\vert \hat{Gl}_{j-2}(q)\vert\displaystyle \frac{\vert Gl_{j}(q)\vert}{\vert Gl_{j-1}(q)\vert}\vert Gl_{n-1}(q)\vert.\end{array}$$
By \cite[Lemma 5.9]{sovi}, $M(Gl_j(q),Gl_{j-1}(q))\leq 2^{j-1}$ and $\vert \hat{Gl}_{j}(q)\vert \leq q^j$. Thus, since $\dim A(\mathcal{H}_j^n\uparrow \mathcal{Q};\mathcal{B})=\# \Hom(\mathcal{H}_j^n\uparrow \mathcal{Q};\mathcal{B})$,
$$\begin{array}{ll}\dim A(\mathcal{H}_j^n\uparrow \mathcal{Q};\mathcal{B})&\leq 2^{2j-4}q^{j-2}q^{j-1}(q^j-1)\vert Gl_{n-1}(q)\vert\\
\\
&=2^{2j-4}q^{j-2}\displaystyle\frac{q^{j-1}(q^j-1)}{q^{n-1}(q^n-1)}\vert Gl_{n}(q)\vert\end{array}$$
\end{proof}

\begin{theorem} For  $\mathcal{J}_j^n$ the quiver of Figure \ref{GlnFGH},  $$\begin{array}{ll}\dim A(\mathcal{H}_j^n\uparrow \mathcal{Q};\mathcal{B})\leq 2^{2j-4}q^{j-2}\displaystyle\frac{q^{j-1}(q^j-1)}{q^{n-1}(q^n-1)}\vert Gl_{n}(q)\vert.\end{array}$$
\end{theorem}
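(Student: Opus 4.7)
The plan is to mirror the preceding proof for $\mathcal{H}_j^n$, exploiting that $\mathcal{J}_j^n$ is a small perturbation of $\mathcal{H}_j^n$ realized as an $n$-toothed quiver in which a single interior tooth sits at levels $j-2$, $j-1$, $j$ and the long outer bubble runs from $\hat 0$ up to level $n-1$. First, by Note~\ref{finitedim} I replace the dimension by $\#\Hom(\mathcal{J}_j^n\uparrow\mathcal{Q};\mathcal{B})$. Then I apply Theorem~\ref{countingmorph} to unfold this count as a sum over admissible labelings of the vertices $\hat 0,\alpha_{j-2},\beta_{j-1},\alpha_{j-1},\beta_j,\beta_{n-1}$ weighted by a product of five multiplicities $M_\mathcal{B}(\cdot,\cdot)$ times the two multiplicity factors $d_{\alpha_{j-2}}d_{\beta_{n-1}}$ arising from the terminal vertices.

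Next I isolate the short tooth. The two local multiplicities that attach the tooth to the common vertex at level $j-2$ are bounded above by $M_\mathcal{B}(Gl_{j-1}(q),Gl_{j-2}(q))^2$, uniformly in the labeling; summing the remaining tooth data over the free intermediate vertex at level $j-2$ contributes at most a factor $|\hat{Gl}_{j-2}(q)|$. After extracting these factors, the residual sum is the inner product of an admissible word in $U$ and $D$ applied to $\hat 0$; explicitly, it has the form $\langle D^{n-1}U^{n-j}DU^{j}\hat 0,\hat 0\rangle$. Corollary~\ref{hom=dim} then evaluates this as $\lambda_j\,\lambda_{n-1}\lambda_{n-2}\cdots\lambda_1 = \frac{|Gl_j(q)|}{|Gl_{j-1}(q)|}\,|Gl_{n-1}(q)|$, and using $|Gl_{n-1}(q)| = \frac{|Gl_n(q)|}{q^{n-1}(q^n-1)}$ together with $|Gl_j(q)|/|Gl_{j-1}(q)| = q^{j-1}(q^j-1)$ produces the target dimensional factor $\frac{q^{j-1}(q^j-1)}{q^{n-1}(q^n-1)}|Gl_n(q)|$.

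It remains to absorb the constants. By \cite[Lemma 5.9]{sovi} the restriction multiplicities along the chain satisfy $M_\mathcal{B}(Gl_j(q),Gl_{j-1}(q))\leq 2^{j-1}$ and the conjugacy class count obeys $|\hat{Gl}_j(q)|\leq q^j$. Substituting gives
\[
M_\mathcal{B}(Gl_{j-1}(q),Gl_{j-2}(q))^2\,|\hat{Gl}_{j-2}(q)| \;\leq\; (2^{j-2})^2 \cdot q^{j-2} \;=\; 2^{2j-4}q^{j-2},
\]
so combining with the dimensional factor above yields the claimed bound $\dim A(\mathcal{J}_j^n\uparrow\mathcal{Q};\mathcal{B})\leq 2^{2j-4}q^{j-2}\,\frac{q^{j-1}(q^j-1)}{q^{n-1}(q^n-1)}|Gl_n(q)|$.

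The main obstacle is step two: one must check that after removing the two ``tooth-root'' multiplicities, the residual morphism count over $\mathcal{J}_j^n$ really does collapse to an $n$-toothed configuration whose admissible $U$-$D$ word has the same $\lambda$-profile as the one produced by $\mathcal{H}_j^n$. In particular, because $\mathcal{J}_j^n$ carries an additional short arc near level $j$, I must smooth the quiver via Corollary~\ref{smooth} and verify that no extra $\lambda_i$ factor is forced in past $j$; this is what ensures the constant $2^{2j-4}q^{j-2}$ and not a larger one. If the smoothing instead introduces a second application of $DU$ between levels $j-1$ and $j$, that would force an extra factor $\lambda_j$ and one would have to re-examine whether the stated bound is achievable or whether it requires a slight strengthening of Lemma 5.9 of \cite{sovi} for the tooth multiplicities at level $j-1$.
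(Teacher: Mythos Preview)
Your proposal misreads the quiver $\mathcal{J}_j^n$. It does \emph{not} have the same local tooth as $\mathcal{H}_j^n$: there is no extra vertex $\alpha_{j-1}$, and there are no edges from level $j-2$ to level $j-1$ to bound by $M_\mathcal{B}(Gl_{j-1},Gl_{j-2})^2$. After accounting for $\uparrow\mathcal{Q}$ and smoothing, the effective vertices are $\hat 0,\alpha_{j-2},\alpha_j,\beta_{j-1},\beta_{n-1}$ only, and the morphism count is
\[
\sum_{\alpha_i,\beta_i\in\mathcal{B}^i}M_\mathcal{B}(\beta_{n-1},\beta_{j-1})\,M_\mathcal{B}(\alpha_{j},\beta_{j-1})\,M_\mathcal{B}(\alpha_{j},\alpha_{j-2})^{2}\,d_{\alpha_{j-2}}\,d_{\beta_{n-1}}.
\]
The squared factor $M_\mathcal{B}(\alpha_j,\alpha_{j-2})^2$ arises because both the top arc $\hat 0\to\alpha_j$ (which factors through the same $\alpha_{j-2}$ by the $\uparrow\mathcal{Q}$ constraint) and the short tooth land on the \emph{same} pair $(\alpha_{j-2},\alpha_j)$. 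Since $\alpha_{j-2}$ still carries the weight $d_{\alpha_{j-2}}$, there is no free sum over level $j-2$ to turn into $|\hat{Gl}_{j-2}(q)|$; your extraction of $M_\mathcal{B}(Gl_{j-1},Gl_{j-2})^2|\hat{Gl}_{j-2}(q)|$ simply does not match any factor present.

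The paper's argument is shorter and avoids your ``main obstacle'' entirely: pull out a \emph{single} factor $M_\mathcal{B}(\alpha_j,\alpha_{j-2})\le M_\mathcal{B}(Gl_j(q),Gl_{j-2}(q))$, after which the residual sum is exactly $\langle D^{n-1}U^{n-j}DU^{j}\hat 0,\hat 0\rangle=\frac{|Gl_j(q)|}{|Gl_{j-1}(q)|}\,|Gl_{n-1}(q)|$, just as for $\mathcal{H}_j^n$. Then \cite[Lemma~5.9]{sovi} gives $M_\mathcal{B}(Gl_j(q),Gl_{j-2}(q))\le 2^{2j-3}q^{j-1}$, yielding
\[
\dim A(\mathcal{J}_j^n\uparrow\mathcal{Q};\mathcal{B})\le 2^{2j-3}q^{j-1}\,\frac{q^{j-1}(q^j-1)}{q^{n-1}(q^n-1)}\,|Gl_n(q)|.
\]
Note that the displayed statement you were given is a copy--paste slip from the preceding theorem: it shows $\mathcal{H}_j^n$ and the constant $2^{2j-4}q^{j-2}$, whereas the proof (and the application in Section~\ref{applications}) use the $\mathcal{J}_j^n$ bound with constant $2^{2j-3}q^{j-1}$.
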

\begin{proof}
$$\dim A(\mathcal{J}_j\uparrow \mathcal{Q};\mathcal{B})=\sum_{\alpha_i,\beta_i\in\mathcal{B}^i}M_\mathcal{B}(\beta_{n-1},\beta_{j-1})M_\mathcal{B}(\alpha_{j},\beta_{j-1})M_\mathcal{B}(\alpha_{j},\alpha_{j-2})^2d_{\alpha_{j-2}}d_{\beta_{n-1}}$$
$$\leq M_{\mathcal{B}}(Gl_{j}(q),Gl_{j-2}(q))\sum_{\alpha_i,\beta_i\in\mathcal{B}^i}M_\mathcal{B}(\beta_{n-1},\beta_{j-1})M_\mathcal{B}(\alpha_{j},\beta_{j-1})M_\mathcal{B}(\alpha_{j},\alpha_{j-2})d_{\alpha_{j-2}}d_{\beta_{n-1}}$$ 
$$\begin{array}{l}\displaystyle=M_{\mathcal{B}}(Gl_{j}(q),Gl_{j-2}(q)) \langle D^{n-1}U^{n-j}DU^j\hat{0},\hat{0}\rangle\\
\displaystyle=M_{\mathcal{B}}(Gl_{j}(q),Gl_{j-2}(q))\frac{\vert Gl_{j}(q)\vert}{\vert Gl_{j-1}(q)\vert}\vert Gl_{n-1}(q)\vert.\end{array}$$
By \cite[Lemma 5.9]{sovi}, $M(Gl_{j}(q),Gl_{j-2}(q))\leq 2^{2j-3}q^{j-1}$. Thus,
$$\dim A(\mathcal{J}_j\uparrow \mathcal{Q};\mathcal{B})\leq 2^{2j-3}q^{j-1}\frac{q^{j-1}(q^j-1)}{q^{n-1}(q^n-1)}\vert Gl_n(q)\vert.$$
\end{proof}

\subsection{Factoring Coset Representatives of $GL_n(\mathbb{F}_q)$}\label{appD}
In this section we provide the set of coset representatives and their factorizations used in the proof of Theorem \ref{Gln} by developing a correspondence between $Gl_n(q)/Gl_{n-1}(q)$ and the set $Z_n=\{\mathbf{z}=(x_1y_1,\dots, x_ny_n)\mid \mathbf{x},\mathbf{y}\in (\mathbb{F}_q)^n,\;y\cdot x=1\}.$

Define an action of $Gl_n(q)$ on $Z_n$ via $A.\mathbf{z}=(\tilde{x}_1\tilde{y}_1,\dots,\tilde{x}_n\tilde{y}_n),$ for $\tilde{\mathbf{y}}=\mathbf{y}A^{-1},\tilde{\mathbf{x}}=(A\mathbf{x}^T)^T $.  Note that the action of $A$ preserves $y\cdot x$. For $\mathbf{1}=(0,\dots,0,1)$, we show (Theorem \ref{factor}) that
$$Z_n=\orb(\mathbf{1}).$$
Note that $Gl_{n-1}(q)$, viewed as a subgroup of $Gl_n(q)$, stabilizes $\mathbf{1},$ so the orbit-stabilizer theorem gives a bijection between
$Z_n=\orb(\mathbf{1})$ and $Gl_n(q)/Gl_{n-1}(q)$ through the correspondence
\begin{equation}\label{correspondence} g.\mathbf{1}\longleftrightarrow gGl_{n-1}(q).\end{equation}
Thus, writing $\mathbf{z}=A_1\cdots A_m.\mathbf{1}$ for each $\mathbf{z}\in Z_n$ gives a factorization of the corresponding coset representative. We find a factorization in which each matrix $A_i=A\bigoplus I_{n-2}$ for $A\in Gl_2(q)$.
\begin{lemma}\label{2x2} Suppose $\mathbf{z}=(x_1y_1,x_2y_2)\in Z_2$ with $x_1y_1+x_2y_2\neq 0$. Then there exists a matrix  $A\in Gl_2(q)$ and $y_2',x_2'\in\mathbb{F}_q^\times$ such that $$A.\mathbf{z}=(0,x_2'y_2').$$
\end{lemma}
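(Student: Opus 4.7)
The plan is to choose $A$ so that its action sends the first coordinate of $\mathbf{x}$ to zero; since the bilinear form $y\cdot x$ is preserved, this automatically forces the second coordinate of $A.\mathbf{z}$ to be $1$, hence a product of two nonzero elements.

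First I would observe that the constraint $y\cdot x = 1$ (which is part of the definition of $Z_2$) forces both $\mathbf{x}\neq 0$ and $\mathbf{y}\neq 0$; in particular the stated hypothesis $x_1y_1+x_2y_2\neq 0$ is automatic (it equals $1$). Then, using $\mathbf{x}\neq 0$, I would extend $\mathbf{x}^T$ to a basis of $\mathbb{F}_q^2$ by picking any vector $\mathbf{v}\in\mathbb{F}_q^2$ with $\mathbf{v}$ and $\mathbf{x}^T$ linearly independent, and define $A\in Gl_2(q)$ to be the unique matrix whose inverse has columns $(\mathbf{v}\mid \mathbf{x}^T)$. By construction $A\mathbf{x}^T=(0,1)^T$, so $\tilde{\mathbf{x}}=(0,1)$.

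The second step is just checking: since the action preserves the pairing $y\cdot x$, we have $\tilde{y}\cdot\tilde{x}=1$, and combined with $\tilde{x}_1=0$, $\tilde{x}_2=1$ this forces $\tilde{y}_2=1$ (and $\tilde{y}_1$ is unconstrained). Therefore
\[
A.\mathbf{z}=(\tilde{x}_1\tilde{y}_1,\tilde{x}_2\tilde{y}_2)=(0,1),
\]
and we may take $x_2'=y_2'=1\in\mathbb{F}_q^\times$ (or, if one prefers a parametric form, scale $\mathbf{v}$ freely to produce any nonzero pair with product $1$).

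There really is no main obstacle here: the lemma reduces to the elementary observation that any nonzero vector in $\mathbb{F}_q^2$ can be moved to $(0,1)^T$ by a suitable element of $Gl_2(q)$, and the non-degeneracy of the pairing then delivers the nonzero second coordinate for free. The only thing worth being careful about is remembering that the $\tilde{\mathbf{y}}$-action is by $A^{-1}$ on the \emph{right}, so it is compatible with the column convention chosen for $A^{-1}$.
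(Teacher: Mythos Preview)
Your argument is correct: once $\mathbf{x}\neq 0$ (which follows from $x_1y_1+x_2y_2\neq 0$), transitivity of $Gl_2(q)$ on nonzero vectors lets you send $\mathbf{x}^T$ to $(0,1)^T$, and invariance of the pairing forces the second coordinate of $A.\mathbf{z}$ to be nonzero. This is a cleaner existence proof than the paper's.

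The paper proceeds differently, and for a reason you should be aware of. It breaks into three explicit cases ($x_1=0$; $x_1\neq 0,\,y_1=0$; $x_1,y_1\neq 0$) and in each writes down a concrete $2\times 2$ matrix $A$, then \emph{counts} how many distinct $A$'s can arise as $(x_1,y_1,x_2,y_2)$ vary over the relevant case. Those counts (roughly $q-1$ or $q^2$ per case) are exactly the numbers fed into the sizes $|\tilde U_j|$, $|\tilde V_j|$ in the complexity estimate for $Gl_n(q)$ (cf.\ the remark after Theorem~\ref{factor} and the bounds on $|\tilde W_j|$ in the proof of Theorem~\ref{Gln}). Your abstract argument yields existence but no such enumeration, so while it suffices for the lemma as stated, it would leave a gap if you then tried to reproduce the downstream operation counts.

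One smaller point: you note that for $\mathbf{z}\in Z_2$ the hypothesis $x_1y_1+x_2y_2\neq 0$ is automatic. That is true as written, but the lemma is actually invoked in Theorem~\ref{factor} on $2$-subvectors of a longer $\mathbf{z}\in Z_n$, where the two-entry sum need not equal $1$. Fortunately your argument only uses $x_1y_1+x_2y_2\neq 0$ (to get $\mathbf{x}\neq 0$ and, via the preserved pairing, $\tilde y_2\neq 0$), so it extends to that setting without change.
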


\begin{proof} 
%Further, if $x_1=0$ then $y_2,x_2\neq 0$ (similarly if $y_1=0$).
\begin{itemize}
\item[]
\item[\textbf{Case 1:}] $x_1=0$. Let $A=\begin{pmatrix}
1&0\\\frac{y_1}{y_2}&1
\end{pmatrix}$. Note that for all possible choices of $\mathbf{z}\in Z_2$, there are $q$ possibilities for $A$.
\item[\textbf{Case 2:}] $x_1\neq0$, $y_1=0$. Let $A=\begin{pmatrix}
1&\frac{-x_1}{x_2}\\0&1
\end{pmatrix}$. Note there are $q-1$ possibilities for $A$.
\item[\textbf{Case 3:}] $x_1\neq0$, $y_1\neq0$. Let $A=\begin{pmatrix}
\frac{-x_2}{x_1}&1\\1&\frac{y_2}{y_1}
\end{pmatrix}$. Note there are $q^2$ possibilities for $A$. Note further that for $z_1:=x_1y_1$ and $z_2:=x_2y_2$ fixed and nonzero,  $$A=\begin{pmatrix}
\frac{-x_2}{x_1}&1\\1&\frac{z_2}{z_1}\frac{x_1}{x_2}
\end{pmatrix},$$ and there are $q-1$ possibilities for $A$.
\end{itemize}
 \end{proof}

We use Lemma \ref{2x2} to systematically write $\mathbf{z}\in Z_n$ in form $$\mathbf{z}=A_1A_2\cdots A_k.\mathbf{1},$$ with $A_i\in Gl_n(q)$. Recall that $p$ is the characteristic of $\mathbb{F}_q$.
\begin{proposition}\label{easier} Let $\tilde{\mathbf{z}}\in Z_n$. Then there is a permutation matrix $\pi\in Gl_n(q)$, $b\in \mathbb{F}_q^\times$, and $i\geq 1$ such that $\pi.\tilde{\mathbf{z}}=\mathbf{z}$ with:

\begin{itemize}
\item[(i)] $z_1+\dots+z_j\neq 0$ for all $i\leq j\leq n$,
\item[(ii)] $z_1=\dots=z_i=b$,
\item[(iii)] $p\vert(i-1)$.
\end{itemize}
\end{proposition}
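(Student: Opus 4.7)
The plan is to build $\pi$, $b$, and $i$ constructively from the hypothesis $y \cdot x = 1$, which gives $\sum_{j=1}^n \tilde z_j = 1 \neq 0$ in $\mathbb{F}_q$ and so guarantees that $\tilde{\mathbf{z}}$ has at least one nonzero component. First I would choose $b \in \mathbb{F}_q^\times$ to be a value of maximum multiplicity $k$ among the nonzero components of $\tilde{\mathbf{z}}$, and set $i = pm + 1$ where $m = \lfloor (k-1)/p \rfloor$; this is by construction the largest integer with $i \leq k$ and $p \mid (i-1)$, and the case $i = 1$ always qualifies.

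Next, I would define $\pi$ so that $i$ of the positions holding $b$ are sent to positions $1, 2, \ldots, i$, giving conditions (ii) and (iii) immediately. Because $p \mid (i-1)$, we have $(i-1)b = 0$ in $\mathbb{F}_q$, hence the partial sum $s_i = ib = b \neq 0$, which handles the $j = i$ instance of (i). Moreover, $s_n = \sum_j \tilde z_j = 1 \neq 0$, so the endpoint of (i) is automatic.

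The remaining task is to order the $n - i$ leftover components (the $k - i$ extra copies of $b$ together with the components of $\tilde{\mathbf{z}}$ not equal to $b$) in positions $i+1, \ldots, n$ so that every intermediate partial sum $s_{i+1}, \ldots, s_{n-1}$ is nonzero. For this I would use a backward greedy construction: with $s_n = 1$ fixed, select $z_n, z_{n-1}, \ldots, z_{i+1}$ in reverse, at each step picking a remaining element $c$ with $c \neq s_j$, so that $s_{j-1} = s_j - c \neq 0$. The greedy step can stall only when every element still to be placed equals the current $s_j$, i.e.\ when the remaining multiset is constant, say equal to $c$ with $j - i$ copies; then $s_j = (j-i)c$ forces $(j-i-1)c = 0$ in $\mathbb{F}_q$, so either $c = 0$ (contradicting $s_j \neq 0$) or $p \mid (j - i - 1)$.

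The main obstacle will be handling the stalling case cleanly. My expectation is that the maximality of $b$'s multiplicity combined with the maximality of $i$ forces the stalled remainder to consist entirely of $b$'s, reducing the problem to the single-value subcase in which $\tilde{\mathbf{z}}$ is a constant multiple $b$ repeated $n$ times. There one checks directly that $nb = 1$ forces $n \not\equiv 0 \pmod p$, so writing $n = mp + r$ with $1 \leq r \leq p-1$ and taking $i = mp + 1$ yields partial sums $s_i, s_{i+1}, \ldots, s_n$ equal to $b, 2b, \ldots, rb$, all nonzero since $1 \leq j \leq r \leq p-1$ implies $p \nmid j$. This closes the argument.
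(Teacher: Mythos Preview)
Your backward greedy has a computational slip that undermines the stall analysis. When the remaining $j - i$ elements are all equal to $c$, the partial sum is
\[
s_j \;=\; z_1 + \cdots + z_i + (j-i)c \;=\; ib + (j-i)c \;=\; b + (j-i)c,
\]
using $(i-1)b = 0$; you dropped the $+b$ term. The stall condition $c = s_j$ therefore reads $(j-i-1)c = -b$, not $(j-i-1)c = 0$, so it forces neither $c = 0$ nor $p \mid (j-i-1)$, and in particular it does not force $c = b$.

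Here is a concrete stall with $c \neq b$. Take $q = 5$ and $\tilde{\mathbf{z}} = (3, 3, 2, 2, 1)$, which has $\sum \tilde{z}_j = 1$. The maximum nonzero multiplicity is $2$, attained (say) by $b = 3$, so $k = 2$ and your rule gives $i = 1$. Place $z_1 = 3$; the leftover multiset is $\{3, 2, 2, 1\}$. The greedy may legally choose $z_5 = 3$ (since $3 \neq s_5 = 1$), giving $s_4 = 3$, and then $z_4 = 1$ (since $1 \neq 3$), giving $s_3 = 2$. Now the remaining multiset is $\{2, 2\}$ with $c = 2 = s_3$: the greedy is stuck, $c \neq b$, and the single-value fallback does not apply. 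A different choice at step $4$ avoids the stall, but your argument supplies no rule for making the right choice.

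The paper's proof sidesteps all of this by not committing to $b$ and $i$ in advance. It runs the same backward peel on the whole vector: repeatedly move to the last position any entry whose removal keeps the remaining sum nonzero, and stop at length $i$ when no such entry exists. At that point $s_i - z_k = 0$ for every $k \leq i$, so all $z_k$ equal $b := s_i \neq 0$, and $ib = s_i = b$ gives $(i-1)b = 0$, hence $p \mid (i-1)$. Letting the process discover $i$ and $b$ is exactly what makes the greedy terminate cleanly without any stall analysis.
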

\begin{proof}
Let $\tilde{\mathbf{z}}=(\tilde{z}_1,\dots,\tilde{z}_n)\in Z_n$. Note that $\tilde{z}_1+\cdots+\tilde{z}_n=1\neq 0$. Let $j$ be an index (if it exists) such that $\tilde{z}_1+\cdots\tilde{z}_n-\tilde{z}_j\neq 0$. Note that for a permutation matrix $\pi$,  $$\pi.\tilde{\mathbf{z}}=(\tilde{z}_{\pi(1)},\dots, \tilde{z}_{\pi(n)}).$$ Permute $\tilde{\mathbf{z}}$ to make $\tilde{z}_j$ the last entry, then delete $\tilde{z}_j$ to produce a vector of length $n-1$. Repeat until no such index exists, and let $i$ be the length of the resultant vector, $\mathbf{z}$. Then clearly $z_1+\cdots+z_j\neq 0$ for all $i\leq j\leq n$. Further, $z_1+\cdots+z_i-z_k=0$ for all $1\leq k\leq i$; in particular, $z_1=\cdots=z_i=b\in \mathbb{F}_q^\times$. Finally note that $z_1+\cdots+z_{i-1}=(i-1)b=0$ and so $p\vert (i-1)$.
\end{proof}
In light of Proposition \ref{easier}, let $$S_i(n)=\{\mathbf{z}\in Z_n\vert\;\mathbf{z}\; \text{satisfies (i) and (ii) of Proposition \ref{easier}}\}.$$ 
%Note that if $p\vert i$, $S_i(n)=\emptyset$.
\begin{theorem}\label{factor} For $p\neq 2$ and $\mathbf{z}\in S_i(n)$, there exist invertible matrices 
$u_j,u_j', v_j, t_j\in Gl_j(q)\cap\cent(Gl_{j-2}(q)),$ such that 
$$v_n\cdots v_{i+1}u_{i}\cdots u_{2p+1}t_{2p}u_{2p+1}'u_{2p-1}\cdots u_{p+1}t_p(u_{p+1}')u_{p-1}\cdots u_2.\mathbf{z}=\mathbf{1}.$$
\end{theorem}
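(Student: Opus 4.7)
The plan is to construct the required factorization explicitly by iterated application of Lemma~\ref{2x2}. The key structural observation is that any matrix in $Gl_j(q)\cap\cent(Gl_{j-2}(q))$, under the embedding $Gl_j\hookrightarrow Gl_n$, acts as a $2\times 2$ block on the coordinates in positions $n-j+1$ and $n-j+2$ of a vector in $Z_n$; the scalar factor on positions $n-j+3,\ldots,n$ fixes the $z$-values there, because the action scales $\mathbf{x}$ and $\mathbf{y}$ by reciprocal factors. The proposed factorization therefore sweeps a $2\times 2$ window leftward across $\mathbf{z}$ one coordinate at a time, reducing one pair per factor.

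I would proceed inductively through the factors, right to left in the product. In the ``smooth'' portion of the sweep --- namely, the $u_j$ with $p\nmid j$ together with all of the $v_j$ --- the pair currently in the window has nonzero sum, so Lemma~\ref{2x2} directly supplies the next matrix. For the $v_j$ block the relevant pair-sum is nonzero by condition~(i) of Proposition~\ref{easier} (applied inductively to the transformed vector, whose partial sums one tracks at each step), and Case~3 of Lemma~\ref{2x2} contributes the $q^2$ choices. For the $u_j$ block in the smooth range, the pair-sum is nonzero because the first $i$ entries of $\mathbf{z}$ are all equal to the fixed nonzero scalar $b$ and the partial sums $kb$ vanish only when $p\mid k$; Cases~1 and~2 contribute $q-1$ choices. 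At each milestone index $j=mp$ the naive pair-sum vanishes and Lemma~\ref{2x2} fails outright: the transposition $t_{mp}$, which simply swaps the two window coordinates, together with the primed matrix $u_{mp+1}'$ (constructed via Lemma~\ref{2x2} Case~1 or~2, contributing $q-1$ choices) bridge the obstruction so that the subsequent sequence $u_{mp+1},\ldots,u_{(m+1)p-1}$ is again a valid application of Lemma~\ref{2x2}.

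The main obstacle will be the careful bookkeeping at each $p$-divisible boundary: one must verify that after the pair $u_{mp+1}'\,t_{mp}$ is inserted, the transformed vector again satisfies the nonzero pair-sum hypothesis required by Lemma~\ref{2x2} for the subsequent block of ordinary $u_j$'s, and that the invariants on the remaining partial sums (inherited from Proposition~\ref{easier}) persist through the swap. The hypothesis $p\neq 2$ enters at the very first window reduction, where we require $2b\neq 0$ in order for Lemma~\ref{2x2} to produce $u_2$ at all, and this is precisely why the characteristic-two case must be handled separately in Theorem~\ref{p=2}.
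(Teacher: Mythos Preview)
Your proposal is correct and follows essentially the same approach as the paper: iteratively apply Lemma~\ref{2x2} to sweep a $2\times2$ window across $\mathbf{z}$, using the partial-sum invariants from Proposition~\ref{easier} to guarantee the nonzero-sum hypothesis, and inserting the transposition $t_{mp}$ together with the primed matrix $u'_{mp+1}$ to bridge each $p$-divisible obstruction where $kb=0$. The paper's proof carries out exactly this sweep, tracking the running vector as $(0,\dots,0,kb,b,\dots,b,z_{i+1},\dots,z_n)$ at each stage, and your identification of where $p\neq 2$ enters (the very first pair-sum $2b$) is precisely right.
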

\begin{proof}
Let $\mathbf{z}\in S_i(n)$ and let $i>p$. Note that $\mathbf{z}=\begin{pmatrix}
b,\dots,b,z_{i+1}, \dots,z_n
\end{pmatrix},$ and since $z_1+z_2=2b\neq 0$, by Lemma \ref{2x2}, there is a matrix $ A\in Gl_2(q)$ such that $A.(z_1,z_2)=\begin{pmatrix}0,x_2'y_2'\end{pmatrix}$ with $y_2'x_2'=2b$. Let $u_2= A\bigoplus I_{n-2}\in Gl_n(q).$ Then $$u_2.\mathbf{z}=\begin{pmatrix}0,2b,b,\dots,b,z_{i+1},\dots,z_n\end{pmatrix}.$$
Repeat this process, defining matrices $u_3,u_4,\dots ,u_{p-1}$ (i.e., find the matrix $A$ guaranteed by Lemma \ref{2x2}, and let $u_j=I_{j-2}\bigoplus A\bigoplus I_{n-j}$). Note that 
$$u_{p-1}\cdots u_3u_2.\mathbf{z}=\begin{pmatrix}
0,\dots,0,(p-1)b,b,b,b,\dots,b,z_{i+1},\dots,z_n
\end{pmatrix}.$$
Since $z_{p-1}+z_p=pb=0$, we cannot use Lemma \ref{2x2}. Instead, define $(u_{p+1}')$ as above and let $t_p$ be the permutation matrix of the transposition $(p-1\; p)$. Then
$$t_pu_{p+1}'u_{p-1}\cdots u_2.\mathbf{z}=\begin{pmatrix}
0,\dots,0,(p-1)b,2b,b,b,\dots,b,z_{i+1},\dots,z_n
\end{pmatrix},$$
and since now $z_{p-1}+z_p\neq 0$, define $u_{p+1}$ as before so that 
$$u_{p+1}t_pu_{p+1}'u_{p-1}\cdots u_2.\mathbf{z}
=\begin{pmatrix}
0,\dots,0,0,(p+1)b=b,b,b,\dots,b,z_{i+1},\dots,z_n
\end{pmatrix}$$
Repeat this process through definition of the matrix $u_i$, so that 
$$u_i\cdots u_2.\mathbf{z}=\begin{pmatrix}
0,\dots,0,z_1+\cdots+z_i,z_{i+1},\dots,z_n
\end{pmatrix}.$$
Since $z_1+\cdots+z_j\neq 0$ for all $i\leq j\leq n$, we use Lemma \ref{2x2}  to find the appropriate 2x2 matrix $A_j$ so that for $v_j=I_{j-2}\bigoplus A_j\bigoplus I_{n-j}$,
$$v_n\cdots v_{i+1}u_i\cdots u_2.\mathbf{z}=
\begin{pmatrix}
0,\dots,0,z_1+\cdots+z_n
\end{pmatrix}=\begin{pmatrix}
0,\dots,0,1
\end{pmatrix}.$$

For $i<p$ analogous arguments apply without needing the matrices $t_p$. 

\end{proof}
\begin{remark} By Lemma \ref{2x2}, there are $(q-1)$ possibilities for each $u_j$ and $q^2$ possibilities for each $v_j$. \end{remark} 
By Proposition \ref{easier}, 
$$X_n=\bigcup_{\pi\in S_n}\bigcup_{\substack{ 1\leq i\leq n \\ p\vert (i-1)}}\pi S_i(n)$$ 
and so by Expression \ref{correspondence} 
 a complete set of coset representatives for \\$Gl_n(q)/Gl_{n-1}(q)$ is contained in $\{\pi s_i\vert\;1\leq i\leq n, p\mid (i-1), s_i\in S_i(n)\},$ with each $s_i$ of form:
$$s_i=u_2\cdots u_{p-1}u_{p+1}'t_pu_{p+1}\cdots u_iv_{i+1}\cdots v_n.$$

Finally, we note that similar results hold in the $p=2$ case.
\begin{theorem}\label{p=2}
For $p=2$, $i\geq 3$ odd, $(\mathbf{y},\mathbf{x})\in S_i(n)$, there exist invertible matrices 
$$a_j,b_j,c_j,v_j\in Gl_j(q)\cap\cent(Gl_{j-2}(q))$$
such that
$$\mathbf{z}=a_3b_2c_3\cdots a_ib_{i-1}c_iv_{i+1}\cdots v_n.\mathbf{1}.$$

\end{theorem}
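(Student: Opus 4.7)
The plan is to adapt the strategy of Theorem \ref{factor}, compensating for the characteristic-2 obstruction that any two consecutive $b$-valued entries of $\mathbf{z}$ have sum $2b = 0$: Lemma \ref{2x2} then cannot zero out a single coordinate in one step. The idea is to clear positions \emph{two at a time} using triples of $2 \times 2$ block operations, which accounts for the triples $a_j b_{j-1} c_j$ indexed by odd $j = 3, 5, \ldots, i$ appearing in the factorization.

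I would work backwards, constructing matrices $\alpha_j, \beta_{j-1}, \gamma_j \in Gl_j(q) \cap \cent(Gl_{j-2}(q))$ and $\nu_k \in Gl_k(q) \cap \cent(Gl_{k-2}(q))$ such that $\nu_n \cdots \nu_{i+1}\, \gamma_i \beta_{i-1} \alpha_i \cdots \gamma_3 \beta_2 \alpha_3 \cdot \mathbf{z} = \mathbf{1}$; their inverses then lie in the same centralizers and supply the desired $a_j, b_j, c_j, v_k$. For each odd $j \leq i$, the triple $\gamma_j \beta_{j-1} \alpha_j$ should transform a state in which positions $1, \ldots, j-3$ are already zero and positions $j-2, j-1, j$ all equal $b$ into a state with positions $1, \ldots, j-1$ zero and $b$ at position $j$, touching nothing beyond.

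The critical move is to take $\alpha_j$ to be a shear, say $\bigl(\begin{smallmatrix} 1 & 1 \\ 0 & 1 \end{smallmatrix}\bigr)$ embedded at positions $j-1, j$. A direct computation from $\tilde{x} = \alpha_j x$ and $\tilde{y} = y \alpha_j^{-1}$ gives the new $z$-values at positions $j-1, j$ both equal to $b + y_{j-1} x_j$; this offset is nonzero because $y_{j-1} x_{j-1} = y_j x_j = b \neq 0$ forces $y_{j-1}, x_j \neq 0$. The shear thereby breaks the $(b,b)$ symmetry, so the sum at positions $j-2, j-1$ becomes $y_{j-1} x_j \neq 0$, and Lemma \ref{2x2} supplies $\beta_{j-1}$ clearing position $j-2$, with $y_{j-1} x_j$ inherited at position $j-1$ by invariance of the pairwise sum. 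Finally, the sum at positions $j-1, j$ is $b \neq 0$ (the offset cancels in characteristic 2), and Lemma \ref{2x2} supplies $\gamma_j$ clearing position $j-1$ and leaving $b$ at position $j$.

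Iterating these triples for $j = 3, 5, \ldots, i$ reduces $\mathbf{z}$ to $(0, \ldots, 0, b, z_{i+1}, \ldots, z_n)$, with $b$ at position $i$. From there I would apply the $\nu_k$ exactly as in the proof of Theorem \ref{factor}: since the partial sums $z_1 + \cdots + z_k$ are nonzero for $k \geq i$ by the defining condition of $S_i(n)$, and since the pairwise coordinate sum is preserved by each $Gl_2$ action, successive applications of Lemma \ref{2x2} clear positions $i, i+1, \ldots, n-1$ in turn, culminating in $\mathbf{1}$. The main obstacle is precisely the first step of each triple, where Lemma \ref{2x2} fails on the $(b,b)$ pair; the essential insight is that a shear injects a nonzero offset $y_{j-1} x_j$ drawn from the underlying $(y, x)$ coordinates --- information that the $z$-tuple alone conceals --- and this offset reopens the door to Lemma \ref{2x2} for the remainder of the triple.
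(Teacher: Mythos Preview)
The paper does not actually supply a proof of Theorem~\ref{p=2}; it merely states the result, prefaced by ``similar results hold in the $p=2$ case,'' and follows it with a remark about the number of choices for $a_j,b_j,c_j,v_j$. Your proposal therefore fills in a genuine gap, and the argument you outline is correct: the shear at positions $j-1,j$ replaces $(b,b)$ by $(b+y_{j-1}x_j,\,b+y_{j-1}x_j)$, the sum with position $j-2$ becomes $2b+y_{j-1}x_j=y_{j-1}x_j\neq 0$ so Lemma~\ref{2x2} furnishes $\beta_{j-1}$, and the resulting pair at positions $j-1,j$ sums to $b\neq 0$ so Lemma~\ref{2x2} furnishes $\gamma_j$. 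Since each triple only touches positions $j-2,j-1,j$, the coordinates $y_{j-1},x_j$ needed at the next odd $j$ are still the original nonzero ones, and the induction goes through; the tail $v_{i+1},\ldots,v_n$ is handled exactly as in Theorem~\ref{factor}.

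One small point of comparison with the paper's remark: the note following the theorem records $(q-1)$ choices for $a_j$ and $b_j$ with $c_j$ determined, whereas your construction uses a single fixed shear for $\alpha_j$ (hence for $a_j$). This does not affect the correctness of the existence statement, but it suggests the authors may have had a slightly different parametrisation in mind --- likely one in which the first move already invokes a Lemma~\ref{2x2}--type matrix depending on the underlying $(y,x)$ data rather than a universal shear. For the complexity application in the body of the paper only the containment $a_j,b_j,c_j\in Gl_j(q)\cap\cent(Gl_{j-2}(q))$ and the overall size of the factorisation set matter, and your construction delivers both.
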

%\begin{proof}
%
%Let $(\mathbf{y},\mathbf{x})\in S_i(n)$. For $\mathbf{z}=(z_1,\dots,z_n):=(x_1y_1,\dots x_ny_n),$
% Proposition \ref{easier} gives 
%$$\mathbf{z}=\begin{pmatrix}
%1,\dots,1,z_{i+1},\dots,z_n
%\end{pmatrix}.$$
%
%First note that since $p\nmid i$, $i$ is odd, and for $i=1$, Lemma \ref{2x2} applies as in the proof of Theorem \ref{factor}, so assume $i\geq 3$. Also note that for $j\leq i$, $z_j=1$ and so neither $x_j$ nor $y_j$ are zero. Then for $$\begin{array}{lll}A_j=\begin{pmatrix}
%1 &\frac{y_j}{y_{j-1}}\\ 0 & 1
%\end{pmatrix}, & B_j=\begin{pmatrix}
%0 &1\\ 1 & \frac{y_j}{y_{j-1}}
%\end{pmatrix}, & C_j=\begin{pmatrix}
%\frac{y_{j-2}}{y_{j}} &1\\ 1 & 0
%\end{pmatrix},\end{array}$$
%let $a_j^{-1}$ (respectively $b_j^{-1},c_j^{-1}$)$\in Gl_n(q)$ have $1$'s along the diagonal and $A_j$ (respectively $B_j,C_j$) in the $(j-1, j)$x$(j-1, j)$th subblock. Then for $j\leq i$, 
%$$\begin{array}{l} c_j^{-1}b_{j-1}^{-1}a_j^{-1}.(\mathbf{x},\mathbf{y})=\\
%(\begin{pmatrix}y_1,\dots,y_{j-3},0,0,y_{j-2},y_{j+1},\dots,y_n\end{pmatrix}, \begin{pmatrix}x_1,\dots,x_{j-3},0,0,x_{j-2},x_{j+1},\dots,x_n\end{pmatrix}).\end{array}$$  
%
%Further, note that $a_j^{-1}, b_j^{-1}, c_j^{-1}$ are invertible for each $1\leq j\leq i$.
%%(since $\frac{y_{j-2}}{y_j}=\left(\frac{y_j}{y_{j-1}}\frac{y_{j-1}}{y_{j-2}}\right)^{-1}$). 
%Then for $v_j^{-1}$ as in the proof of Theorem \ref{factor}, 
%$$v_n^{-1}\cdots v_{i+1}^{-1}c_i^{-1}b_{i-1}^{-1}a_i^{-1}c_{i-2}^{-1}b_{i-3}^{-1}a_{i-2}^{-1}\cdots c_3^{-1}b_2^{-1}a_3^{-1}.(\mathbf{y},\mathbf{x})=\mathbf{1}$$
%which proves the theorem.
%\end{proof}

Note that there are $(q-1)$ choices for $a_j$ and $b_j$, that $c_j$ is completely determined by $a_j$ and $b_j$, and that there are $q^2$ choices for $v_j$.

\bibliographystyle{abbrv}
\bibliography{SOVIIbib}

\end{document}